\newtheorem{theorem}{Theorem}[chapter]
\newtheorem{corollary}[theorem]{Corollary}
\newtheorem{lemma}[theorem]{Lemma}
\newtheorem{claim}[theorem]{Claim}
\newtheorem{proposition}[theorem]{Proposition}
\theoremstyle{definition}
\newtheorem{definition}[theorem]{Definition}
\newtheorem{remark}[theorem]{Remark}
\title{Geometric Interpretation of \\ the Two Dimensional Poisson Kernel \\ And Its Applications}
\author{Sergei Artamoshin} % \\ \\ \\ I HOPE THAT THIS DISSERTATION \\ WILL BE SUBMITTED DURING 2009 - 2010.}
\date{2009}
\begin{document}

%\frontmatter

%\maketitle

%\pagenumbering{roman} % ????????? ????.

\pagestyle{empty}

\vspace*{4cm}

\centerline{\Large\textbf{Geometric Interpretation of}}
\bigskip
\centerline{\Large\textbf{the Two Dimensional Poisson Kernel}}
\bigskip
\centerline{\Large\textbf{And Its Applications}}

\bigskip
\bigskip
\bigskip

\centerline{\Large{Sergei Artamoshin}}

\bigskip
\bigskip

\centerline{The Graduate Center}
\centerline{The City University of New York}
\centerline{Department of Mathematics}
\centerline{365 Fifth Avenue, New York, NY, 10016.}
\centerline{e-mail: kolmorpos@yahoo.com}

\bigskip

\bigskip
\bigskip

\centerline{\large{November, 2009}}

\newpage

%\makeapprovalpage{Jozef Dodziuk}{Alexander Kheyfits}{*************}{************}

\vspace*{4cm}

\begin{spacing}{1.5}

\centerline{\textbf{Abstract}}

\bigskip

\noindent Hermann Schwarz, while studying complex analysis, introduced the geometric interpretation for the Poisson 
kernel in 1890, see \cite{Schwarz} (pp. 359-361) or V. Ahlfors, \cite{Ahlfors} (p.168). We shall see here that the 
geometric interpretation can be useful to develop a new approach to some old classical problems as well as to obtain 
several new results, mostly related to hyperbolic geometry. For example, we obtain One Radius Theorem saying that any 
two radial eigenfunctions of a Hyperbolic Laplacian assuming the value 1 at the origin can not assume any other 
common value within some interval [0, p], where the length of this interval depends only on the location of the 
eigenvalues on the complex plane and does not depend on the distance between them.

% For example, we shall see a new approach to the classical Dirichlet problem for a disc. As a new result, we compute explicitly all radial eigenfunctions together with their eigenvalues for the Dirichlet eigenvalue problem in a disc in hyperbolic space.}

%are going to use this geometric interpretation as the main tool to

 \chapter*{Acknowledgements} %(the * forces it to be unnumbered, put this before \tableofcontents)

 \thispagestyle{empty}

I am very grateful to professors A. Koranyi and Z.I.
Szabo who pointed out to me that $\omega^{\alpha}$ as well as its radialization (Spherical Mean) is an eigenfunction of the hyperbolic
Lapalcian. This is why all applications of the Main Theorem and the geometric interpretation of $\omega$ in itself to the
hyperbolic geometry became possible. This PhD program has been
supported by NSF Grant DMS-0604861. Besides them, I am equally grateful
to J\'ozef Dodziuk for his interest and valuable comments supporting me since
the very beginning these researches started.

Also I would like to thank Eliezer Hameiri and Leon Karp, whose lectures in Applied Mathematics and PDE inspired me to consider a number of applications, as well as Alexander Kheyfits for some helpful discussions.

\newpage

\pagestyle{headings}

\pagenumbering{arabic}

\setcounter{page}{1}

\tableofcontents

%\mainmatter %\singlespace

\chapter{Introduction}
\section{General comments.}

Recall that the Poisson kernel
\begin{equation}\label{Intro-1}
    P_{\,1}(x,y)=\frac{1}{2\pi R}\cdot
    \frac{||y|^2-|x|^2|}{|x-y|^2}\,,
\end{equation}
where $|y|=R$ is used as a tool to solve the Dirichlet problem for
a plane disk $\mathbf{B}^2_R (O)$ or for its exterior. In this paper we shall see that
the normalized expression $\omega = 2\pi R \, P_1(x,y)$ considered as a function of $x,y\in
\mathbb{R}^{k+1}$ has a geometric interpretation that can serve as a useful tool to obtain a number of results related to Euclidean and Hyperbolic geometry. Originally, for $x,y\in\mathbb{R}^2$ and $|x|<|y|$, this interpretation was observed by Hermann Schwarz, see \cite{Schwarz} (pp. 359-361) or \cite{Ahlfors} (p. 168). Here we present the uniform description of the geometric interpretation, which includes both cases: $|x|<|y|$ and $|x|>|y|$. As a first consequence of this interpretation we shall see that $\omega(x,y)$ satisfies an integral identity that leads to direct relationship between the Newtonian potential in $\mathbb{R}^{k+1}$ and $(k+1)-$dimensional Poisson kernel. Then, we shall present the following applications.

\medskip

  \section{Analysis in the Euclidean space and \\ applications to electrostatics.}

  Below is the list of applications related to the Euclidean space.

  \begin{enumerate}
    \item An algebraic way to compute certain integrals arising in electrostatics.
    \item A sufficient integral condition for a function depending only on distance to be harmonic.
    \item A new derivation for a solution of the classical
Dirichlet problem in the $(k+1)$-dimensional ball of radius $R$ or
in its exterior.

The standard way of obtaining the Poisson kernel involves the normal derivative of Green's function, see, for example, \cite{Kellog} (pp. 240-241) or \cite{Fritz} (pp. 106-108). The derivation presented here does not involve Green's identity or Green's function and relies on an integral identity of the two-dimensional Poisson kernel introduced here and, for real $\alpha\neq\beta$, expressed
by the following equivalence

\begin{equation}\label{1-ff}
    \alpha+\beta=k\quad \iff \quad
\int\limits_{S^k} \omega^\alpha\,
dS_y=\int\limits_{S^k} \omega^\beta\,
dS_y\,.
\end{equation}
where
\begin{equation}\label{3}
    \omega=2\pi R\cdot
    P_{\,1}(x,y)=\frac{|R^2-|x|^2|}{|x-y|^2}=\frac{||x|^2-|y|^2|}{|x-y|^2}
\end{equation}
with $x\in \mathbb{R}^{k+1}\setminus S^k$ and $y\in
S^k$. $S^k$ is a $k$-dimensional sphere of
radius $R$ centered at the origin $O$.
    \item In the Main Theorem, p.~\pageref{Basic-theorem}, we derive the restrictions for $R, |x|, \alpha, \beta$ so that the equivalence \eqref{1-ff} remains true for complex values $\alpha$ and $\beta$.
    \item Some non-trivial inequalities will be established as a consequences of the integral identity \eqref{1-ff} described above.
    %\item In Appendix we present an elementary proof of the fact that Laplacian commutes with radialization and as a consequence we obtain the Euler-Poisson-Darboux equation.
\end{enumerate}

\section {Analysis in the Hyperbolic space and \\ applications to the Dirichlet eigenvalue problem.}

The function $\omega(x,y)$ introduced above is useful for studying the Hyperbolic Laplacian, since $\omega^\alpha$ is a radial eigenfunction of the Hyperbolic Laplacian in the ball model. For example, using the integral identity \eqref{1-ff} extended to complex powers $\alpha$ and $\beta$, together with the geometric interpretation of $\omega$, we can obtain the following results concerning eigenfunctions and eigenvalues of Hyperbolic Laplacian.

\begin{enumerate}
  \item Consider the set of all radial eigenfunctions of the Hyperbolic Laplacian assuming the value 1 at the origin, i.e. the set of all solutions for the following system
\begin{equation}\label{Intro-2}
\left\{
  \begin{array}{ll}
     & \hbox{$\varphi^{''}(r)+\frac{k}{\rho}\coth\left(\frac{r}{\rho}\right)\varphi^{'}(r)+\lambda\varphi(r)=0$, $\lambda\in\mathbf{C}$;} \\
     & \hbox{$\varphi(0)=1$\,,}
  \end{array}
\right.
\end{equation}
written in the geodesic polar coordinates of the hyperbolic space of constant sectional curvature $\kappa=-1/\rho^2$.
Recall that for every $\lambda\in\mathbf{C}$ there exists a unique solution $\varphi_\lambda(r)$ such that $\varphi_\lambda(0)=1$, see \cite{Chavel} (p. 272). If we choose two radial eigenfunctions $\varphi_\mu(r)$ and $\varphi_\nu(r)$ such that $\varphi_\mu(0)=\varphi_\nu(0)=1$, then, how many values of $r>0$ such that $\varphi_\mu(r)=\varphi_\nu(r)$ are sufficient to ensure that $\mu=\nu$ and $\varphi_\mu(r)\equiv\varphi_\nu(r)$? We will see that according to the Main Theorem, introduced on page~\pageref{Basic-theorem}, we need only one such a value of $r$ if the value is small enough. In other words, if $\mu\neq\nu$, then there exists an interval $(0,p(\mu,\nu)]$, such that $\varphi_\mu(r)\neq\varphi_\nu(r)$ for all $r\in(0,p(\mu,\nu)]$. In particular, we prove that if $\mu\neq \nu$ are real and $\mu, \nu\leq k^2/4$ then $\varphi_\mu(r)\neq\varphi_\nu(r)$ for all $r\in(0,\infty)$. So, if $\mu, \nu\leq k^2/4$ and $\varphi_\mu(r)=\varphi_\nu(r)$ just for one arbitrary $r>0$, then $\mu=\nu$ and $\varphi_\mu(r)\equiv\varphi_\nu(r)$.
  \item What is the lower bound and the upper bound for the smallest positive eigenvalue of a Dirichlet Eigenvalue Problem? This question has been discussed in many papers. For the bibliography, see, for example, \cite{Chavel} or \cite{Cheng}. Recall that the Dirichelt Eigenvalue Problem for a disc of radius $\delta$ is formulated as follows. Find all $\lambda\in\mathbb{R}$ and corresponding functions $\varphi_\lambda(\upsilon, r)$, which are eigenfunctions of Hyperbolic Laplacian in $(k+1)-$dimensional hyperbolic space with a constant sectional curvature $\kappa=-1/\rho^2$ such that
\begin{equation}\label{Dirichlet_Original}
\left\{
  \begin{array}{ll}
     & \hbox{$\triangle\varphi_\lambda(\upsilon,r)+\lambda \varphi_\lambda(\upsilon,r)=0 \quad \forall r\in [0,\delta], \,\,\lambda - \text{real}$;} \\
     & \hbox{$\varphi_\lambda(\upsilon, \delta)=0$,}
  \end{array}
\right.
\end{equation}
where $\upsilon$ is a point from the unit sphere $\sigma_O^k$ centered at the origin. H.P. McKean showed that
\begin{equation}\label{Mckean_Estimation}
\begin{split}
    & \lambda\geq -\frac{\kappa k^2}{4} \quad\text{for all}\quad \delta>0
    \\& \text{and}\quad\lim\limits_{\delta\rightarrow +\infty}\lambda(\delta)=-\frac{\kappa k^2}{4}\,,
\end{split}
\end{equation}
see \cite{McKean} or \cite{Chavel} (p.46). Relatively recent result was obtained by Jun Ling in \cite{JunLING}. His lower bound of $\lambda$ from the problem \eqref{Dirichlet_Original} can be written as
\begin{equation}\label{Intro-4-1}
    \lambda\geq  \frac{\kappa}{2}+\left(\frac{\pi}{2\delta}\right)^2\,.
\end{equation}
Note that since $\kappa=-1/\rho^2$ is negative, the estimate \eqref{Intro-4-1} becomes trivial for large value of $\delta$. In this paper we will obtain an explicit representation of a radial solution for the problem and will see that the smallest eigenvalue $\lambda$ in \eqref{Dirichlet_Original} must satisfy the following inequality
\begin{equation}\label{Intro-4-2}
    \lambda>  \frac{-\kappa k^2}{4}+\left(\frac{\pi}{2\delta}\right)^2\,,
\end{equation}
which for $k\geq3$ can be improved to
\begin{equation}\label{Intro-4-3}
    \lambda>  \frac{-\kappa k^2}{4}+\left(\frac{\pi}{\delta}\right)^2\,
\end{equation}
and for $k=2$ the smallest eigenvalue $\lambda$ can be computed precisely, i.e.,
\begin{equation}\label{Intro-4-4}
    \lambda_{\min}= -\kappa+\left(\frac{\pi}{2\delta}\right)^2\,.
\end{equation}

As a bonus, the technique developed in this paper yields also the upper bound for $\lambda_{\min}$ in the two dimensional space. The estimate obtained here is a bit stronger than the upper bound in two dimensional space obtained by S.Y. Cheng, see \cite{Cheng} or \cite{Chavel} (p.82), but still, remains weaker then the upper bound obtained by M. Gage, see \cite{Gage} or \cite{Chavel} (p. 80). On the other hand, in the three dimensional space \eqref{Intro-4-4} is much stronger than the estimates obtained by Gage and Cheng in their works mentioned above.
%The last inequality remains true for all Dirichlet eigenvalues, not only for radial ones.

  \item The next question is to figure out whether the inequality and the limit in \eqref{Mckean_Estimation} obtained by H. McKean remain true for $\delta=\infty$. By the Dirichlet eigenvalue problem at $\infty$ we understand the following system of conditions.
\begin{equation}\label{Dirichlet_at_infty}
\left\{
  \begin{array}{ll}
     & \hbox{$\triangle\varphi_\lambda(\upsilon,r)+\lambda \varphi_\lambda(\upsilon,r)=0 \quad \forall r\in [0,\infty), \,\,\lambda - \text{real}$;} \\
     & \hbox{$\lim\varphi_\lambda(\upsilon, r)=0 \quad \text{as}\,\, r\rightarrow\infty\,,$}
  \end{array}
\right.
\end{equation}
which is the natural extension of \eqref{Dirichlet_Original} for $\delta=\infty$. We shall see in Theorem~\ref{Infinity-Behavior}, p.~\pageref{Infinity-Behavior} that for all $\lambda\in(0,\infty)$ this problem has a solution. At least, for every $\lambda\in(0,\infty)$ there exists the unique radial eigenfunction assuming the value 1 at the origin and satisfying \eqref{Dirichlet_at_infty}. We can even write down such an eigenfunction explicitly. So, the lower bound for $\lambda_{\min}$ from \eqref{Dirichlet_at_infty} is 0, and then, the results listed in \eqref{Mckean_Estimation} are not appropriate for eigenvalues from \eqref{Dirichlet_at_infty}. Note, however, that according to Harold Donelly, see \cite{Donelly} %\cite{MR612619},
none of the solutions belongs to $L^2(B_\rho^{k+1})$ with respect to the hyperbolic measure.
  \item We shall see in Theorem~\ref{Explicit-Solution-Theorem}, p.~\pageref{Explicit-Solution-Theorem} that in the hyperbolic space of three dimensions all the radial Dirichlet eigenfunctions together with their eigenvalues can be computed explicitly, i.e., the set of the following formulae
\begin{equation}\label{Intro-6}
    \lambda_j=-\kappa+\left(\frac{\pi j}{\delta}\right)^2\,,\,\,j=1,2,...
\end{equation}
yields the whole spectrum for the Dirichlet eigenvalue problem \eqref{Dirichlet_Original} restricted by a non-zero condition at the origin. The radial solution assuming the value 1 at the origin for each $\lambda_j$ can be written as  \begin{equation}\label{Intro-7}
    \varphi_{\lambda_j}(r)=\frac{\delta(\rho^2-\eta^2)}{2\pi\rho^2 \eta j}\cdot\sin\left(\frac{\pi jr}{\delta}\right)=
    \frac{\delta\sin(\pi jr/\delta)}{\pi j\rho\sinh(r/\rho)}\,,
\end{equation}
where $0\leq r\leq\delta<\infty$ and $\eta=\rho\tanh(r/2\rho)$.
    \item We shall see in Theorem~\ref{Lambda-Positively-Large-Theorem}, p.~\pageref{Lambda-Positively-Large-Theorem} that the geometric interpretation is useful to investigate the asymptotic behavior of a radial eigenfunction $\varphi_\lambda(r)$ as $\lambda\rightarrow\infty$ and $r$ is fixed. We compute the leading terms of the asymptotic decompositions in both cases: as $\lambda\rightarrow\infty$ (Theorem~\ref{Lambda-Positively-Large-Theorem}, p.~\pageref{Lambda-Positively-Large-Theorem}) and $\lambda\rightarrow-\infty$ (Theorem~\ref{Asymptotic-behavior-Theorem}, p.~\pageref{Asymptotic-behavior-Theorem}).
\end{enumerate}

\chapter {Some geometric results.}

\medskip

\section {Geometric Interpretation of $\omega(x,y)$.}

\smallskip

Let $x,y\in\mathbb{R}^{k+1}$ and assume that $|x|\neq|y|$. Let $S^k(R)$ denotes the $k$-dimensional sphere of radius $R$ centered at the origin $O$. Then, let us define $x^*$ and $y^*$ in such a way that
\begin{equation}\label{GeoInte-1}
    x^*\in S^k(|x|),\,\,\,y^*\in S^k(|y|)\quad\text{and}\quad x^*, y^*,x,y\,\,\,\text{are collinear}\,.
\end{equation}

\begin{proposition}
\begin{equation}\label{GeoInte-2}
    |x-y^*|=|x^*-y|\,.
\end{equation}
\end{proposition}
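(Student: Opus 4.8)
The plan is to realize $x^{*}$ and $y^{*}$ as the images of $x$ and $y$ under a single isometry of $\mathbb{R}^{k+1}$, namely the point reflection through the foot of the perpendicular dropped from the origin $O$ onto the line through $x$ and $y$. Since $|x|\neq|y|$ we have $x\neq y$, so $x,y$ determine a unique line $\ell\subset\mathbb{R}^{k+1}$; let $M$ be the orthogonal projection of $O$ onto $\ell$ (it exists and is unique, with $M=O$ in the degenerate case $O\in\ell$). For any $z\in\ell$ the displacement $z-M$ points along $\ell$ while $O-M$ is orthogonal to $\ell$, so by the Pythagorean theorem $|z|^{2}=|M|^{2}+|z-M|^{2}$; hence $z$ and its reflection $2M-z$ through $M$ lie at the same distance from $O$. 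Applying this with $z=x$ and $z=y$ shows that $2M-x\in S^{k}(|x|)$ and $2M-y\in S^{k}(|y|)$, and both lie on $\ell$; these are the points $x^{*}$ and $y^{*}$ of \eqref{GeoInte-1} (the only alternative choices, $x^{*}=x$ and $y^{*}=y$, make the assertion vacuous). So I would record $x^{*}=2M-x$ and $y^{*}=2M-y$.

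With this, \eqref{GeoInte-2} follows immediately, and in two equivalent ways. Concretely, $x-y^{*}=x-(2M-y)=(x+y)-2M$ whereas $x^{*}-y=(2M-x)-y=-\bigl((x+y)-2M\bigr)$, so the two vectors are negatives of one another and in particular have equal norm. Conceptually, the point reflection $\sigma_{M}\colon z\mapsto 2M-z$ is an isometry of $\mathbb{R}^{k+1}$ that interchanges $x\leftrightarrow x^{*}$ and $y\leftrightarrow y^{*}$; it therefore carries the segment $[x,y^{*}]$ onto the segment $[x^{*},y]$, whence $|x-y^{*}|=|x^{*}-y|$.

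I do not expect a genuine obstacle here: the whole content is in spotting the point $M$ and observing that both starred points are its reflections, after which the verification is a single line. The only points requiring a little care are (i) the mild underdetermination of $x^{*},y^{*}$ in \eqref{GeoInte-1}, which I resolve by taking the reflected points — equivalently, the second intersection of $\ell$ with each of the two spheres — and (ii) the degenerate configurations, i.e. $O\in\ell$ or $\ell$ tangent to one of the spheres (so that $x^{*}=x$ or $y^{*}=y$); all of these are handled uniformly by the formula $x^{*}=2M-x$, $y^{*}=2M-y$ with no separate argument. As a byproduct, writing $|z|^{2}=|M|^{2}+|z-M|^{2}$ for $z=x,y$ gives $\bigl||x|^{2}-|y|^{2}\bigr|$ in terms of the positions of $x,y$ along $\ell$ and yields $\omega(x,y)=|x-y^{*}|/|x-y|=|x^{*}-y|/|x-y|$, which I expect is exactly the geometric interpretation of $\omega$ that the section is heading toward.
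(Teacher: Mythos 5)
Your proof is correct and is essentially the paper's argument: the paper reflects across the hyperplane through $O$ orthogonal to the line $xy$, which swaps $x\leftrightarrow x^{*}$ and $y\leftrightarrow y^{*}$ and hence the segments $xy^{*}$ and $x^{*}y$; your point reflection through the foot $M$ is the same symmetry restricted to the line, verified explicitly by the identity $x-y^{*}=-(x^{*}-y)$. Your remarks on the choice of $x^{*},y^{*}$ and the degenerate cases are a harmless refinement of what the paper leaves implicit.
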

\begin{proof}
Consider the $k$-dimensional plane passing though the origin and orthogonal to the line($x,y$). Clearly, $x^*$ is the reflection of $x$ with respect to the plane. For the same reason, $y^*$ is the reflection of $y$, and therefore, segment $x^*y$ is the reflection of $xy^*$. Hence, $|x-y^*|=|x^*-y|$.
\end{proof}

\bigskip

Now, using the proposition above, we may introduce notation for the following segments:
\begin{equation}\label{GeoInte-3}
    q=|x-y|\quad\text{and}\quad l=|x-y^*|=|x^*-y|\,.
\end{equation}

\begin{theorem}[Geometric Interpretation]\label{Geometric-Interpretation-theorem}
\begin{equation}\label{GeoInte-4}
    \omega(x,y)=\frac{||x|^2-|y|^2|}{|x-y|^2}=\frac{l}{q}\,.
\end{equation}
This expression will be referred to as the Geometric Interpretation of $\omega$.
\end{theorem}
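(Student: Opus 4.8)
The plan is to exploit the reflection picture already established in the Proposition and reduce everything to a one–dimensional computation on the line $\ell$ through $x$ and $y$.

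First I would recall from the proof of the Proposition that $x^*$ is the mirror image of $x$, and $y^*$ the mirror image of $y$, in the $k$-plane $\Pi$ that passes through $O$ orthogonally to $\ell$. Since $x,y,x^*,y^*$ are collinear, they all lie on $\ell$; hence $\Pi$ meets $\ell$ in exactly one point $M$ — the foot of the perpendicular from $O$ to $\ell$ — and the restriction of the reflection in $\Pi$ to the (now invariant) line $\ell$ is simply the point–reflection of $\ell$ about $M$. Next I would coordinatize: let $d=|OM|$ and give $\ell$ a signed coordinate $t$ with origin $M$, so that the point of $\ell$ at parameter $t$ is at Euclidean distance $\sqrt{d^2+t^2}$ from $O$ by the Pythagorean theorem. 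Write $x$ at parameter $a$ and $y$ at parameter $b$; the point–reflection about $M$ then places $x^*$ at $-a$ and $y^*$ at $-b$, which indeed gives $|x^*|=\sqrt{d^2+a^2}=|x|$ and $|y^*|=|y|$, consistent with \eqref{GeoInte-1}.

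Now everything is explicit: $q=|x-y|=|a-b|$, $l=|x-y^*|=|a+b|$ (and likewise $|x^*-y|=|a+b|$, re-deriving the Proposition), while $\big||x|^2-|y|^2\big|=\big|(d^2+a^2)-(d^2+b^2)\big|=|a^2-b^2|=|a-b|\cdot|a+b|=q\,l$. Dividing by $|x-y|^2=q^2$ yields $\omega(x,y)=l/q$. There is no genuine obstacle here — the content is entirely in choosing the right picture (the perpendicular foot $M$ and the signed parameter), after which it is a one-line algebraic identity. The only points needing a word of care are the degenerate configurations: when $\ell$ passes through $O$ (so $d=0$, $M=O$, $x^*=-x$, $y^*=-y$) the same formulas apply verbatim; and when $x$ (or $y$) coincides with $M$, i.e. $a=0$, the ``other'' intersection point $x^*$ degenerates to $x$ itself, and one checks directly that both sides equal $1$. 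The hypothesis $|x|\neq|y|$ guarantees $x\neq y$, so the left-hand side is well defined throughout.
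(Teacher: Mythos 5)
Your proof is correct, but it takes a different route from the paper's. The paper argues synthetically: it draws the chord of the larger sphere through $x$ that is tangent to the smaller sphere $S^k(|x|)$ at $x$, uses the Pythagorean theorem to get $\bigl||x|^2-|y|^2\bigr|=a^2$ for the half-chord $a$, and then similar triangles ($\triangle y^*xM\sim\triangle Pxy$, i.e.\ the power-of-a-point/intersecting-chords argument) to get $a^2=l\,q$, whence $\omega=l/q$; the case $|x|>|y|$ is handled by the remark that the same picture works with the roles of the spheres exchanged. You instead coordinatize the line $\ell$ through $x$ and $y$ by the signed parameter $t$ measured from the foot $M$ of the perpendicular from $O$, observe that $x^*,y^*$ sit at $-a,-b$ when $x,y$ sit at $a,b$, and verify $\bigl||x|^2-|y|^2\bigr|=|a^2-b^2|=|a-b|\,|a+b|=q\,l$ by a one-line computation. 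Both proofs hinge on the same identity $\bigl||x|^2-|y|^2\bigr|=l\,q$ (the power of $x$ with respect to $S^k(|y|)$); yours establishes it algebraically, which buys a uniform treatment of the cases $|x|<|y|$ and $|x|>|y|$, re-derives the Proposition $|x-y^*|=|x^*-y|$ for free, and cleanly disposes of the degenerate configurations ($\ell$ through $O$, or $x$ at the foot $M$ where the tangent-chord picture collapses), whereas the paper's synthetic argument is shorter on the generic picture but leans on the figure and a separate remark for the second case.
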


\begin{proof} Figure \ref{Geometric_Interpretation} below represents the two dimensional plane defined by three points: $O,x,y$. The segment $MP$ is the tangent chord to the smaller sphere, say, $S^k(|x|)$ at point $x$. Then, clearly, $a=|Mx|=|xP|$ is one half of the length of the chord. Pythagorean theorem implies that

%\includegraphics*[bb= 0 0 220 185]{GeoInte_1_1.bmp}

%\begin{figure}[h]
%    \centering
%    \includegraphics*[scale=0.3,bb= 0 0 614 545]{}\\
%    \caption{Geometric Interpretation.}\label{Geometric_Interpretation}
%\end{figure}

\begin{figure}[!h]
    \centering
    \epsfig{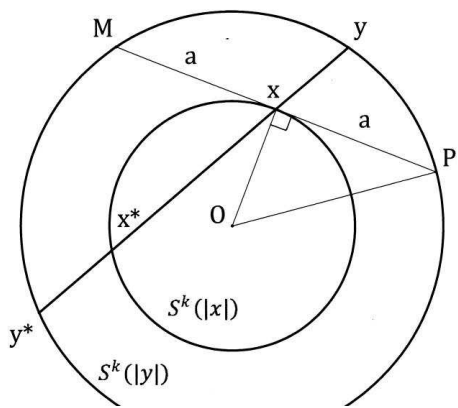}
    \caption{Geometric Interpretation.}\label{Geometric_Interpretation}
\end{figure}

\begin{equation}\label{GeoInte-5}
    ||x|^2-|y|^2|=a^2\,.
\end{equation}
In addition, $\triangle y^*xM$ is similar to $\triangle Pxy$, which yields
\begin{equation}\label{GeoInte-6}
    a^2=|y^*-x|\cdot |x-y|=lq\,.
\end{equation}
Combining \eqref{GeoInte-5} and \eqref{GeoInte-6}, we may write that
\begin{equation}\label{GeoInte-7}
    \omega(x,y)=\frac{||x|^2-|y|^2|}{|x-y|^2}=\frac{a^2}{q^2}=\frac{lq}{q^2}=\frac{l}{q}\,.
\end{equation}
The same argument applies if the sphere $S^k(|y|)$ is the smaller one. This completes the proof of Theorem~\ref{Geometric-Interpretation-theorem}.
\end{proof}

\begin{corollary} Using the geometric interpretation obtained above, we observe that $\omega(x,y)$ remains constant as long as $x$ stays at a circle tangent to $S^k(|y|)$ and centered at some point of the line $Oy$. This follows from the Lemma \ref{omega_constant-lemma} below.
\end{corollary}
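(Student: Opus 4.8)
The plan is to read the corollary straight off the Geometric Interpretation of $\omega$. Write $S=S^k(|y|)$ and recall from Theorem~\ref{Geometric-Interpretation-theorem} that $\omega(x,y)=l/q=a^2/q^2$, where $q=|x-y|$ and $a^2=\big||x|^2-|y|^2\big|$ is the absolute value of the power of $x$ with respect to $S$. Using $|x|^2-|y|^2=\langle x-y,\,x+y\rangle$ together with $x+y=(x-y)+2y$, I would first put $\omega$ in the form
\begin{equation*}
\omega(x,y)=\left|\,1+\frac{2\langle x-y,\,y\rangle}{|x-y|^2}\,\right|,
\end{equation*}
so that the whole task reduces to showing that the ratio $\langle x-y,\,y\rangle/|x-y|^2$ is constant along the circle in question.

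Next I would pin down that circle. If a sphere $C$ in $\mathbb{R}^{k+1}$ has its center on the line $Oy$ and is tangent to $S$, then the point of tangency lies on the line joining the two centers, i.e.\ on $Oy$, and $Oy$ meets $S$ only at $y$ and at its antipode; since we want a level set of $\omega(\cdot,y)$, the relevant tangency point is $y$ (the antipodal spheres are the level sets of $\omega(\cdot,-y)$ instead). Hence $C=C_s:=\{x\in\mathbb{R}^{k+1}:\ |x-sy|=|1-s|\,|y|\}$ for some real $s$, whose section by the plane through $O,x,y$ is the circle of the statement; the values $s=1$ (the single point $y$), $s=0$ (the sphere $S$, where $\omega\equiv0$) and ``$s=\infty$'' (the tangent hyperplane to $S$ at $y$, where $\omega\equiv1$) are degenerate and I would dispose of them separately.

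To finish, I would expand the defining relation $|x-sy|^2=(1-s)^2|y|^2$ of $C_s$ into $|x|^2=2s\langle x,y\rangle+(1-2s)|y|^2$, which gives
\begin{equation*}
|x-y|^2=2(1-s)\big(|y|^2-\langle x,y\rangle\big)\qquad\text{and}\qquad\langle x-y,\,y\rangle=-\big(|y|^2-\langle x,y\rangle\big),
\end{equation*}
so that $\langle x-y,\,y\rangle/|x-y|^2=-1/\big(2(1-s)\big)$ does not depend on $x\in C_s$; substituting into the displayed formula yields $\omega(x,y)=|s|/|1-s|$, a constant attached to $C_s$. The same step admits a more intrinsic phrasing in the spirit of the geometric interpretation: both $\mathrm{pow}(x,S)$ and $|x-y|^2=\mathrm{pow}(x,\{y\})$ restrict on $C_s$ to affine functions vanishing on the radical hyperplane of $C_s$ with $S$, respectively with the point $y$, and both of these radical hyperplanes coincide with the common tangent hyperplane $\Pi$ of $C_s$ and $S$ at $y$ (because $C_s$ is tangent to $S$ at $y$ and $y\in C_s$); an affine function vanishing on $\Pi$ is a fixed scalar times the signed distance to $\Pi$, so $\omega$ is the quotient of two such scalars and hence constant.

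I expect the only friction to be bookkeeping rather than substance: confirming that the tangency point is $y$ and not its antipode, checking that $|x|^2-|y|^2$ keeps a constant sign along each $C_s$ so that the absolute value is harmless, and handling the degenerate positions $s\in\{0,1,\infty\}$ by inspection. Once $\omega$ is written in the form above the computation is immediate, and in any case the corollary is subsumed by Lemma~\ref{omega_constant-lemma}.
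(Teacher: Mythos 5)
Your proposal is correct, but it takes a genuinely different route from the paper's.  You work algebraically: writing $\omega(x,y)=\bigl|1+2\langle x-y,y\rangle/|x-y|^2\bigr|$ via the power-of-a-point identity $|x|^2-|y|^2=\langle x-y,x+y\rangle$, parametrizing the tangent spheres as $C_s=\{|x-sy|=|1-s|\,|y|\}$, and then seeing both $|x-y|^2$ and $\langle x-y,y\rangle$ reduce on $C_s$ to scalar multiples of the same quantity $|y|^2-\langle x,y\rangle$, giving $\omega=|s|/|1-s|$.  The paper's Lemma~\ref{omega_constant-lemma} instead argues synthetically in the two-dimensional cross-section: it draws the diameters $Uy$ of $S^k(|y|)$ and $yV$ of $S^k_T(|Ty|)$, invokes Thales' theorem to get right angles at $y^*$ and at $x$, and reads off $l=2R\cos\gamma+2\delta\cos\gamma$, $q=2\delta\cos\gamma$ directly from the picture, so that $l/q=|OT|/\delta$.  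With $T=sy$ one has $|OT|=|s|\,|y|$ and $\delta=|1-s|\,|y|$, so your constant $|s|/|1-s|$ agrees with the paper's $|OT|/\delta$.  The trade-off: your version is coordinate-free of pictures and localizes cleanly where the computation could degenerate (the factor $|y|^2-\langle x,y\rangle$ vanishes only at the tangency point $y$ itself), and your radical-hyperplane reformulation makes the ``why'' transparent; the paper's version is shorter, visibly geometric, and dovetails with the notation $l,q,\gamma$ already set up for Theorem~\ref{Geometric-Interpretation-theorem}, which the rest of the chapter reuses.  Your side remark about the antipodal tangency and the degenerate cases $s\in\{0,1,\infty\}$ is appropriate bookkeeping that the paper silently absorbs into the phrase ``a similar argument yields \eqref{Omega-constanta} for all other positions of $T$.''
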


\begin{lemma}\label{omega_constant-lemma} Let $T$ be any point on the line $Oy$ such that $T\neq y$ and let $S^k_T(|Ty|)$ be the sphere of radius $\delta=|Ty|$ centered at $T$, where $|Ty|=|T-y|$ is the distance from $T$ to $y$. Then
\begin{equation}\label{Omega-constanta}
    \omega(x,y)=\frac{|OT|}{\delta}\quad\text{for all}\,\,\,x\in S^k_T(|Ty|)\setminus \{y\}\,.
\end{equation}
\end{lemma}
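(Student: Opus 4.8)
My plan is to carry the whole picture into the plane $E$ determined by $O$, $x$, $y$ and then quote the Geometric Interpretation (Theorem~\ref{Geometric-Interpretation-theorem}). Note that $T$ lies on the line $Oy\subset E$ and that the auxiliary point $y^{*}$ of \eqref{GeoInte-1} lies on the line $xy\subset E$, so the configuration is genuinely two‑dimensional. Since $y$ itself lies on $S^{k}(|y|)$, the point $y^{*}$ is simply the second intersection of the line $xy$ with the circle $S^{k}(|y|)\cap E$; I would first note that for $x\in S^{k}_{T}(|Ty|)\setminus\{y\}$ this second point is distinct from $y$ — equivalently $|x|\neq|y|$ — because if the line $xy$ were tangent to $S^{k}(|y|)$ at $y$ it would be orthogonal to $Oy$ there, whereas the only point of the sphere $S^{k}_{T}(|Ty|)$ lying in the hyperplane through $y$ orthogonal to $Oy$ is $y$ itself (that hyperplane being tangent to $S^{k}_{T}(|Ty|)$ at $y$). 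Thus Theorem~\ref{Geometric-Interpretation-theorem} applies and gives $\omega(x,y)=l/q$ with $q=|xy|$ and $l=|xy^{*}|$.

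The heart of the argument is the observation that $|Tx|=|Ty|=\delta$, so the triangle $Txy$ is isosceles with apex $T$. Let $M$ and $N$ be the feet of the perpendiculars dropped onto the line $xy$ from $T$ and from $O$ respectively. Then $M$ is the midpoint of the chord $xy$ of the circle $S^{k}_{T}(|Ty|)\cap E$, so from the right triangle $TMx$ one gets $q=2|xM|=2\delta\cos\beta$, where $\beta$ denotes the angle between the lines $xy$ and $xT$. Likewise $N$ is the midpoint of the chord $yy^{*}$ of $S^{k}(|y|)\cap E$; since $M$ and $N$ are the midpoints of $\{x,y\}$ and of $\{y,y^{*}\}$ measured along the line $xy$, we get $|MN|=\tfrac12|xy^{*}|=l/2$, while on the other hand $|MN|$ equals the length of the orthogonal projection of the segment $OT$ onto the line $xy$, i.e. $|MN|=|OT|\cos\gamma$ with $\gamma$ the angle between the lines $Oy$ and $xy$. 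Finally, the base angles of the isosceles triangle $Txy$ at $x$ and at $y$ coincide, and the one at $y$ is precisely $\gamma$; hence $\beta=\gamma$. Combining the three relations, $l=2|OT|\cos\beta$ and $q=2\delta\cos\beta$, and therefore $\omega(x,y)=l/q=|OT|/\delta$.

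I do not expect a serious obstacle: the only care needed is with the degenerate positions, and all of it reduces to the fact already used above, namely that $x\in S^{k}_{T}(|Ty|)\setminus\{y\}$ forces $|x|\neq|y|$ (hence $y^{*}\neq y$) and that the base angle $\beta$ of a genuine isosceles triangle is acute, so $\cos\beta\neq0$; the cases $T=O$ and $x,y,T$ collinear are then immediate. As an independent check — and a proof that is valid verbatim in every dimension and in every special case — I would also record the short computation: writing $T=sy$, so that $|OT|=|s|\,|y|$ and $\delta=|T-y|=|1-s|\,|y|$, expanding $|x-T|^{2}=|y-T|^{2}$ gives $|x|^{2}-|y|^{2}=2s\bigl(\langle x,y\rangle-|y|^{2}\bigr)$, and substituting this into $|x-y|^{2}=|x|^{2}-2\langle x,y\rangle+|y|^{2}$ yields $|x-y|^{2}=2(1-s)\bigl(|y|^{2}-\langle x,y\rangle\bigr)$; for $x\neq y$ the factor $|y|^{2}-\langle x,y\rangle$ is nonzero, so it cancels and $\omega(x,y)=|s|/|1-s|=|OT|/\delta$.
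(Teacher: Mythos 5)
Your proof is correct, but it takes a genuinely different route from the paper's. The paper works in the cross-sectional plane with the two diameters $Uy$ (of $S^k(|y|)$) and $yV$ (of $S^k_T(|Ty|)$) and the right angles they subtend, getting $l=2R\cos\gamma+2\delta\cos\gamma$ and $q=2\delta\cos\gamma$ for the one position of $T$ shown in its figure, and then dismisses all other positions of $T$ with ``a similar argument.'' Your synthetic argument is configuration-independent: the midpoint identity $|MN|=\tfrac12|xy^*|$ holds for any ordering of $x,y,y^*$ on the line, and the isosceles triangle $Txy$ gives $\beta=\gamma$ immediately, so no case analysis over where $T$ sits on the line $Oy$ is needed; and your closing computation with $T=sy$, where the factor $|y|^2-\langle x,y\rangle$ cancels, is a complete standalone proof that also covers the degenerate situations ($T=O$, or $x,y,T$ collinear) that the paper leaves implicit. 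What each buys: the paper's proof stays closest to the geometric picture that motivates the lemma, while yours trades the figure-dependence for uniformity and, in the algebraic version, for an argument with no cases at all.

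One small inaccuracy in the synthetic paragraph: ``the second intersection $y^*$ is distinct from $y$'' is not equivalent to $|x|\neq|y|$ (if $|x|=|y|$ the line $xy$ is a secant through $x$ and $y$, so $y^*=x\neq y$), and what Theorem~\ref{Geometric-Interpretation-theorem} actually assumes is $|x|\neq|y|$, which your tangency argument does not by itself deliver. The needed fact is true: for $T\neq O$ the spheres $S^k_T(|Ty|)$ and $S^k(|y|)$ meet only at $y$, as your own identities $|x|^2-|y|^2=2s\bigl(\langle x,y\rangle-|y|^2\bigr)$ and $|x-y|^2=2(1-s)\bigl(|y|^2-\langle x,y\rangle\bigr)$ show at once. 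So this is a one-line repair rather than a gap, and your algebraic proof is unconditionally complete as written.
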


\begin{proof}[Proof of Lemma \ref{omega_constant-lemma}]
Let us choose $T\in\text{line}(Oy)$ as it is shown on Figure~\ref{Omega-constant} below.

\begin{figure}[!h]
    \centering
    \epsfig{figure=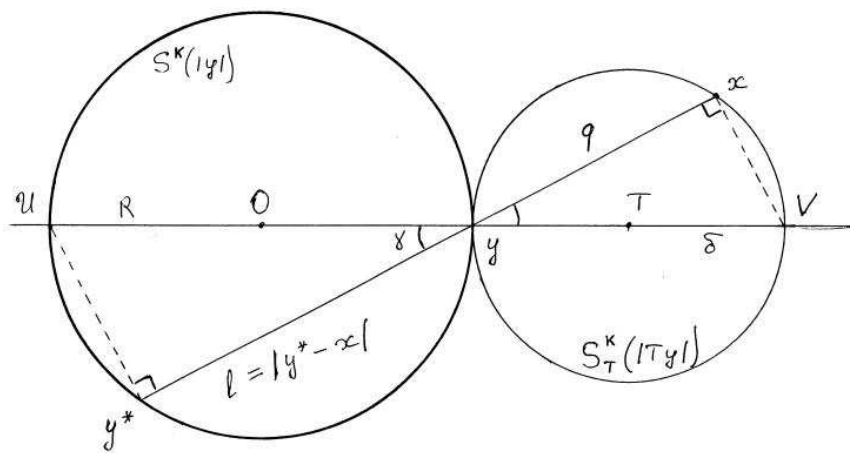,height=6cm}
    \caption{Level hypersurface for $\omega$.}\label{Omega-constant}
\end{figure}

Let $|Oy|=R$, $U$ and $V$ be the intersections of line $Oy$ with the spheres $S^k(|y|)$ and $S^k_T(|Ty|)$ respectively. $y^*\in S^k(|y|)$ is defined by \eqref{GeoInte-1}. Let $\gamma=\angle Oyy^*=\angle xyT$. Observe that $\angle Uy^*y=\angle yxV=\pi/2$ since the segments $Uy$ and $yV$ are diameters. Therefore,
\begin{equation}\label{l-for-level-hypersurface}
    l=|y^*-x|=|y^*y|+|yx|=2R\cos\gamma+2\delta\cos\gamma
\end{equation}
and
\begin{equation}\label{q-for-level-hypersurface}
    q=|y-x|=2\delta\cos\gamma\,.
\end{equation}
Hence, the combination of \eqref{GeoInte-4}, \eqref{l-for-level-hypersurface} and \eqref{q-for-level-hypersurface} yields
\begin{equation}
    \omega(x,y)=\frac{l}{q}=1+\frac{R}{\delta}=\frac{\delta+R}{\delta}=\frac{|OT|}{\delta}\,.
\end{equation}
Therefore, for the chosen point $T$ formula \eqref{Omega-constanta} is justified. A similar argument yields \eqref{Omega-constanta} for all other positions of $T$ on the line $Oy$. This completes the proof of Lemma~\ref{omega_constant-lemma}.
\end{proof}

\begin{corollary}\label{limit-for-omega}
    Let $p$ be the $\mathcal{C}^1$ path through $y$ and let $x\in p$. Then
\begin{equation}\label{Omega-limit-formula}
    \lim\limits_{p\ni x\rightarrow y} \omega(x,y)= A(p)\in[0, \infty]\,,
\end{equation}
where the value $A(p)$ depends on the path $p$ chosen and can assume any value from the closed interval $[0,\infty]$.
\end{corollary}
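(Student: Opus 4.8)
The plan is to base everything on Lemma~\ref{omega_constant-lemma}. That lemma foliates a punctured neighbourhood of $y$ by the spheres $S^k_T(|Ty|)$, $T\in\mathrm{line}(Oy)$ (the limiting leaf, as $T\to\infty$, being the hyperplane through $y$ orthogonal to $\mathrm{line}(Oy)$, on which $\omega\equiv1$), and on the leaf with centre $T$ it asserts $\omega\equiv|OT|/|Ty|$. Since $p$ is $\mathcal C^1$ it has a tangent line $\ell$ at $y$, with unit direction $e$; the value $A(p)$ will be governed by the order of contact of $p$ with this foliation at $y$, and the argument splits according to whether $e\perp\mathrm{line}(Oy)$ or not.

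Suppose first that $e$ is \emph{not} orthogonal to $\mathrm{line}(Oy)$. For $x\in p$ near $y$ let $T(x)$ be the point where $\mathrm{line}(Oy)$ meets the perpendicular bisector hyperplane of the segment $xy$; then $x\in S^k_{T(x)}(|T(x)y|)$, so $\omega(x,y)=|OT(x)|/|T(x)y|$ by Lemma~\ref{omega_constant-lemma}. This bisector hyperplane has normal $x-y$ (which tends to the direction $e$) and passes through the midpoint of $xy$ (which tends to $y$); since $e\not\perp\mathrm{line}(Oy)$ the hyperplane meets $\mathrm{line}(Oy)$ transversally, so $T(x)\to y$. Hence $|OT(x)|\to|Oy|$ while $|T(x)y|\to0$, and therefore $\omega(x,y)\to\infty$, i.e. $A(p)=\infty$.

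The remaining, and principal, case is $e\perp\mathrm{line}(Oy)$, i.e. $p$ is tangent at $y$ to the sphere $S^k(|y|)$. Writing $R=|y|$ and a unit-speed expansion $x(s)=y+s\,e+\varepsilon(s)$ with $\varepsilon(s)=o(s)$ and $\langle y,e\rangle=0$, one computes
\begin{equation*}
\omega(x(s),y)=\frac{\bigl|\,|x(s)|^2-R^2\,\bigr|}{|x(s)-y|^2}
=\frac{\bigl|\,s^2+2\langle y,\varepsilon(s)\rangle+o(s^2)\,\bigr|}{s^2+o(s^2)}
=\Bigl|\,1+\frac{2\langle y,\varepsilon(s)\rangle}{s^2}\,\Bigr|\bigl(1+o(1)\bigr),
\end{equation*}
so that $A(p)=\bigl|\,1+\lim_{s\to0}2\langle y,\varepsilon(s)\rangle/s^2\,\bigr|$; geometrically, when $p$ admits an osculating circle at $y$ this says $A(p)=1+R/\delta$, where $\delta\in(0,\infty]$ is the radius of the unique leaf $S^k_T$ of the foliation having second-order contact with $p$ at $y$ (with $A(p)=1$ in the super-osculating case $\delta=\infty$). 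I expect this to be the delicate step: a bare $\mathcal C^1$ hypothesis does not by itself force $\langle y,\varepsilon(s)\rangle/s^2$ to converge, so one must either invoke enough regularity for the osculating leaf to exist or squeeze $p$ between two leaves $S^k_{T_1},S^k_{T_2}$ and let $T_1,T_2$ coalesce; I would present the osculating-leaf formulation. As $\lim 2\langle y,\varepsilon(s)\rangle/s^2$ ranges over $[-\infty,\infty]$, the quantity $A(p)$ ranges over $[0,\infty]$.

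It remains to note that every value in $[0,\infty]$ is actually attained, which is now immediate. An arc of $S^k(|y|)$ through $y$ gives $A(p)=0$ (there $\omega\equiv0$); an arc of a chord through $O$ and $y$ — more generally, any line through $y$ not orthogonal to $\mathrm{line}(Oy)$ — gives $A(p)=\infty$ by the second paragraph; and for a prescribed $c\in(0,\infty)$ one takes an arc through $y$ of the leaf $S^k_T$ with $|OT|/|Ty|=c$ (such a $T$ exists, since $T\mapsto|OT|/|Ty|$ sweeps out $(0,\infty)$ along $\mathrm{line}(Oy)$), on which $\omega\equiv c$ by Lemma~\ref{omega_constant-lemma}, so trivially $A(p)=c$. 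This completes the plan.
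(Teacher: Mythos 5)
Your argument is correct in substance, and for what the paper actually establishes --- that $A(p)$ can assume every value in $[0,\infty]$ --- it runs on the same engine as the paper's proof: Lemma~\ref{omega_constant-lemma} supplies the level spheres $S^k_T(|Ty|)$, and your last paragraph exhibits essentially the paper's family of paths (an arc of $S^k(|y|)$ for the value $0$, an arc of the leaf with $|OT|/|Ty|=c$ for each $c\in(0,\infty)$, a non-tangential path for $\infty$; the paper realizes the value $1$ by the tangent hyperplane, you by the leaf centered at the midpoint of $Oy$, which is immaterial). Where you genuinely diverge is in the two limit computations. For non-tangential paths the paper argues directly from the geometric interpretation of Theorem~\ref{Geometric-Interpretation-theorem}: $l\to 2R\cos\gamma_0>0$ while $q\to 0$, so $\omega=l/q\to\infty$; your perpendicular-bisector construction reaching $T(x)\to y$ gets the same conclusion through the foliation and is sound (note $T(x)\neq y$ is automatic, and the bisector meets line$(Oy)$ for $x$ near $y$ since $\langle x-y,\,y\rangle\neq0$ eventually). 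For tangential paths your Taylor expansion goes beyond the paper and correctly flags that a bare $\mathcal{C}^1$ hypothesis does not force $\langle y,\varepsilon(s)\rangle/s^2$ to converge --- for instance a radial deviation $s^2\sin\ln(1/s)$ is $\mathcal{C}^1$ with vanishing derivative at $0$ and makes $\omega$ oscillate --- so the corollary's literal assertion that the limit exists for \emph{every} $\mathcal{C}^1$ path is too strong; the paper's proof, like your attainability paragraph, only produces paths realizing each value, so your remark is a sharpening of the statement rather than a gap in your argument. One small slip: in the osculating case the limiting value is $|1\pm R/\delta|=|OT|/\delta$, the sign depending on which side of the tangent hyperplane the center $T$ lies, so writing it as $1+R/\delta$ misses the leaves that produce values in $[0,1)$; your analytic formula $A(p)=\bigl|1+\lim_{s\to0}2\langle y,\varepsilon(s)\rangle/s^2\bigr|$ is the correct one and already yields the full range $[0,\infty]$.
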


\begin{proof}[Proof of Corollary \ref{limit-for-omega}]
Note first, if $p\subseteq S^k(|y|)$, then $\omega(x,y)\equiv0$ for every $x\in p\setminus \{y\}$ and therefore, $A(p)=0$. If $p\subseteq S^k_T(|yT|)$, then, by \eqref{Omega-constanta},
\begin{equation}
    A(p)=\frac{|OT|}{|\delta|}=\frac{|OT|}{|yT|}\,,
\end{equation}
which can be any number from $(0,\infty)$ depending on $T$ chosen on the line $Oy$. In particular, if $T=\infty$, then $A(p)=1$, which corresponds to the case when $p$ belongs to the tangent hyperplane to $S^k(|y|)$ at point $y$. In this case $\omega(x,y)\equiv1$ for every $x\in p\setminus \{y\}$. Finally, to get $A(p)=\infty$, $p$ must be a $\mathcal{C}^1$ curve non-tangential to $S^k(|y|)$ at point $y$. Indeed, let $\gamma_0$ be the angle between $p$ and line$(Oy)$ at point $y$ as it is pictured on Figure~\ref{Omega-Limit-picture} below.
\begin{figure}[!h]
    \centering
    \epsfig{figure=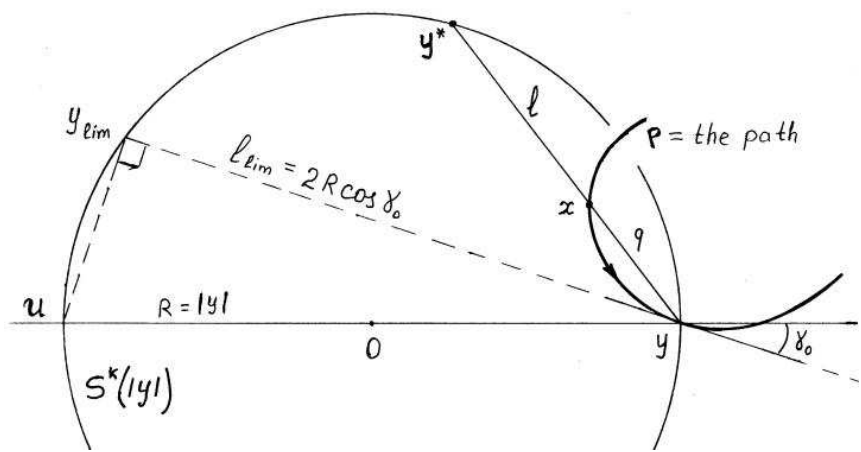,height=6cm}\\
    \caption{Non-tangential limit for $\omega$.}\label{Omega-Limit-picture}
\end{figure}

Note that $Uy$ is the diameter which implies that $\angle Uy_{\lim}y=\pi/2$, and then,
\begin{equation}
    \lim\limits_{p\ni x\rightarrow y} l(x)=l_{\lim}=2R\cos\gamma_0\,,
\end{equation}
while
\begin{equation}
    \lim\limits_{p\ni x\rightarrow y} q(x)=\lim\limits_{p\ni x\rightarrow y} |x-y|=0\,.
\end{equation}
Therefore,
\begin{equation}
    \lim\limits_{p\ni x\rightarrow y} \omega(x,y)= \frac{l(x)}{q(x)}=\infty\,,
\end{equation}
which completes the proof of Corollary \eqref{limit-for-omega}.
\end{proof}

\section {Sphere exchange rule.}  %TTTTTTTTTTTTTTTTTTTTTTTTTTTTTTTTTTTTTTTTTTTTTTTTTTTTTTTTTTTTTTTTTTTTTTTTTTT

The following lemma describes the important rule that can be used instead of successive application of inversion and then dilation in the case of a function depending only on distance is integrated over sphere.

\begin{lemma}[Sphere exchange rule]\label{Basic_lemma}

Let $S^k(r)$ and $S^k(R)$ be
two $k$-dimensional spheres of radii $r$ and $R$ respectively. Let $x,x_1,y,y_1\in \mathbb{R}^{k+1}$ be arbitrary points satisfying $|x|=|x_1|=r$ and $|y|=|y_1|=R$. We assume that $x,y$ are fixed, while $x_1,y_1$ be the parameters of integration. If $g:\mathbb{R}\rightarrow \mathbf{C}$ is an integrable complex-valued function on
$[|R-r|,|R+r|]$, then
\begin{equation}\label{ExchaSphere-1}
    R\,^k\cdot \int\limits_{S^k(r)}
    g(|x_1-y|)\,d\,S_{x_1}=r^k\cdot \int\limits_{S^k(R)}
    g(|x-y_1|)\,d\,S_{y_1}
\end{equation}
and
\begin{equation}\label{ExchaSphere-2}
    R\,^k\cdot \int\limits_{S^k(r)} g\circ \omega(x_1,y)
    d\,S_{x_1}=r^k\cdot \int\limits_{S^k(R)}
    g\circ\omega(x,y_1)d\,S_{y_1}\,,
\end{equation}
where $g\circ\omega$ is the composition of two functions.

Moreover, for every integrable complex-valued function $f$ defined on the unit
$k$-dimensional sphere centered at the origin $O$ we have
\begin{equation}\label{ExchaSphere-3}
    R\,^k\cdot \int\limits_{S^k(r)}
    g(|x-y_0|)\cdot f\left(\frac{x}{|x|}\right)d\,S_x=
    r^k\cdot \int\limits_{S^k(R)}
    g(|x_0-y|)\cdot f\left(\frac{y}{|y|}\right)d\,S_y\,,
\end{equation}
if $x_0\in S^k(r)$, $y_0\in S^k(R)$ and $O$
are collinear.

\end{lemma}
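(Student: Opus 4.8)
The plan is to reduce each of the three identities to an integral over the unit sphere $S^k(1)$, using only the rotational invariance of round spheres together with the explicit form \eqref{3} of $\omega$. Note that \eqref{ExchaSphere-1} and \eqref{ExchaSphere-2} impose nothing on the positions of $x,y$ beyond $|x|=r$, $|y|=R$, whereas \eqref{ExchaSphere-3} genuinely needs the collinearity hypothesis.

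I would establish \eqref{ExchaSphere-1} first, by Fubini's theorem. Put $I=\int_{S^k(r)}\int_{S^k(R)} g(|x_1-y_1|)\,dS_{y_1}\,dS_{x_1}$, which is finite since $|x_1-y_1|\in[\,|R-r|,\,R+r\,]$ and $g$ is integrable there. For fixed $x_1\in S^k(r)$ the inner integral $\int_{S^k(R)} g(|x_1-y_1|)\,dS_{y_1}$ does not depend on $x_1$: any two points of $S^k(r)$ are carried to one another by an orthogonal map of $\mathbb{R}^{k+1}$ fixing $O$, and such a map preserves $S^k(R)$, its surface measure, and Euclidean distances. Hence $I=|S^k(r)|\cdot\int_{S^k(R)} g(|x-y_1|)\,dS_{y_1}=r^k|S^k(1)|\int_{S^k(R)} g(|x-y_1|)\,dS_{y_1}$, and evaluating $I$ in the other order gives $I=R^k|S^k(1)|\int_{S^k(r)} g(|x_1-y|)\,dS_{x_1}$; equating the two and cancelling $|S^k(1)|$ yields \eqref{ExchaSphere-1}. (Alternatively, spherical coordinates about the axis through the fixed point give $\int_{S^k(r)} g(|x_1-y|)\,dS_{x_1}=r^k|S^{k-1}(1)|\,J$ and $\int_{S^k(R)} g(|x-y_1|)\,dS_{y_1}=R^k|S^{k-1}(1)|\,J$ for the common value $J=\int_0^\pi g\bigl(\sqrt{r^2+R^2-2rR\cos\theta}\bigr)\sin^{k-1}\theta\,d\theta$, which is again \eqref{ExchaSphere-1}.)

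Then \eqref{ExchaSphere-2} follows from \eqref{ExchaSphere-1}. Since $|x_1|=r$ and $|y|=R$, \eqref{3} gives $\omega(x_1,y)=|r^2-R^2|/|x_1-y|^2$ and likewise $\omega(x,y_1)=|r^2-R^2|/|x-y_1|^2$; so with $h(t):=g\bigl(|r^2-R^2|/t^2\bigr)$ we have $g\circ\omega(x_1,y)=h(|x_1-y|)$ and $g\circ\omega(x,y_1)=h(|x-y_1|)$, and \eqref{ExchaSphere-2} is exactly \eqref{ExchaSphere-1} applied to $h$ in place of $g$. The map $s=|r^2-R^2|/t^2$ is a smooth monotone bijection of $[\,|R-r|,\,R+r\,]$ onto the range $[\,|R-r|/(R+r),\,(R+r)/|R-r|\,]$ of $\omega$, so ``$g$ integrable on the range of $\omega$'' is precisely what makes $h$ integrable on $[\,|R-r|,\,R+r\,]$; I would record this as the hypothesis actually used for the second identity.

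For \eqref{ExchaSphere-3} the weight $f(x/|x|)$ destroys full rotational symmetry, so the collinearity hypothesis must be brought in. Write $x=ru$ and $y=Rv$ with $u,v\in S^k(1)$, so that $dS_x=r^k\,d\sigma(u)$ and $dS_y=R^k\,d\sigma(v)$ for the measure $d\sigma$ on $S^k(1)$, and let $u_0\in S^k(1)$ be the common unit direction, so $x_0=ru_0$ and $y_0=Ru_0$. Then $|x-y_0|^2=r^2+R^2-2rR\langle u,u_0\rangle$ and $|x_0-y|^2=r^2+R^2-2rR\langle u_0,v\rangle$, i.e. both are the same function of the inner product with $u_0$. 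Substituting, the left-hand side of \eqref{ExchaSphere-3} becomes $R^kr^k\int_{S^k(1)} g\bigl(\sqrt{r^2+R^2-2rR\langle u,u_0\rangle}\bigr)\,f(u)\,d\sigma(u)$ and the right-hand side is the identical integral with $u$ renamed $v$, so the two agree. (If $x_0$ and $y_0$ lie on opposite rays from $O$, apply this to $\tilde f(u):=f(-u)$; I would state the lemma for the same-ray configuration shown in the figure.) The one genuinely delicate point — the step I would watch most carefully — is that the collinearity must be used in the sharp form ``$x_0$ and $y_0$ are the same positive multiple of a single unit vector $u_0$'', since otherwise the weight $f$ is not matched on the two sides; Fubini, the change of variable behind \eqref{ExchaSphere-2}, and tracking the powers $r^k,R^k$ from the surface measures are all routine.
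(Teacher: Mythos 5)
Your proof is correct, but for the first identity it takes a genuinely different route from the paper. The paper proves \eqref{ExchaSphere-1} by building an explicit point-to-point correspondence between the two spheres: it sends $\widetilde{y}\in S^k(R)$ to $\widetilde{x}=(r/R)\widetilde{y}\in S^k(r)$, shows by congruent triangles that $|\widetilde{x}-y_0|=|x_0-\widetilde{y}|$ for $x_0$ on $\mathrm{Ray}(Oy_0)$, converts measures by the dilation factor $(R/r)^k$, and invokes rotational invariance twice to replace $y$ by $y_0$ and $x_0$ by $x$. You instead integrate $g(|x_1-y_1|)$ over the product $S^k(r)\times S^k(R)$ and compare the two iterated integrals via Fubini (or, equivalently, compute both sides in spherical coordinates about the axis through the fixed point, getting the common factor $J$); rotational invariance enters only once, to make each inner integral constant. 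Your argument is shorter and more standard, while the paper's dilation correspondence has the advantage that it is reused verbatim for the weighted identity \eqref{ExchaSphere-3}: since $\widetilde{x}/|\widetilde{x}|=\widetilde{y}/|\widetilde{y}|$, the factors $f$ match pointwise under the correspondence, which is exactly your observation, phrased by you as a direct reduction of both sides to one and the same integral over the unit sphere. Your treatment of \eqref{ExchaSphere-2} (substituting $h(t)=g(|r^2-R^2|/t^2)$ and adjusting the integrability hypothesis to the range of $\omega$) coincides with the paper's, which additionally notes the degenerate case $r=R$, where $\omega\equiv 0$ off the diagonal and both sides reduce to $g(0)$ times the common value $R^k|S^k(r)|=r^k|S^k(R)|$; your substitution handles this case as well since $h$ is then constant. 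Finally, your insistence that ``collinear'' must mean ``on the same ray from $O$'' is a legitimate sharpening: the paper's own proof takes $x_0\in\mathrm{Ray}(Oy_0)$, and for the opposite-ray configuration the identity only holds with $f$ replaced by $f(-\,\cdot)$ on one side, exactly as you note.
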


\begin{proof}[Proof of Lemma \ref{Basic_lemma}] Let us prove successfully all formulae listed in the Lemma.

\begin{proof}[Proof of \eqref{ExchaSphere-1}.]

Let us fix $y_0\in S^k(R)$ and define $x_0$ as the following intersection:
\begin{equation}\label{ExchaSphere-4}
    x_0=S^k(R)\cap \text{Ray}(Oy_0)\,.
\end{equation}
Then, let $\widetilde{y}\in S^k(R)$ be a variable point and
\begin{equation}\label{ExchaSphere-5}
    \widetilde{x}=S^k(r)\cap \text{Ray}(O\widetilde{y})\,.
\end{equation}
The figure below shows the plane defined by Ray($O\widetilde{y}$) and Ray($Oy_0$). The points $x$ and $y$ denoted on the picture below need not to be on the cross-sectional plane. Clearly, such a construction yields $\triangle O\widetilde{x}y_0$ and $\triangle O\widetilde{y}x_0$ are congruent, and then,
\begin{equation}\label{ExchaSphere-5.1}
    |\widetilde{y}-x_0|=|\widetilde{x}-y_0|\,.
\end{equation}

\begin{figure}[!h]
    \centering
    \epsfig{figure=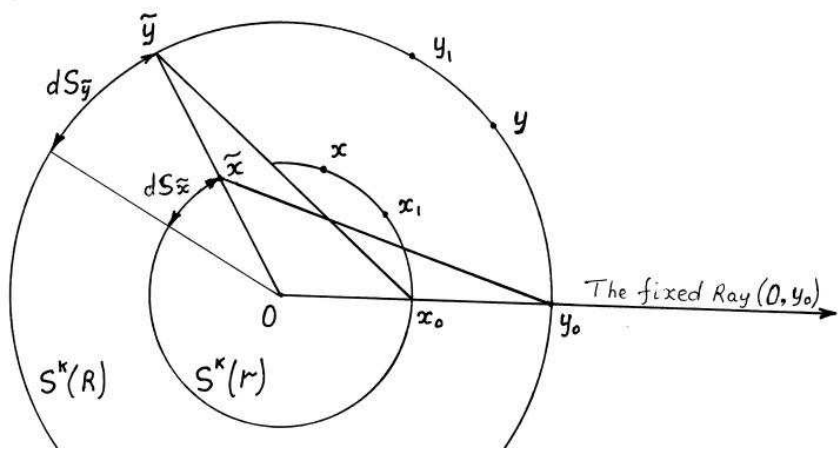,height=6cm}\\
    \caption{Sphere exchange rule.}\label{Sphere_Exchange}
\end{figure}

\smallskip

Let $dS_{\widetilde{x}}$ be the measure of some spherical infinitesimal neighborhood $U(\widetilde{x})$ around $\widetilde{x}$ and $dS_{\widetilde{y}}$ be the measure of the spherical infinitesimal neighborhood around $\widetilde{y}$ obtained as a dilated image of $U(\widetilde{x})$. This implies that
\begin{equation}\label{ExchaSphere-6}
    dS_{\widetilde{y}}=\left(\frac{R}{r}\right)^k dS_{\widetilde{x}}\,.
\end{equation}
Notice, now, that the function
\begin{equation}\label{ExchaSphere-7}
    F_1(y)=\int\limits_{S^k(r)} g(|x_1-y|)dS_{x_1}
\end{equation}
is invariant under isometries of $\mathbb{R}^{k+1}$ that fix the origin $O$. This is why $F_1(y)=F_1(y_0)$ for all $y,y_0\in S^k(R)$. Then, if we replace in $F_1(y_0)$ the parameter of integration $x_1$ to $\widetilde{x}$, we obtain the following formula
\begin{equation}\label{ExchaSphere-8}
    \int\limits_{S^k(r)} g(|x_1-y|)dS_{x_1}=\int\limits_{S^k(r)} g(|\widetilde{x}-y_0|)dS_{\widetilde{x}}\,.
\end{equation}
Recall that
\begin{equation}\label{ExchaSphere-10}
    |\widetilde{x}-y_0|=|x_0-\widetilde{y}|\quad\text{and}\quad
    dS_{\widetilde{y}}=\left(\frac{R}{r}\right)^k dS_{\widetilde{x}}\,.
\end{equation}
Therefore, by a change of variables and \eqref{ExchaSphere-10},
\begin{equation}\label{ExchaSphere-11}
\begin{split}
    & \int\limits_{S^k(r)} g(|\widetilde{x}-y_0|)dS_{\widetilde{x}}=\left(\frac{r}{R}\right)^k
    \int\limits_{\widetilde{x}\in S^k(r)} g(|x_0-\widetilde{y}|)\left(\frac{R}{r}\right)^k dS_{\widetilde{x}}
    \\& \quad\quad
    =\left(\frac{r}{R}\right)^k
    \int\limits_{\widetilde{x}(\widetilde{y})\in S^k(r)} g(|x_0-\widetilde{y}(\widetilde{x})|) dS_{\widetilde{y}(\widetilde{x})}
    \\& \quad\quad\quad\quad\quad\quad\quad\quad\quad\quad=
    \left(\frac{r}{R}\right)^k
    \int\limits_{\widetilde{y}\in S^k(R)} g(|x_0-\widetilde{y}|)dS_{\widetilde{y}}\,,
\end{split}
\end{equation}
where the last equality follows since
\begin{equation}\label{ExchaSphere-12}
    \widetilde{x}=\widetilde{x}(\widetilde{y})=\frac{r}{R}\cdot \widetilde{y}\in S^k(r)\quad\Leftrightarrow\quad
    \widetilde{y}=\widetilde{y}(\widetilde{x})=\frac{R}{r}\cdot \widetilde{x}\in S^k(R)\,.
\end{equation}
As above, the function
\begin{equation}\label{ExchaSphere-13}
    F_2(x_0)=\int\limits_{S^k(R)} g(|x_0-\widetilde{y}|)dS_{\widetilde{y}}
\end{equation}
is invariant under isometries of $\mathbb{R}^{k+1}$ that fix the origin. Thus,
\begin{equation}\label{ExchaSphere-14}
    \int\limits_{S^k(R)} g(|x_0-\widetilde{y}|)dS_{\widetilde{y}}=
    \int\limits_{S^k(R)} g(|x-\widetilde{y}|)dS_{\widetilde{y}} \quad\forall x,x_0\in S^k(r)\,.
\end{equation}
Finally, gathering all results from the chain \eqref{ExchaSphere-8}, \eqref{ExchaSphere-11}, \eqref{ExchaSphere-14} and changing of notation for the variable of integration, we have
\begin{equation}\label{ExchaSphere-15}
    \int\limits_{S^k(r)} g(|x_1-y|)dS_{x_1}=\left(\frac{r}{R}\right)^k
    \int\limits_{S^k(R)} g(|x-y_1|)dS_{y_1} \,,
\end{equation}
which completes the proof of \eqref{ExchaSphere-1} in Lemma \eqref{Basic_lemma}.
\end{proof}

\medskip

\begin{proof}[Proof of \eqref{ExchaSphere-2}.]

A similar argument is used to prove \eqref{ExchaSphere-2}.

The identity \eqref{ExchaSphere-2} holds, because the function $g\circ
\omega(x,y)=\widetilde{g}(|x-y|)$ is also integrable function of one variable $w=|x-y|\in[|R-r|, |R+r|]$ since \begin{equation}\label{ExchaSphere-16}
    \omega(x,y)=\frac{|R^2-r^2|}{|x-y|^2}
\end{equation}
is continuous as a function of $w=|x-y|$ if $R\neq r$. If $R=r$, observe that $\omega(x,y)\equiv0$ for all $y\neq x$, and then,
\begin{equation}
    \frac{1}{|S^k(r)|}\int\limits_{S^k(r)}g\circ\omega(x_1,y)dS_{x_1}=g(0)
    =\frac{1}{|S^k(R)|}\int\limits_{S^k(R)}g\circ\omega(x,y_1)dS_{y_1}\,.
\end{equation}
Therefore, \eqref{ExchaSphere-2} remains true for $R=r$ as well.
\end{proof}

\begin{proof}[Proof of \eqref{ExchaSphere-3}.]

Note, first, that $\widetilde{y}/|\widetilde{y}|=\widetilde{x}/|\widetilde{x}|$ and then, by \eqref{ExchaSphere-5.1},
\begin{equation}\label{ExchaSphere-17}
    g(|\widetilde{x}-y_0|)f\left(\frac{\widetilde{x}}{|\widetilde{x}|}\right)
    =g(|x_0-\widetilde{y}|)f\left(\frac{\widetilde{y}}{|\widetilde{y}|} \right)\,.
\end{equation}
Hence, we can repeat the same procedure of changing of variables as the one used in \eqref{ExchaSphere-11}. This yields
\begin{equation}\label{ExchaSphere-18}
    \int\limits_{S^k(r)} g(|\widetilde{x}-y_0|)f\left(\frac{\widetilde{x}}{|\widetilde{x}|}\right)dS_{\widetilde{x}}=
    \left(\frac{r}{R}\right)^k
    \int\limits_{S^k(R)} g(|x_0-\widetilde{y}|)f\left(\frac{\widetilde{y}}{|\widetilde{y}|}\right)dS_{\widetilde{y}} \,.
\end{equation}
Finally, by changing the notation for the variables of integration, where $\widetilde{x}$ is replaced by $x$ and $\widetilde{y}$ is replaced by $y$, we obtain \eqref{ExchaSphere-3}.
\end{proof}
Therefore, the proof of Lemma \eqref{Basic_lemma} is complete.
\end{proof}

%\end{document}888888888888888888888888888888888888888888888888888888888888888888888888888888888888888888888888888888

\section {A useful property of $l$ and $q$.}

\smallskip

The next goal is to describe a useful feature of $l$ and $q$ defined above. Recall that if $O$ is the origin, $x, y\in\mathbb{R}^{k+1}$ and $R=|y|>|x|$, then
\begin{equation}\label{Feature-1}
    y^*\in S^k(|y|)\quad\text{such that}\quad y^*,x,y\quad\text{are collinear}\,;
\end{equation}

\begin{equation}\label{Feature-2}
    q(y)=|x-y|\,;\quad l(y)=|x-y^*|\,;\quad\psi=\angle Oxy\,.
\end{equation}
All of the notations are presented on the left Figure~\ref{l,q_exchange} below. Clearly, if $x$ and $R$ are fixed, $l$ and $q$ depend only on $\psi$ since both of the distances $|x-y|$ and $|x-y^*|$ depend only on $\psi, x, R$. Fix some $\psi\in(0,\pi)$. Then we observe that while there is the whole set of points
\begin{equation}\label{Feature-3}
    \overline{y}=\overline{y}(\psi)=\{y\in S^k(R)\, | \,\angle yxO=\psi\}\,,
\end{equation}
we need only one plane defined by $x,O$ and some arbitrary $y\in \overline{y}$ to demonstrate the desired relationship among $l,q$ and $\psi$. This is possible because all the values $l,q,\psi$ are invariant of $y\in \overline{y}$ and can be pictured on the plane passing through $x,O$ and some $y\in \overline{y}(\psi)$. The invariance mentioned implies that $q(\psi)$ and $l(\psi)$ can be defined as follows.
\begin{equation}\label{Feature-5}
    q(\psi)=q(\overline{y}(\psi))=|x-y|\quad\text{for every}\quad y\in \overline{y}\,;
\end{equation}
\begin{equation}\label{Feature-6}
    l(\psi)=l(\overline{y}(\psi))=|x-y^*|\,,
\end{equation}
where
\begin{equation}
   y^*\in S^k(|y|),\,\,\, y\in \overline{y} \quad\text{and}\quad y^*,x,y\quad\text{are collinear}.
\end{equation}
Fix some $y=y(\psi)\in \overline{y}$ and picture $l(\psi), q(\psi)$ on the plane defined by $O,x,y(\psi)$. (See the Figure \ref{l,q_exchange} below on the right).

\begin{figure}[!h]
    \centering
    \epsfig{figure=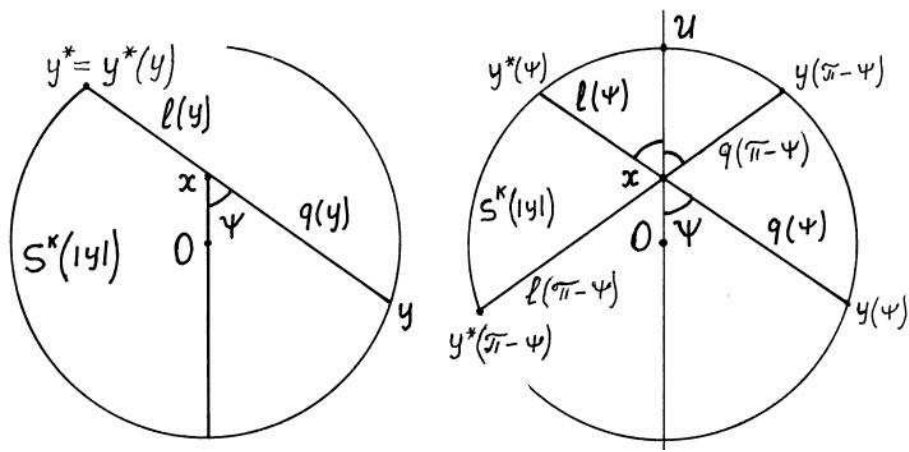,height=6cm}
    \caption{Exchanging $l$ and $q$ cumbersome.}\label{l,q_exchange}
\end{figure}

\begin{lemma}[$l, q$ - property]\label{Feature-Feature-lemma}

\begin{equation}\label{Feature-Feature-Lemma-formula}
    l(\psi)=q(\pi-\psi)\quad\text{and}\quad q(\psi)=l(\pi-\psi)\,.
\end{equation}

\end{lemma}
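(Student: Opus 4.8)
The plan is to work entirely in the two–dimensional plane determined by $O$, $x$, and a representative $y = y(\psi) \in \overline{y}(\psi)$, exploiting the reflection that swaps the roles of $y$ and $y^*$. Fix $\psi \in (0,\pi)$ and recall from \eqref{Feature-5}, \eqref{Feature-6} that $q(\psi) = |x-y|$ and $l(\psi) = |x - y^*|$, where $y^* \in S^k(|y|) = S^k(R)$ is collinear with $x,y$. The key observation is that the point $y^*$ is itself a legitimate point of $S^k(R)$, and the angle $\angle O x y^*$ is the supplement of $\angle Oxy = \psi$, because $y$ and $y^*$ lie on opposite sides of $x$ along the line through $x$ (this is exactly the configuration in the Geometric Interpretation picture, where $y^*, x, y$ are collinear in that order). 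Hence $y^* \in \overline{y}(\pi - \psi)$.

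With that identification in hand, I would compute $q(\pi-\psi)$ and $l(\pi-\psi)$ using $y^*$ as the representative point of $\overline{y}(\pi-\psi)$. By definition, $q(\pi-\psi) = |x - y^*|$, which is precisely $l(\psi)$; this gives the first identity. For the second, $l(\pi - \psi) = |x - (y^*)^*|$, where $(y^*)^*$ is the point of $S^k(R)$ collinear with $x$ and $y^*$, on the far side. But the line through $x$ and $y^*$ is the same line through $x$ and $y$, and $S^k(R)$ meets that line in exactly two points; since $y^* $ is one of them, $(y^*)^* = y$. Therefore $l(\pi-\psi) = |x - y| = q(\psi)$, which is the second identity. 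A small point to check carefully is the degenerate case where the line through $x$ parallel to... — actually since $|x| < R = |y|$, the point $x$ lies strictly inside $S^k(R)$, so every line through $x$ meets $S^k(R)$ in exactly two distinct points, and the involution $z \mapsto z^*$ on those two points is well-defined and genuinely swaps them; this is what makes $(y^*)^* = y$ rigorous.

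The main obstacle — really the only thing requiring care rather than routine — is verifying that $y^* \in \overline{y}(\pi - \psi)$, i.e. that $\angle O x y^* = \pi - \psi$. One must confirm that $y$ and $y^*$ are separated by $x$ on their common line, not on the same side. This follows from the construction in \eqref{GeoInte-1} together with $|x| < |y^*| = |y| = R$: the chord of $S^k(R)$ through $x$ has $x$ as an interior point, so its two endpoints $y$ and $y^*$ lie on opposite rays from $x$, whence the two angles at $x$ subtended with the ray $xO$ are supplementary. Once this is pinned down, the rest is immediate from the definitions. Alternatively, if one prefers a purely analytic route, one can write $q(\psi)$ and $l(\psi)$ explicitly via the law of cosines in triangle $Oxy$ (with $|Ox| = |x|$, $|Oy| = R$, included angle related to $\psi$) and in triangle $Oxy^*$, and then observe directly that the two resulting expressions exchange under $\psi \mapsto \pi - \psi$; but the synthetic argument above is cleaner and matches the spirit of the preceding lemmas.
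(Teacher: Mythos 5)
Your proposal is correct and follows essentially the same route as the paper: you identify $y^*(\psi)$ as a point of $S^k(R)$ whose angle with the ray $xO$ is $\pi-\psi$ (because $x$ lies strictly inside the sphere, so $y$ and $y^*$ sit on opposite rays from $x$), and then invoke the invariance of $l$ and $q$ in the angle, exactly as in the paper's chain of implications $\angle y^*(\psi)xO=\pi-\psi=\angle y(\pi-\psi)xO \Rightarrow l(\psi)=q(\pi-\psi)$. Your use of the involution $(y^*)^*=y$ for the second identity is just a restatement of the paper's ``same argument'' step, so no substantive difference.
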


\begin{proof}[Proof of Lemma \ref{Feature-Feature-lemma}] Using the notation described above and the right picture from the Figure~\ref{l,q_exchange} above, we have the following sequence of implications:
\begin{equation}\label{Feature-10}
\begin{split}
    & \angle y(\psi)xO=\psi\quad \Leftrightarrow\quad \angle y^*(\psi)xU=\psi\quad\Leftrightarrow
    \\& \Leftrightarrow\quad \angle y^*(\psi)xO=\pi-\psi=\angle y(\pi-\psi)xO\quad\Rightarrow
    \\& \Rightarrow\quad|x-y^*(\psi)|=|x-y(\pi-\psi)| \quad\Leftrightarrow\quad l(\psi)=q(\pi-\psi)\,.
\end{split}
\end{equation}
The same argument shows that $q(\psi)=l(\pi-\psi)$. This completes the proof of Lemma~\ref{Feature-Feature-lemma}.
\end{proof}

\subsection {Change of variables.}

\smallskip

The following Lemma describes some important for integration exchanging rules. First let us introduce some notation necessary to state the Lemma.

\textbf{Notation:}
\begin{description}
  \item[$S^k(P;R)=S^k_P(R)$] is the $k$-dimensional sphere of radius $R$ centered \\ at $P\in\mathbb{R}^{k+1}$;
  \item[$S^k(R)=S^k(O;R)=S^k_O(R)$] is the $k$-dimensional sphere of radius $R$ centered at the origin $O$;
  \item[$x,y$] are two fixed points in $\mathbb{R}^{k+1}$, such that $r=|x|<|y|=R$;
  \item[$\Sigma=S^k(x;1)$] is the $k$-dimensional unit sphere centered at $x$;
  \item[$\psi=\angle Oxy$] and $\theta=\pi-\angle xOy$;
  \item[$\widetilde{y}=\Sigma\cap xy$].
  %\item[$dS_y=dS_y^k(O;R)=Rd\theta\cdot dS^{k-1}(P_y; |yP_y|)$] is the volume element of $S^k(R)$;
\end{description}

\begin{figure}[!h]
    \centering
    \epsfig{figure=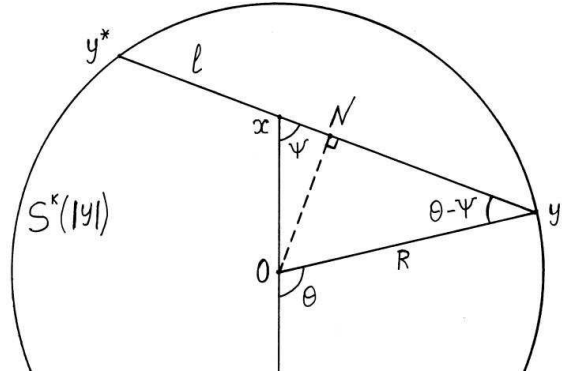,height=5cm}\\
    \caption{$\theta\leftrightarrow \psi$ Exchange}\label{Theta_Psi_Exchange}
\end{figure}

\begin{lemma}[The integration exchanging rules]\label{VarExchange-Lemma}

\begin{equation}\label{VarExchange-1}
  \text{\textbf{(A)}}\quad d\theta=\frac{2q}{l+q}d\psi\quad\text{and}\quad dS_y=\frac{2R}{l+q}q^k d\Sigma_{\widetilde{y}}\,,
\end{equation}
where $dS_y$ and $d\Sigma_{\widetilde{y}}$ are the volume elements of $S^k(R)$ and $\Sigma$ respectively.
\begin{equation}\label{VarExchange-2}
    \text{\textbf{(B)}}\quad\quad\quad\quad\quad\quad\quad  \int\limits_{\Sigma}f(l,q)d\Sigma=\int\limits_{\Sigma}f(q,l)d\Sigma
\end{equation}
for any integrable on $[0,\pi]$ complex-valued function $\widetilde{f}(\psi)=f(l(\psi), q(\psi))$.
\end{lemma}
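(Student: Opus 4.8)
The plan is to reduce everything to the explicit formulas
\[
q(\psi)=r\cos\psi+\sqrt{R^2-r^2\sin^2\psi},\qquad l(\psi)=\sqrt{R^2-r^2\sin^2\psi}-r\cos\psi ,
\]
which I would obtain at once: the law of cosines in the triangle $Oxy$ at the vertex $x$ reads $R^2=r^2+q^2-2rq\cos\psi$, so $q$ is the positive root $r\cos\psi+D$ with $D:=\sqrt{R^2-r^2\sin^2\psi}$; and since $lq=\bigl||x|^2-|y|^2\bigr|=R^2-r^2$ by Theorem~\ref{Geometric-Interpretation-theorem} (equivalently $l(\psi)=q(\pi-\psi)$ by Lemma~\ref{Feature-Feature-lemma}), we get $l=D-r\cos\psi$. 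In particular $l+q=2D$ and $q-l=2r\cos\psi$.

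For the first identity in \textbf{(A)} I would invoke the exterior angle theorem: $\theta=\pi-\angle xOy$ is the sum of the two non-adjacent interior angles of $\triangle Oxy$, i.e. $\theta=\psi+\beta$ with $\beta:=\angle Oyx$. The law of sines gives $\sin\beta=(r/R)\sin\psi$, and $\beta$ is acute since $\cos\beta=(R^2+q^2-r^2)/(2Rq)>0$, so $\theta=\psi+\arcsin\!\bigl((r/R)\sin\psi\bigr)$. Differentiating in $\psi$,
\[
\frac{d\theta}{d\psi}=1+\frac{r\cos\psi}{D}=\frac{D+r\cos\psi}{D}=\frac{q}{(l+q)/2}=\frac{2q}{l+q}.
\]
For the second identity in \textbf{(A)}, observe that $y\mapsto\widetilde y$ is the central projection of $S^k(R)$ from the point $x$ onto the unit sphere $\Sigma=S^k(x;1)$. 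By the standard solid-angle/area relation for such a projection, the solid angle that the surface element $dS_y$ subtends at $x$ equals $\cos\alpha\cdot q^{-k}\,dS_y$, where $\alpha$ is the angle between the line $xy$ and the normal to $S^k(R)$ at $y$, and this solid angle is exactly $d\Sigma_{\widetilde y}$. Since the normal to $S^k(R)$ at $y$ points along $Oy$, the angle $\alpha$ coincides with the interior angle $\beta=\angle Oyx$, so $\cos\alpha=\cos\beta=(R^2+q^2-r^2)/(2Rq)=(lq+q^2)/(2Rq)=(l+q)/(2R)$ (using $lq=R^2-r^2$ once more). Hence $dS_y=\dfrac{q^k}{\cos\alpha}\,d\Sigma_{\widetilde y}=\dfrac{2R}{l+q}\,q^k\,d\Sigma_{\widetilde y}$.

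For \textbf{(B)}, the reflection of $\Sigma$ through the hyperplane through $x$ orthogonal to $Ox$ is an isometry of $\Sigma$ carrying a point of colatitude $\psi$ (measured from the pole in the direction of $O$) to the point of colatitude $\pi-\psi$; equivalently, writing the surface measure in geodesic polar coordinates about that pole and integrating out the longitudes gives $\int_\Sigma\widetilde f(\psi)\,d\Sigma=\omega_{k-1}\!\int_0^{\pi}\widetilde f(\psi)\sin^{k-1}\psi\,d\psi$ with $\omega_{k-1}=|S^{k-1}|$. Since $\sin^{k-1}(\pi-\psi)=\sin^{k-1}\psi$, the substitution $\psi\mapsto\pi-\psi$ yields $\int_\Sigma\widetilde f(\psi)\,d\Sigma=\int_\Sigma\widetilde f(\pi-\psi)\,d\Sigma$; and by Lemma~\ref{Feature-Feature-lemma}, $\widetilde f(\pi-\psi)=f\bigl(l(\pi-\psi),q(\pi-\psi)\bigr)=f\bigl(q(\psi),l(\psi)\bigr)$, which is precisely \eqref{VarExchange-2}. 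The step that requires the most care is the Jacobian of the central projection in the second identity of \textbf{(A)}: one must get both the power $q^k$ and the cosine factor correct and correctly identify that angle with the triangle's angle $\beta$ at $y$; the rest is the law of sines/cosines together with a one-variable change of variables.
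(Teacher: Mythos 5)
Your proposal is correct. Parts of it coincide with the paper's own argument: for the first identity in (A) the paper also works from the law of sines in $\triangle Oxy$ (written implicitly as $r\sin\psi=R\sin(\theta-\psi)$ and differentiated), identifying the resulting numerator with $q$ and the denominator with $R\cos(\theta-\psi)=(l+q)/2$ via the midpoint of the chord $yy^*$ — your explicit parametrization $\theta=\psi+\arcsin\bigl((r/R)\sin\psi\bigr)$ together with the closed forms $q=r\cos\psi+D$, $l=D-r\cos\psi$, $l+q=2D$ is the same computation made explicit; and your proof of (B) (polar coordinates on $\Sigma$, the substitution $\psi\mapsto\pi-\psi$, and Lemma~\ref{Feature-Feature-lemma}) is exactly the paper's. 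Where you genuinely diverge is the second identity in (A): the paper slices both $S^k(R)$ and $\Sigma$ into latitude $(k-1)$-spheres, writes $dS_y=R\,d\theta\cdot dS^{k-1}_y$ and $d\Sigma_{\widetilde y}=d\psi\cdot dS^{k-1}_{\widetilde y}$, relates the $(k-1)$-dimensional elements by the factor $q^{k-1}$ coming from $R\sin\theta=q\sin\psi$, and then multiplies in the already-proved formula for $d\theta$; you instead compute the Jacobian of the central projection $y\mapsto\widetilde y$ directly through the solid-angle relation $d\Sigma_{\widetilde y}=\cos\beta\,q^{-k}\,dS_y$, with $\cos\beta=(R^2+q^2-r^2)/(2Rq)=(l+q)/(2R)$. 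Your route is independent of the $d\theta/d\psi$ formula and gives the factor $2R/(l+q)$ a clean meaning as the reciprocal cosine of the incidence angle at $y$ (it needs $r<R$ so that the projection is bijective and $\beta$ is acute, which is the standing hypothesis, and it quotes the standard projection/solid-angle formula without proof — acceptable, since the paper's own infinitesimal manipulations are at the same level of rigor); the paper's route avoids invoking that formula by reusing the one-dimensional change of variables it has just established. Incidentally, your chain reproduces the stated exponent $q^k$, whereas the paper's displayed computation \eqref{VarExchange-8} drops a factor of $q$ in its last two expressions (a typo: the $Rd\theta$ contributes $2Rq/(l+q)$, giving $q^k$ as in the statement).
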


\begin{proof}[Proof of \eqref{VarExchange-1}.]

Using the elementary geometry and the picture above, we observe that $\angle xyO=\theta-\psi$. The law of sines applied to the triangle $\triangle xOy$, gives
\begin{equation}\label{VarExchange-3}
|x|\, \sin(\psi)=R\, \sin(\theta-\psi)\,.
\end{equation}
Differentiation with respect to $\theta$ and $\psi$ yields
\begin{equation}\label{VarExchange-4}
    d\theta=\frac{|x|\, \cos(\psi)+R\, \cos(\theta-\psi)}
    {R\, \cos(\theta-\psi)}\, d\psi\,.
\end{equation}
Again, look at the picture above and observe that if $N$ is the orthogonal projection of the origin $O$ to the chord $yy^*$, then $N$ must be the midpoint for the chord $yy^*$. Therefore,
\begin{equation}\label{VarExchange-4-1}
    |Ny|=R\cos(\theta-\psi)=\frac{l+q}{2}\,,
\end{equation}
which is the denominator in \eqref{VarExchange-4}. Note also that
\begin{equation}\label{VarExchange-4-2}
    q=|x-y|=|xN|+|Ny|=|x|\cos\psi+R\cos(\theta-\psi)\,,
\end{equation}
which is precisely the numerator in \eqref{VarExchange-4}.
Therefore, combining \eqref{VarExchange-4}, \eqref{VarExchange-4-1} and \eqref{VarExchange-4-2}, we have
\begin{equation}\label{VarExchange-5}
    d\theta=\frac{2q}{q+l}\, d\psi\,,
\end{equation}
and then, the first formula in \eqref{VarExchange-1} is complete.

To prove the second formula in \eqref{VarExchange-1}, let us introduce some additional notation listed and pictured on Figure~\ref{dS_dSigma_Exchange} below.

\textbf{Additional notation:}
\begin{description}
  \item[$\sigma_k$] is the volume of a $k$-dimensional unit sphere;
  \item[$P_y$] and $P_{\widetilde{y}}$ are the orthogonal projections of $y$ and $\widetilde{y}$ respectively to line $Ox$;
  \item[$H(y)$] and $H(\widetilde{y})$ are the $k$-dimensional hyperplanes passing through $y$ and $\widetilde{y}$ respectively and orthogonal to $Ox$;
  \item[$S^{k-1}(P_y; |y P_y |)$]$=H(y)\cap S^k(|y|)$;
  \item[$S^{k-1}(P_{\widetilde{y}}; |\widetilde{y} P_{\widetilde{y}}|)$]$=H(\widetilde{y})\cap \Sigma$;
  \item[$dS_y^{k-1}(P_y; |yP_y|)$] is the volume element of $S^{k-1}(P_y;|yP_y|)$ at point $y$;
  \item[$dS_{\widetilde{y}}^{k-1}(P_{\widetilde{y}}; |\widetilde{y} P_{\widetilde{y}}|)$] is the volume element of $S^{k-1}(P_{\widetilde{y}}; |\widetilde{y} P_{\widetilde{y}}|)$ at point $\widetilde{y}$.
  %\item[$dS_y=dS_y^k(O;R)=Rd\theta\cdot dS^{k-1}(P_y; |yP_y|)$] is the volume element of $S^k(R)$;
\end{description}

\begin{figure}[!h]
    \centering
    \epsfig{figure=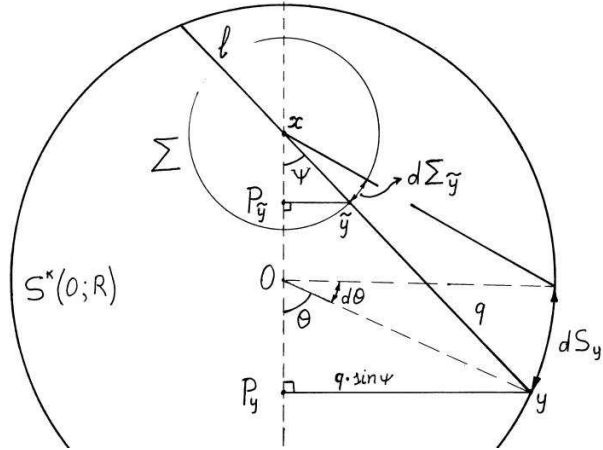,height=6cm}\\
    \caption{$dS\leftrightarrow d\Sigma$ Exchange}\label{dS_dSigma_Exchange}
\end{figure}

Note first that
\begin{equation}\label{VarExchange-6}
\begin{split}
    & dS_y=Rd\theta\cdot dS_y^{k-1}(P_y;|yP_y|)\quad\text{and}
    \\& d\Sigma_{\widetilde{y}}=
    d\psi\cdot dS_{\widetilde{y}}^{k-1}(P_{\widetilde{y}};|\widetilde{y}P_{\widetilde{y}}|) \,.
\end{split}
\end{equation}
On the other hand,
\begin{equation}\label{VarExchange-7}
    dS_y^{k-1}(P_y;|yP_y|)=
    q^{k-1}\cdot dS_{\widetilde{y}}^{k-1}(P_{\widetilde{y}};|\widetilde{y}P_{\widetilde{y}}|)\,.
\end{equation}
Therefore, combining \eqref{VarExchange-6}, \eqref{VarExchange-7} and the expression for $d\theta$ from \eqref{VarExchange-1}, we have the following sequence of equalities.
\begin{equation}\label{VarExchange-8}
\begin{split}
    dS_y
    & =Rd\theta\cdot dS_y^{k-1}(P_y;|yP_y|)=Rd\theta\cdot q^{k-1}\cdot dS_{\widetilde{y}}^{k-1}(P_{\widetilde{y}};|\widetilde{y}P_{\widetilde{y}}|)
    \\& =\frac{2R}{l+q}q^{k-1}d\psi\cdot dS_{\widetilde{y}}^{k-1}(P_{\widetilde{y}};|\widetilde{y}P_{\widetilde{y}}|)
    =\frac{2R}{l+q}q^{k-1}d\Sigma_{\widetilde{y}}\,,
\end{split}
\end{equation}
which completes the proof of \eqref{VarExchange-1}.
\end{proof}

\begin{proof}[Proof of \eqref{VarExchange-2}.]

Note that the function $f(l,q)=f(l(\psi), q(\psi))$ depends only on angle $\psi$ pictured above. Therefore, if we introduce
\begin{equation}\label{VarExchange-9}
    \Sigma^{k-1}(\psi)=\{\widetilde{y}\in\Sigma\mid\angle \widetilde{y}xO=\psi\}\,,
\end{equation}
we may write
\begin{equation}\label{VarExchange-10}
\begin{split}
    \int\limits_{\widetilde{y}\in\Sigma}f(l,q)d\Sigma_{\widetilde{y}}
    & =\int\limits_0^\pi d\psi\int\limits_{\widetilde{y}\in\Sigma^{k-1}(\psi)}f(l,q)d\Sigma_{\widetilde{y}}^{k-1}(\psi)
    \\& =\int\limits_0^\pi f(l,q)(\sin\psi)^{k-1}\sigma_{k-1}d\psi\,,
\end{split}
\end{equation}
since $f(l,q)$ remains constant while $\widetilde{y}\in\Sigma^{k-1}(\psi)$ and $|\Sigma^{k-1}(\psi)|=\sigma_{k-1}(\sin\psi)^{k-1}$ is the volume of $\Sigma^{k-1}(\psi)$.
 Using \eqref{VarExchange-10}, then  \eqref{Feature-Feature-Lemma-formula} from Lemma \eqref{Feature-Feature-lemma}, p.~\pageref{Feature-Feature-lemma}, and the following change of variables $\widetilde{\psi}=\pi-\psi$, we have the following sequence of equalities.
\begin{equation}\label{VarExchange-11}
\begin{split}
    & \int\limits_{\Sigma}f(l,q)d\Sigma=\int\limits_{\psi=0}^{\psi=\pi} f(l(\psi),q(\psi))\sigma_{k-1}(\sin\psi)^{k-1}d\psi
    \\& =\int\limits_0^\pi f(q(\pi-\psi), l(\pi-\psi))\sigma_{k-1}(\sin\psi)^{k-1}d\psi
    \\& =-\int\limits_\pi^0 f(q(\widetilde{\psi}), l(\widetilde{\psi}))\sigma_{k-1}(\sin\widetilde{\psi})^{k-1}d\widetilde{\psi}
    \\& =\int\limits_{\widetilde{\psi}=0}^{\widetilde{\psi}=\pi} f(q(\widetilde{\psi}), l(\widetilde{\psi})) \sigma_{k-1}(\sin\widetilde{\psi})^{k-1}d\widetilde{\psi}=\int\limits_\Sigma f(q,l)d\Sigma\,,
\end{split}
\end{equation}
which completes the proof of \eqref{VarExchange-2}, and the proof of Lemma \eqref{VarExchange-Lemma}.
\end{proof}

\subsection {The Differentiation of $l/q$ and $l+q$ \\ with respect to $\psi$.}

\begin{lemma}\label{l/q-differentiation-lemma}

\begin{equation}\label{Differention-1}
    \frac{d\omega}{d\psi}=\frac{d}{d\psi}\left(\frac{l}{q}\right)=\frac{l}{q}\cdot\frac{4r\sin\psi}{l+q}\,,
\end{equation}
where $q,l, \psi$ were defined in \eqref{Feature-2} and $\theta=\pi-\angle xOy$.
\end{lemma}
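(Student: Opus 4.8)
The plan is to reduce the statement to two elementary relations among $l$, $q$ and $\psi$, differentiate them in $\psi$, and then apply the quotient rule to $\omega=l/q$.

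First I would record that the product $lq$ is \emph{constant} in $\psi$. Indeed, equations \eqref{GeoInte-5} and \eqref{GeoInte-6} from the proof of Theorem~\ref{Geometric-Interpretation-theorem} give $lq=a^2=\big||x|^2-|y|^2\big|=R^2-r^2$, the power of the point $x$ with respect to $S^k(R)$, which does not depend on where $y$ sits on $S^k(R)$, hence not on $\psi$. The second relation is
\[ q-l=2r\cos\psi, \]
which follows at once by subtracting \eqref{VarExchange-4-1}, i.e. $l+q=2R\cos(\theta-\psi)$, from twice \eqref{VarExchange-4-2}, i.e. $q=r\cos\psi+R\cos(\theta-\psi)$.

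Next I would differentiate both relations with respect to $\psi$ on $(0,\pi)$, where $l$ and $q$ are smooth and positive (being compositions of the distance functions with the law of sines \eqref{VarExchange-3}). Writing $'=d/d\psi$, the first relation gives $l'q+lq'=0$ and the second gives $q'-l'=-2r\sin\psi$. Solving this $2\times2$ linear system — from the first equation $l'=-(l/q)q'$, and substituting into the second, $q'(l+q)/q=-2r\sin\psi$ — yields
\[ q'=-\frac{2qr\sin\psi}{l+q},\qquad l'=\frac{2lr\sin\psi}{l+q}. \]
Finally, since $\omega=l/q$ by Theorem~\ref{Geometric-Interpretation-theorem}, the quotient rule gives
\[ \frac{d\omega}{d\psi}=\frac{l'q-lq'}{q^2}=\frac{1}{q^2}\cdot\frac{2lqr\sin\psi+2lqr\sin\psi}{l+q}=\frac{4lr\sin\psi}{q(l+q)}=\frac{l}{q}\cdot\frac{4r\sin\psi}{l+q}, \]
which is the claim; equivalently one may compute the logarithmic derivative $\omega'/\omega=l'/l-q'/q=4r\sin\psi/(l+q)$.

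I expect the computation itself to be routine; the only point requiring care is the sign-correct derivation of $q-l=2r\cos\psi$ uniformly for $\psi\in(0,\pi)$ (note that $q<l$ once $\psi>\pi/2$), which is why I prefer to cite \eqref{VarExchange-4-1}--\eqref{VarExchange-4-2} rather than recompute the projection length $|xN|=r\cos\psi$ by hand. If one wishes to avoid invoking the constancy of $lq$, an alternative is to differentiate $l+q=2R\cos(\theta-\psi)$ directly, using $d\theta/d\psi=2q/(l+q)$ from \eqref{VarExchange-1} together with $r\sin\psi=R\sin(\theta-\psi)$ from \eqref{VarExchange-3}; this produces $l'+q'=-4r^2\sin\psi\cos\psi/(l+q)$, and combining with $q'-l'=-2r\sin\psi$ and $2r\cos\psi=q-l$ gives the same $l'$ and $q'$, hence the same conclusion.
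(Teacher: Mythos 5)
Your proof is correct. The key relations you use are sound: $lq=R^2-r^2$ is the (constant) power of the point $x$ with respect to $S^k(R)$, and $q-l=2r\cos\psi$ follows, sign-correctly for all $\psi\in(0,\pi)$, from \eqref{VarExchange-4-1}--\eqref{VarExchange-4-2} (or directly from the quadratic for the signed intersection parameters along the line through $x$, whose root sum is $2r\cos\psi$ and root product is $r^2-R^2$). Differentiating these and solving the linear system gives your formulas for $dl/d\psi$ and $dq/d\psi$, and the quotient rule then yields \eqref{Differention-1}.

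Your route is genuinely different from the paper's, though both are elementary. The paper writes $\omega$ explicitly as a function of $\theta$, computes $d\omega/d\theta=2rl\sin\psi/q^2$ from the cosine formula using $R^2-r^2=lq$, $q^2=R^2+r^2+2Rr\cos\theta$ and $R\sin\theta=q\sin\psi$, and then applies the chain rule with $d\theta/d\psi=2q/(l+q)$ from Lemma~\ref{VarExchange-Lemma}; the whole argument goes through the auxiliary angle $\theta$. You instead stay entirely in the variables $l,q,\psi$, exploiting the constancy of $lq$ together with $q-l=2r\cos\psi$; the $\theta$-substitution only appears in your optional alternative. What your approach buys is the individual derivatives $l'=2lr\sin\psi/(l+q)$ and $q'=-2qr\sin\psi/(l+q)$, which are slightly stronger than the statement of the lemma and, summed, reprove Lemma~\ref{(l+q)-Differentiation-Lemma} as well (since $l'+q'=-2r\sin\psi\,(q-l)/(l+q)=-2r^2\sin(2\psi)/(l+q)$). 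What the paper's approach buys is that it reuses machinery (the $d\theta/d\psi$ exchange rule) already established and needed elsewhere, so no new geometric identities have to be justified. Your caution about the sign of $q-l$ for $\psi>\pi/2$ is the right point to flag, and citing \eqref{VarExchange-4-1}--\eqref{VarExchange-4-2} handles it.
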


\begin{proof}[Proof of Lemma \ref{l/q-differentiation-lemma}] Recall that for $R=|y|>|x|=r$,
\begin{equation}\label{Differention-2}
    \omega(x,y)=\frac{R^2-r^2}{R^2+r^2+2Rr\cos\theta}=\frac{l}{q}\,,
\end{equation}
and then, since $R^2-r^2=lq$, $R^2+r^2+2Rr\cos\theta=q^2$ and ${R\sin\theta=q\sin\psi}$, the direct computation yields
\begin{equation}\label{Differention-3}
    \frac{d\omega}{d\theta}=\frac{2rl\sin\psi}{q^2}\,.
\end{equation}
Therefore, using \eqref{Differention-3} and the expression for $d\theta$ from \eqref{VarExchange-1}, p.~\pageref{VarExchange-1}, we have
\begin{equation}\label{Differention-4}
    \frac{d\omega}{d\psi}=\frac{d\omega}{d\theta}\frac{d\theta}{d\psi}
    =\frac{2rl\sin\psi}{q^2}\cdot\frac{2q}{l+q}
    =\frac{l}{q}\cdot\frac{4r\sin\psi}{l+q}\,,
\end{equation}
which is precisely what was stated in Lemma \eqref{l/q-differentiation-lemma}.
\end{proof}

\begin{corollary}
\begin{equation}\label{Substitution-for-d-psi}
    d\psi=\frac{l+q}{4r\sin\psi}d\ln\omega\,.
\end{equation}
\end{corollary}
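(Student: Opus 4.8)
The plan is to derive this directly from Lemma~\ref{l/q-differentiation-lemma}, since the corollary is just a rearrangement of formula \eqref{Differention-1}. First I would recall that $\omega = l/q$ by the Geometric Interpretation (Theorem~\ref{Geometric-Interpretation-theorem}), so that $d\ln\omega = d\omega/\omega = (q/l)\,d\omega$. Then Lemma~\ref{l/q-differentiation-lemma} can be rewritten as
\begin{equation*}
    \frac{1}{\omega}\cdot\frac{d\omega}{d\psi}=\frac{d\ln\omega}{d\psi}=\frac{4r\sin\psi}{l+q}\,,
\end{equation*}
where I have divided both sides of \eqref{Differention-1} by $\omega = l/q$.

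The second and final step is simply to invert this relation. Solving $\dfrac{d\ln\omega}{d\psi}=\dfrac{4r\sin\psi}{l+q}$ for $d\psi$ yields
\begin{equation*}
    d\psi=\frac{l+q}{4r\sin\psi}\,d\ln\omega\,,
\end{equation*}
which is exactly \eqref{Substitution-for-d-psi}. One should note that the manipulation is legitimate on the open interval $\psi\in(0,\pi)$, where $\sin\psi\neq 0$ and where, by \eqref{GeoInte-4} together with the assumption $r=|x|<|y|=R$, we have $\omega=l/q>0$, so that $\ln\omega$ is well-defined and differentiable; thus there is no genuine obstacle here beyond bookkeeping. The only point that requires a line of justification is that $\omega$ is a smooth function of $\psi$ on this interval, which again follows from Lemma~\ref{l/q-differentiation-lemma} (its proof exhibits $\omega$ as a composition of smooth functions of $\psi$ via \eqref{Differention-2} and \eqref{VarExchange-3}), so no separate argument is needed.
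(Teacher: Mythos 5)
Your proposal is correct and follows exactly the same route as the paper: the paper simply states that \eqref{Substitution-for-d-psi} is a direct consequence of \eqref{Differention-1}, and you carry out that rearrangement explicitly by dividing by $\omega=l/q$ to pass to $d\ln\omega$ and then inverting. The extra remarks about $\sin\psi\neq 0$ and $\omega>0$ on $(0,\pi)$ are sound but merely make explicit what the paper leaves implicit.
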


\begin{proof}
Formula \eqref{Substitution-for-d-psi} is the direct consequence of~\eqref{Differention-1}.
\end{proof}

\begin{lemma}\label{(l+q)-Differentiation-Lemma}
\begin{equation}\label{(l+q)-Differentiation-formula}
    \frac{d(l+q)}{d\psi}=-\frac{2|x|^2\sin(2\psi)}{l+q}\,.
\end{equation}
\end{lemma}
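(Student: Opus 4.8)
The plan is to express $l+q$ in terms of the data already introduced and differentiate directly, reusing the relations established in Lemma~\ref{VarExchange-Lemma} and its proof. Recall from \eqref{VarExchange-4-1} that $l+q = 2R\cos(\theta-\psi)$, where $\theta = \pi - \angle xOy$, and that by the law of sines \eqref{VarExchange-3} we have $r\sin\psi = R\sin(\theta-\psi)$ with $r=|x|$. These two identities let one eliminate the angle $\theta-\psi$ entirely: squaring gives $(l+q)^2 = 4R^2\cos^2(\theta-\psi) = 4R^2 - 4R^2\sin^2(\theta-\psi) = 4R^2 - 4r^2\sin^2\psi$. Thus $l+q = 2\sqrt{R^2 - r^2\sin^2\psi}$, a clean closed form depending on $\psi$ alone (with $r,R$ fixed).

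From here I would simply differentiate. Writing $(l+q)^2 = 4R^2 - 4r^2\sin^2\psi$ and differentiating both sides with respect to $\psi$ gives
\[
2(l+q)\,\frac{d(l+q)}{d\psi} = -8r^2\sin\psi\cos\psi = -4r^2\sin(2\psi),
\]
so that
\[
\frac{d(l+q)}{d\psi} = -\frac{2r^2\sin(2\psi)}{l+q} = -\frac{2|x|^2\sin(2\psi)}{l+q},
\]
which is exactly \eqref{(l+q)-Differentiation-formula}. This avoids differentiating $l$ and $q$ separately, which would be messier.

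Alternatively, and perhaps more in the spirit of the preceding lemmas, one can differentiate $l+q = 2R\cos(\theta-\psi)$ directly, obtaining $\frac{d(l+q)}{d\psi} = 2R\sin(\theta-\psi)\left(\frac{d\theta}{d\psi}-1\right)$; then substitute $2R\sin(\theta-\psi) = 2r\sin\psi$ from \eqref{VarExchange-3} and $\frac{d\theta}{d\psi} = \frac{2q}{l+q}$ from \eqref{VarExchange-5}, giving $2r\sin\psi\cdot\frac{2q-(l+q)}{l+q} = 2r\sin\psi\cdot\frac{q-l}{l+q}$, and finally use $q^2-l^2 = (q-l)(q+l)$ together with $q\cos\psi$-type identities to recognize $2r\sin\psi(q-l)$ as $-2r^2\sin(2\psi)$. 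The first route is shorter, so I would present that one and perhaps remark on the second.

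The only real obstacle is bookkeeping of which geometric quantities are held fixed: $x$ and $R=|y|$ are fixed while $\psi$ varies over $(0,\pi)$, and one must be sure that $\theta$ is a function of $\psi$ through \eqref{VarExchange-3} rather than an independent variable. Once that is clear the computation is entirely routine, and the identity $(l+q)^2 = 4(R^2 - r^2\sin^2\psi)$ does all the work.
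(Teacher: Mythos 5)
Your proof is correct and follows essentially the same route as the paper: the paper's proof simply records the identity $\left(\tfrac{l+q}{2}\right)^2 = R^2 - |x|^2\sin^2\psi$ (citing the Pythagorean theorem applied to the chord $yy^*$ in Figure~\ref{Theta_Psi_Exchange}) and then differentiates, which is exactly your computation. The only cosmetic difference is that you derive that identity algebraically from \eqref{VarExchange-4-1} and the law of sines \eqref{VarExchange-3} rather than reading it off the figure directly; both lead to the same one-line differentiation.
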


\begin{proof}
 Using Figure~\ref{Theta_Psi_Exchange}, p.~\pageref{Theta_Psi_Exchange} and the Pythagorean theorem we can observe that
 $$\left(\frac{l+q}{2}\right)^2=\rho^2-|x|^2\sin^2\psi\,,$$
 which leads to~\eqref{(l+q)-Differentiation-formula}. This completes the proof of Lemma~\ref{(l+q)-Differentiation-Lemma}.
\end{proof}

\subsection{Some useful inequalities involving $l$ and $q$.}

In this subsection we obtain some inequalities used in the Chapter related to Hyperbolic Geometry, see the proof of Proposition~\ref{Limit-for-alpha-(0,k)}, p.~\pageref{Proof-of-Limit-for-alpha-(0,k)} and proof of Proposition~\ref{Limit-for-alpha-outside-[0,k]}, p.~\pageref{Proof-of-Limit-for-alpha-outside-[0,k]}. So, let us introduce the notation we are going to use in the Hyperbolic Geometry topics. Let $\rho$ be the radius of a circle (sphere) centered at the origin $O$. Points $A$ and $u$ be arbitrary points from the boundary of $S^k(\rho)$ and $m$ be an arbitrary point from segment $OA$.
Angle $\psi=\angle Omu$ and $\theta=\pi-\angle mOu$. $\eta=|Om|$ and $u^*$ is defined as the second intersection of line $um$ and $S^k(\rho)$. Then $q=|um|$ and $l=|u^*m|$. All of these notations will be pictured on Figure~\ref{MInimal Chord-picture} below.

\begin{lemma}\label{Minimum-for-(l+q)}
    Let $\psi_0\in(0,\pi/2)$, $D=(0,\rho]\times(0,\psi_0]\quad\text{and}\quad (\eta,\psi)\in D$. Then
    $\min\limits_{D}(l+q)=2\rho\cos\psi_0$.
\end{lemma}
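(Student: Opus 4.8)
The plan is to obtain a closed-form expression for $l+q$ on the rectangle $D$ and then minimize it by inspection. From Lemma~\ref{(l+q)-Differentiation-Lemma} (or rather its proof), applied with $|x|=\eta$ and radius $\rho$, we have the Pythagorean identity
\begin{equation*}
    \left(\frac{l+q}{2}\right)^2=\rho^2-\eta^2\sin^2\psi\,,
\end{equation*}
so that $l+q=2\sqrt{\rho^2-\eta^2\sin^2\psi}$ on all of $D$. This reduces the problem to minimizing the single expression $\rho^2-\eta^2\sin^2\psi$ over $(\eta,\psi)\in(0,\rho]\times(0,\psi_0]$, i.e. to \emph{maximizing} $\eta^2\sin^2\psi$, i.e. $\eta\sin\psi$, since both factors are positive.

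Next I would note that $\eta^2\sin^2\psi$ is monotone increasing in $\eta$ on $(0,\rho]$ for every fixed $\psi$, and monotone increasing in $\psi$ on $(0,\psi_0]\subseteq(0,\pi/2)$ for every fixed $\eta$ (because $\sin$ is increasing on $(0,\pi/2)$). Hence the supremum of $\eta^2\sin^2\psi$ over $D$ is attained at the corner $(\eta,\psi)=(\rho,\psi_0)$, where it equals $\rho^2\sin^2\psi_0$. Note this corner point does lie in $D$ since $D$ is closed on the right in both coordinates. Therefore
\begin{equation*}
    \min_{D}(l+q)=2\sqrt{\rho^2-\rho^2\sin^2\psi_0}=2\rho\sqrt{\cos^2\psi_0}=2\rho\cos\psi_0\,,
\end{equation*}
the last step using $\cos\psi_0>0$ because $\psi_0\in(0,\pi/2)$.

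The only genuine point requiring care — and what I would treat as the main obstacle, though it is minor — is justifying the Pythagorean identity uniformly on $D$, in particular at the boundary values. The identity is read off Figure~\ref{Theta_Psi_Exchange}: if $N$ is the foot of the perpendicular from $O$ to the chord $uu^*$, then $N$ is the midpoint of that chord (a radius perpendicular to a chord bisects it), so $|Nu|=(l+q)/2$ and $|ON|=\eta\sin\psi$ in the right triangle $\triangle OmN$ (with hypotenuse $Om$ of length $\eta$ and angle $\psi$ at $m$); then $|ON|^2+|Nu|^2=|Ou|^2=\rho^2$ gives the claim. One should observe this configuration and the resulting identity persist as $\psi\to\psi_0$ and as $\eta\to\rho$ (when $m$ reaches the boundary point $A$, the chord degenerates appropriately but the formula $(l+q)/2=\sqrt{\rho^2-\eta^2\sin^2\psi}$ still holds by continuity, and indeed $l\to 0$). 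Once the identity is in hand the minimization is the elementary monotonicity argument above, and the proof is complete.
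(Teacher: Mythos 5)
Your proof is correct and follows essentially the same route as the paper: both rest on the observation that the distance from $O$ to the chord $u^*u$ is $\eta\sin\psi$, so $l+q$ shrinks as $\eta$ or $\psi$ grows, with the minimum at the corner $(\rho,\psi_0)$. The only cosmetic difference is that you evaluate the corner value via the closed-form identity $l+q=2\sqrt{\rho^2-\eta^2\sin^2\psi}$ (the Pythagorean relation from Lemma~\ref{(l+q)-Differentiation-Lemma}), whereas the paper reads off $q_0(A)=2\rho\cos\psi_0$ from the inscribed right angle at $C$; both are fine.
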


\begin{figure}[!h]
    \centering
    \epsfig{figure=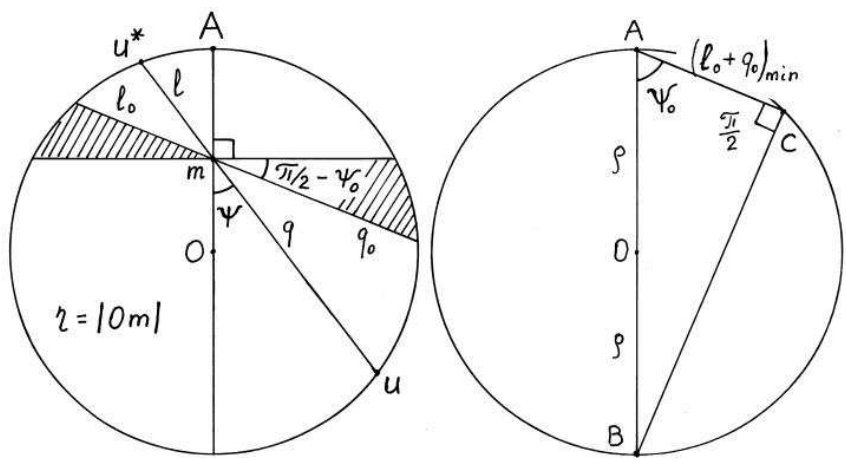,height=6cm}\\
    \caption{The minimal chord $l+q$.}\label{MInimal Chord-picture}
\end{figure}

\begin{proof}[Proof of Lemma \ref{Minimum-for-(l+q)}]
    Note that the length of every chord in a circle depends only on the distance between the chord and the center of the circle. The bigger distance, the smaller chord.  Observe also that the distance between the center $O$ and the chord $u^*u$ is $\eta\sin\psi$.
Therefore, the bigger $\eta$ or the bigger $\psi$, the smaller $|u^*u|=l+q$. Thus, if we set
\begin{equation}
    q_0=q_0(m)=q(m,\psi_0)\quad\text{and}\quad l_0=l_0(m)=l(m,\psi_0)
\end{equation}
and observe that $\angle ACB$ on Figure~\ref{MInimal Chord-picture} is equal to $\pi/2$, we can write the following sequence of equalities.
\begin{equation}
\begin{split}
    & \min\limits_{D}(l+q)=\min\limits_{\eta\in(0,\rho]}\min\limits_{\psi\in(0,\psi_0]}(l+q)=\min\limits_{\eta\in(0,\rho]}(l_0+q_0)
    \\& =l_0(A)+q_0(A)=q_0(A)=2\rho\cos\psi_0 \,,
\end{split}
\end{equation}
which completes the proof of Lemma~\ref{Minimum-for-(l+q)}.
\end{proof}

%if $m$ is fixed and $\psi\in(0,\pi/2)$, then the length of the chord $u^*u$ must decrease as long as $\psi$ increases.
%Moreover, for every fixed $\psi\in(0,\psi_0]$ the chord $l+q$ is also decreasing as long as $|m|$ is increasing.
%    Note that $l=l(|m|, \psi)$ and $q=q(|m|,\psi)$. It is clear that $q>l$ for all $(|m|,\psi)\in D$. Then, using the first formula of \eqref{VarExchange-1}, p.~\pageref{VarExchange-1} we can observe that
%It is clear from Figure~\ref{MInimal Chord-picture} that
%\begin{equation}\label{Cord-length}
%   l+q=2\rho\cos(\theta-\psi)\,.
%\end{equation}
%Therefore, combining \eqref{Derivative-of-side-angle} and \eqref{Cord-length} we conclude that for every $|m|\in(0,\rho]$ we have $l+q$ is decreasing as long as $\psi$ is increasing.

The following Lemma will be used in the proof of Proposition~\ref{Limit-for-alpha-outside-[0,k]}, p.~\pageref{Proof-of-Limit-for-alpha-outside-[0,k]}.

\begin{lemma}\label{l-q-estimation-Lemma} For every $(|m|,\psi)\in[0,\rho]\times[2\pi/3,\pi]$,
\begin{equation}\label{l-q-estimation-formulae}
   l\geq\rho\quad\text{and}\quad q\leq2(\rho-\eta)\,.
\end{equation}
\end{lemma}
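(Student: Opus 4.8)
The plan is to work in the plane determined by $O$, $m$, and $u$, using the figure described just before the statement, and to exploit the fact (already used in the preceding lemmas) that a chord of $S^k(\rho)$ has length determined by its distance to the center, namely $|u^*u| = l+q = 2\sqrt{\rho^2 - \eta^2\sin^2\psi}$ where $\eta = |Om|$. First I would dispose of the bound on $q$. Since $m$ lies on the segment $OA$ with $|OA|=\rho$, we have $|u^*u| \le 2\rho$, hence $q = |um| \le |u^*u| \le 2\rho$; but this is not yet sharp enough. Instead, I would split $u^*u$ at $m$ and note that $q=|um|$ is the shorter of the two pieces when $\psi \in [2\pi/3,\pi]$: the angle $\angle Omu = \psi \ge 2\pi/3 > \pi/2$ forces the foot $N$ of the perpendicular from $O$ to the chord to lie on the $u^*$-side of $m$ (on the same side as the longer subchord), so $q \le l$, i.e. $q \le \tfrac12(l+q) \le \rho$. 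To upgrade $q\le\rho$ to $q \le 2(\rho-\eta)$, I would instead compute $q$ directly from triangle $\triangle Omu$: with $|Ou|=\rho$, $|Om|=\eta$, and $\angle Omu = \psi$, the law of cosines gives $\rho^2 = \eta^2 + q^2 - 2\eta q\cos(\pi-\psi) = \eta^2 + q^2 + 2\eta q\cos\psi$ — wait, more carefully, $\angle Omu=\psi$ so $\rho^2 = \eta^2 + q^2 - 2\eta q\cos\psi$; solving the quadratic in $q$ and using $\cos\psi \le -\tfrac12$ on $[2\pi/3,\pi]$ yields $q = -\eta\cos\psi + \sqrt{\rho^2 - \eta^2\sin^2\psi} \le \tfrac{\eta}{2}\cdot(\text{something})$. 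Rather than push the quadratic, the cleanest route is: $q \le 2(\rho-\eta)$ is equivalent (after squaring, legitimate since both sides are nonnegative) to $\rho^2 - \eta^2 + 2\eta q\cos\psi \le 4(\rho-\eta)^2 - \text{lower order}$, which I would reduce to the inequality $\rho^2 - \eta^2 \le (2(\rho-\eta)-q)\cdot q + \text{correction}$ and then close using $\rho+\eta \le 2\rho$ together with $q \le 2(\rho-\eta)$ being re-derived consistently; concretely, from $\rho^2 = \eta^2 + q^2 - 2\eta q\cos\psi$ and $-\cos\psi \ge 1/2$ we get $\rho^2 \ge \eta^2 + q^2 + \eta q$, hence $(\rho-\eta)(\rho+\eta) \ge q^2 + \eta q = q(q+\eta) \ge q\cdot\eta$... so one needs $\rho+\eta$ bounds. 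The honest statement is that the quadratic $q^2 + \eta q(-2\cos\psi) + (\eta^2-\rho^2) = 0$... let me instead present it as: the positive root satisfies $q \le \rho - \eta$ when $\cos\psi=-1$ (degenerate: $u=u^*$ antipodal-ish through $m$, $q=\rho-\eta$), and $q$ increases as $\psi$ decreases from $\pi$; at $\psi = 2\pi/3$ one checks $q = \tfrac{\eta}{2} + \sqrt{\rho^2 - \tfrac34\eta^2} \le 2(\rho-\eta)$ reduces to $\sqrt{\rho^2-\tfrac34\eta^2} \le 2\rho - \tfrac52\eta$, valid on $[0,\rho]$ by squaring. So the bound $q\le 2(\rho-\eta)$ follows by monotonicity in $\psi$ from these two computations.

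For the bound $l \ge \rho$, I would argue geometrically: $l = |u^*m|$ is the longer of the two subchords of $u^*u$ cut by $m$ (as noted, $\psi > \pi/2$ puts $m$ on the short side). Since $l+q = 2\sqrt{\rho^2 - \eta^2\sin^2\psi} \ge 2\sqrt{\rho^2-\eta^2}$... hmm, but that can be small when $\eta\to\rho$. The right observation is that $l$ is bounded below by the distance from $m$ to the far intersection $u^*$, and in the worst case $\eta\to\rho$, $\psi\to\pi$ we have $m\to A$ on the boundary, $u\to A$, and $u^*$ is the antipode of $A$, giving $l \to 2\rho \ge \rho$; while when $\eta$ is small, $m$ is near $O$ and $l \approx \rho$. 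More rigorously, from $l = -\eta\cos(\pi-\psi) + \sqrt{\rho^2 - \eta^2\sin^2\psi}$ — i.e. $l = \eta(-\cos\psi) + \sqrt{\rho^2-\eta^2\sin^2\psi}$ where $-\cos\psi \ge 1/2 > 0$ — both terms are nonnegative and I claim the sum is $\ge \rho$. Indeed write $s = \eta\sin\psi \in [0,\rho]$, $c = \eta(-\cos\psi) \ge 0$; then $l = c + \sqrt{\rho^2 - s^2}$ and $s^2 + c^2 = \eta^2 \le \rho^2$, so $c \ge$ ... we want $c + \sqrt{\rho^2-s^2} \ge \rho$, i.e. $\sqrt{\rho^2-s^2} \ge \rho - c$; if $\rho - c \le 0$ this is trivial, otherwise square to get $\rho^2 - s^2 \ge \rho^2 - 2\rho c + c^2$, i.e. $2\rho c \ge s^2 + c^2 = \eta^2$, i.e. $c \ge \eta^2/(2\rho)$, i.e. $\eta(-\cos\psi) \ge \eta^2/(2\rho)$, i.e. $-\cos\psi \ge \eta/(2\rho)$. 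Since $\eta \le \rho$ we have $\eta/(2\rho) \le 1/2 \le -\cos\psi$ on $[2\pi/3,\pi]$ — done. This is clean, so I would present $l \ge \rho$ via exactly this substitution $s = \eta\sin\psi$, $c = -\eta\cos\psi$.

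So the key steps, in order: (1) set up the planar triangle $\triangle Omu$ with $|Ou|=\rho$, $|Om|=\eta\le\rho$, $\angle Omu = \psi \in [2\pi/3,\pi]$; (2) express $q = |um|$ and $l = |u^*m|$ explicitly via the law of cosines / the perpendicular foot, obtaining $q = -\eta\cos\psi + \sqrt{\rho^2-\eta^2\sin^2\psi}$ and $l = \eta\cos\psi\cdot(-1)... $ wait, $l$ is the other subchord so $l = -\eta\cos\psi + \sqrt{\rho^2-\eta^2\sin^2\psi}$ as well? No — $l$ and $q$ are the two subchords, $l+q = 2\sqrt{\rho^2-\eta^2\sin^2\psi}$ and $l - q = 2\eta|\cos\psi|\cdot\text{sign}$; since $\psi>\pi/2$, $m$ is between $N$ and $u$, so $q = \sqrt{\rho^2-\eta^2\sin^2\psi} - \eta|\cos\psi|$ and $l = \sqrt{\rho^2-\eta^2\sin^2\psi} + \eta|\cos\psi|$, i.e. $l = -\eta\cos\psi + \sqrt{\rho^2-\eta^2\sin^2\psi}$, $q = \eta\cos\psi + \sqrt{\rho^2-\eta^2\sin^2\psi}$ — with $\cos\psi<0$ so indeed $q$ is the smaller; (3) prove $l\ge\rho$ by the substitution $c=-\eta\cos\psi,\ s=\eta\sin\psi$, reducing to $-\cos\psi \ge \eta/(2\rho)$, true since $\eta\le\rho$ and $-\cos\psi\ge 1/2$; (4) prove $q\le 2(\rho-\eta)$ by squaring, reducing to a polynomial inequality in $\eta,\rho$ with $-\cos\psi\in[1/2,1]$, and checking it holds at the endpoint $-\cos\psi=1/2$ (the worst case, by monotonicity of $q$ in $\cos\psi$) where it becomes $\sqrt{\rho^2-\tfrac34\eta^2}\le 2\rho-\tfrac52\eta$, valid on $[0,\rho]$ after squaring to $\tfrac{21}{4}\eta^2 - 10\rho\eta + 3\rho^2 \ge 0$... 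I should double-check this factors/holds; if it does not hold for all $\eta\in[0,\rho]$ I would instead keep $-\cos\psi$ general and extremize. I expect step (4), the bound on $q$, to be the main obstacle: the bound $l\ge\rho$ is a short two-line estimate, but $q \le 2(\rho-\eta)$ is genuinely tight near $\eta=\rho$ (both sides $\to 0$) and requires either a careful squaring argument or a monotonicity observation to avoid a messy case analysis, so I would budget the bulk of the write-up there and state explicitly where $\psi \ge 2\pi/3$ (equivalently $-\cos\psi \ge 1/2$) is used.
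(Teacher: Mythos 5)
Your route is viable and genuinely different from the paper's: you work analytically from the triangle $\triangle Omu$, getting the explicit subchord formulas $l=-\eta\cos\psi+\sqrt{\rho^2-\eta^2\sin^2\psi}$, $q=\eta\cos\psi+\sqrt{\rho^2-\eta^2\sin^2\psi}$, whereas the paper argues synthetically: it shows (via the angle $\alpha=\angle mu^*O$ and the chord--distance-to-center observation) that on $[2\pi/3,\pi]$, for fixed $\eta$, $l$ is minimized and $q$ is maximized at $\psi=2\pi/3$, and then reads the two endpoint bounds $l(m,2\pi/3)\ge\rho$ and $q(m,2\pi/3)\le|mL|=2(\rho-\eta)$ directly off the figure. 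Your proof of $l\ge\rho$ is complete and in fact tidier than the paper's, since it needs no monotonicity: with $c=-\eta\cos\psi$, $s=\eta\sin\psi$, the bound reduces to $-\cos\psi\ge\eta/(2\rho)$, which holds because $\eta\le\rho$ and $-\cos\psi\ge 1/2$.

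The bound $q\le2(\rho-\eta)$, however, does not close as written. First, the endpoint algebra has a sign slip: $2(\rho-\eta)+\eta/2=2\rho-\tfrac32\eta$, not $2\rho-\tfrac52\eta$, and the polynomial you obtained, $\tfrac{21}{4}\eta^2-10\rho\eta+3\rho^2\ge0$, is actually false near $\eta=\rho$ (at $\eta=\rho$ it equals $-\tfrac74\rho^2$), so the verification you flagged indeed fails in that form. The correct endpoint inequality is $\sqrt{\rho^2-\tfrac34\eta^2}\le2\rho-\tfrac32\eta$; the right side is at least $\rho/2>0$, so squaring is legitimate and the inequality reduces to $0\le3(\rho-\eta)^2$, which is exactly the tightness at $\eta=\rho$ that you anticipated. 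Second, the monotonicity of $q$ in $\psi$ that reduces everything to $\psi=2\pi/3$ is asserted but not proved; with your explicit formula it is one line: writing $c=-\cos\psi$, one has $dq/dc=\eta\bigl(\eta c/\sqrt{\rho^2-\eta^2(1-c^2)}-1\bigr)\le0$ because $\eta c\le\sqrt{\rho^2-\eta^2+\eta^2c^2}$, so $q$ is largest at $c=1/2$. (Alternatively, the same computation shows $dq/d\psi\le0$ on $(0,\pi)$, matching the paper's geometric claim.) With these two repairs your argument is a complete, purely computational alternative to the paper's figure-based proof.
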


\begin{proof}[Proof of Lemma~\ref{l-q-estimation-Lemma}.]
In the proof of Lemma~\ref{l-q-estimation-Lemma} we are going to use Figure~\ref{l-q-Estimation-Picture} below.

\begin{figure}[!h]
    \centering
    \epsfig{figure=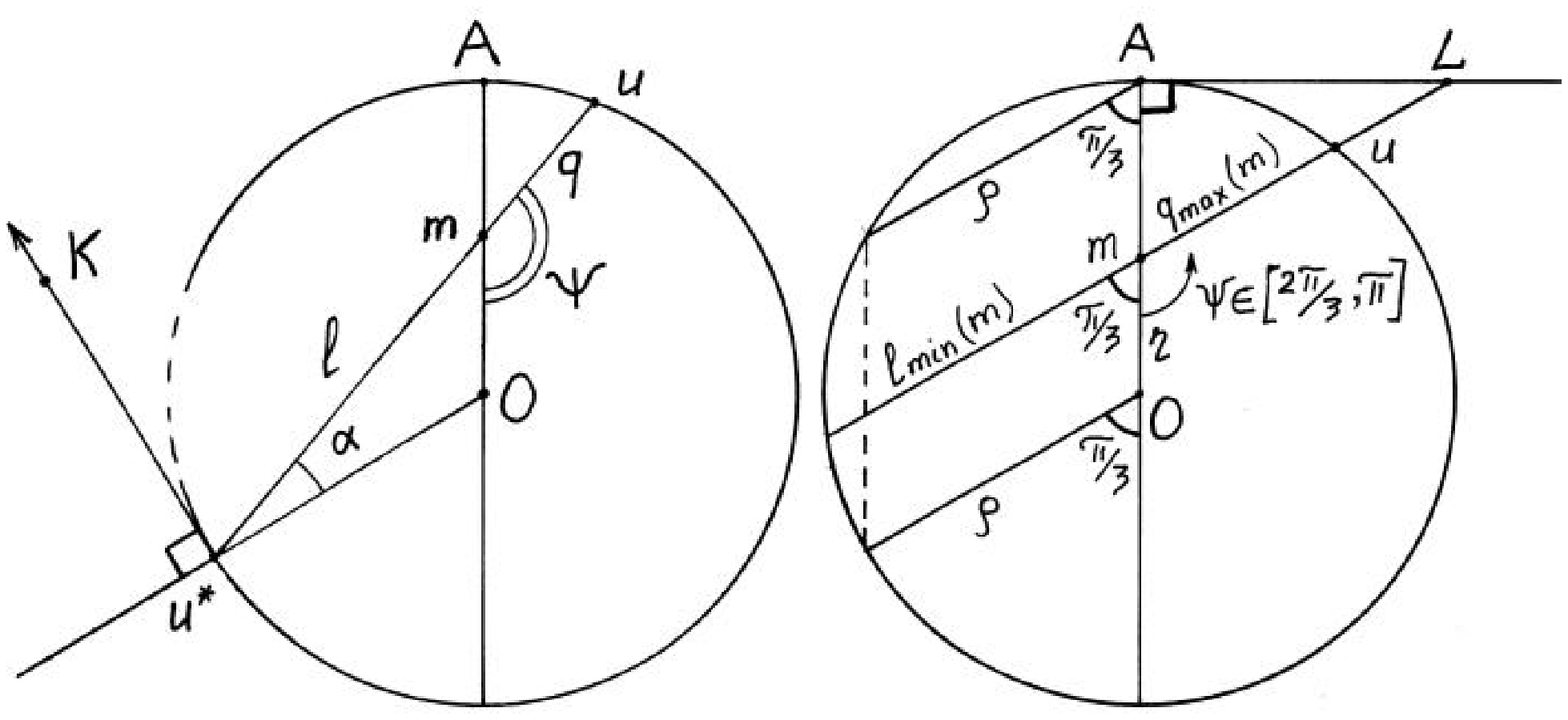,height=6cm}\\
    \caption{Estimation for $l$ and $q$.}\label{l-q-Estimation-Picture}
\end{figure}

Let $\alpha=\angle mu^*O$ and $u^*K$ defines the ray perpendicular to $u^*O$ and directed to the upper-half plane. Fix $\eta=|m|\in(0,\rho]$ and observe that for every $\psi\in(0,\pi)$, angle $\alpha>0$. Therefore, $\angle Ku^*m=\pi/2-\alpha$ must be acute, which implies that $l=|u^*O|$ is strictly decreasing as long as $\psi$ is decreasing. Thus, for every fixed $\eta=|m|\in(0,\rho]$, $l$ attains its minimum at $\psi=2\pi/3$. By the same argument, for every fixed $\eta\in(0,\rho)$, $q=|mu|$ attains its maximum at $\psi=2\pi/3$.

Using the right part of Figure~\ref{l-q-Estimation-Picture} we can observe that for every $m\in OA$,
\begin{equation}\label{l-q-estimation-formulae-1}
    l(m, 2\pi/3)\geq\rho\quad\text{and}\quad q(m,2\pi/3)\leq  |mL|\,.
\end{equation}
Combining \eqref{l-q-estimation-formulae-1} with the previous observation saying that $l(\psi)$ is increasing and $q(\psi)$ is decreasing for $\psi\in[0,\pi]$ and for every fixed $\eta=|m|\in[0,\rho]$, we have
\begin{equation}
    \min\limits_{\psi\in[2\pi/3, \pi]} l(m,\psi)\geq l_{\min}(m)=l(m, 2\pi/3)\geq\rho
\end{equation}
as well as
\begin{equation}
    \max\limits_{\psi\in[2\pi/3, \pi]} q(m,\psi)\leq q_{\max}(m)=q(m, 2\pi/3)\leq|mL|=2(\rho-\eta)\,,
\end{equation}
which completes the proof of Lemma~\ref{l-q-estimation-Lemma}.
\end{proof}

%*********************Main Theorem************************Basic **************************************************** %Theorem****************************************************************************************************************
%**********************************************************************************************************************
%**********************************************************************************************************************
%**********************************************************************************************************************

\chapter {Main Theorem.}

The main goal of this chapter is to describe the constraints for complex numbers $\alpha,\beta$ and for a fixed point $x\in\mathbb{R}^{k+1}\setminus\{S^k(R)\}$ under which the following equivalence holds.
\begin{equation}\label{Chapter-goal}
    \alpha+\beta=k\,\,\,\text{or}\,\,\,\alpha=\beta\quad\Longleftrightarrow\quad\int\limits_{S^k(R)}\omega^{\alpha}dS_y=
    \int\limits_{S^k(R)}\omega^{\beta}dS_y\,,
\end{equation}
where $\omega=\omega(x,y)$ is defined below in \eqref{Sphe-Pro-Statement-1}.

\begin{theorem}[Main Theorem]\label{Basic-theorem}

Let $S^k$,
$k\in\mathbb{N}$, be a $k$-dimensional sphere of radius $R$
centered at the origin $O$, $x, y\in\mathbb{R}^{k+1}$ such that $r=|x|\neq|y|=R$ and
\begin{equation}\label{Sphe-Pro-Statement-1}
    \omega(x,y)=\frac{|\,|x|^2-|y|^2|}{|x-y|^2}\,.
\end{equation}
Until further notice we assume that $R$ and a point $x\notin S^k$ are fixed.
Then, the following statements hold.

\begin{description}

  \item[(A)]\label{Sphe-pro-direct-statement} If $\alpha, \beta\in\mathbb{C}$ and $\alpha+\beta=k$, then
\begin{equation}\label{Sphe-Pro-Statement-2}
    \int\limits_{S^k}\omega^\alpha dS_y=
    \int\limits_{S^k}\omega^\beta dS_y\,.
\end{equation}

  \item[(B)]\label{sphe-pro-inverse-real-statement} If $\alpha, \beta$ are real, then
\begin{equation}\label{Sphe-Pro-Statement-3}
     \int\limits_{S^k}\omega^\alpha dS_y=
    \int\limits_{S^k}\omega^\beta dS_y\quad \text{implies}\quad \alpha+\beta=k\,\,\,
    \text{or}\,\,\,\alpha=\beta  \,.
\end{equation}

  \item[(C)]\label{sphe-pro-liuville-statement} For every $\beta\in \mathbb{C}$ there are infinitely many numbers $\alpha\in\mathbb{C}$ such that
\begin{equation}\label{Sphe-Pro-Statement-4}
    \int\limits_{S^k}\omega^\alpha dS_y=
    \int\limits_{S^k}\omega^\beta dS_y\,.
\end{equation}
%This shows that Statement (B) can not be generalized to $\alpha,\beta$ complex.

    \item[(D)]\label{sphe-pro-inverse-complex-statement} Suppose that for the fixed point $x\notin S^k$
%there exists $p\geq0$ such that
\begin{equation}\label{Sphe-Pro-Statement-5-1}
    \max\{|\Im(\alpha)|, |\Im(\beta)|\}\leq \left.\frac\pi2 \right/ \ln\frac{R+r}{|R-r|}\,.
%p\quad\text{and}\quad
%    \min\left\{\frac{R}{|x|}, \frac{|x|}{R}\right\}\leq\tanh\left(\frac{\pi}{4p}\right),
\end{equation}
Then
%where we assume that $\tanh(\pi/4p)=\infty$ for $p=0$. Then
\begin{equation}\label{Sphe-Pro-Statement-6}
\int\limits_{S^k} \omega^\alpha dS_y =
\int\limits_{S^k} \omega^\beta dS_y \quad\text{implies}\quad
\alpha+\beta=k\quad\text{or}\quad\alpha=\beta \,.
\end{equation}

\end{description}
\end{theorem}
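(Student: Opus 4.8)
The plan is to collapse all four assertions into facts about a single entire function of one variable. By the integration exchanging rule \eqref{VarExchange-1}, which moves the integral from $S^k$ to the unit sphere $\Sigma=S^k(x;1)$, followed by the interchange $l\leftrightarrow q$ in \eqref{VarExchange-2} and the identity $\omega=l/q$ (Theorem~\ref{Geometric-Interpretation-theorem}),
\[
\int_{S^k}\omega^{\alpha}\,dS_y=2R\int_{\Sigma}\frac{l^{\alpha}q^{k-\alpha}}{l+q}\,d\Sigma=R\int_{\Sigma}\frac{(lq)^{k/2}}{l+q}\Bigl(\omega^{\,\alpha-\frac{k}{2}}+\omega^{-(\alpha-\frac{k}{2})}\Bigr)d\Sigma .
\]
Writing $s=\alpha-k/2$ and pushing the positive weight $\tfrac{2R(lq)^{k/2}}{l+q}\,d\Sigma$ forward along $\widetilde y\mapsto u=\ln\omega$, whose range is $[-L,L]$ with $L=\ln\tfrac{R+r}{|R-r|}$, this reads
\[
\int_{S^k}\omega^{\alpha}\,dS_y=\int_{-L}^{L}\cosh(su)\,d\mu(u)=\int_{-L}^{L}e^{su}\,d\mu(u)=:G(s),\qquad s=\alpha-\tfrac{k}{2},
\]
where $\mu$ is a finite positive measure on $[-L,L]$, symmetric because the involution $\psi\mapsto\pi-\psi$ of Lemma~\ref{Feature-Feature-lemma} preserves the weight and negates $u$, and charging $(0,L)$ because there $\omega>1$ by Lemma~\ref{l/q-differentiation-lemma} (with $\omega=1$ at $\psi=\pi/2$ by Lemma~\ref{Feature-Feature-lemma}). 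In particular $G$ is even, which is exactly (A): $\alpha+\beta=k$ says $s_{\beta}=-s_{\alpha}$.

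For (B), on the real line $G''(s)=\int u^{2}\cosh(su)\,d\mu(u)>0$ since $\mu$ charges $u\neq 0$, so $G|_{\mathbb{R}}$ is strictly convex and even; hence $G(s_1)=G(s_2)$ with $s_1,s_2$ real forces $s_1=\pm s_2$, i.e.\ $\alpha=\beta$ or $\alpha+\beta=k$. For (C), $G$ is entire and $|G(s)|\le\int e^{(\Re s)u}\,d\mu=G(\Re s)\le\mu([-L,L])\,e^{L|\Re s|}$, so it has exponential type and is bounded on every vertical line by a constant depending only on $\Re s$, and by (B) it is non-constant. If $H(s):=G(s)-\int_{S^k}\omega^{\beta}\,dS_y$ had only finitely many zeros, Hadamard's factorization (order $\le 1$) would give $H(s)=P(s)e^{as+b}$ with $P$ a polynomial; evenness of $H$ forces $a=0$, and a polynomial bounded on a vertical line is constant, so $H$---hence $G$---would be constant, a contradiction. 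Therefore $H$ has infinitely many zeros, which is (C).

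Part (D) is the real content. Put $h=\pi/(2L)$; the hypothesis says $s_{\alpha},s_{\beta}$ lie in the closed strip $\mathcal{S}=\{|\Im s|\le h\}$, and since $G$ is even it suffices to show $G$ is injective on the fundamental domain $\{\Re s>0\}\cup\{\Re s=0,\ \Im s\ge 0\}$ of $s\mapsto-s$ inside $\mathcal{S}$; then $G(s_\alpha)=G(s_\beta)$ gives $s_\alpha=\pm s_\beta$, i.e.\ $\alpha=\beta$ or $\alpha+\beta=k$. The key estimate is that for $\sigma=\Re s>0$ and $\tau=\Im s$ with $|\tau|\le h$,
\[
\Re G'(s)=2\int_{0}^{L}u\,\sinh(\sigma u)\cos(\tau u)\,d\mu(u)>0,
\]
because $|\tau u|\le hL=\pi/2$ keeps $\cos(\tau u)\ge 0$, with strict positivity on $(0,L)$ where $\mu$ charges. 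If $s_1,s_2$ both have positive real part and $s_1\neq s_2$, then $G(s_2)-G(s_1)=(s_2-s_1)\int_0^1 G'\bigl((1-t)s_1+ts_2\bigr)\,dt$ and the segment stays in the convex set $\{\Re s>0\}\cap\mathcal{S}$, so the integral has positive real part and cannot vanish. If some $s_j$ is purely imaginary, write it as $i\vartheta$ with $\vartheta\in[0,h]$; then $G(i\vartheta)=\int\cos(\vartheta u)\,d\mu$ is real and $\vartheta\mapsto G(i\vartheta)$ is strictly decreasing on $[0,h]$ (derivative $-2\int_0^L u\sin(\vartheta u)\,d\mu<0$), which settles the case of two purely imaginary points; and a point with $\sigma=\Re s>0$ cannot share its $G$-value with $i\vartheta$, because either $\Im G(s)=2\int_0^L\sinh(\sigma u)\sin(\tau u)\,d\mu$ has the nonzero sign of $\tau$ so $G(s)\notin\mathbb{R}$, or $\tau=0$ and $G(s)=\int\cosh(\sigma u)\,d\mu>\int\cos(\vartheta u)\,d\mu=G(i\vartheta)$ since $\cosh(\sigma u)>1\ge\cos(\vartheta u)$ off $u=0$. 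This gives injectivity on the fundamental domain, hence (D).

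The crux---and the one place the bound $\pi/(2L)$ is really used---is the positivity $\Re G'(s)>0$ throughout the strip $|\Im s|\le\pi/(2L)$: this width is exactly the largest one on which $\cos(\tau u)$ stays nonnegative for every $u$ in the range $[-L,L]$ of $\ln\omega$. On a wider strip $\cos(\tau u)$ changes sign near the extreme values of $\omega$ and the monotonicity driving the injectivity argument collapses. The care needed in a complete write-up is modest: confirming that $\mu$ genuinely charges the interior interval $(0,L)$ and that the boundary cases ($\Re s=0$ and $|\Im s|=h$ exactly, where $\mu$ puts no mass at $u=\pm L$) cause no trouble.
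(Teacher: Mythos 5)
Your proposal is correct, and for Statements (A) and (B) it coincides in substance with the paper's argument: the $l\leftrightarrow q$ exchange (Lemma~\ref{Feature-Feature-lemma} together with \eqref{VarExchange-1}--\eqref{VarExchange-2}) gives the symmetry $F(\alpha)=F(k-\alpha)$, and convexity plus symmetry settles the real case. Where you genuinely diverge is in the one-variable reformulation $F(k/2+s)=G(s)=\int_{-L}^{L}e^{su}\,d\mu(u)$ with $\mu$ a symmetric finite positive measure, and in how (C) and (D) are then harvested from it. For (C) the paper invokes Picard's Great Theorem at the essential singularity at $\infty$ (essentiality read off from the Taylor expansion \eqref{Taylor-Decomposition}); you instead use that $G$ has order at most one and is bounded on vertical lines, so Hadamard factorization turns ``finitely many solutions'' into ``$G$ constant,'' contradicting (B) --- a legitimate alternative that trades Picard for a factorization theorem. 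For (D) the paper runs Steps 1--4: level curves of $\Re F$ in the quadrant $[k/2,\infty)\times[0,p]$ obtained from the signs of the partials (Claims~\ref{Partial-dW-d-xi-claim} and~\ref{partial-vertical-claim}), then strict monotonicity of $\Im F$ along each level curve via the Cauchy--Riemann equations, then symmetry. You get the same uniqueness more directly: the hypothesis \eqref{Sphe-Pro-Statement-5-1} gives $\cos(\tau u)\ge 0$ for $|u|\le L$, hence $\Re G'(s)>0$ on the convex half-strip $\{\Re s>0,\ |\Im s|\le\pi/(2L)\}$, and univalence follows from the segment identity $G(s_2)-G(s_1)=(s_2-s_1)\int_0^1 G'\,dt$, with the imaginary axis handled by the strict decrease of $\vartheta\mapsto\int\cos(\vartheta u)\,d\mu$ and your reality/positivity comparisons. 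The crucial positivity is exactly the one the paper isolates in \eqref{cosine-positivenes}; your packaging is shorter and makes the role of the width $\pi/(2L)$ transparent.

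One omission you should repair: your construction of $\mu$ rests on the change of variables of Lemma~\ref{VarExchange-Lemma} and the exchange \eqref{VarExchange-2}, which the paper states (and whose geometry only works) for $|x|<R$, whereas the theorem also allows $|x|>R$. As in the paper's Case~2 of Statement~(A), p.~\pageref{x-is-outside-proof-Main-Thm}, reduce the exterior case by the sphere exchange rule \eqref{ExchaSphere-2} of Lemma~\ref{Basic_lemma}: this multiplies $F$ by the constant $(R/r)^k$, i.e.\ rescales $\mu$ by a positive factor, and leaves $L=\ln\frac{R+r}{|R-r|}$ unchanged, so every step of your argument goes through verbatim. With that remark, and the observation (which you flag yourself, and which follows from the strict monotonicity of $\omega(\psi)$ in Lemma~\ref{l/q-differentiation-lemma}) that $\mu$ charges every subinterval of $(0,L)$, the proof is complete.
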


\begin{remark}
    Statement~(C) shows that the implication introduced in \eqref{Sphe-Pro-Statement-3} fails if we just let $\alpha$ and $\beta$ be complex. However, a certain additional restriction formulated in Statement~(D) for the complex numbers $\alpha, \beta$ allows us to obtain precisely the same implication as in \eqref{Sphe-Pro-Statement-3}.
\end{remark}

\begin{remark}
    Observe that Statement~(B) is a special case of Statement~(D). Indeed, if $\alpha, \beta$ are real, then \eqref{Sphe-Pro-Statement-5-1} holds for every $r\neq R$.
\end{remark}

\begin{corollary}\label{imaginary-part-vanishing-corollary} If $\alpha=k/2+ib$, then
\begin{equation}\label{imaginary-part-vanishing}
    \int\limits_{S^k}\omega^\alpha dS_y=
    \int\limits_{S^k}\omega^{k/2}\cos(b\ln\omega) dS_y\,.
\end{equation}
Moreover, for every natural number $m=0,1,2,3,...$
\begin{equation}\label{logarithm-integral}
\begin{split}
    & \int\limits_{S^k}\omega^{k/2}(\ln\omega)^{2m+1}\cos(b\ln\omega) dS_y=0\,,
    \\& \int\limits_{S^k}\omega^{k/2}(\ln\omega)^{2m}\sin(b\ln\omega) dS_y=0
\end{split}
\end{equation}
and for every $\alpha\in\mathbb{C}$
\begin{equation}\label{Taylor-Decomposition}
    F(\alpha)=\int\limits_{S^k}\omega^{\alpha}dS_y=\sum\limits_{m=0}^{\infty}\frac{(\alpha-k/2)^{2m}}{(2m)!}
    \int\limits_{S^k}\omega^{k/2}(\ln\omega)^{2m}dS_y\,,
\end{equation}
which is the Taylor decomposition of $F(\alpha)$ at $\alpha=k/2$.
\end{corollary}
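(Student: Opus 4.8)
The plan is to deduce all four assertions from the single functional equation
$F(\alpha)=F(k-\alpha)$ provided by Statement~\textbf{(A)} of Theorem~\ref{Basic-theorem}, where
$F(\alpha):=\int_{S^k}\omega^\alpha\,dS_y$, combined with the fact that $F$ is entire. First I would verify this analyticity. Since $y\in S^k=S^k(R)$ and $r=|x|\neq R$, the distance $|x-y|$ stays in the compact interval $[\,|R-r|,\,R+r\,]\subset(0,\infty)$, so $\omega=|r^2-R^2|/|x-y|^2$ is bounded above and below by positive constants on $S^k$; hence $\ln\omega$ is bounded on $S^k$, $\omega^\alpha=e^{\alpha\ln\omega}$ is holomorphic in $\alpha$ with locally uniformly bounded derivatives of every order, and differentiation under the integral sign is legitimate, giving
$F^{(n)}(\alpha)=\int_{S^k}\omega^\alpha(\ln\omega)^n\,dS_y$ for all $n\ge 0$.

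Next I would record the consequence of the symmetry: differentiating $F(\alpha)=F(k-\alpha)$ $n$ times yields $F^{(n)}(\alpha)=(-1)^nF^{(n)}(k-\alpha)$, so at $\alpha=k/2$ all odd derivatives vanish, i.e. $\int_{S^k}\omega^{k/2}(\ln\omega)^{2m+1}\,dS_y=0$. For $\alpha=k/2+ib$ one has $k-\alpha=\overline\alpha$, and since $\omega>0$ is real, $\overline{\omega^\alpha}=\omega^{\overline\alpha}$, whence $F(k/2+ib)=F(k/2-ib)=\overline{F(k/2+ib)}$ is real. Expanding $\omega^{k/2+ib}=\omega^{k/2}\bigl(\cos(b\ln\omega)+i\sin(b\ln\omega)\bigr)$ and taking real and imaginary parts gives \eqref{imaginary-part-vanishing} and the $m=0$ case of the second line of \eqref{logarithm-integral}.

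For the general identities in \eqref{logarithm-integral} I would substitute $\cos(b\ln\omega)=\tfrac12(\omega^{ib}+\omega^{-ib})$ and $\sin(b\ln\omega)=\tfrac1{2i}(\omega^{ib}-\omega^{-ib})$, multiply by $\omega^{k/2}(\ln\omega)^j$, and integrate termwise. The first integral becomes $\tfrac12\bigl(F^{(2m+1)}(k/2+ib)+F^{(2m+1)}(k/2-ib)\bigr)$ and the second $\tfrac1{2i}\bigl(F^{(2m)}(k/2+ib)-F^{(2m)}(k/2-ib)\bigr)$. Applying $F^{(n)}(k/2+ib)=(-1)^nF^{(n)}(k/2-ib)$ (the relation $F^{(n)}(\alpha)=(-1)^nF^{(n)}(k-\alpha)$ at $\alpha=k/2+ib$) makes the first combination vanish for odd $n=2m+1$ and the second vanish for even $n=2m$. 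Finally, \eqref{Taylor-Decomposition} is simply the Taylor series of the entire function $F$ about $k/2$, with the odd-order coefficients $F^{(2m+1)}(k/2)/(2m+1)!$ annihilated by the symmetry already used.

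There is no genuinely hard step once Statement~\textbf{(A)} is available; the only place demanding care is the justification that $F$ is entire with all derivatives obtained by differentiating under the integral, and this is precisely where the standing hypothesis $r\neq R$ — which keeps $\omega$ bounded away from both $0$ and $\infty$ on $S^k$ — is essential. The remainder is bookkeeping with the functional equation $F(\alpha)=F(k-\alpha)$.
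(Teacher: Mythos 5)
Your proof is correct and takes essentially the same route as the paper: both rest on Statement~(A) applied at the conjugate pair $k/2\pm ib$, the entirety of $F$ with differentiation under the integral sign, and repeated differentiation to produce \eqref{logarithm-integral} before reading off the Taylor series \eqref{Taylor-Decomposition}. The only cosmetic difference is that the paper differentiates the vanishing imaginary part repeatedly with respect to the real parameter $b$, whereas you differentiate the functional equation $F(\alpha)=F(k-\alpha)$ in $\alpha$ and then specialize at $k/2\pm ib$ via the Euler formulas; the substance is identical.
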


\begin{corollary}\label{sphe-pro-distance-basic-statement}
If $\alpha+\beta=2k$ and $\alpha,\beta\in\mathbb{C}$, then
\begin{equation}\label{Sphe-Pro-Statement-9.0}
\int\limits_{S^k} \frac{dS_y}{|x-y|^\alpha}
=|R^2-|x|^2|^{(\beta-\alpha)/2}\, \int\limits_{S^k}
\frac{dS_y}{|x-y|^\beta}\,.
\end{equation}
\end{corollary}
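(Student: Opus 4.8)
The plan is to deduce this directly from Statement~(A) of the Main Theorem by halving the exponents and using the explicit form of $\omega$ on $S^k$. First I would record that, for the fixed $x$ with $r=|x|\neq R$ and for any $y\in S^k$, definition \eqref{Sphe-Pro-Statement-1} gives $\omega(x,y)=|R^2-r^2|/|x-y|^2$, so that for every $s\in\mathbb{C}$
\begin{equation*}
    \int\limits_{S^k}\omega^s\,dS_y=|R^2-r^2|^{\,s}\int\limits_{S^k}\frac{dS_y}{|x-y|^{2s}}\,,
\end{equation*}
where $\omega^{s}$ is understood, as throughout the paper, via the principal branch (legitimate since $\omega>0$ on $S^k$ except at the point collinear with $x$ and $O$, which is a null set).

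Next, since $\alpha+\beta=2k$, the numbers $\alpha/2$ and $\beta/2$ satisfy $\alpha/2+\beta/2=k$, so \eqref{Sphe-Pro-Statement-2} of Statement~(A) applies with exponents $\alpha/2,\beta/2$ and yields
\begin{equation*}
    \int\limits_{S^k}\omega^{\alpha/2}\,dS_y=\int\limits_{S^k}\omega^{\beta/2}\,dS_y\,.
\end{equation*}
Substituting the displayed identity from the first step into each side of this equality gives
\begin{equation*}
    |R^2-r^2|^{\alpha/2}\int\limits_{S^k}\frac{dS_y}{|x-y|^{\alpha}}=|R^2-r^2|^{\beta/2}\int\limits_{S^k}\frac{dS_y}{|x-y|^{\beta}}\,,
\end{equation*}
and dividing through by $|R^2-r^2|^{\alpha/2}\neq0$ (this is where $r\neq R$ is used) produces exactly \eqref{Sphe-Pro-Statement-9.0}, upon recalling that $r=|x|$.

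There is essentially no obstacle here: once Statement~(A) is in hand the argument is a one-line substitution. The only points that deserve a word of care are the consistent choice of branch for the complex power $\omega^{s}$ (namely the same convention under which \eqref{Sphe-Pro-Statement-2} was established) and the fact that $|R^2-r^2|>0$, which is what makes the final division legitimate; both are immediate from the standing hypotheses $r\neq R$ and $x\notin S^k$.
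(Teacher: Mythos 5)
Your argument is correct and is essentially the paper's own: the paper disposes of this corollary with a remark that it follows from Statement~(A) by rewriting \eqref{Sphe-Pro-Statement-2} in terms of the distance $|x-y|$, which is exactly your substitution $\omega^{\alpha/2}=|R^2-r^2|^{\alpha/2}|x-y|^{-\alpha}$ with the exponents $\alpha/2,\beta/2$ summing to $k$. (One trivial slip: since $|x|\neq R$ and $x\notin S^k$, in fact $\omega>0$ at \emph{every} point of $S^k$, so no exceptional null set is needed.)
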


\begin{remark} It is clear that formula \eqref{Sphe-Pro-Statement-9.0} can be obtained from Statement~(A) if we rewrite formula \eqref{Sphe-Pro-Statement-2} in terms of distance between $x$ and $y$. In particular, for $\alpha=k-1$ and for every $x\notin S^k$, formula~\eqref{Sphe-Pro-Statement-9.0} yields
\begin{equation}\label{Sphe-Pro-Statement-9}
    \int\limits_{S^k} \frac{dS_y}{|x-y|^{k-1}}
    = \int\limits_{S^k}
    \frac{|R^2-|x|^2|}{|x-y|^{k+1}}\,dS_y\,.
\end{equation}
Note that for $k>1$ the integrand on the left is the Newtonian
potential in $\mathbb{R}^{k+1}$ and the integrand on the right is
exactly the Poisson kernel in $\mathbb{R}^{k+1}$.
\end{remark}

\begin{corollary}\label{One-dimension-sphe-pro} If $k=1$, then for every $p\in\mathbb{C},a,b
\in\mathbb{R}$ and $a>b>0$,% identity \eqref{3-a} yields
\begin{equation}\label{Sphe-Pro-Statement-10}
    \int\limits_0^{2\pi} \frac{d\theta}{(a-b\,\sin(\theta))^{p}}=
    (a^2-b^2)^{1/2-p}\, \int\limits_0^{2\pi}
    (a-b\,\sin(\theta))^{p-1}d\theta\,.
\end{equation}
\end{corollary}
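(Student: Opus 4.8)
The plan is to deduce this from Corollary~\ref{sphe-pro-distance-basic-statement} in the case $k=1$. There the hypothesis $\alpha+\beta=2k$ becomes $\alpha+\beta=2$, so $(\beta-\alpha)/2=1-\alpha$, and \eqref{Sphe-Pro-Statement-9.0} reads
\begin{equation*}
  \int\limits_{S^1}\frac{dS_y}{|x-y|^{\alpha}}
  =\bigl|R^2-|x|^2\bigr|^{\,1-\alpha}\int\limits_{S^1}\frac{dS_y}{|x-y|^{2-\alpha}}\,.
\end{equation*}
The only real work is to choose the circle $S^1$ of radius $R$ and the point $x$ so that $|x-y|^{2}$ takes the precise shape $a-b\sin\theta$.

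First I would place $x=(0,s)$ on the vertical axis and parametrize $y=(R\cos\theta,R\sin\theta)$, $\theta\in[0,2\pi]$; then $|x-y|^{2}=R^{2}+s^{2}-2Rs\sin\theta$, so with $a=R^{2}+s^{2}$, $b=2Rs$ one gets $|x-y|^{2}=a-b\sin\theta$. Conversely, given $a>b>0$, the system $(R+s)^{2}=a+b$, $(R-s)^{2}=a-b$ has the solution $R=\tfrac12(\sqrt{a+b}+\sqrt{a-b})$, $s=\tfrac12(\sqrt{a+b}-\sqrt{a-b})$, with $R,s>0$ (here $s>0$ uses $b>0$) and $R\neq s$ (since $a>b$), so every admissible pair $(a,b)$ is realized and the hypothesis $|x|=s\neq R$ of Corollary~\ref{sphe-pro-distance-basic-statement} holds. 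Moreover $a^{2}-b^{2}=(R^{2}+s^{2})^{2}-(2Rs)^{2}=(R^{2}-s^{2})^{2}$, so $\bigl|R^2-|x|^2\bigr|=\sqrt{a^{2}-b^{2}}$, and $dS_y=R\,d\theta$ on $S^1$.

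Second I would put $\alpha=2p$, so $2-\alpha=2(1-p)$ and $1-\alpha=1-2p$. Substituting into the displayed identity and cancelling the common factor $R$ coming from $dS_y=R\,d\theta$ on both sides gives
\begin{equation*}
  \int\limits_0^{2\pi}\frac{d\theta}{(a-b\sin\theta)^{p}}
  =\bigl(a^{2}-b^{2}\bigr)^{1/2-p}\int\limits_0^{2\pi}(a-b\sin\theta)^{\,p-1}\,d\theta\,,
\end{equation*}
which is \eqref{Sphe-Pro-Statement-10}, using $(1-2p)/2=1/2-p$.

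There is no serious obstacle. The only points deserving a word are that every pair $a>b>0$ genuinely arises from some $(R,s)$ — supplied by the explicit square-root formulas above — and that the complex powers are unambiguous, which holds because $a-b\sin\theta>0$ for all $\theta$ (as $a>b$) and $a^{2}-b^{2}>0$, so all powers in sight are powers of positive reals and the exponent manipulations are literal identities.
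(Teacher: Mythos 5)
Your proposal is correct and takes essentially the same route as the paper: both proofs reduce \eqref{Sphe-Pro-Statement-10} to the integral identity of the Main Theorem through the substitution $a=R^2+r^2$, $b=2Rr$ (equivalently your $b=2Rs$), you via the distance form \eqref{Sphe-Pro-Statement-9.0} with $\alpha=2p$, the paper via Statement~(A) after rewriting the integrand as $\omega^p$ and trading $\sin\theta$ for $\cos\theta$ by periodicity rather than by your explicit placement of $x$ on the vertical axis. Your added checks (explicit solvability for $R,s$ and positivity of $a-b\sin\theta$) are fine but do not change the substance.
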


\begin{corollary}\label{Basic-Th-Integrable-Case-general-Item}
If $k=2$, then
\begin{equation}\label{Basic-Th-Integrable-Case-1}
    \int\limits_{S^2(R)}\omega^{1+ib}dS_y=\frac{2\pi R}{r}
    \cdot\frac{|R^2-r^2|}{b}\cdot\sin\left(b\ln\frac{R+r}{|R-r|}\right)\,.
\end{equation}
In particular, if $b=0$,
\begin{equation}\label{Basic-Th-Integrable-Case-2}
    \int\limits_{S^2(R)}\omega\, dS_y=\frac{2\pi R}{r}\cdot|R^2-r^2|\cdot\ln\frac{R+r}{|R-r|}\,.
\end{equation}
\end{corollary}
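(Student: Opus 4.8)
The plan is to reduce the surface integral to an elementary one-dimensional integral by using ordinary spherical coordinates on $S^2(R)$ with polar axis along the ray $Ox$. Since $\omega(x,y)$ depends on $y\in S^2(R)$ only through the polar angle $\theta'=\angle xOy$ — because $|x-y|^2=r^2+R^2-2rR\cos\theta'$ while $||x|^2-|y|^2|=|r^2-R^2|$ — integrating out the azimuthal variable contributes just a factor $2\pi$, and I would obtain
\[
\int_{S^2(R)}\omega^{1+ib}\,dS_y=2\pi R^2\,|r^2-R^2|^{\,1+ib}\int_0^\pi\frac{\sin\theta'\,d\theta'}{\bigl(r^2+R^2-2rR\cos\theta'\bigr)^{1+ib}}.
\]
This one expression covers both $r<R$ and $r>R$ simultaneously, since only $|r^2-R^2|$ and $(R-r)^2$ will appear below.

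Next I would evaluate the remaining integral by the substitutions $u=\cos\theta'$ and then $v=r^2+R^2-2rRu$, which carries the interval of integration to $[(R-r)^2,(R+r)^2]$ and gives $\frac{1}{2rR}\int_{(R-r)^2}^{(R+r)^2}v^{-1-ib}\,dv$. For $b\neq0$ an antiderivative is $v^{-ib}/(-ib)=e^{-ib\ln v}/(-ib)$, the branch being unambiguous because $v>0$ throughout, so this equals $\frac{1}{2rR(-ib)}\bigl(e^{-2ib\ln(R+r)}-e^{-2ib\ln|R-r|}\bigr)$. The decisive step is the symmetric factorization
\[
e^{-2ibL}-e^{-2ibM}=-2i\,e^{-ib(L+M)}\sin\!\bigl(b(L-M)\bigr),\qquad L=\ln(R+r),\quad M=\ln|R-r|,
\]
which produces $L-M=\ln\frac{R+r}{|R-r|}$ inside the sine and the phase $e^{-ib(L+M)}=e^{-ib\ln|R^2-r^2|}$ outside.

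Now the phase $e^{-ib\ln|R^2-r^2|}$ is exactly cancelled by the factor $|r^2-R^2|^{\,1+ib}=|r^2-R^2|\,e^{ib\ln|R^2-r^2|}$ standing in front of the integral, and collecting the remaining constants ($2\pi R^2\cdot\frac1{2rR}\cdot\frac{-2i}{-ib}=\frac{2\pi R}{rb}$) leaves the real quantity $\frac{2\pi R}{r}\cdot\frac{|R^2-r^2|}{b}\sin\!\bigl(b\ln\frac{R+r}{|R-r|}\bigr)$, which is \eqref{Basic-Th-Integrable-Case-1}. As a consistency check, Statement~(A) of the Main Theorem applied with $\alpha=1+ib$ and $\beta=1-ib=k-\alpha$ predicts this integral equals $\int_{S^2}\omega^{1-ib}\,dS_y$, hence is real — as it came out. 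Formula \eqref{Basic-Th-Integrable-Case-2} then follows by letting $b\to0$, using $\sin(bt)/b\to t$, or equivalently by redoing the last integral with the antiderivative $\ln v$ of $v^{-1}$. An alternative derivation, more in the spirit of this chapter, uses $dS_y=\frac{2R}{l+q}q^2\,d\Sigma_{\widetilde{y}}$ from Lemma~\ref{VarExchange-Lemma} (with $k=2$), $d\Sigma_{\widetilde{y}}=2\pi\sin\psi\,d\psi$, and $d\psi=\frac{l+q}{4r\sin\psi}\,d\ln\omega$ from the corollary to Lemma~\ref{l/q-differentiation-lemma}: after the cancellations and the identity $\omega q^2=lq=|R^2-r^2|$, everything collapses to $\frac{\pi R|R^2-r^2|}{r}\int_{-D}^{D}e^{ibt}\,dt$ with $D=\ln\frac{R+r}{|R-r|}$, giving the same value.

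The only delicate points are bookkeeping ones: keeping the orientation of the $v$-integral straight so the limits come out as $(R-r)^2$ and $(R+r)^2$ rather than their negatives, fixing one branch of the complex powers consistently (harmless, every base being a positive real), and carrying out the trigonometric collapse that exhibits the cancellation of phases and the reality of the answer. None of these is a genuine obstacle; once the reduction to a one-variable integral is in place the computation is essentially forced.
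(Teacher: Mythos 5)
Your primary derivation is correct, and it takes a genuinely different route from the paper's. The paper (Steps~1--7 of the proof of Corollary~\ref{Basic-Th-Integrable-Case-general-Item}) first passes to the real integrand $\omega\cos(b\ln\omega)$ via Corollary~\ref{imaginary-part-vanishing-corollary}, then pushes the surface integral through the $l,q$ machinery: the change $dS_y=\frac{2R}{l+q}q^2\,d\Sigma_{\widetilde y}$, the identity $lq=|R^2-r^2|$, the symmetry $l(\psi)=q(\pi-\psi)$ to halve the range, and finally the substitution $d\psi=\frac{l+q}{4r\sin\psi}\,d\ln\omega$, which turns the integrand into $\cos(b\ln\omega)\,d(\ln\omega)$ and is then integrated by hand. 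You instead use that on $S^2(R)$ with polar axis $Ox$, $\omega$ depends only on $\theta'=\angle xOy$, collapse the azimuthal direction to a factor $2\pi$, and then do $u=\cos\theta'$ followed by $v=r^2+R^2-2rRu$ to land on the elementary $\int_{(R-r)^2}^{(R+r)^2} v^{-1-ib}\,dv$; a symmetric factorization of the resulting exponentials then both produces the sine and cancels the phase $|R^2-r^2|^{ib}$ from the prefactor, so reality falls out rather than being imposed in advance. I checked the arithmetic: after the phase cancellation you get exactly $\frac{2\pi R}{rb}|R^2-r^2|\sin\bigl(b\ln\frac{R+r}{|R-r|}\bigr)$, and the $b\to0$ limit via $\sin(bt)/b\to t$ gives \eqref{Basic-Th-Integrable-Case-2}. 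Your approach is shorter and more self-contained (plain spherical coordinates and an antiderivative, no use of Lemma~\ref{VarExchange-Lemma} or the $d\ln\omega$ substitution), which makes it a cleaner verification; the paper's proof, while longer, serves a rhetorical purpose by exercising the geometric machinery that the rest of the chapter depends on, and your closing alternative sketch (reduction to $\frac{\pi R|R^2-r^2|}{r}\int_{-D}^{D}e^{ibt}\,dt$) is essentially a streamlined version of that route.
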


Let us start the proof of Main Theorem, p.~\pageref{Basic-theorem}.

\begin{proof}[\textbf{Proof of Statement (A)}] This proof consists of two steps described in Case 1 and Case 2 below. We are going to use the notation introduced in the previous chapter.

  \textbf{Case 1 ($x$ is inside the sphere $S^k(|y|)$, i.e., $|x|<R$).}  Let $\alpha, \beta\in\mathbf{C}$ be such that $\beta=k-\alpha$. Using formulae \eqref{VarExchange-1} and \eqref{VarExchange-2}, p.~\pageref{VarExchange-1}, we have the following sequence of equalities.
\begin{equation}\label{A-SphePro-Proof-1}
\begin{split}
    & \int\limits_{S^k}
    \omega^\alpha\,dS_y
    =\int\limits_{S^k}
    \left(\frac{l}{q}\right)^\alpha\,dS_y
     =\int\limits_{\Sigma}
    \left(\frac{l}{q}\right)^\alpha\, \frac{2R}{l+q}\, q^k\,
    d\Sigma_{\widetilde{y}}
    \\& =\int\limits_\Sigma f(l,q)d\Sigma_{\widetilde{y}}
    =\int\limits_\Sigma f(q,l)d\Sigma_{\widetilde{y}}
    \\& =\int\limits_{\Sigma}
    \left(\frac{q}{l}\right)^\alpha\, \frac{2R}{l+q}\, l^k\,
    d\Sigma_{\widetilde{y}}
    =\int\limits_{\Sigma}
    \left(\frac{l}{q}\right)^{-\alpha}\, \frac{l^k}{q^k}\,
    \frac{2R}{l+q}\, q^k\, d\Sigma_{\widetilde{y}}
    \\& =\int\limits_{S^k}
    \left(\frac{l}{q}\right)^{k-\alpha}\, dS_y
    =\int\limits_{S^k}
    \omega^{\beta}\,dS_y\,,
\end{split}
\end{equation}
where
\begin{equation}\label{A-SphePro-Proof-2}
    f(l,q)=\left(\frac{l}{q}\right)^\alpha\, \frac{2R}{l+q}\, q^k
\end{equation}
and the fourth equality in \eqref{A-SphePro-Proof-1} is obtained from \eqref{VarExchange-2}. Thus, the proof of statement (A) is complete for the case $|x|<R$. To finish the proof we have to consider the case when $|x|>R$.

\begin{remark} Besides the Geometric Interpretation, for some reason, Hermann Schwarz introduced angle $\theta^*=\pi-\angle xOy^*$, see \cite{Ahlfors}, p.~168. We shall see in the Appendix, Theorem~\ref{Schwartz-Angle-Theorem}, p.~\pageref{Schwartz-Angle-Theorem} that this angle $\theta^*$ could also be used to obtain an elementary proof of Statement~(A) of the Main Theorem. Unfortunately,
the direct substitution of $d\theta^*$ instead of $d\theta$ does not compute certain important integrals related to Electrostatics and to Hyperbolic Geometry, while using $d\psi$ instead of $d\theta$ does. For this reason the angle $\psi$ was chosen in the proof presented above.
\end{remark}

\begin{remark} This statement obtained in Case 1 can also be interpreted in terms of eigenvalues and eigenfunctions of the Hyperbolic Laplacian. We shall see this interpretation in Proposition~\ref{Statement-A'-Proposition}, p.~\pageref{Statement-A'-Proposition}.
\end{remark}
%ddd
\textbf{Case 2 ($x$ is outside of the sphere $S^k(|y|)$, i.e., $|x|>R$).}\label{x-is-outside-proof-Main-Thm} This case can be reduced to Case 1 by Sphere Exchange Rule introduced in Lemma~\ref{Basic_lemma}, formula \eqref{ExchaSphere-2}, p.~\pageref{ExchaSphere-2}. Recall that $y\in S^k=S^k(R)$, where $R=|y|<|x|=r$. Let $x\in
S^k(r)$, which is the $k$-dimensional sphere of radius $r$
centered at the origin $O$. Then, using formula \eqref{A-SphePro-Proof-1} and Lemma~\ref{Basic_lemma}, formula \eqref{ExchaSphere-2}, we have
\begin{equation}\label{A-SphePro-Proof-3}
\begin{split}
    & \int\limits_{S^k} \omega^{\alpha}(x,y_1) d\,S_{y_1}=
    \left(\frac{|y|}{|x|}\right)^k \int\limits_{S^k(r)}
    \omega^{\alpha}(x_1,y) d\,S_{x_1}
    \\& =\left(\frac{|y|}{|x|}\right)^k \int\limits_{S^k(r)}
    \omega^{\beta}(x_1,y) d\,S_{x_1} = \int\limits_{S^k}
    \omega^{\beta}(x,y_1) d\,S_{y_1}\,.
\end{split}
\end{equation}
The above derivation completes the proof of (A) of Theorem~\ref{Basic-theorem}.
\end{proof}

\begin{proof}[\textbf{Proof of Corollary \ref{imaginary-part-vanishing-corollary}, p.~\pageref{imaginary-part-vanishing-corollary}.}]

According to Statement (A),
\begin{equation}\label{NYU-1}
    \int\limits_{S^k}\omega^{k/2+ib}dS_y=
    \int\limits_{S^k}\omega^{k/2-ib}dS_y\,.
\end{equation}
Comparing the imaginary parts of this identity  gives
\begin{equation}\label{imaginary-part-only}
\int\limits_{S^k}\omega^{k/2}\sin(b\ln\omega)dS_y=
    -\int\limits_{S^k}\omega^{k/2}\sin(b\ln\omega)dS_y\,,
\end{equation}
which implies that imaginary part is identically zero, and then, the proof of \eqref{imaginary-part-vanishing} is complete. The repeated differentiation with respect to $b$ of the imaginary part \eqref{imaginary-part-only} leads directly to \eqref{logarithm-integral}.

To proof the Taylor decomposition presented in \eqref{Taylor-Decomposition}, notice that $F(\alpha)$ is entire function and then for every $\alpha\in\mathbb{C}$,
\begin{equation}\label{Taylor-Deco-Proof-1}
    F(\alpha)=F(k/2)+F'(k/2)(\alpha-k/2)+\frac{F''(k/2)}{2!}(\alpha-k/2)^2+\cdots\,.
\end{equation}
The direct computation together with the first formula from \eqref{logarithm-integral}, p.~\pageref{logarithm-integral} yields
\begin{equation}\label{Taylor-Deco-Proof-2}
    F^{(2m+1)}(k/2)=\int\limits_{S^k}\omega^{k/2}(\ln\omega)^{2m+1}dS\equiv 0
\end{equation}
for $m=0,1,2,\ldots$ and
\begin{equation}\label{Taylor-Deco-Proof-3}
    F^{(2m)}(k/2)=\int\limits_{S^k}\omega^{k/2}(\ln\omega)^{2m}dS>0
\end{equation}
for $m=0,1,2,\cdots$. This completes the proof of the Corollary \ref{imaginary-part-vanishing-corollary}.
\end{proof}

\begin{proof}[\textbf{Proof of Statement (B), p.~\pageref{sphe-pro-inverse-real-statement}.}]

Fix $\alpha\in\mathbb{R}$. We have to show that the equation
\begin{equation}\label{2-r}
    F(\alpha)=\int\limits_{S^k} \omega^{\alpha}d\,S_y=\int\limits_{S^k}
    \omega^{\lambda}d\,S_y=F(\lambda)
\end{equation}
has only two solutions with respect to the variable $\lambda$,
namely, $\lambda=\alpha$ and $\lambda=k-\alpha$. First note that,
since $\lambda+(k-\lambda)=k$, we have $F(\lambda)=F(k-\lambda)$
for every $\lambda\in \mathbb{R}$. Therefore, the function
$F(\lambda)$ is symmetric with respect to the point $\lambda=k/2$.
One can easily check that the Leibnitz rule of differentiation
under the integral sign, see \cite{Sokolnikoff}, (p.~121) is applicable to
$F(\lambda)$. Hence,

\begin{equation}\label{3-r}
    \frac{dF(\lambda)}{d\lambda}=\int\limits_{S^k}
    \omega^{\lambda}\, (\ln\omega)\,d\,S_y\,.
\end{equation}
We apply the same rule one more time to obtain
\begin{equation}\label{4-r}
    \frac{d^2F(\lambda)}{(d\lambda)^2}=\int\limits_{S^k}
    \omega^{\lambda}\, (\ln\omega)^2\,d\,S_y >0\quad
    \text{for every}\,\,\,\lambda\in \mathbb{R}\,.
\end{equation}
Therefore, the graph of our function $\xi=F(\lambda)$, plotted in
the $(\lambda,\xi)$-plane, is convex and symmetric with respect to
the line $\lambda=k/2$. Therefore, $\lambda=k/2$ is the
minimum point for $F(\lambda)$. Thus, for every $\nu >
F(k/2)$, the equation $F(\lambda)=\nu$ has only two solutions
$\lambda=\alpha$ and $\lambda=k-\alpha$. This completes the proof of Statement~(B) of the Main Theorem, p.~\pageref{Basic-theorem}.
\end{proof}

\begin{proof}[\textbf{Proof of Statement (C), p.~\pageref{sphe-pro-liuville-statement}.}]

Let $F(\alpha)=\int\limits_{S^k} \omega^\alpha dS_y$, where $\alpha=u+iv$. Then, the statement (C) can be obtained as a consequence of the following Picard's Great Theorem.

\begin{theorem}[Picard's Great Theorem]\label{Picard-Great-theorem}
 Let $c\in\mathbf{C}$ be an isolated essential singularity of $f$. Then, in every neighborhood of $c$, $f$ assumes every complex number as a value with at most one exception infinitely many times, see \cite{Reinhold}, (p.240).
\end{theorem}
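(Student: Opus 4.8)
The plan is to deduce Picard's Great Theorem from \emph{Montel's Fundamental Normality Test}: any family of holomorphic functions on a domain, each omitting two common distinct values, is normal in the spherical sense, so that every sequence in the family has a subsequence converging locally uniformly either to a holomorphic function or to the constant $\infty$. I would take this normality test as the deep input. (It is itself proved by lifting an omitting function through the modular covering $\lambda:\mathbf{H}\to\mathbf{C}\setminus\{0,1\}$ and applying the Schwarz--Pick lemma, or equivalently via Zalcman's rescaling lemma, or via Schottky's estimate.) Granting the normality test, everything that follows is an elementary localization plus a maximum-modulus argument.

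First I would reformulate the conclusion in contrapositive form. It suffices to prove: if $f$ is holomorphic on a punctured disk $\{0<|z|<r\}$ with an isolated essential singularity at the center, then $f$ cannot omit two distinct complex values there. Indeed, if $f$ attained two distinct values $a\neq b$ each only finitely often in every punctured neighborhood of the essential singularity $c$, then on a sufficiently small punctured neighborhood $f$ would omit both $a$ and $b$. After translating $c$ to $0$ and post-composing $f$ with an affine map, I may assume the two omitted values are $0$ and $1$ and that $c=0$.

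Then comes the rescaling step. On the fixed annulus $A=\{1/2<|z|<2\}$ define $g_n(z)=f(2^{-n}z)$; this is holomorphic and omits $0$ and $1$ whenever $2^{-n+1}<r$. By the normality test, some subsequence $g_{n_k}$ converges locally uniformly on $A$, and since $A$ is connected the limit is either a holomorphic function $g$ or the constant $\infty$. In the first case the $g_{n_k}$ are eventually uniformly bounded on the compact circle $\gamma=\{|z|=1\}\subset A$, say $|g_{n_k}|\le M$ on $\gamma$, which says exactly that $|f|\le M$ on each circle $\{|z|=2^{-n_k}\}$. Since $f$ is holomorphic (hence has no pole) on each closed annulus $\{2^{-n_{k+1}}\le|z|\le2^{-n_k}\}$, and these closed annuli exhaust the punctured disk $\{0<|z|\le2^{-n_1}\}$, the maximum-modulus principle forces $|f|\le M$ throughout that punctured disk; by Riemann's removable-singularity theorem $0$ is then a removable singularity, contradicting that it is essential. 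In the second case, $1/f$ is holomorphic on the punctured disk (because $f$ omits $0$) and the same argument gives $|1/f|\le 1$ near $0$, so $1/f$ extends holomorphically and $f$ has at worst a pole at $0$ --- again not an essential singularity. Either way we obtain a contradiction, which proves that $f$ cannot omit two values near an essential singularity, and hence the theorem.

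The main obstacle is clearly the Fundamental Normality Test; the argument above contributes only routine bookkeeping. If a self-contained treatment is wanted, I would instead first establish Schottky's inequality (a holomorphic function on the unit disk with $f(0)=a\notin\{0,1\}$ omitting $0$ and $1$ satisfies $|f(z)|\le C(a,|z|)$) using Bloch's theorem and the modular function, deduce Montel's normality test from it, and then run the localization-and-maximum-modulus argument unchanged.
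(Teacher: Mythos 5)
The paper does not prove this statement at all: Picard's Great Theorem is quoted as a classical result with a pointer to Remmert's book and is then used as a black box in the proof of Statement~(C) of the Main Theorem, so there is no internal argument to compare yours against. Your sketch is the standard proof and it is sound: the reduction to showing that a function holomorphic near an isolated essential singularity cannot omit two finite values there, the rescaled family $g_n(z)=f(2^{-n}z)$ on the fixed annulus $\{1/2<|z|<2\}$, the dichotomy (holomorphic limit versus the constant $\infty$) supplied by Montel's Fundamental Normality Test, and the maximum-modulus argument on the closed annuli between the circles $|z|=2^{-n_k}$ (applied to $f$ in the first case and to $1/f$ in the second, legitimate since $f$ omits $0$) followed by Riemann's removable-singularity theorem, are exactly the classical route --- essentially the one in the cited source, where the heavy lifting is done by Bloch/Schottky/Montel. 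One small quibble: the negation of the theorem's conclusion gives two values each assumed only finitely often in \emph{some} punctured neighborhood, not in every one; that is what you actually use when you shrink the neighborhood so that both values are omitted, so this is a wording slip rather than a gap. As you say yourself, all the depth sits in the Fundamental Normality Test, which you take as input; within the conventions of this paper (which cites the entire theorem) that is acceptable, but a genuinely self-contained proof would require carrying out the Schottky/Bloch or modular-function argument you only outline.
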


Indeed, the direct computation shows that $F$ is an entire function, which means that $\infty$ is an isolated singularity of $F$.
Note that the Taylor decomposition presented in \eqref{Taylor-Decomposition}, p.~\pageref{Taylor-Decomposition} shows that $\infty$ is the essential singularity for $F(\alpha)$, since all even Taylor coefficients are positive.

It is clear also, we can not meet an exception mentioned in Picard's theorem above, since we are looking for complex numbers $\lambda$, satisfying $F(\lambda)=F(\alpha)$ for some $\alpha$. This means that the value $F(\alpha)$ is already assumed by $F$, and then, by Picard's Great Theorem, $F$ must attain this value infinitely many times in every neighborhood of an isolated essentially singular point, i.e., at every neighborhood of $\infty$, in our case. This completes the proof of Statement~(C).
\end{proof}

\begin{proof}[\textbf{Proof of Statement (D), p.~\pageref{sphe-pro-inverse-complex-statement}.}]\label{Level-Curve-Story-Proof}

Denote
\begin{equation}\label{E-basic-proof-1}
    \Upsilon=\Upsilon(x)=\min\left\{\frac{R}{|x|},\frac{|x|}{R}\right\}.
\end{equation}
Recall that $R$ and $x\notin S^k(R)$ are fixed. Then,
\begin{equation}\label{E-basic-proof-2}
    \omega(x,y)=\omega(\theta)=\frac{1-\Upsilon^2}{1+\Upsilon^2+2\Upsilon\cos\theta}\,,
\end{equation}
where $\theta=\pi-\angle xOy$, $\theta\in[0,\pi]$ and it is clear that $\Upsilon<1$ for every possible $x\notin S^k(R)$. Observe also that $\omega(\theta)$ is increasing while $\theta\in[0,\pi]$. Therefore,
\begin{equation}\label{E-basic-proof-3}
    \frac{1-\Upsilon}{1+\Upsilon}\leq\omega(\theta)\leq\frac{1+\Upsilon}{1-\Upsilon}.
\end{equation}
Let $p\geq 0$ be an arbitrary non-negative number such that
\begin{equation}\label{introduction-of-p}
    \max\{|\Im(\alpha)|, |\Im(\beta)|\}\leq p \leq \left.\frac\pi2\right/\ln\frac{R+r}{|R-r|}=\left.\frac\pi2\right/\ln\frac{1+\Upsilon}{1-\Upsilon}\,.
\end{equation}
The direct computations show that the second inequality in~\eqref{introduction-of-p}, in itself, implies
\begin{equation}\label{E-basic-proof-4}
    e^{-\pi/2p}\leq\frac{1-\Upsilon}{1+\Upsilon}\quad\text{and}\quad
    e^{\pi/2p}\geq\frac{1+\Upsilon}{1-\Upsilon}
\end{equation}
Therefore, combining \eqref{E-basic-proof-3} and \eqref{E-basic-proof-4}, we have
\begin{equation}\label{E-basic-proof-5}
    e^{-\pi/2p}\leq\frac{1-\Upsilon}{1+\Upsilon}\leq\omega(\theta)\leq
    \frac{1+\Upsilon}{1-\Upsilon}\leq e^{\pi/2p}\,,
\end{equation}
or, equivalently,
\begin{equation}\label{E-basic-proof-6}
    -\frac{\pi}{2}\leq p\ln\omega\leq\frac{\pi}{2}\quad\text{or}\quad
    p\ln\omega\in\left[-\frac{\pi}{2},\frac{\pi}{2}\right]\,.
\end{equation}
If $\alpha=\xi+i\zeta$ and $|\zeta|\leq p$, then, clearly, \eqref{E-basic-proof-6} implies
\begin{equation}\label{E-basic-proof-7}
    -\frac{\pi}{2}\leq \zeta\ln\omega\leq\frac{\pi}{2}\quad\text{or}\quad
    \zeta\ln\omega\in\left[-\frac{\pi}{2},\frac{\pi}{2}\right]\,.
\end{equation}
Hence,
\begin{equation}\label{cosine-positivenes}
\cos(\zeta\ln\omega(\theta))\geq 0 \quad\text{for every}\quad \zeta\in[-p,p]
\end{equation}
and for every $\theta\in[0,\pi]$. Fix a number $p>0$ satisfying \eqref{cosine-positivenes} or, equivalently~\eqref{Sphe-Pro-Statement-5-1}, p.~\pageref{Sphe-Pro-Statement-5-1} and introduce the set
\begin{equation}\label{Strip-definition}
    \Xi=\{\xi+i\zeta\mid-p\leq\zeta\leq p\}\,.
\end{equation}
Consider the function
\begin{equation}\label{Patch-6}
    F(\alpha)=\int\limits_{S^k} \omega^\xi\cos(\zeta\ln\omega) dS_y +
    i\int\limits_{S^k} \omega^\xi\sin(\zeta\ln\omega) dS_y \,.
\end{equation}
Note now that Statement~(E) will be proven if we show that for any $\beta=a+ib\in\Xi$, the equation $F(\alpha)=F(\beta)$ has only two solutions in $\Xi$, i.e., $\alpha=\beta$ and $\alpha=k-\beta$. To accomplish this goal let us split our argument into the following four steps.
\begin{description}
  \item[Step 1.] Fix any $\beta=a+ib\in\Xi$ and set the equation $F(\alpha)=F(\beta)$ as the following system of two equations
      \begin{equation}\label{system_for_real_Imaginary}
        \Re F(\xi,\zeta)=\Re F(\beta)\quad\text{and}\quad \Im F(\xi,\zeta)=\Im F(\beta).
      \end{equation}
           Then, we establish a symmetry property for $\Re F(\xi,\zeta)$ and $\Im F(\xi,\zeta)$. We shall see that $\Re F(\xi,\zeta)$ is symmetric with respect to the two lines $\xi=k/2$ and $\zeta=0$, while $\Im F(\xi,\zeta)$ is skew-symmetric with respect to the same lines. This step is described on p.~\pageref{Step_curve_1}.
  \item[Step 2.] Here we describe all solutions inside $\Xi$ for the equation $\Re F(\xi,\zeta)=\Re F(\beta)$. Because of the symmetry mentioned in Step 1 it is enough to describe the set of solutions in the first quadrant $[k/2,\infty)\times[0,p]$. The description will be presented in Proposition~\ref{Level-curve-description}, page~\pageref{Level-curve-description}.

   %   This Lemma implies that the set of all solutions in $\Xi$ for $\Re F(\xi,\zeta)=\Re F(\beta)$ is the union of two non-intersecting $\mathcal{C}^{\infty}$ curves if $(k/2,0)$ is not a solution. If $(k/2,0)$ is a solution then the set of all solutions is the union of two intersecting $\mathcal{C}^1$ curves. We call these curves the level curves. All of these possible level curves are pictured on Figure~\ref{Level-Curves-fig}, p.~\pageref{Level-Curves-fig}.

  \item[Step 3.] Here we shall study the behavior of $\Im F(\xi,\zeta)$ along any of the $\mathcal{C}^1$ curves from Step 2. Proposition~\ref{uniquiness-total-in-first-quadrant}, p.~\pageref{uniquiness-total-in-first-quadrant} shows that $\Im F(\xi,\zeta)$ is a strictly increasing function along any of the level curves inside the first quadrant $[k/2,\infty)\times[0,p]$, which implies that the system \eqref{system_for_real_Imaginary} has the unique solution in the first quadrant for any $\beta\in[k/2,\infty)\times[0,p]$, i.e., the solution is $\beta$ itself. This step is carried out on p.~\pageref{Step_curve_3}.
  \item[Step 4.] Here we study the behavior of $\Im F(\xi,\zeta)$ in the whole strip $\Xi$. We shall see that for every $\beta\in[k/2,\infty)\times[0,p]$ there exist only two solutions of the system \eqref{system_for_real_Imaginary}, i.e., $\alpha_1=\xi+i\zeta=\beta$ and $\alpha_2=k-\beta$. The same argument will show that for every $\beta\in\Xi$ there are at most two possible solutions of system~\eqref{system_for_real_Imaginary}, i.e., $\alpha_1=\beta$ and $\alpha_2=k-\beta$. This step is carried out on p.~\pageref{Step_curve_4}.
\end{description}
Now let us follow this plan described in the Steps 1-4 above.

\underline{\textbf{Step 1.}}\label{Step_curve_1} It is convenient to introduce the notation for the real and the imaginary parts of $F(\alpha)=F(\xi+i\zeta)=F(\xi,\zeta)$. Since $R=|y|$ and $x\notin S^k(R)$ are fixed, we can denote
\begin{equation}\label{Definition-W}
    W(\alpha)=W(\xi+i\zeta)=W(\xi,\zeta)=
    \int\limits_{S^k} \omega^\xi\cos(\zeta\ln\omega) dS_y
\end{equation}
and similarly,
\begin{equation}\label{Definition-I}
    I(\alpha)=I(\xi+i\zeta)=I(\xi,\zeta)=
    \int\limits_{S^k} \omega^\xi\sin(\zeta\ln\omega) dS_y \,.
\end{equation}
Fix some $\beta=a+ib\in\Xi$ and rewrite system \eqref{system_for_real_Imaginary} in the new notation.
\begin{equation}\label{system_for_real_Ima_new_notation}
    W(\xi,\zeta)=W(a,b)\quad\text{and}\quad I(\xi,\zeta)=I(a,b)\,,
\end{equation}
which is going to be solved. As we saw in Statement (A), $F(\alpha)=F(k-\alpha)$ for every $\alpha=\xi+i\zeta\in \mathbb{C}$ and then, the direct computation yields
\begin{equation}\label{Symmetry-Reations}
\begin{split}
    & W(\xi+i\zeta)=W(k-\xi,-\zeta)=W(\xi,-\zeta)=W(k-\xi,\zeta) \quad\text{and}
    \\& I(\xi+i\zeta)=I(k-\xi,-\zeta)=-I(\xi, -\zeta)=-I(k-\xi,\zeta)\,,
\end{split}
\end{equation}
which pictured on the diagram below, see Figure~\ref{Re-Symmetry}.

\begin{figure}[!h]
    \centering
    \epsfig{figure=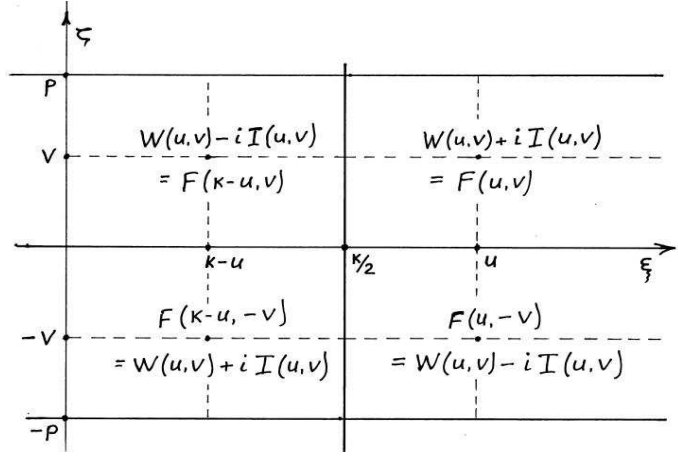,height=6cm,width=9cm}\\
    \caption{The symmetry of $\Re$.}\label{Re-Symmetry}
\end{figure}

The symmetry relations presented in \eqref{Symmetry-Reations} imply that $W(u+iv)$ is symmetric with respect to the two perpendicular lines $\xi=k/2$ and $\zeta=0$ in the plane $(\xi,\zeta)$. Similarly, $I(\xi,\zeta)$ is symmetric with respect to point $(k/2,0)$ and skew-symmetric with respect to the lines $\zeta=0$ and $\xi=k/2$.

\underline{\textbf{Step 2.}}\label{Step_curve_2} Because of the symmetry, without loss of generality, we may assume that $\beta=a+ib\in\{[k/2,\infty)\times[0,p]\}$, which is the upper-right quadrant. Now we are ready to solve the equation $F(\xi+i\zeta)=F(\beta)$. First we need to describe all solutions for the real parts of this equation, i.e., $\Re F(\xi,\zeta)=\Re F(\beta)$.
Let us define two quadrants $Q$, $\overline{Q}$ and the level set $S(a,b)$ by
\begin{equation}\label{Definition-Q-S(a,b)}
\begin{split}
    & Q=(k/2,\infty)\times(0,p)\,,\quad \overline{Q}=[k/2, \infty)\times[0,p],
    \\& S(a,b)=\{(\xi,\zeta)\in\overline{Q}\mid W(\xi,\zeta)=W(a,b)\}\,.
\end{split}
\end{equation}
It is clear that $\overline{Q}$ is the closure of $Q$.
The following proposition describes all possible types of such level sets.

\begin{proposition}[Level curves description]\label{Level-curve-description} \
    \begin{enumerate}
      \item For every fixed $\beta=a+ib\in\overline{Q}$ the set of solutions $S(a,b)$ for the equation
        \begin{equation}
            W(\xi,\zeta)=\int\limits_{S^k}\omega^\xi\cos(\zeta\ln\omega)dS_y=W(a,b)
        \end{equation}
        is a continuous curve $\gamma_{(a,b)}(t)=\gamma(t)=(t,v(t))\subseteq{\overline{Q}}$.
      \item The function $v(t)$ is $\mathcal{C}^{\infty}$ and $v'(t)>0$ for every point $t$ such that $(t,v(t))\subseteq{Q}$.
      \item No two of these curves have common points in $\overline{Q}$.
    \end{enumerate}
\end{proposition}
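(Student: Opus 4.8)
The plan is to treat $S(a,b)$ as a level set of $W$ and run an implicit-function-theorem argument, with everything resting on two monotonicity facts: $\partial_\zeta W<0$ on $Q$ and $\partial_\xi W>0$ on $\{\xi>k/2\}\cap\overline{Q}$. First I would justify differentiating \eqref{Definition-W} under the integral sign — legitimate by the Leibniz rule exactly as in the proof of Statement~(B), since $\omega$ is bounded away from $0$ and $\infty$ on $S^k$ — to obtain
\[\partial_\zeta W=-\int\limits_{S^k}\omega^\xi(\ln\omega)\sin(\zeta\ln\omega)\,dS_y,\qquad \partial_\xi W=\int\limits_{S^k}\omega^\xi(\ln\omega)\cos(\zeta\ln\omega)\,dS_y.\]
On $Q$ we have $\zeta>0$ and $|\zeta\ln\omega|\le\pi/2<\pi$ by \eqref{cosine-positivenes}, so $(\ln\omega)\sin(\zeta\ln\omega)\ge0$ with strict inequality off the measure-zero set $\{\omega=1\}$; hence $\partial_\zeta W<0$ throughout $Q$.

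The sign of $\partial_\xi W$ is the main obstacle, since $\ln\omega$ changes sign on $S^k$. Here I would reuse the symmetrization from the proof of Statement~(A): the chain \eqref{A-SphePro-Proof-1} shows (replace $(l,q)$ by $(q,l)$ in \eqref{VarExchange-2} and use $l^k=\omega^kq^k$) that $\int_{S^k}h(\omega)\,dS_y=\int_{S^k}h(1/\omega)\,\omega^k\,dS_y$ for every integrable $h$. Applying this with $h(\omega)=\omega^\xi(\ln\omega)\cos(\zeta\ln\omega)$ and averaging the two resulting expressions for $\partial_\xi W$ gives
\[\partial_\xi W=\int\limits_{S^k}\omega^{k/2}\,\sinh\big((\xi-\tfrac{k}{2})\ln\omega\big)\,(\ln\omega)\,\cos(\zeta\ln\omega)\,dS_y.\]
For $\xi>k/2$ the factor $\sinh\big((\xi-\tfrac{k}{2})\ln\omega\big)\cdot\ln\omega$ has the same sign as $\ln\omega$ (vanishing only where $\omega=1$) and $\cos(\zeta\ln\omega)\ge0$ again by \eqref{cosine-positivenes}, so $\partial_\xi W>0$ on $\{\xi>k/2\}\cap\overline{Q}$. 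In particular $W(\cdot,0)=F$ and $W(\cdot,p)$ are continuous and strictly increasing on $[k/2,\infty)$, both tending to $+\infty$ (the part of the integral over $\{\omega>1\}$ already diverges), with $W(\xi,p)<W(\xi,0)$ for all $\xi\ge k/2$.

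With these two facts the proposition follows by bookkeeping. Fix $\beta=(a,b)\in\overline{Q}$ and put $c=W(a,b)$. Since $\partial_\zeta W<0$, the map $\zeta\mapsto W(\xi,\zeta)$ is strictly decreasing on $[0,p]$ for each $\xi\ge k/2$, so $W(\xi,\zeta)=c$ has at most one root $\zeta=v(\xi)$, which exists exactly when $W(\xi,p)\le c\le W(\xi,0)=F(\xi)$. By the monotonicity and limits just recorded, this set of $\xi$ is a closed interval $[\xi_0,\xi_1]$ with $k/2\le\xi_0\le a\le\xi_1<\infty$ (it contains $a$ since $W(a,p)\le W(a,b)=c\le W(a,0)$; it degenerates to a point only in the borderline case $\beta=(k/2,p)$, the unique minimizer of $W$ on $\overline{Q}$, which is still a one-point "graph"). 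Thus $S(a,b)=\{(t,v(t)):t\in[\xi_0,\xi_1]\}$, and one checks that $(t,v(t))\in Q$ precisely for $t\in(\xi_0,\xi_1)$: there $v(t)<p$ because $W(\cdot,p)$ is strictly increasing with $W(\xi_1,p)=c$, and $v(t)>0$ because $F$ is strictly increasing with $F(\xi_0)=c$ or else $\xi_0=k/2$. On $(\xi_0,\xi_1)$ one has $\xi>k/2$, so $\partial_\zeta W\ne0$ there and the implicit function theorem renders $v$ of class $\mathcal{C}^\infty$ with $v'=-\partial_\xi W/\partial_\zeta W>0$; continuity of $v$ at the endpoints $\xi_0,\xi_1$ follows from monotonicity together with continuity of $W$ and uniqueness of $v(\xi)$. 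This proves parts (1) and (2).

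Part (3) is then immediate: if $\gamma_{(a,b)}$ and $\gamma_{(a',b')}$ shared a point $(\xi^*,\zeta^*)\in\overline{Q}$, then $W(a,b)=W(\xi^*,\zeta^*)=W(a',b')$, so the two level sets — hence the two curves — coincide. The only genuinely delicate step is the positivity of $\partial_\xi W$ via the $\omega\mapsto1/\omega$ symmetrization; once that is in hand, the rest is monotone-function bookkeeping and a routine application of the implicit function theorem.
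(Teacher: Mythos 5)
Your proof is correct and follows the same overall strategy as the paper: establish $\partial W/\partial\xi>0$ for $\xi>k/2$ and $\partial W/\partial\zeta<0$ for $\zeta>0$, realize the level set as the graph of a function $v$ over a closed interval, and apply the Implicit Function Theorem to get smoothness and $v'>0$, with part (3) immediate from the level-set formulation. The only real divergence is in how the two sign conditions are obtained. The paper derives them from the symmetry relations \eqref{Symmetry-Reations} (so the first partials vanish on $\xi=k/2$ and on $\zeta=0$) combined with the second-derivative computations \eqref{horizontal-second-derivative} and \eqref{vertical-second-derivative}, whose signs come from \eqref{cosine-positivenes}; you instead work with the first derivatives directly, getting $\partial W/\partial\zeta<0$ from the signum relation for $\ln\omega\,\sin(\zeta\ln\omega)$ and $\partial W/\partial\xi>0$ by symmetrizing the integrand via $\omega\mapsto 1/\omega$ (the $l\leftrightarrow q$ exchange underlying Statement (A)), which yields your $\sinh\bigl((\xi-\tfrac k2)\ln\omega\bigr)$ formula. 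The two derivations are equivalent in substance --- your $\sinh$ identity is exactly what one obtains by differentiating the symmetry $W(\xi,\zeta)=W(k-\xi,\zeta)$ under the integral sign --- but yours avoids second derivatives and makes strict positivity visible pointwise, up to the measure-zero sets $\{\omega=1\}$ and $\{|\zeta\ln\omega|=\pi/2\}$, which you correctly discard. One cosmetic slip: the product $\sinh\bigl((\xi-\tfrac k2)\ln\omega\bigr)\cdot\ln\omega$ is nonnegative rather than ``of the same sign as $\ln\omega$'', but the conclusion is unaffected. The remaining bookkeeping (your interval $[\xi_0,\xi_1]$ versus the paper's $[a_0,a_1]$ located via the boundary curves $\Gamma_0,\Gamma_1$, continuity of $v$ at the endpoints from monotonicity, the degenerate corner case, and the triviality of part (3)) matches the paper's argument, including the formula $v'=W_\xi/(-W_\zeta)$ as in \eqref{derivative-v-by-Implicit-FT}.
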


\begin{remark}
    The more detailed description of the level curves will be given in Appendix by Theorem~\ref{Level-curve-descri-Appendix}, p.~\pageref{Level-curve-descri-Appendix}, where we shall see that if a level curve is started at the corner $C=(k/2,0)$, then it bisects the corner and if a level curve is started at the lower or at the left edge of $\overline{Q}$, then it must be perpendicular to the edge, see Figure~\ref{Level-Curves-fig}, p.~\pageref{Level-Curves-fig}.
\end{remark}

\begin{proof}[\textbf{Proof of the Proposition \ref{Level-curve-description}}]

First, we need to analyze the behavior of partial derivatives of $W(\xi,\zeta)$ in $\overline{Q}$.
\begin{claim}\label{Partial-dW-d-xi-claim}
    \begin{equation}\label{Partial-dW-d-xi}
    \begin{split}
        & \frac{\partial W(\xi,\zeta)}{\partial\xi}>0\quad\text{for every}\quad(\xi,\zeta)\in
        \left(\frac{k}{2},\infty\right)\times[0,p]\,;
        \\&  \frac{\partial W(\xi,\zeta)}{\partial\xi}=0\quad\text{if}\quad\xi=
        \frac{k}{2}\quad\text{and}\quad\zeta\in[0,p]\,.
    \end{split}
    \end{equation}
\end{claim}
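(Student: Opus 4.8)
The plan is to differentiate $W(\xi,\zeta)=\int_{S^k}\omega^\xi\cos(\zeta\ln\omega)\,dS_y$ under the integral sign (justified by Leibniz's rule exactly as in the proof of Statement (B)) to get
$$\frac{\partial W}{\partial\xi}=\int_{S^k}\omega^\xi(\ln\omega)\cos(\zeta\ln\omega)\,dS_y.$$
The issue is that $\ln\omega$ changes sign on $S^k$ (since $\omega$ ranges over an interval containing $1$), so the integrand is not of one sign and one cannot conclude positivity directly. The key idea I would use is the same symmetrization that powered the proof of Statement (A): pass from the sphere $S^k(R)$ to the unit sphere $\Sigma$ centered at $x$ via the change of variables in Lemma~\ref{VarExchange-Lemma}, writing the integral as $\int_\Sigma(\ldots)\,\frac{2R}{l+q}q^k\,d\Sigma$, and then exploit the $l\leftrightarrow q$ symmetry \eqref{VarExchange-2}. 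Since $\omega=l/q$, replacing $(l,q)$ by $(q,l)$ sends $\omega\mapsto 1/\omega$, hence $\ln\omega\mapsto -\ln\omega$ while $\cos(\zeta\ln\omega)$ is unchanged.

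Concretely, I would write $\frac{\partial W}{\partial\xi}=\int_\Sigma\bigl(\tfrac lq\bigr)^\xi(\ln\tfrac lq)\cos(\zeta\ln\tfrac lq)\,\tfrac{2R}{l+q}q^k\,d\Sigma$, apply \eqref{VarExchange-2} to symmetrize, and average the two expressions to obtain
$$\frac{\partial W}{\partial\xi}=\int_\Sigma \Bigl[\Bigl(\tfrac lq\Bigr)^\xi q^k-\Bigl(\tfrac lq\Bigr)^{k-\xi}q^k\Bigr]\,\frac{R}{l+q}\,(\ln\tfrac lq)\cos(\zeta\ln\tfrac lq)\,d\Sigma$$
(up to bookkeeping on which variable the symmetrized $q^k$ attaches to; the point is that after symmetrizing one gets a factor like $\omega^\xi-\omega^{k-\xi}$ times $q^k$ — or more cleanly $l^{k/2}q^{k/2}(\omega^{\xi-k/2}-\omega^{k/2-\xi})$ — times $\ln\omega\cdot\cos(\zeta\ln\omega)$). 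Now observe that for $\xi>k/2$ the quantity $\omega^{\xi-k/2}-\omega^{k/2-\xi}=2\sinh\bigl((\xi-\tfrac k2)\ln\omega\bigr)$ has the same sign as $\ln\omega$, so the product $(\omega^{\xi-k/2}-\omega^{k/2-\xi})\ln\omega\geq 0$ everywhere, with strict inequality wherever $\ln\omega\neq 0$. Finally, $\cos(\zeta\ln\omega)\geq 0$ on all of $S^k$ by \eqref{cosine-positivenes}, since $\zeta\in[0,p]$. Hence the integrand is nonnegative and not identically zero, giving $\partial W/\partial\xi>0$ strictly when $\xi>k/2$.

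For the second assertion, when $\xi=k/2$ the bracket $\omega^{\xi-k/2}-\omega^{k/2-\xi}$ vanishes identically, so the symmetrized integrand is identically zero and $\partial W/\partial\xi=0$; alternatively this is immediate from the symmetry relation $W(\xi,\zeta)=W(k-\xi,\zeta)$ in \eqref{Symmetry-Reations}, which forces $\xi=k/2$ to be a critical point of $\xi\mapsto W(\xi,\zeta)$ for each fixed $\zeta$. The main obstacle is purely organizational: carrying out the symmetrization cleanly so that the $q^k$ weight and the $\frac{2R}{l+q}$ factor are handled correctly when $l$ and $q$ are swapped — the natural fix is to factor the weight symmetrically as $(lq)^{k/2}\cdot\frac{2R}{l+q}$ before symmetrizing, after which the argument above applies verbatim and the sign of $\cos(\zeta\ln\omega)$ via \eqref{cosine-positivenes} closes the estimate.
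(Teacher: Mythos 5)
Your argument is correct, but it is not the route the paper takes. The paper's proof is a two-line convexity argument: the symmetry $W(\xi,\zeta)=W(k-\xi,\zeta)$ from \eqref{Symmetry-Reations} forces $\partial W/\partial\xi=0$ on the line $\xi=k/2$, and then $\partial^2 W/\partial\xi^2=\int_{S^k}\omega^{\xi}(\ln\omega)^2\cos(\zeta\ln\omega)\,dS_y>0$ on the strip (using \eqref{cosine-positivenes}) makes $\xi\mapsto W(\xi,\zeta)$ strictly convex, so the first derivative is strictly positive for $\xi>k/2$. You instead work directly with the first derivative, pulling the integral back to $\Sigma$ and symmetrizing with the $l\leftrightarrow q$ exchange \eqref{VarExchange-2}; your bookkeeping is in fact consistent, since $(l/q)^{-\xi}l^k=(l/q)^{k-\xi}q^k$, and the averaged integrand $(lq)^{k/2}\,\frac{R}{l+q}\cdot 2\sinh\bigl((\xi-\tfrac k2)\ln\omega\bigr)\ln\omega\,\cos(\zeta\ln\omega)$ is nonnegative by \eqref{cosine-positivenes} and strictly positive off the measure-zero set where $\omega=1$ or $\cos(\zeta\ln\omega)=0$, which (by continuity) gives strict positivity of the integral — a point you should state slightly more carefully than ``not identically zero.'' In effect you redeploy the same symmetry that underlies Statement (A) at the level of the derivative, which buys an explicit signed formula for $\partial W/\partial\xi$ and gives the vanishing at $\xi=k/2$ from the same expression; the paper's version buys brevity, since it reuses the already-established identity $W(\xi,\zeta)=W(k-\xi,\zeta)$ and only needs the (trivially signed) second derivative. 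Both hinge on the same hypothesis \eqref{cosine-positivenes}, so neither has wider scope than the other.
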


\begin{proof}[Proof of the claim \ref{Partial-dW-d-xi-claim}]
    Note that $W(\xi,\zeta)$ is $\mathcal{C}^{\infty}$ function of both variables and, according to \eqref{Symmetry-Reations}, $W(\xi,\zeta)$ is symmetric with respect to $\xi=k/2$ as a function of $\xi$ for every $\zeta$. This implies that
    \begin{equation}\label{horizontal-derivative-zero}
        \frac{\partial W(\xi,\zeta)}{\partial\xi}=0\quad\text{for}\quad\xi=
        \frac{k}{2}\quad\text{and for every}\quad\zeta\,.
    \end{equation}
    Note also that according to \eqref{cosine-positivenes}, p.~\pageref{cosine-positivenes},
    \begin{equation}\label{horizontal-second-derivative}
        \frac{\partial^2 W(\xi,\zeta)}{\partial\xi^2}=
        \int\limits_{S^k}\omega^{\xi}(\ln\omega)^2\cos(\zeta\ln\omega)dS_y>0
    \end{equation}
    for every $(\xi,\zeta)\in\Xi$ defined in \eqref{Strip-definition}. It is clear that \eqref{horizontal-derivative-zero} and \eqref{horizontal-second-derivative} imply \eqref{Partial-dW-d-xi}. This completes the proof of Claim~\ref{Partial-dW-d-xi-claim}.
    \end{proof}

    \begin{claim}\label{partial-vertical-claim}
      \begin{equation}\label{partial-vertical}
      \begin{split}
        & \frac{\partial W(\xi,\zeta)}{\partial\zeta}<0\quad\text{for every}\quad(\xi,\zeta)\in
        \left[\frac{k}{2},\infty\right)\times(0,p]\,;
        \\&  \frac{\partial W(\xi,\zeta)}{\partial\zeta}=0\quad\text{if}\quad\zeta=0
        \quad\text{and}\quad\xi\in[k/2, \infty]\,.
      \end{split}
      \end{equation}
    \end{claim}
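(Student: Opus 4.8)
\textbf{Proof proposal for Claim~\ref{partial-vertical-claim}.}

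The plan is to differentiate $W(\xi,\zeta)$ under the integral sign with respect to $\zeta$, which is legitimate by the same Leibnitz-rule argument already invoked for $F(\lambda)$ in the proof of Statement~(B), and to obtain
\begin{equation}\label{dW-d-zeta-formula}
    \frac{\partial W(\xi,\zeta)}{\partial\zeta}=-\int\limits_{S^k}\omega^{\xi}(\ln\omega)\sin(\zeta\ln\omega)\,dS_y\,.
\end{equation}
From this formula the second assertion in \eqref{partial-vertical} is immediate: when $\zeta=0$ the integrand vanishes identically for every $\xi$, so the derivative is $0$. The whole content of the claim is therefore the strict negativity of the integral in \eqref{dW-d-zeta-formula} for $(\xi,\zeta)\in[k/2,\infty)\times(0,p]$.

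For the strict inequality the key observation is a sign analysis of the integrand $g(\omega)=\omega^{\xi}(\ln\omega)\sin(\zeta\ln\omega)$, viewed as a function of $\omega$ ranging over the interval $[\,(1-\Upsilon)/(1+\Upsilon),\,(1+\Upsilon)/(1-\Upsilon)\,]$ established in \eqref{E-basic-proof-3}. Write $t=\ln\omega$, so $t$ ranges over $[-L,L]$ where $L=\ln\frac{1+\Upsilon}{1-\Upsilon}$, and the constraint \eqref{E-basic-proof-6}–\eqref{E-basic-proof-7} gives $\zeta t\in[-\pi/2,\pi/2]$ for $\zeta\in[0,p]$. On this range $\sin(\zeta t)$ has the same sign as $t$ (it is an odd function of $t$, nonneg. on $[0,\pi/(2\zeta)]\supseteq[0,L]$), so the product $t\sin(\zeta t)\geq 0$ with equality only at $t=0$, i.e. $\omega=1$. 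Since $\omega^{\xi}>0$ always, the integrand $g(\omega)\geq 0$ pointwise, and it is strictly positive except on the set where $\omega(x,y)=1$. Thus the integral in \eqref{dW-d-zeta-formula} is $\geq 0$, hence $\partial W/\partial\zeta\leq 0$; to upgrade to strict negativity one notes that, by \eqref{E-basic-proof-2}, $\omega(\theta)=1$ holds only for the single value $\theta=\theta_0$ with $\cos\theta_0=-\Upsilon$ (a measure-zero subset of $S^k$, or not attained at all depending on the fixed $x$), so the nonnegative integrand is positive on a set of positive measure and the integral is strictly positive. This gives $\partial W/\partial\zeta<0$ on $[k/2,\infty)\times(0,p]$, as claimed; note that the restriction $\xi\geq k/2$ is not actually needed for this particular claim, but it is harmless to state it that way to match the use in Proposition~\ref{Level-curve-description}.

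The main obstacle is purely the sign bookkeeping: one must be careful that $\zeta=p$ is allowed, and at $\zeta=p$ the quantity $\zeta t$ can reach exactly $\pm\pi/2$ at the endpoints $t=\pm L$, where $\sin(\zeta t)=\pm1\neq 0$; this is fine because the product $t\sin(\zeta t)$ is still $>0$ there, and the endpoint case is exactly why \eqref{cosine-positivenes} was arranged with a closed interval. A secondary point worth spelling out is the justification of differentiating under the integral: the integrand and its $\zeta$-derivative are continuous in $(\theta,\zeta)$ on the compact set $[0,\pi]\times[0,p]$, so the hypotheses of the Leibnitz rule cited from \cite{Sokolnikoff} are met, exactly as in the proofs of \eqref{3-r}–\eqref{4-r}. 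Once \eqref{dW-d-zeta-formula} and the pointwise sign of its integrand are in hand, the claim follows with no further computation.
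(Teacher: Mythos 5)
Your argument is correct, but it follows a genuinely different route from the paper's. The paper proves Claim~\ref{partial-vertical-claim} exactly as it proved Claim~\ref{Partial-dW-d-xi-claim}: it uses the symmetry relations \eqref{Symmetry-Reations} to get $\partial W/\partial\zeta=0$ on the line $\zeta=0$, and then the strict concavity in $\zeta$, namely $\partial^2 W/\partial\zeta^2=-\int_{S^k}\omega^{\xi}(\ln\omega)^2\cos(\zeta\ln\omega)\,dS_y<0$ throughout $\Xi$ (this is where \eqref{cosine-positivenes} enters), so that the $\zeta$-derivative, vanishing at $\zeta=0$ and strictly decreasing, is strictly negative for $\zeta\in(0,p]$. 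You instead differentiate once, obtain $\partial W/\partial\zeta=-\int_{S^k}\omega^{\xi}(\ln\omega)\sin(\zeta\ln\omega)\,dS_y$, and settle the sign pointwise: under the constraint $\zeta\ln\omega\in[-\pi/2,\pi/2]$ from \eqref{E-basic-proof-6}, the factor $\sin(\zeta\ln\omega)$ has the same sign as $\ln\omega$, so the integrand is nonnegative and vanishes only on the measure-zero set $\{\omega=1\}$ (a single latitude $\cos\theta_0=-\Upsilon$), giving strict negativity directly. Both proofs rest on the same choice of $p$, used through the cosine in the paper and through the sine in your version; in fact your sign relation is the same one the paper uses later in \eqref{signum-relation-for-sinus} for $I(\xi,\zeta)$ in Step~4. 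What your route buys is a more elementary, one-derivative argument that also makes explicit that the restriction $\xi\geq k/2$ is irrelevant for this claim (as you note), at the small price of having to spell out the measure-zero/positive-measure point, which the paper's concavity argument absorbs into the already-established strict inequality \eqref{vertical-second-derivative}; what the paper's route buys is uniformity, since the identical symmetry-plus-second-derivative template is forced for $\partial W/\partial\xi$ in Claim~\ref{Partial-dW-d-xi-claim}, where a pointwise sign argument on the first derivative would not work. (Both arguments tacitly assume $x\neq 0$, i.e. $\omega\not\equiv 1$, without which neither strict inequality can hold.)
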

    \begin{proof}[Proof of the Claim \ref{partial-vertical-claim}]
        Here we are going to use similar symmetry and second derivative arguments used in the proof of previous Claim. Recall that $W(\xi,\zeta)$ is $\mathcal{C}^{\infty}$ function of both variables and, according to \eqref{Symmetry-Reations}, $W(\xi,\zeta)$ is symmetric with respect to $\zeta=0$ as a function of $\zeta$ for every $\xi$. This implies that
    \begin{equation}\label{vertical-derivative-zero}
        \frac{\partial W(\xi,\zeta)}{\partial\zeta}=0\quad\text{for}\quad\zeta=0
        \quad\text{and for every}\quad\xi\,.
    \end{equation}
    Note also that according to \eqref{cosine-positivenes},
    \begin{equation}\label{vertical-second-derivative}
        \frac{\partial^2 W(\xi,\zeta)}{\partial\zeta^2}=
        -\int\limits_{S^k}\omega^{\xi}(\ln\omega)^2\cos(\zeta\ln\omega)dS_y<0
    \end{equation}
    for every $(\xi,\zeta)\in\Xi$ defined in \eqref{Strip-definition}, p.~\pageref{Strip-definition}. It is clear that \eqref{vertical-derivative-zero} and \eqref{vertical-second-derivative} imply \eqref{partial-vertical}. This completes the proof of Claim~\ref{partial-vertical-claim}.
    \end{proof}

    For the next step we need the following notation for the boundary parts of $\overline{Q}$. With every $\beta=a+ib\in\overline{Q}$ we associate two curves $\Gamma_0(a,b)$ and $\Gamma_1(a,b)$ as shown on the Figure~\ref{Special-boundary-curves} below and defined as follows.
\begin{equation}
\begin{split}
     \Gamma_0(a,b)= & \{(\xi,\zeta)\mid\xi=k/2\,\,\text{and}\,\,\zeta\in[0,b]\}\cup
    \\& \{(\xi,\zeta)\mid\xi\in[k/2,a]\,\,\text{and}\,\,\zeta=0\};
\end{split}
\end{equation}
\begin{equation}
    \Gamma_1(a,b)=\{(\xi,\zeta)\mid \xi\in[a,\infty)\,\,\text{and}\,\,\zeta=p\}\,.
\end{equation}

\begin{figure}[!h]
    \centering
    \epsfig{figure=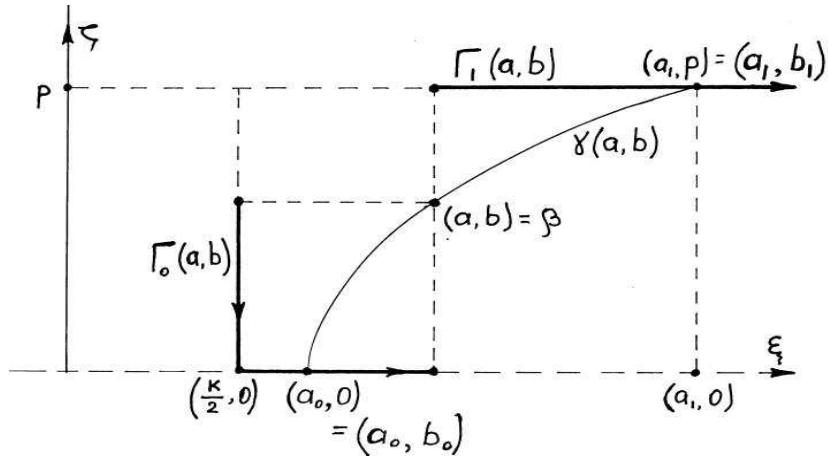,height=6cm,width=11cm}
  \caption{Special boundary curves.}\label{Special-boundary-curves}
\end{figure}

Note then, according to \eqref{Partial-dW-d-xi} and \eqref{partial-vertical}, the value of $W(\xi,\zeta)$ is a continuous and strictly increasing function as the point $(\xi, \zeta)$ runs along $\Gamma_0(a,b)$ from $(k/2,b)$ to $(a,0)$ and
\begin{equation}
    W(k/2,b)\leq W(\xi,\zeta)\leq W(a,0)\quad\text{for every}\,\,(\xi,\zeta)\in\Gamma_0(a,b)\,.
\end{equation}
On the other hand, $W(k/2,b)\leq W(a,b)\leq W(a,0)$.
Therefore, there exists the unique point $(a_0,b_0)\in\Gamma_0$ such that $W(a_0,b_0)=W(a,b)$.

Note also that by \eqref{partial-vertical}, $W(\xi,\zeta)$ is continuous and strictly decreasing along the vertical segment connecting $(a,b)$ and $(a,p)$. Thus, $W(a,p)\leq W(a,b)$. Then, when the point $(\xi,\zeta)$ runs along $\Gamma_1$ from $(a,p)$ to $\infty$, the value of $W(\xi,\zeta)$ is continuously and strictly increasing to $\infty$, according to~\eqref{Partial-dW-d-xi} and \eqref{horizontal-second-derivative}. Therefore, there exists the unique point $(a_1,p)\in\Gamma_1$ such that $W(a_1,p)=W(a,b)$. Clearly, $a_0\leq a\leq a_1$.

Now we are ready to describe the level set $S(a,b)$ for any fixed $(a,b)\in \overline{Q}$.

\begin{claim}\label{Level-set-boundary}
    \begin{equation}\label{rectangle-for-level-set}
        S(a,b)\subseteq[a_0,a_1]\times[0,p]\,.
    \end{equation}
\end{claim}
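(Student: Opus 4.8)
The plan is to read off everything from the sign information on the partial derivatives of $W$ recorded in Claims~\ref{Partial-dW-d-xi-claim} and~\ref{partial-vertical-claim}, together with the defining equalities $W(a_0,b_0)=W(a,b)$ and $W(a_1,p)=W(a,b)$. Since $S(a,b)\subseteq\overline{Q}=[k/2,\infty)\times[0,p]$ holds by definition, the second-coordinate inclusion $\zeta\in[0,p]$ is automatic, and the whole content of the claim is to show $a_0\le\xi\le a_1$ for every $(\xi,\zeta)\in S(a,b)$. I would note once and for all that Claim~\ref{Partial-dW-d-xi-claim} makes $\xi\mapsto W(\xi,\zeta)$ \emph{strictly} increasing on $[k/2,\infty)$ for each fixed $\zeta\in[0,p]$ (the derivative vanishes only at the single endpoint $\xi=k/2$), and Claim~\ref{partial-vertical-claim} makes $\zeta\mapsto W(\xi,\zeta)$ strictly decreasing on $[0,p]$ for each fixed $\xi\ge k/2$; these strict monotonicities are what the argument actually uses.

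For the upper bound I would argue by contradiction. Suppose $(\xi,\zeta)\in S(a,b)$ with $\xi>a_1$. Since $\zeta\le p$ and $W(\xi,\cdot)$ is decreasing, $W(\xi,\zeta)\ge W(\xi,p)$. Since $a_1\ge a\ge k/2$ and $\xi>a_1$, strict monotonicity of $W(\cdot,p)$ on $[k/2,\infty)$ gives $W(\xi,p)>W(a_1,p)=W(a,b)$, the last equality being the definition of $a_1$. Hence $W(\xi,\zeta)>W(a,b)$, contradicting $(\xi,\zeta)\in S(a,b)$; therefore $\xi\le a_1$.

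For the lower bound I would first dispose of the case $a_0=k/2$, where $\xi\ge k/2=a_0$ holds trivially. If $a_0>k/2$, then the unique point $(a_0,b_0)\in\Gamma_0(a,b)$ with $W(a_0,b_0)=W(a,b)$ cannot lie on the vertical leg $\{\xi=k/2\}$ of $\Gamma_0(a,b)$, so it lies on the horizontal leg $\{\zeta=0\}$, forcing $b_0=0$ and $W(a_0,0)=W(a,b)$. Now suppose $(\xi,\zeta)\in S(a,b)$ with $\xi<a_0$. Since $\zeta\ge 0$ and $W(\xi,\cdot)$ is decreasing, $W(\xi,\zeta)\le W(\xi,0)$; since $k/2\le\xi<a_0$ and $W(\cdot,0)$ is strictly increasing on $[k/2,\infty)$, $W(\xi,0)<W(a_0,0)=W(a,b)$. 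Hence $W(\xi,\zeta)<W(a,b)$, again a contradiction, so $\xi\ge a_0$. Combining the two bounds with the automatic $\zeta\in[0,p]$ yields $S(a,b)\subseteq[a_0,a_1]\times[0,p]$.

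There is no real obstacle here: the proof is just a packaging of the monotonicity claims already proved. The only points needing care are the degenerate configurations — $a_0=k/2$, or $(\xi,\zeta)$ sitting on an edge ($\xi=k/2$ or $\zeta=0$ or $\zeta=p$) where one of the partial derivatives vanishes — which is precisely why I would emphasize at the outset that $W$ is \emph{strictly} monotone along horizontal lines on $[k/2,\infty)$ and along vertical lines on $[0,p]$, so that each strict inequality above is legitimate.
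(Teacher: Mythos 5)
Your proof is correct and follows essentially the same route as the paper's (terse) argument: the paper observes that $W(a_0,0)$ is the strict maximum of $W$ on $[k/2,a_0]\times[0,p]$ and $W(a_1,p)$ is the strict minimum on $[a_1,\infty)\times[0,p]$, so no point of $S(a,b)$ can lie to the left of $a_0$ or to the right of $a_1$. You spell out the same monotonicity facts and the implicit case split at $a_0=k/2$ more explicitly, which is a cleaner presentation of the same idea rather than a different approach.
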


\begin{proof}[Proof of the Claim \ref{Level-set-boundary}]

Using \eqref{Partial-dW-d-xi} and \eqref{partial-vertical} we can observe that the value of $W(a_0,0)$ is the strict maximum of $W(\xi,\zeta)$ for $(\xi,\zeta)\in [k/2,a_0]\times[0,p]$ as well as the value of $W(a,p)$ is the strict minimum for $(\xi,\zeta)\in [a_1,\infty]\times[0,p]$. This completes the proof of Claim~\ref{Level-set-boundary}.
\end{proof}

\begin{claim}\label{function-presentation-claim} $S(a,b)$ can be interpreted as the graph of a function $v(t)$, i.e.,
\begin{equation}\label{function-presentation}
    S(a,b)=\{(t,v(t))\mid t\in[a_0,a_1]\}\,.
\end{equation}
\end{claim}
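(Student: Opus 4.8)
The plan is to show that $S(a,b)$ meets each vertical line $\{\xi=t\}$ in at most one point, in exactly one point when $t\in[a_0,a_1]$, and in no point when $t\notin[a_0,a_1]$; then $v(t)$ is defined to be the ordinate of that unique intersection. Uniqueness will come from strict monotonicity of $W$ in $\zeta$, existence from the Intermediate Value Theorem applied to $\zeta\mapsto W(t,\zeta)$, and the confinement of abscissas to $[a_0,a_1]$ is already Claim~\ref{Level-set-boundary}.

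For uniqueness, fix $t\in[k/2,\infty)$. By Claim~\ref{partial-vertical-claim} the one-variable function $\zeta\mapsto W(t,\zeta)$ is $\mathcal{C}^\infty$ on $[0,p]$, has vanishing derivative only at $\zeta=0$, and strictly negative derivative on $(0,p]$; integrating $\partial W/\partial\zeta$ over any subinterval of positive length shows it is strictly decreasing on the closed interval $[0,p]$, hence injective. So the fibre $\{\zeta\in[0,p]\mid W(t,\zeta)=W(a,b)\}$ has at most one element. Combined with Claim~\ref{Level-set-boundary} this gives that $S(a,b)$ is contained in the graph $\{(t,v(t))\mid t\in[a_0,a_1]\}$ once existence of $v(t)$ is established.

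For existence, let $t\in[a_0,a_1]$. Since $W$ is nondecreasing in $\xi$ by Claim~\ref{Partial-dW-d-xi-claim} and strictly decreasing in $\zeta$ for $\zeta>0$, and since $(a_0,b_0)\in\Gamma_0(a,b)$ was chosen with $W(a_0,b_0)=W(a,b)$ and $(a_1,p)\in\Gamma_1(a,b)$ with $W(a_1,p)=W(a,b)$ (both established just before Claim~\ref{Level-set-boundary}), we obtain
\[
W(t,0)\ \ge\ W(a_0,0)\ \ge\ W(a_0,b_0)\ =\ W(a,b)\ =\ W(a_1,p)\ \ge\ W(t,p).
\]
Hence $W(a,b)\in[W(t,p),W(t,0)]$, and by continuity together with strict monotonicity of $\zeta\mapsto W(t,\zeta)$ there is a unique $v(t)\in[0,p]$ with $W(t,v(t))=W(a,b)$. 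Conversely, every point of $S(a,b)$ lies in $[a_0,a_1]\times[0,p]$ and must, by the uniqueness above, equal $(t,v(t))$ for its own abscissa $t$; therefore $S(a,b)=\{(t,v(t))\mid t\in[a_0,a_1]\}$, as claimed.

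There is no deep obstacle here: the entire argument is driven by the sign information on the partials of $W$ from Claims~\ref{Partial-dW-d-xi-claim} and~\ref{partial-vertical-claim} together with the bracketing endpoints $(a_0,b_0)$ and $(a_1,p)$. The only mild points of care are deducing strict monotonicity of $\zeta\mapsto W(t,\zeta)$ on the \emph{closed} interval from a derivative that degenerates at $\zeta=0$, and reading the displayed chain of inequalities in the correct direction at the corner $t=a_0$, where $\partial W/\partial\xi$ vanishes but $W$ is still nondecreasing in $\xi$. The smoothness of $v$ and the sign of $v'$ are not needed for this Claim; they are handled separately in Proposition~\ref{Level-curve-description}.
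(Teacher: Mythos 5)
Your proof is correct and follows essentially the same route as the paper: bracket $W(a,b)$ between $W(t,0)$ and $W(t,p)$ using the sign information on the partials together with the defining properties of $(a_0,b_0)$ and $(a_1,p)$, then get existence and uniqueness of $v(t)$ from continuity and the strict decrease of $\zeta\mapsto W(t,\zeta)$. Your extra care about strict monotonicity on the closed interval (despite $\partial W/\partial\zeta$ vanishing at $\zeta=0$) and the explicit converse inclusion via Claim~\ref{Level-set-boundary} only make explicit what the paper leaves implicit.
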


\begin{proof} Fix any $t\in[a_0,a_1]$. Then, for the vertical segment connecting two points $(t,0)$ and $(t,p)$ we can observe that
\begin{equation}
    W(t,0)\geq W(a,b)\geq W(t,p)
\end{equation}
since, according to \eqref{Partial-dW-d-xi}, the value of $W(\xi,\zeta)$ is a strictly increasing function along every horizontal line in $\overline{Q}$. Note also that the value of $W(t,\zeta)$, according to \eqref{partial-vertical}, is strictly decreasing for $\zeta\in[0,p]$. Hence, there exists the unique value of $\zeta=v(t)\in[0,p]$, such that $W(t,v(t))=W(a,b)$, which completes the proof of Claim \ref{function-presentation-claim}.
\end{proof}

To complete the proof of Proposition~\ref{Level-curve-description}, from p.~\pageref{Level-curve-description}, we need to study the functional properties of the function $v(t)$. First, let us observe that $v(t)>0$ for every $t\in(a_0,a_1)$, since the value of $W(\xi,\zeta)$, according to \eqref{Partial-dW-d-xi}, is a strictly increasing function along the lower edge of $\overline{Q}$. Therefore, by the Implicit Function Theorem, for every $t\in(a_0,a_1)$ there exists some neighborhood $N_n(t)=(t-\epsilon_n, t+\epsilon_n)$, where $v(t)$ is continuously differentiable $n$ times for any natural $n$ and then, according to \eqref{Partial-dW-d-xi} and \eqref{partial-vertical}, we have
      \begin{equation}\label{derivative-v-by-Implicit-FT}
        \frac{dv(t)}{dt}=\frac{W_\xi(t,v(t))}{-W_\zeta(t,v(t))}>0\quad\text{for every}\,\,t\in(a_0,a_1)\,.
      \end{equation}
      A possible curve $\gamma(t)=(t,v(t))$ starting with the horizontal edge of $\overline{Q}$ is sketched on Figure \ref{Special-boundary-curves}, page~\pageref{Special-boundary-curves}.
Note also that $v(t)$ is continuous in $[a_0,a_1]$, since $v(t)$ is monotone in $(a_0,a_1)$ and $W(\xi,\zeta)$ is continuous in $\overline{Q}$. Now the proof of the Proposition \ref{Level-curve-description} from p.~\pageref{Level-curve-description} is complete.
\end{proof}

\underline{\textbf{Step 3.}}\label{Step_curve_3} The goal of this step is to obtain the following proposition.

\begin{proposition}\label{uniquiness-total-in-first-quadrant}
   If $(a,b)\in\overline{Q}$, then the equation $F(\xi,\zeta)=F(a,b)$ has the unique solution $(\xi,\zeta)\in\overline{Q}$, i.e., $(\xi,\zeta)=(a,b)$.
\end{proposition}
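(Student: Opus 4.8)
The plan is to reduce the system $F(\xi,\zeta)=F(a,b)$ to the monotonicity of a single real-valued function, namely the imaginary part $I$ restricted to the level curve of the real part $W$. Any solution $(\xi,\zeta)\in\overline{Q}$ of $F(\xi,\zeta)=F(a,b)$ must satisfy in particular $W(\xi,\zeta)=\Re F(a,b)=W(a,b)$, so it lies on the level set $S(a,b)$; by Proposition~\ref{Level-curve-description} this set is the graph $\gamma(t)=(t,v(t))$, $t\in[a_0,a_1]$, of a function $v$ that is $\mathcal{C}^\infty$ and strictly increasing on the interior $(a_0,a_1)$, and moreover $(a,b)$ itself lies on $S(a,b)$. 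Hence it suffices to prove that $h(t):=I(t,v(t))$ is strictly monotone on $[a_0,a_1]$: then $h(t)=I(a,b)$ has the unique root $t=a$, and $(\xi,\zeta)=(a,v(a))=(a,b)$ is the only solution of both equations, i.e.\ of $F(\xi,\zeta)=F(a,b)$.

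The crux is the sign of $h'(t)$ for $t\in(a_0,a_1)$, where $\gamma$ stays in the open quadrant $Q$. Differentiating under the integral sign (the Leibniz rule applies just as in the proof of Statement~(B)) gives the identities $I_\xi=-W_\zeta$ and $I_\zeta=W_\xi$, which are simply the Cauchy--Riemann equations for the entire function $F=W+iI$. Combining these with the expression $v'(t)=-W_\xi/W_\zeta$ from \eqref{derivative-v-by-Implicit-FT}, one computes
\begin{equation*}
    h'(t)=I_\xi(t,v(t))+I_\zeta(t,v(t))\,v'(t)
    =-W_\zeta-\frac{W_\xi^{\,2}}{W_\zeta}
    =-\frac{W_\xi^{\,2}+W_\zeta^{\,2}}{W_\zeta}\,.
\end{equation*}
By Claim~\ref{partial-vertical-claim} we have $W_\zeta(t,v(t))<0$ throughout $Q$, so $h'(t)>0$ on $(a_0,a_1)$. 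Since $v$, and therefore $h$, is continuous on the closed interval $[a_0,a_1]$, strict monotonicity of $h$ extends to all of $[a_0,a_1]$, which in particular covers the cases where $(a,b)$ lies on the lower edge ($b=0$, forcing $a=a_0$) or on the upper edge ($b=p$, forcing $a=a_1$) of $\overline{Q}$.

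Assembling the pieces: $h$ is strictly increasing and continuous on $[a_0,a_1]$ with $h(a)=I(a,b)$, so $t=a$ is the unique parameter with $h(t)=I(a,b)$, and hence $(a,b)$ is the unique point of $\overline{Q}$ satisfying both $W(\xi,\zeta)=W(a,b)$ and $I(\xi,\zeta)=I(a,b)$. I expect the only delicate point to be the boundary bookkeeping: on the edges $\zeta=0$ or $\xi=k/2$ one of $W_\zeta,W_\xi$ may vanish, so the displayed derivative formula is only asserted on the open quadrant and strict monotonicity at the endpoints is recovered from continuity rather than from $h'>0$ there; but the fact that $\gamma$ stays inside $Q$ except at its two endpoints is already part of Proposition~\ref{Level-curve-description}, so this requires no new work, and the remaining content is the one-line computation above.
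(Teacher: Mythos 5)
Your proposal is correct and follows essentially the same route as the paper: restrict to the level curve $\gamma(t)=(t,v(t))$ of $W$, use the Cauchy--Riemann relations $I_\xi=-W_\zeta$, $I_\zeta=W_\xi$ together with \eqref{derivative-v-by-Implicit-FT}, and conclude that $I$ is strictly increasing along $\gamma$ because $W_\zeta<0$. Your expression $-\bigl(W_\xi^{\,2}+W_\zeta^{\,2}\bigr)/W_\zeta$ is exactly the paper's $\bigl(1+[v'(t)]^2\bigr)\bigl[-W_\zeta\bigr]$, and the continuity argument at the endpoints matches the paper's passage from $(a_0,a_1)$ to $[a_0,a_1]$.
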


       \begin{proof}[Proof of the Corollary \ref{uniquiness-total-in-first-quadrant}.]
        It is clear that a solution $(\xi,\zeta)$ may appear only on the level curve $S(a,b)=\gamma(t)$ described above. For a point $(\xi,\zeta)\in S(a,b)$ to be a solution we need $I(\xi,\zeta)=I(t,v(t))=I(a,b)$. So, let us study the behavior of $I(\xi,\zeta)$ along $\gamma(t)$. By the direct computation we can observe that
        \begin{equation}\label{Cauchy-Riemann}
            \frac{\partial W}{\partial \xi}=\frac{\partial I}{\partial\zeta}\quad\text{and}\quad
            \frac{\partial W}{\partial \zeta}=-\frac{\partial I}{\partial\xi}\,,
        \end{equation}
        which together with \eqref{derivative-v-by-Implicit-FT} leads to
        \begin{equation}
            \frac{d I(t,v(t))}{dt}=
            \left( 1+[v'(t)]^2\right)\left[-\frac{\partial W(t,v(t))}{\partial v(t)}\right]>0
        \end{equation}
        for every $t\in(a_0,a_1)$, where the last inequality holds because of \eqref{partial-vertical} from Claim~\ref{partial-vertical-claim}, p.~\pageref{partial-vertical-claim}. Therefore, the value of $I(t,v(t))$ is a strictly increasing function on $[a_0, a_1]$ and then, the function $I(t,v(t))$ assumes each of its values for $t\in[a_0,a_1]$ only once. Hence, the equation $I(\xi,\zeta)=I(a,b)$ must have only one solution in $S(a,b)$. This completes the proof of the Proposition \ref{uniquiness-total-in-first-quadrant}.
      \end{proof}

\underline{\textbf{Step 4.}}\label{Step_curve_4} The next and the last stage in the proof of Statement (D) is to analyze the behavior of $I(\xi,\zeta)=\Im F(\alpha)$ in $\Xi=\{(\xi,\zeta)\mid\zeta\in[-p,p]\}.$

\begin{proposition}\label{Signum-Behavior-of-I-propo}
    \begin{equation}\label{Signum-Behavior-of-I-1}
        I(\xi,\zeta)>0\quad\text{for}\quad(\xi,\zeta)\in \{(0,\infty)\times(0,p]\}\cup\{(-\infty,0)\times(0,-p]\};
    \end{equation}

    \begin{equation}\label{Signum-Behavior-of-I-2}
        I(\xi,\zeta)<0\quad\text{for}\quad(\xi,\zeta)\in \{(-\infty,0)\times(0,p]\}\cup\{(0,\infty)\times(0,-p]\}.
    \end{equation}
    The signum of $I$ behavior is sketched on Figure~\ref{Signum-Behavior-of-I} below.

  \begin{figure}[!h]
    \centering
    \epsfig{figure=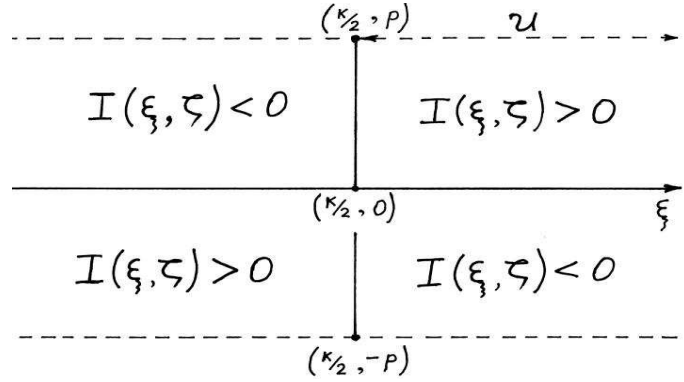,height=5cm,width=9cm}\\
    \caption{Signum behavior of $I(\xi,\zeta)$.}\label{Signum-Behavior-of-I}
\end{figure}
\end{proposition}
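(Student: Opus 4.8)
The plan is to reduce the sign analysis of $I(\xi,\zeta)=\int_{S^k}\omega^\xi\sin(\zeta\ln\omega)\,dS_y$ (see \eqref{Definition-I}) to a single integrand of constant sign. Since $\omega=\omega(x,y)>0$ is real, I would factor $\omega^\xi=\omega^{k/2}\cdot e^{(\xi-k/2)\ln\omega}$ and split the exponential into its hyperbolic cosine and sine parts, obtaining
\[
I(\xi,\zeta)=\int_{S^k}\omega^{k/2}\cosh\!\big((\xi-k/2)\ln\omega\big)\sin(\zeta\ln\omega)\,dS_y
+\int_{S^k}\omega^{k/2}\sinh\!\big((\xi-k/2)\ln\omega\big)\sin(\zeta\ln\omega)\,dS_y .
\]
The first integrand is $\omega^{k/2}$ times an odd function of $\ln\omega$ (an even factor times $\sin(\zeta\ln\omega)$), so it ought to integrate to zero; granting this (discussed below),
\[
I(\xi,\zeta)=\int_{S^k}\omega^{k/2}\,\sinh\!\big((\xi-k/2)\ln\omega\big)\,\sin(\zeta\ln\omega)\,dS_y .
\]

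The sign of $I$ can then be read off pointwise. The factor $\omega^{k/2}$ is positive; since $\sinh$ is odd and strictly increasing, $\sinh\!\big((\xi-k/2)\ln\omega\big)$ has the sign of $(\xi-k/2)\ln\omega$; and by the hypothesis \eqref{Sphe-Pro-Statement-5-1} in force here --- which via \eqref{E-basic-proof-6}--\eqref{cosine-positivenes} gives $|\zeta\ln\omega(\theta)|\le\pi/2$ for all $\theta\in[0,\pi]$ whenever $|\zeta|\le p$ --- the factor $\sin(\zeta\ln\omega)$ has the sign of $\zeta\ln\omega$ and vanishes only where $\ln\omega=0$. Multiplying, the integrand has the constant sign $\operatorname{sgn}\!\big((\xi-k/2)\,\zeta\big)$ at every $y$ with $\omega(x,y)\ne1$. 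Since $x\notin S^k$ and $x\ne O$ (i.e. $0<\Upsilon<1$), the locus $\{y\in S^k:\omega(x,y)=1\}$ is a measure-zero slice, so its complement carries positive $dS_y$-measure and the strict inequality passes to the integral: $I(\xi,\zeta)>0$ when $(\xi-k/2)\zeta>0$, $I(\xi,\zeta)<0$ when $(\xi-k/2)\zeta<0$, and $I(\xi,\zeta)=0$ when $\xi=k/2$ or $\zeta=0$. This is precisely the sign pattern of \eqref{Signum-Behavior-of-I-1}--\eqref{Signum-Behavior-of-I-2}, the four regions being the open quadrants around the symmetry center $(k/2,0)$; it also re-derives the skew-symmetry of $I$ recorded in \eqref{Symmetry-Reations}.

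The one delicate point --- the main obstacle --- is the vanishing of the $\cosh$-term, i.e.
\[
\int_{S^k}\omega^{k/2}\cosh\!\big(a\ln\omega\big)\,\sin(b\ln\omega)\,dS_y=0\qquad(a,b\in\mathbb{R}).
\]
I would deduce this from Corollary~\ref{imaginary-part-vanishing-corollary}: by \eqref{logarithm-integral}, $\int_{S^k}\omega^{k/2}(\ln\omega)^{2m}\sin(b\ln\omega)\,dS_y=0$ for every $m=0,1,2,\dots$, and since $\ln\omega(x,y)$ is bounded on $S^k$ by $\ln\frac{R+r}{|R-r|}$, the Taylor series $\cosh(a\ln\omega)=\sum_{m\ge0}\frac{a^{2m}}{(2m)!}(\ln\omega)^{2m}$ converges uniformly there, legitimizing term-by-term integration, whence the sum is $0$. (Conceptually: the push-forward of $\omega^{k/2}\,dS_y$ under $y\mapsto\ln\omega(x,y)$ is a measure on $[-\sigma,\sigma]$, $\sigma=\ln\frac{1+\Upsilon}{1-\Upsilon}$, symmetric about $0$ because its bilateral Laplace transform $s\mapsto\int_{S^k}\omega^{k/2+s}\,dS_y=F(k/2+s)$ is even by Statement~(A), so every integrand odd in $\ln\omega$ integrates to zero.) With this in hand the remaining steps are the routine sign bookkeeping above, which would complete the proof of Proposition~\ref{Signum-Behavior-of-I-propo}.
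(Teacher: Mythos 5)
Your proof is correct, but it follows a genuinely different route from the paper's. The paper argues via monotonicity: it first records that $I$ vanishes on the cross $\{\xi=k/2\}\cup\{\zeta=0\}$ (a consequence of the skew-symmetry \eqref{Symmetry-Reations}; the phrase ``$\xi=0$ or $\zeta=k/2$'' in \eqref{cross-imaginary-vanishing} is a typo for this), then observes that $\partial I/\partial\xi=\int_{S^k}\omega^\xi\ln\omega\,\sin(\zeta\ln\omega)\,dS_y$ has the sign of $\zeta$ because $|\zeta\ln\omega|\le\pi/2$, and integrates in $\xi$ away from the zero line. You instead make the sign visible pointwise: you split $\omega^\xi=\omega^{k/2}\bigl[\cosh((\xi-k/2)\ln\omega)+\sinh((\xi-k/2)\ln\omega)\bigr]$, annihilate the $\cosh$-part using \eqref{logarithm-integral} together with uniform convergence of the $\cosh$ series (legitimate, since $\ln\omega$ is bounded on $S^k$ for fixed $x\notin S^k$), and read off that the remaining $\sinh$-integrand has constant sign $\operatorname{sign}\bigl((\xi-k/2)\zeta\bigr)$ off a measure-zero latitude, so the strict inequality survives integration. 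Both arguments rest on the same two pillars --- the bound $|\zeta\ln\omega|\le\pi/2$ coming from \eqref{Sphe-Pro-Statement-5-1} and the $\alpha\mapsto k-\alpha$ symmetry of Statement (A) --- but yours replaces the derivative-plus-zero-set bookkeeping by an even--odd decomposition; note that the vanishing of your $\cosh$-term follows in one line from the skew-symmetry, since $\omega^{k/2}\cosh((\xi-k/2)\ln\omega)=\tfrac12(\omega^{\xi}+\omega^{k-\xi})$, so its integral against $\sin(\zeta\ln\omega)$ equals $\tfrac12\bigl[I(\xi,\zeta)+I(k-\xi,\zeta)\bigr]=0$, making the Taylor-series detour unnecessary (though valid). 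Finally, you are right to read the four regions as the open quadrants about the symmetry center $(k/2,0)$: what both proofs actually establish is $\operatorname{sign}I=\operatorname{sign}\bigl((\xi-k/2)\zeta\bigr)$, and indeed $I<0$ for $0<\xi<k/2$, $\zeta\in(0,p]$, so the literal dividing line $\xi=0$ appearing in \eqref{Signum-Behavior-of-I-1}--\eqref{Signum-Behavior-of-I-2} must be understood as $\xi=k/2$; this is also the form in which the statement is used in Step 4.
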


\begin{proof}[Proof of Proposition \ref{Signum-Behavior-of-I-propo}.]
    Using the skew-symmetry of $I(\xi,\zeta)$ introduced in \eqref{Symmetry-Reations}, p.~\pageref{Symmetry-Reations}, we can observe that
\begin{equation}\label{cross-imaginary-vanishing}
    I(\xi,\zeta)=\int\limits_{S^k}\omega^\xi\sin(\zeta\ln\omega)dS_y=0\quad\text{for}\quad
    \xi=0\,\,\text{or}\,\,\zeta=k/2\,.
\end{equation}
On the other hand, we can observe that the condition $\zeta\ln\omega\in[-\pi/2,\pi/2]$ yields
\begin{equation}\label{signum-relation-for-sinus}
    \text{sign}(\ln\omega\cdot\sin(\zeta\ln\omega))=\text{sign}\zeta
\end{equation}
for every value of $\omega$, except $\omega=1$ and then, for every point $(\xi,\zeta)\in\Xi$, one of following relations depending on the signum of $\zeta$ holds
\begin{equation}\label{Cross-difinition}
\begin{split}
    & \left.\frac{dI(\xi,\zeta)}{d\xi}\right|_{\zeta=0}=
    \left[\int\limits_{S^k}\omega^\xi(\ln\omega)\sin(\zeta\ln\omega)dS_y \right]_{\zeta=0}=0;
    \\& \left.\frac{dI(\xi,\zeta)}{d\xi}\right|_{\zeta>0}>0 \quad\text{or}\quad\left.\frac{dI(\xi,\zeta)}{d\xi}\right|_{\zeta<0}<0,
\end{split}
\end{equation}
which to together with \eqref{cross-imaginary-vanishing} implies \eqref{Signum-Behavior-of-I-1} and \eqref{Signum-Behavior-of-I-2}. This completes the proof of Proposition~\ref{Signum-Behavior-of-I-propo}.
\end{proof}

%Therefore, the value of $I(\xi,\zeta)$, considered as a function of $\xi$, is strictly increasing for every $\zeta\in(0,p]$ and is strictly decreasing for every $\zeta\in[-p,0)$. If we take into account \eqref{cross-imaginary-vanishing}, we can observe that
%$$I(\xi,\zeta)>0\,\,\,\text{for every}\,\,\, (\xi, \zeta)\in Q\cup U\,,$$
%which together with the skew-symmetry of $I(\xi,\zeta)$ introduced in \eqref{Symmetry-Reations}, p.~\pageref{Symmetry-Reations} and expressed as
%$$I(\xi,\zeta)=-I(\xi,-\zeta)\quad\text{and}\quad I(\xi,\zeta)=-I(k-\xi,\zeta)$$
%yields the following diagram for $I$.

Note also that $I(\xi,\zeta)$, as well as $W(\xi,\zeta)$ are symmetric with respect to $(k/2,0)$. Therefore, the uniqueness obtained in Proposition~\ref{uniquiness-total-in-first-quadrant}, p.~\pageref{uniquiness-total-in-first-quadrant} together with the symmetry of $W$ and the skew-symmetry of $I$ with respect to the lines $\xi=k/2$ and $\zeta=0$ implies that the equation $F(\xi,\zeta)=F(a,b)$ has only two solutions $(\xi,\zeta)=(a,b)$ and $(\xi,\zeta)=(k-a,-b)$ for every $(a,b)\in\overline{Q}$.

It is clear that a similar symmetry argument shows that for every $(a,b)\in\Xi$, the equation $F(\xi,\zeta)=F(a,b)$ also must have only two solutions $(\xi,\zeta)=(a,b)$ and $(\xi,\zeta)=(k-a,-b)$. This completes the proof of Statement~(D) of Main Theorem as well as completes the proof of Main Theorem~\ref{Basic-theorem}, p~\pageref{Basic-theorem}.
\end{proof}

\begin{proof}[\textbf{Proof of Corollary \ref{One-dimension-sphe-pro}, p.~\pageref{One-dimension-sphe-pro}.}]

Note that the condition: $a>b>0$ is sufficient to find $R$ and $r$
such that $R>r>0$ and the equalities $R^2+r^2=a\, ,\,\, 2R\, r=b$
hold. Note also that if we integrate a $2\pi$-periodic function
over the period from 0 to $2\pi$, we can replace $\sin(\theta)$ by
$\cos(\theta)$ and vice versa without changing the result of the
integration. Thus, the function under the integral can be reduced to the
Poisson kernel, which yields

\begin{equation}\label{22}
\begin{split}
& \int\limits_{0}^{2\pi} \frac{d\theta}{(a-b\,\sin(\theta))^p}
=\int\limits_{0}^{2\pi} \frac{d\theta}{(a+b\,\cos(\theta))^p}
\\& =(R^2-r^2)^{-p}\,\int\limits_{0}^{2\pi}
\left(\frac{R^2-r^2}{R^2+r^2+2R\,r\,\cos(\theta)}\right)^p\,d\theta
\\& =(a^2-b^2)^{-p/2}\,\int\limits_0^{2\pi}
\omega^p\, d\theta=
\\& =(a^2-b^2)^{-p/2}\int\limits_0^{2\pi}
\omega^{1-p}\, d\theta
\\& =(a^2-b^2)^{1/2-p}\, \int\limits_0^{2\pi}
(a-b\, \sin(\theta))^{p-1}\, d\theta\,,
\end{split}
\end{equation}
where the fourth equality was obtained by \eqref{Sphe-Pro-Statement-2}. This completes the proof of Corollary \ref{One-dimension-sphe-pro}.
\end{proof}

% *************Pereneseno v giperbolicheskuyu geometriyu-***********************************
%111
\begin{proof}[\textbf{Proof of Corollary \ref{Basic-Th-Integrable-Case-general-Item} of the Main Theorem, p.~\pageref{Basic-Th-Integrable-Case-general-Item}.}]

Let us organize the proof as the chain of identities, which will be explained at each step.

\begin{description}
  \item[Step 1.] Using the Corollary \ref{imaginary-part-vanishing-corollary}, from p.~\pageref{imaginary-part-vanishing-corollary}, we have
    \begin{equation}\label{Step-1}
        \int\limits_{S^k(R)}\omega^{1+ib}dS_y=\int\limits_{S^k(R)}\omega\cos(b\ln\omega)dS_y.
    \end{equation}
  \item[Step 2.] Recall that according to \eqref{VarExchange-1} from the Lemma \ref{VarExchange-Lemma}, p.~\pageref{VarExchange-Lemma},
     \begin{equation}\label{Step-2-1}
        dS_y=\frac{2R}{l+q}q^2d\Sigma_{\widetilde{y}}\,,
    \end{equation}
    where all notations are pictured on the Figure \ref{Reference-Picture} below that can be used for a reference. Thus, the last integral in \eqref{Step-1} can be written as
    \begin{equation}\label{Step-2-2}
        \int\limits_{S^k(R)}\omega\cos(b\ln\omega)\frac{2R}{l+q}q^2d\Sigma_{\widetilde{y}}.
    \end{equation}

    \begin{figure}[!h]
    \centering
    \epsfig{figure=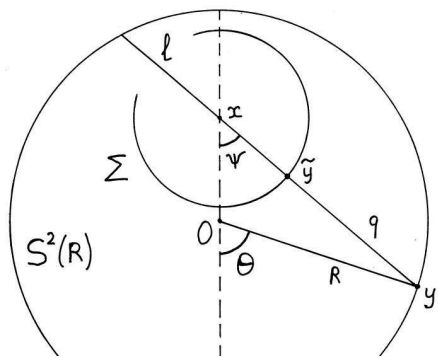,height=4.8cm}\\
  \caption{Reference picture.}\label{Reference-Picture}
    \end{figure}

  \item[Step 3.] Recall that according to \eqref{GeoInte-4}, \eqref{GeoInte-5} and \eqref{GeoInte-6} obtained in the proof of Theorem \ref{Geometric-Interpretation-theorem}, p.~\pageref{Geometric-Interpretation-theorem},
    \begin{equation}\label{Step-3-1}
        \omega(x,y)=\frac lq\quad\text{and}\quad lq=|R^2-r^2|.
    \end{equation}
    Note also that the integrand in \eqref{Step-2-2} depends only on angle $\psi$ pictured on Figure \ref{Reference-Picture} above. Therefore, the integral in \eqref{Step-2-2} becomes
    \begin{equation}\label{Step-3-2}
        4\pi R|R^2-r^2|\int\limits_{0}^{\pi}\frac{\sin\psi}{l+q}\cos(b\ln\omega)d\psi.
    \end{equation}
   \item[Step 4.] Observe that the last integrand is symmetric with respect to $\psi=\pi/2$. Indeed, $\sin(\pi/2+\tau)=\sin(\pi/2-\tau)$ and according to~\eqref{Feature-Feature-Lemma-formula}, p.~\pageref{Feature-Feature-Lemma-formula},
\begin{equation}\label{Three_D_Dirichlet-29}
\begin{split}
    & (l+q)\circ\left(\frac{\pi}{2}-\tau\right)=(l+q)\circ\left(\frac{\pi}{2}+\tau\right)
    \\& \text{and}\quad\ln\frac{l}{q}\circ\left(\frac{\pi}{2}-\tau\right)
    =-\ln\frac{l}{q}\circ\left(\frac{\pi}{2}+\tau\right)\,.
\end{split}
\end{equation}
where the symbol $\circ$ denotes composition. Hence, the integral in \eqref{Step-3-2} can be written as
    \begin{equation}\label{Step-4-2}
        8\pi R|R^2-r^2|\int\limits_{0}^{\pi/2}\frac{\sin\psi}{l+q}\cos\left(b\ln\frac{l}{q}\right)d\psi.
    \end{equation}

% using the following picture,

%\begin{figure}[h]
%  % Requires \usepackage{graphicx}
%  \quad\quad\quad\quad\quad\includegraphics*[bb= 0 0 270 180]{Symmetry_with_resp_pi_over_2.bmp}\\
%  \caption{The symmetry with respect to $\pi/2$.}\label{Reference-Picture}
%\end{figure}

%we can observe that
%\begin{equation}\label{Three_D_Dirichlet-28}
%    l\left(\frac{\pi}{2}+\tau\right)=q\left(\frac{\pi}{2}-\tau\right)\quad\text{and}\quad
%    l\left(\frac{\pi}{2}-\tau\right)=q\left(\frac{\pi}{2}+\tau\right)\,,
%\end{equation}
%which yields

%   It is clear that the integrand in \eqref{Step-3-2} is symmetric with respect to $\psi=\pi/2$, since according to \eqref{Feature-Feature-Lemma-formula} from p.~\pageref{Feature-Feature-Lemma-formula},
%    \begin{equation}\label{Step-4-1}
%        l(\psi)=q(\pi-\psi)\quad\text{and}\quad q(\psi)=l(\pi-\psi)\,.
%    \end{equation}

   \item[Step 5.] The substitution for $d\psi$ presented in \eqref{Substitution-for-d-psi}, p.~\pageref{Substitution-for-d-psi}
   allows to rewrite the integral in \eqref{Step-4-2} as follows
    \begin{equation}\label{Step-5-2}
        \frac{8\pi R|R^2-r^2|}{4r}
        \int\limits_{\psi=0}^{\psi=\pi/2}\cos(b\ln\omega)d(\ln\omega).
    \end{equation}
   \item[Step 6.] It is clear that the last integral can be computed directly, since $(\sin u)'=\cos u$ and then the integral in \eqref{Step-5-2} yields
    \begin{equation}\label{Step-6-1}
        \left.\frac{2\pi R|R^2-r^2|}{br}\sin(b\ln\omega)\right|_{\psi=0}^{\psi=\pi/2}.
    \end{equation}
   \item[Step 7.] Note now that
    \begin{equation}\label{Step-7-1}
        \omega(\pi/2)=\frac{l(\pi/2)}{q(\pi/2)}=1\,,\quad\text{while}\quad\omega(0)=\frac{|R-r|}{R+r}.
    \end{equation}
    Therefore, the direct computation in \eqref{Step-6-1} gives
    \begin{equation}\label{Step-7-2}
        \frac{2\pi R|R^2-r^2|}{br}\sin\left(b\ln\frac{R+r}{|R-r|}\right)\,,
    \end{equation}
    which completes the proof of \eqref{Basic-Th-Integrable-Case-1}, p.~\pageref{Basic-Th-Integrable-Case-1} of the Corollary.
\end{description}
The partial case stated in \eqref{Basic-Th-Integrable-Case-2}, p.~\pageref{Basic-Th-Integrable-Case-2} can be obtained by taking limit at both parts of \eqref{Basic-Th-Integrable-Case-1} as $b\rightarrow0$. This limit exists since the integrand in \eqref{Basic-Th-Integrable-Case-1}, p.~\pageref{Basic-Th-Integrable-Case-1} or equivalently, in \eqref{Step-1}, p.~\pageref{Step-1} converges uniformly as $b\rightarrow0$. This completes the proof of Corollary~\ref{Basic-Th-Integrable-Case-general-Item}, p.~\pageref{Basic-Th-Integrable-Case-general-Item}.
\end{proof}

\chapter{Analysis in Euclidean Geometry.}

In this chapter we derive a new approach to some classical problems arising in electrostatics.

\section {A sufficient condition for a function depending only on the distance to be harmonic.}

For a reference for all variables used in the following theorem a reader may look at the Figure \ref{Converse_Mean_Value} below.

\begin{theorem}\label{Sufficient-Dista-Condi-Thm}
Let $x, y$ be points in $\mathbb{R}^{k+1}$ and $v\in S^k(|y|)$ be integration variable. Let $g(\xi)$ be a
continuous function of one variable $\xi\in
\mathbb{R}$ defined for $\xi>0$. Assume that
\begin{equation}\label{2-dd}
    \int\limits_{S^k(|y|)} g(|x-v|)dS_v=M(|y|)
\end{equation}
is independent of $x$ inside $S^k(|y|)$, a sphere of
radius $|y|$ centered at the origin. In other words, the integral
depends only on $|y|$ and does not depend on $x$ inside the
$S^k(|y|)$ for all $x, y\in \mathbb{R}^{k+1}$ such that
$|x|<|y|$. Then $g(|x|)$ is harmonic with respect to $x$ for every $x\neq0$.
\end{theorem}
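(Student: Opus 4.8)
The plan is to exploit the mean-value-type hypothesis together with the integral identity for $\omega$ from the Main Theorem, p.~\pageref{Basic-theorem}, in order to identify $g(|x|)$ with a solution of Laplace's equation. The starting point is to rewrite the hypothesis \eqref{2-dd} using the Sphere exchange rule, Lemma~\ref{Basic_lemma}, p.~\pageref{Basic_lemma}: integrating $g(|x-v|)$ over $S^k(|y|)$ is, up to the explicit factor $(|x|/|y|)^k$, the same as integrating $g(|x_0-w|)$ over $S^k(|x|)$ with $x_0$ collinear with $x$ and $O$ on $S^k(|x|)$. So the condition that \eqref{2-dd} be independent of $x$ for $|x|<|y|$ translates into a condition relating spherical averages of $g$ on spheres of different radii. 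In particular, if $M_g(r,R)$ denotes the average of $g(|x-v|)$ over $v\in S^k(R)$ with $|x|=r<R$, the hypothesis says $|S^k(R)|\,M_g(r,R)$ depends only on $R$; I want to differentiate this relation in $r$.

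Next I would set up the ODE. Fix $R$ and let $x$ move radially, writing $u(r)=g(r)$ as the unknown radial profile. The spherical average $M_g(r,R)$ is, by the classical formula for averaging a radial function over a sphere not centered at the origin, an explicit integral transform of $g$; differentiating twice in $r$ and using that $M_g(r,R)$ is constant in $r$ gives $\partial_r^2\big(r^{\,?}M_g\big)=0$ type relations. The cleaner route, though, is to apply the \emph{converse of the mean value property}: I would show that the hypothesis forces $g(|x|)$ to satisfy the mean value property over every small sphere centered at an arbitrary point $x_1\neq 0$, by letting $|y|\to|x_1|$ from above and using continuity of $g$ together with the exchange rule to convert the average over the large sphere into an average over a small sphere centered near $x_1$; then Koebe's theorem (the converse to Gauss's mean value theorem) yields harmonicity. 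The key computational input is the Geometric Interpretation, Theorem~\ref{Geometric-Interpretation-theorem}, and the change-of-variables formulas in Lemma~\ref{VarExchange-Lemma}, p.~\pageref{VarExchange-Lemma}, which let one pass between integration over $S^k(R)$ and over the unit sphere $\Sigma$ centered at $x$.

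Concretely, the steps in order: (1) apply Lemma~\ref{Basic_lemma} to rewrite \eqref{2-dd} as $(r/R)^k|S^k(R)|$ times the average of $g(|x_0-w|)$ over $w\in S^k(r)$, so that constancy in $x$ becomes a statement purely about the function $R\mapsto R^k M(R)$ versus $r$; (2) deduce that the spherical average of $g$ over a sphere of radius $r$ about a point at distance $r$ from $O$ has a prescribed dependence; (3) translate this, via the substitution $\theta\leftrightarrow\psi$ and $dS_y=\frac{2R}{l+q}q^k\,d\Sigma_{\widetilde y}$ from \eqref{VarExchange-1}, into an identity for averages over spheres \emph{centered at $x$}; (4) pass to the limit to obtain the exact mean value property of $g(|x|)$ at every $x\neq 0$ and on all sufficiently small spheres; (5) invoke the converse mean value theorem to conclude $g(|x|)$ is harmonic on $\mathbb R^{k+1}\setminus\{0\}$.

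The main obstacle I anticipate is Step (3)–(4): I only know that the average of $g(|x-v|)$ over $v\in S^k(|y|)$ is $x$-independent when the \emph{center} of the sphere of integration is the origin, so to get a genuine mean value property of $g(|x|)$ I must interchange the roles of ``sphere centered at $O$'' and ``sphere centered at $x$,'' and this interchange is exactly what the exchange rule plus the $l,q$-geometry is designed to do, but checking that the limiting sphere is honestly centered at $x_1$ (and not merely tangent to something, cf.\ Lemma~\ref{omega_constant-lemma}) requires care. A secondary point is justifying differentiation/limit passage under the integral sign, which should follow from continuity of $g$ on $(0,\infty)$ as in \cite{Sokolnikoff}.
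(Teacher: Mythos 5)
Your overall strategy --- establish a mean value property and invoke its converse --- is the same as the paper's, but as written the proposal has a genuine gap exactly where you anticipate it: the mean value property is never actually obtained, and the mechanism you propose for it would not work. First, you omit the small but essential observation that the hypothesis may be evaluated with the free point at the center, $x=0$, which gives $M(|y|)=\sigma_k|y|^k\,g(|y|)$ (this is \eqref{4-dd}); without it the exchange rule only relates averages to other averages and never ties them to the pointwise value $g(|y|)$, so no mean value property can come out of Steps (1)--(2). Second, Steps (3)--(4) try to manufacture spheres centered at $x_1$ via the $\theta\leftrightarrow\psi$ substitution and a limit $|y|\to|x_1|$. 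But the substitution of Lemma~\ref{VarExchange-Lemma} produces a \emph{weighted} integral over the fixed unit sphere $\Sigma$ (with weight $\tfrac{2R}{l+q}q^k$), not an honest spherical average of $g$ over a sphere centered at $x_1$, and letting $|y|\to|x_1|$ pushes $x_1$ toward the sphere of integration (the degenerate boundary situation) rather than producing small spheres about $x_1$; so the ``exact mean value property'' claimed in Step (4) is asserted, not proved. A minor slip in Step (1): the collinearity of $x_0$ with $x$ and $O$ belongs to \eqref{ExchaSphere-3}, not to \eqref{ExchaSphere-1}; in \eqref{ExchaSphere-1} the fixed points are simply $x\in S^k(r)$ and $y\in S^k(R)$.

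The gap closes with no limiting argument at all. Since the integrand $g(|x-v|)$ depends only on the difference $x-v$, hypothesis \eqref{2-dd} is translation invariant, hence valid for spheres centered at \emph{any} point, with value $\sigma_kR^kg(R)$. The paper applies this with center $x$: take the two concentric spheres $S^k(x,\rho)$, $\rho<|y|-|x|$, and $S^k(x,|x-y|)$, apply the exchange rule \eqref{ExchaSphere-1} to them, and use \eqref{4-dd} transported to center $x$; this yields \eqref{1-ee}, i.e.\ the mean value property of $u\mapsto g(|y-u|)$ at $x$ over all sufficiently small spheres, after which the converse of the mean value property and \eqref{3-dd} finish the proof. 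Alternatively, staying with spheres centered at $O$ as in your Step (1): \eqref{ExchaSphere-1} together with $M(R)=\sigma_kR^kg(R)$ gives $\int_{S^k(r)}g(|x_1-y|)\,dS_{x_1}=\sigma_k r^k\,g(|y|)$ for every $|y|=R>r$, and the substitution $w=x_1-y$ turns this into the mean value property of $g(|\cdot|)$ at the arbitrary point $-y\neq0$ over spheres of radius $r<|y|$ --- again with no limit and no recentering difficulty. Either repair makes your outline into a correct proof essentially identical to the paper's.
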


\begin{proof} Observe first that the direct computation yields
\begin{equation}\label{3-dd}
    \bigtriangleup_x g(|x-y|)=\bigtriangleup_y
    g(|x-y|)=\bigtriangleup_{(x-y)}
    g(|x-y|)=\widetilde{g}(|x-y|)\,,
\end{equation}
where $\widetilde{g}(|x-y|)$ is again a function depending only
on the distance between $x$ and $y$. Note also that
$M(|y|)=\sigma_k|y|^k\cdot g(|y|)$, since by the condition, the integral in
\eqref{2-dd} does not depend on $x$ for $|x|<|y|$ and, therefore,
the value of the integral is equal to the value at $x=0$.
Thus, we have
\begin{equation}\label{4-dd}
    \frac{1}{\sigma_k |y|^k}\int\limits_{S^k(|y|)}
    g(|x-v|) dS_v = g(|y|)\,,
\end{equation}
where $\sigma_k$ is the area of $k$-dimensional unit sphere.

We are going to use the converse of mean
value property, see~\cite{Sheldon},~p.16. We must only show that the condition
\eqref{2-dd} implies the mean value property for $g(|x-y|)$ with
respect to $x$ for every $y$ such that $|y|>|x|$. Indeed, look at
the following picture.

\begin{figure}[!h]
    \centering
    \epsfig{figure=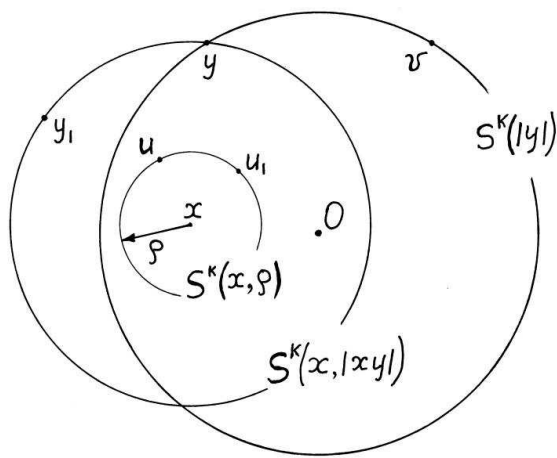,height=6cm}\\
  \caption{Sufficient condition theorem.}\label{Converse_Mean_Value}
\end{figure}

Let $x,y\in\mathbb{R}^{k+1}$ such that $|y|>|x|$ and let $\rho$ be a positive number such that $\rho<|y|-|x|$. Let $S^k(|y|)$ be the sphere of radius $|y|$ centered at the
origin $O$ and $S^k(x,\rho)$ be the sphere of radius $\rho$ centered at the point $x$. Fix $u\in S^k(x,\rho)$ and let $u_1\in S^k(x,\rho)$, $y_1\in S^k(x,|xy|)$ be the parameters of integration.
Now we are ready to check the mean value property for $g(|x-y|)$ with
respect to $x$. Using Lemma~\ref{Basic_lemma}, p.~\pageref{Basic_lemma} and \eqref{4-dd}, we have
\begin{equation}\label{1-ee}
\begin{split}
    & \frac{1}{|S^k(x,\rho)|}
    \int\limits_{S^k(x,\rho)}
    g(|y-u_1|) dS_{u_1}
    \\& =\frac{1}{|S^k(x,|xy|)|}
    \int\limits_{S^k(x,|xy|)}
    g(|u-y_1|) dS_{y_1}=g(|y-x|)\,,
\end{split}
\end{equation}
and therefore, $g$ satisfies the mean value property. Therefore,
$g(|x-y|)$ is harmonic with respect to $x$ for every $x$ inside
$S^k(|y|)$ and for every $y\in S^k(|y|)$. Using
\eqref{3-dd}, we can easily see that $g(|x-y|)$ is harmonic for
all $x,y\in\mathbb{R}^{k+1}$ such that $x\neq y$, which completes the proof of Theorem \ref{Sufficient-Dista-Condi-Thm}.
\end{proof}
 \smallskip

\begin{remark} The statement of Theorem~\ref{Sufficient-Dista-Condi-Thm}, p.~\pageref{Sufficient-Dista-Condi-Thm} can not be
generalized for every function of two variables $g(x,y)$, i.e.,
the condition given by
\begin{equation}\label{2-ee}
    \int\limits_{S^k(|y|)} g(x,y) d\,S_y=M(|y|)
\end{equation}
for all $x$ and $y$ such that $|x|<|y|$ does not imply that
$g(x,y)$ is harmonic in $x$ even for all $x$ and $y$
such that $|x|<|y|$. For example, the function
\begin{equation}\label{9-dd}
    \omega^k(x,y)=\left(\frac{|x|^2-|y|^2}{|x-y|^2}\right)^k\quad\text{with}\quad k>1\,,
\end{equation}
according to \eqref{Sphe-Pro-Statement-2}, p.~\pageref{Sphe-Pro-Statement-2} satisfies \eqref{2-ee}, but if $|x|<|y|$, then
 $(\triangle_x \omega^k)(x,y)=0$ for every $y$ only at $x=0$. This fact is proven in the following
Proposition.
\end{remark}
%can be easily checked by direct computation of
%$\bigtriangleup(\omega^k)$.

%This will be shown in the following theorem.
\medskip

%\textbf{Corollary.} Theorem 3 can not be inverted completely.
%I.e., the conditions \eqref{2-bb} and \eqref{3-bb} are not enough
%to conclude that the function

%$\tau(|x|,|y|)\cdot g(|x-y|)$ is harmonic with respect to $x$. For
%example, the function
%\begin{equation}\label{9-dd}
%    \omega^k(x,y)=\left(\frac{|x|^2-|y|^2}{|x-y|^2}\right)^k
%\end{equation}
%satisfies the conditions \eqref{2-bb} and \eqref{3-bb}, since by
%\eqref{3-a},
%\begin{equation}\label{10-dd}
%    \int\limits_{S^k(|y|)} \omega^k d\,S_y =
%     |S^k(|y|)|
%\end{equation}
%for every $x$ such that $|x|\neq |y|$,  but it is not harmonic
%which will be shown in the following theorem.
%\medskip

% Vse poprosili ubrat' etu teoremu.

\begin{proposition}\label{Couner-Example-Hermo-Proposition}
Let $x,y\in\mathbb{R}^{k+1}$ such that $|x|< |y|$ and $k\geq
1$. Then $\omega^k$ is harmonic in $x$ only if $k=1$. If $k>1$,
then $(\triangle_x \omega^k)(x,y)=0$ for every admissible $y$ only at $x=0$.
\end{proposition}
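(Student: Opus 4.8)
The plan is to compute $\triangle_x\omega^k$ by brute force and read off both conclusions. Since $|x|<|y|$ we may drop the absolute value and write $\omega=u/v$ with $u=u(x)=|y|^2-|x|^2>0$ and $v=v(x)=|x-y|^2>0$, so that $\omega^k=u^kv^{-k}$ and, setting $\phi=\log\omega^k=k\log u-k\log v$, we have $\triangle_x\omega^k=\omega^k\bigl(|\nabla_x\phi|^2+\triangle_x\phi\bigr)$. Because $\nabla_x\phi=k\nabla u/u-k\nabla v/v$, everything reduces to the elementary quantities $\nabla_x u=-2x$, $\triangle_x u=-2(k+1)$, $\nabla_x v=2(x-y)$, $\triangle_x v=2(k+1)$, together with $|\nabla u|^2=4|x|^2$, $|\nabla v|^2=4v$, and $\nabla u\cdot\nabla v=-4\bigl(|x|^2-x\cdot y\bigr)$. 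Substituting these and clearing the common factor $u^2v$, I expect to arrive at an identity of the form
\[
\triangle_x\omega^k=\frac{2k\,\omega^k}{u^2v}\,E(x,y),\qquad
E=2(k-1)|x|^2\,v+(k+1)u^2+4k\bigl(|x|^2-x\cdot y\bigr)u-(k+1)uv .
\]

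For $k=1$ the first term disappears and $E=2u\bigl[u+2(|x|^2-x\cdot y)-v\bigr]=0$ identically (the bracket is $(|y|^2-|x|^2)+2(|x|^2-x\cdot y)-(|x|^2-2x\cdot y+|y|^2)=0$); this recovers the fact that $\omega$ is, up to a constant factor, the two-dimensional Poisson kernel and hence harmonic in $x$. So the whole content is the case $k>1$, where I claim $E$ cannot vanish for all admissible $y$ unless $x=0$. Fix $x$ and let $y$ range over a single sphere $S^k(R)$ with $R>|x|$; then $u=R^2-|x|^2$ is constant, $v=|x|^2+R^2-2t$ with $t:=x\cdot y$, and $E$ becomes an affine function of $t$. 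Collecting the coefficient of $t$ gives $-4(k-1)|x|^2-2(k-1)u=-2(k-1)\bigl(|x|^2+R^2\bigr)$. For $k>1$ this is nonzero, so $E$ is a non-constant affine function of $t$; since $t$ sweeps out the nondegenerate interval $[-|x|R,|x|R]$ as $y$ runs over $S^k(R)$ (assuming $x\neq0$), $E$ vanishes for at most one value of $t$, whence $\triangle_x\omega^k(x,y)\neq0$ for generic $y$. Thus the requirement "$\triangle_x\omega^k(x,y)=0$ for every admissible $y$'' fails for every $x\neq0$. When $x=0$ one has $t\equiv0$ and $u=v=R^2$, so $E\equiv0$ and $\triangle_x\omega^k(x,y)=0$ for all $y$; combining the two cases proves both assertions.

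I do not anticipate a genuine conceptual obstacle: the only delicate part is the bookkeeping in the Laplacian computation that produces the closed form for $E$, and the correct collection of the $t$-linear term; once the formula for $E$ is in hand, the rest is immediate. As an optional remark, for $k=1$ one could skip the computation altogether by invoking that $\dfrac{|y|^2-|x|^2}{|x-y|^2}$ is a positive constant multiple of the Poisson kernel of the disc in $\mathbb{R}^2$, but the direct computation has the advantage of treating all $k$ at once and simultaneously exhibiting exactly how harmonicity breaks for $k\ge2$.
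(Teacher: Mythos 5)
Your proof is correct. Both you and the paper compute $\triangle_x\omega^k$ by direct calculation, but the bookkeeping is organized quite differently: the paper first derives the clean identities $\triangle_x\omega=\frac{2(k-1)(1-\omega)}{|x-y|^2}$ and $|\nabla_x\omega|^2=\frac{4|y|^2}{|x-y|^4}$, then applies $\triangle_x\omega^k=k(k-1)\omega^{k-2}|\nabla_x\omega|^2+k\omega^{k-1}\triangle_x\omega$, which collapses the vanishing condition for $k>1$ to the single scalar equation $|x-y|^2=\frac{(|x|^2-|y|^2)^2}{|x|^2+|y|^2}$. You instead use the logarithmic-derivative decomposition $\triangle_x\omega^k=\omega^k(|\nabla_x\phi|^2+\triangle_x\phi)$ and end up with the polynomial $E(x,y)$, whose affine dependence on $t=x\cdot y$ you then analyze. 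These are two ways of making precise the same observation: the paper's closed form is a condition on $|x-y|^2$ alone (hence constant in $t$ on a sphere), while your affine-in-$t$ argument makes fully explicit why that condition cannot hold for all $y$ on $S^k(R)$ unless $x=0$ — a step the paper merely asserts. Your version is therefore somewhat more rigorous at the endgame, at the cost of a bulkier intermediate expression; it would be worth replacing the phrase "I expect to arrive at" with an actual derivation of $E$, which as stated is correct but not yet checked. Note also that the paper uses the sign convention $\omega=(|x|^2-|y|^2)/|x-y|^2$ here (negative for $|x|<|y|$) rather than your positive $u/v$; this is immaterial for the conclusion since only the vanishing of $\triangle_x\omega^k$ is at stake.
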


%ii). $P_k(x,y)$ introduced in \eqref{2-y} is harmonic with respect
%to $x$. \smallskip

\begin{proof} At first, let us consequently obtain the following
list of trivial identities.

%\begin{equation}\label{11-dd}
%    \frac{\partial |x|}{\partial x_1}=\frac{x_1}{|x|} \quad
%    \text{and} \quad \frac{\partial |x-y|}{\partial
%    x_1}=\frac{x_1-y_1}{|x-y|}\,,
%\end{equation}

%\begin{equation}\label{12-dd}
%    \frac{\partial |x|^2}{\partial x_1}=2x_1 \quad
%    \text{and} \quad \frac{\partial |x-y|^2}{\partial
%    x_1}=2(x_1-y_1)\,,
%\end{equation}

\begin{equation}\label{1-13-dd}
    \frac{\partial \omega}{\partial
    x_1}=\frac{2x_1}{|x-y|^2}-\frac{2(x_1-y_1)}{|x-y|^2}\omega\,,
\end{equation}

\begin{equation}\label{2-14-dd}
    \frac{\partial\,^2 \omega}{(\partial
    x_1)^2}=\frac{2}{|x-y|^2}-\frac{8x_1(x_1-y_1)}{|x-y|^4}+
    \frac{8(x_1-y_1)^2\omega}{|x-y|^4}-\frac{2\omega}{|x-y|^2}\,,
\end{equation}

\begin{equation}\label{3-15-dd}
    (x,x-y)=\frac{|x|^2-|y|^2+|x-y|^2}{2}=\frac{1}{2}(\omega
    +1)\cdot |x-y|^2\,.
\end{equation}
Using \eqref{2-14-dd}, \eqref{3-15-dd} and \eqref{1-13-dd},
\eqref{3-15-dd} respectively, we obtain
\begin{equation}\label{4-16-dd}
    \bigtriangleup_x \omega=\frac{2(k-1)(1-\omega)}{|x-y|^2}\quad
    \text{and}\quad |\nabla_x\omega|^2=\frac{4|y|^2}{|x-y|^4}\,.
\end{equation}
Thus, $\omega^k$ is harmonic for $k=1$ and $x,y\in\mathbb{R}^2$.

To prove the statement of the Proposition, note that
\begin{equation}\label{5-18-dd}
    \bigtriangleup_x \omega^k=k(k-1)\omega^{k-2}|\nabla_x
    \omega|^2+k\omega^{k-1}\bigtriangleup_x \omega
\end{equation}
and the direct computation shows that for $k>1$
\begin{equation}\label{6-19-dd}
    \bigtriangleup_x \omega^k=0\quad \text{if and only if}\quad
     |x-y|^2=\frac{(|x|^2-|y|^2)^2}{|x|^2+|y|^2}\,,
\end{equation}
where the last equality holds for every admissible $y\in\mathbb{R}^{k+1}$
only if $x=0$. Therefore, if $|y|>|x|$, then $(\triangle_x \omega^k)(x,y)=0$ for every $y$ only at $x=0$, which completes the proof of Proposition~\ref{Couner-Example-Hermo-Proposition}.
\end{proof}

%where the last equality holds only if $y$ belongs to some
%specially chosen $k-1$-dimensional sphere that belongs to
%$S^k(y)$ and it is easy to see that last equality holds
%for every $y\in S^k(|y|)$ only if $x=0$ .

%To proof statement ii) notice that
%\begin{equation}\label{20-dd}
%    \bigtriangleup (f(x)\cdot g(x))=f\cdot \bigtriangleup g+2(\nabla f,\nabla
%    g)+g\cdot \bigtriangleup f
%\end{equation}
%and therefore,
%\begin{equation}\label{21-dd}
%   % \bigtriangleup_x \frac{|x|^2-|y|^2}{|x-y|^{k+1}}=
%    \bigtriangleup_x
%    \left(\omega\cdot\frac{1}{|x-y|^{k-1}}\right)=\frac{1}{|x-y|^{k-1}}
%    \bigtriangleup_x\omega+2\left(\nabla_x\omega,\nabla_x\frac{1}{|x-y|^{k-1}}\right)\,,
%\end{equation}
%since, by \eqref{25-g} and theorem 4 the function $1/|x-y|^{k-1}$
%is harmonic. The expression on the right hand side in
%\eqref{21-dd} can be easily computed with help of \eqref{15-dd}
%and \eqref{16-dd}. This completes the proof of ii). $\blacksquare$
\bigskip

\section {Algebraic computation of the electrostatic potential created by a charge with evenly distributed density.}

In this section we are going to present a new method relying on geometric interpretation of integrands to compute the following important integrals arising in electrostatics:

\begin{equation}\label{Two-Integrals}
    \int\limits_{S^k}\frac{dS_y}{|x-y|^{k-1}}\quad\text{and}\quad
    \int\limits_{S^k}\frac{|x|^2-|y|^2}{|x-y|^{k+1}}dS_y\,.
\end{equation}

The first integral appears when we compute the potential created by a sphere with an evenly distributes charge and the second integral plays an important role in Dirichlet Problem.

Note that the potential created by an evenly distributed charge in a sphere is well known. It can be computed, for example, using Gauss Integral Theorem saying that "the flux outward across the surface bounding a region is equal to $-4\pi$ times the total mass (charge) in the region, provided the bounding surface meets no mass (charge)". The computation process is described in \cite{Kellog}, pp.~40-53.

To compute the second integral in \eqref{Two-Integrals} it is convenient to use the integral form of Gauss Integral Theorem saying that

\begin{equation}\label{Gauss-Sokes-Formula}
    \int\limits_V \triangle fdV=\int\limits_{\partial V} \nabla f\cdot d\sigma\,,
\end{equation}
which is the special case of Stokes' formula, see~\cite{Sheldon}, (p.~27, exercises 17, 18). This method is applicable to both integrals in \eqref{Two-Integrals} and it is summarized in the proof of  Theorem~\ref{Stokes_Formula_Computation_Theorem} below on page~\pageref{Stokes_Formula_Computation_Theorem}.

A new and a shorter way to compute the second integral in \eqref{Two-Integrals} relies on formula \eqref{Sphe-Pro-Statement-9}, p.~\pageref{Sphe-Pro-Statement-9} saying that both integrals in \eqref{Two-Integrals} are identically equal for every point $x\notin S^k$.

From now on our goal is to present a new algebraic way based on geometric interpretation of integrands to compute both integrals in \eqref{Two-Integrals}. We introduce a few definitions that will be used later.
%show the origination of the first integral as well as play an important role in the next chapter.

\begin{definition} A point charge in
$\mathbb{R}^{k+1}$ as a pair $(Q,x)$, where $Q\in \mathbb{R}$, $x\in\mathbb{R}^{k+1}$. $Q$ is
called charge measure or, simply, charge and $x\in
\mathbb{R}^{k+1}$ is called the location of the charge.
\end{definition}

\begin{definition}\label{Electro_Potential_by_Point_Charge_Def} The electrostatic potential in
$\mathbb{R}^{k+1}$ created by a point charge $Q$ located at point $x$ is the
function of one variable $y\in \mathbb{R}^{k+1}\backslash \{x\}$,
\begin{equation}\label{23}
    W_k^Q (x,y)=\frac{Q}{|x-y|^{k-1}}\,.
\end{equation}
\end{definition}
%where $x,Q$ are fixed, $x\neq y$ and $y \in \mathbb{R}^{k+1}$ is
%variable.
\smallskip

We shall see that for every $y\neq x$, $W^Q_k(x,y)$ is a harmonic
function with respect to the variables $x$, $y$ and $W^Q_k(x,y)$
depends only on the distance between $x$ and $y$. Definition \ref{Electro_Potential_by_Point_Charge_Def}
is a natural generalization of the electrostatic potential in
$\mathbb{R}^3$ created by a point charge, see~\cite{Freedman} (p.~738).

\begin{definition}\label{Electro_Potential_by_Sphere_Def} The electrostatic potential
created by a charge with density $\rho(y)$ on the k-dimensional sphere $S^k\in
\mathbb{R}^{k+1}$ is the function
\begin{equation}\label{24}
    F_k(x)=\int\limits_{S^k} \frac{\rho
    (y)}{|x-y|^{k-1}}\,d\,S_y\, ,
\end{equation}
where $y\in S^k$ and $\rho (y)$ is a continuous function
called the density distribution.
\end{definition}

Note that this definition is a natural generalization of the
potential created by a distributed charge on a surface in
$\mathbb{R}^3$, see \cite{Freedman}, p.~738. Note that although the function
$F_k(x)$ is well defined for every $x\in \mathbb{R}^{k+1}$,
we consider it only for $x \notin S^k$.
\smallskip

Consider the potential created by an evenly distributed charge, $\rho(y)\equiv\text{const}$, on the sphere $S^k \in
\mathbb{R}^{k+1}$. Without loss of generality, we may assume that
$\rho (y)\equiv 1$. Thus, we have

\begin{proposition} Let $\rho(y)\equiv 1$. Then
\begin{equation}\label{25-g}
    \int\limits_{S^k}\frac{d\,S_y}{|x-y|^{k-1}}=
    F_k(x)=\frac{|S^k|}{R^{k-1}}\,\, , \quad \text{for} \quad |x|<R
\end{equation}
and
\begin{equation}\label{26}
    \int\limits_{S^k}\frac{d\,S_y}{|x-y|^{k-1}}=
    F_k(x)=\frac{|S^k|}{|x|^{k-1}}\,\, , \quad \text{for} \quad
    |x|>R
\end{equation}
\end{proposition}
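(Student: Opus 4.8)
The plan is to use that $g(\xi):=\xi^{-(k-1)}$ is, up to a multiplicative constant, the Newtonian potential in $\mathbb{R}^{k+1}$, so that $g(|x-y|)$ is harmonic in $x$ on $\mathbb{R}^{k+1}\setminus\{y\}$ and depends only on $|x-y|$ --- precisely the situation recalled in the Remark following Theorem~\ref{Sufficient-Dista-Condi-Thm}. The case $k=1$ is degenerate and should be disposed of first: there $k-1=0$, the integrand equals $1$, so $F_1(x)=|S^1|$ for every $x$, in agreement with both \eqref{25-g} and \eqref{26}. From now on assume $k\geq 2$.

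First I would observe that $F_k$ is rotation invariant about $O$: replacing $x$ by $Ax$ and the integration variable $y$ by $Ay$ for $A\in O(k+1)$ leaves both the sphere $S^k=S^k(R)$ and its surface measure fixed, so $F_k(Ax)=F_k(x)$ and hence $F_k(x)=\phi(|x|)$ for some function $\phi$. Next, for $|x|\neq R$ the distance $|x-y|$ is bounded below by $\bigl||x|-R\bigr|>0$ uniformly in $y\in S^k$, so one may differentiate under the integral sign and conclude that $F_k$ is harmonic on each of the open regions $\{|x|<R\}$ and $\{|x|>R\}$. Therefore on each region $\phi$ satisfies the radial Laplace equation $\phi''(r)+\tfrac{k}{r}\phi'(r)=0$, whose general solution for $k\geq 2$ is $\phi(r)=a+b\,r^{-(k-1)}$.

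On the interior region the function $F_k$ is smooth at $x=0$, which forces $b=0$; thus $F_k$ is constant there, equal to $F_k(0)=\int_{S^k}R^{-(k-1)}\,dS_y=|S^k|/R^{k-1}$, which is \eqref{25-g}. On the exterior region, letting $|x|\to\infty$ gives $F_k(x)\to 0$ (the kernel tends to $0$ uniformly in $y\in S^k$), so $a=0$ and $F_k(x)=b\,|x|^{-(k-1)}$; and since $|x|/|x-y|\to 1$ uniformly in $y\in S^k$, the product $|x|^{k-1}F_k(x)=\int_{S^k}\bigl(|x|/|x-y|\bigr)^{k-1}\,dS_y$ tends to $|S^k|$, so $b=|S^k|$, which is \eqref{26}. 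Alternatively, the exterior formula can be obtained from the interior one without any limiting argument: applying the Sphere Exchange Rule~\eqref{ExchaSphere-1} to the concentric spheres $S^k(|x|)$ and $S^k(R)$ with $g(\xi)=\xi^{-(k-1)}$ expresses $|x|^{k}F_k(x)$ as $R^k$ times the potential at a point of $S^k(R)$ produced by a unit-density charge on the larger sphere $S^k(|x|)$, and that potential equals $|S^k(|x|)|/|x|^{k-1}$ by the interior case already proved.

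The only steps requiring any care are the justification of differentiation under the integral sign (immediate once $|x|\neq R$ keeps the kernel away from its singularity) and the evaluation of the two integration constants via the behavior of $F_k$ at $x=0$ and at $x=\infty$; neither is a real obstacle, the substance being the classical fact that a uniformly charged sphere produces a constant potential in its interior and a point-charge potential in its exterior.
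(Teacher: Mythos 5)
Your proof is correct, but for the interior formula \eqref{25-g} it follows a genuinely different route from the paper's. You argue by classical potential theory: rotation invariance gives $F_k(x)=\phi(|x|)$, differentiation under the integral sign plus harmonicity of the kernel $|x-y|^{-(k-1)}$ gives the radial equation $\phi''+\tfrac{k}{r}\phi'=0$ on each component of $\{|x|\neq R\}$, and the constants in $\phi=a+b\,r^{-(k-1)}$ are fixed by regularity at the origin, respectively by decay and the asymptotics $|x|^{k-1}F_k(x)\to|S^k|$ at infinity. The paper deliberately avoids harmonicity here: the point of that section is an ``algebraic'' computation based on the geometric interpretation $\omega=l/q$. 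Concretely, it rewrites the integral over $S^k$ as an integral over the unit sphere $\Sigma$ via $dS_y=\frac{2R}{l+q}\,q^k\,d\Sigma$ (Lemma~\ref{VarExchange-Lemma}), getting $F_k(x)=R\int_\Sigma\frac{2q}{l+q}\,d\Sigma$, then applies the $l\leftrightarrow q$ exchange identity \eqref{VarExchange-2} to write the same quantity as $R\int_\Sigma\bigl(2-\frac{2q}{l+q}\bigr)\,d\Sigma$; adding the two expressions gives $2F_k(x)=2R\sigma_k$ immediately, with no ODE, no regularity discussion, and no limit at infinity, and uniformly in $k\geq1$ (your $k=1$ case needs, and gets, separate treatment since the radial solution basis changes there). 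For the exterior formula \eqref{26}, your ``alternative'' argument via the Sphere Exchange Rule \eqref{ExchaSphere-1} is exactly the paper's proof, while your first argument (decay forces $a=0$, asymptotics give $b=|S^k|$) is an additional valid route. What your approach buys is brevity and reliance on standard facts about harmonic functions; what the paper's buys is consistency with its program of deriving such potentials, and eventually the Poisson kernel, without invoking harmonicity or Green-type machinery, using only the identity $\omega=l/q$ and the exchange lemmas.
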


\begin{proof}[Proof of \eqref{25-g}.] Let $x$ be inside a sphere
$S^k$.

Using the change of variables formulae \eqref{VarExchange-1}, \eqref{VarExchange-2} from Lemma~\ref{VarExchange-Lemma}, p.~\pageref{VarExchange-Lemma} and elementary algebra, we have

\begin{equation}\label{25}
\begin{split}
& \int\limits_{S^k}\frac{dS_y}{|x-y|^{k-1}}=
\int\limits_{\Sigma}\frac{1}{q^{k-1}}\, \frac{2R}{l+q}\,
q^k\, d\Sigma
\\& =R\, \int\limits_{\Sigma}\frac{2\,q}{l+q}\, d\Sigma=
R\,\int\limits_{\Sigma}\frac{2\,l}{l+q}\,d\Sigma
\\&
=R\,\int\limits_{\Sigma}\left(2-\frac{2\,q}{l+q}\right)\,d\Sigma\,.
\end{split}
\end{equation}
Hence
\begin{equation}\label{27}
F(x)=R\, \int\limits_{\Sigma}\frac{2\,q}{l+q}\,
d\Sigma=
R\,\int\limits_{\Sigma}\left(2-\frac{2\,q}{l+q}\right)\,d\Sigma\,.
\end{equation}
Therefore
\begin{equation}\label{26-a}
F(x)=\int\limits_{S^k}\frac{dS_y}{|x-y|^{k-1}}=R\,|\sigma_k|=
\frac{|S^k|}{R^{k-1}}\,,
\end{equation}
where $|\sigma_k|$ is the area of a $k$-dimensional unit sphere
and $|S^k|$ is the area of a $k$-dimensional sphere of
radius $R$. Thus, the electrostatic potential inside a sphere
created by a charge, evenly distributed on the sphere, is
constant, which is, clearly, a well known fact.
\end{proof}

\begin{proof}[Proof of \eqref{26}.] Let $x$ be outside of the sphere
$S^k$. Note that identity \eqref{26} can be obtained as a
simple consequence of Lemma~\ref{Basic_lemma} from p.~\pageref{Basic_lemma} and formula~\eqref{25-g}. Indeed, let
$r=|x|>|y|=R$ and let $g(|x-y|)=1/|x-y|^{k-1}$. Then, using
\eqref{25-g} and \eqref{ExchaSphere-1} from p.~\pageref{ExchaSphere-1}, we have
\begin{equation}\label{1-m}
    |x|^k \int\limits_{S^k(R)} g(|x-y_1|) d\,S_{y_1}=R\,^k
    \int\limits_{S^k(r)} g(|x_1-y|)
    d\,S_{x_1}=R\,^k\cdot\frac{|S^k(r)|}{|x|^{k-1}}\,,
\end{equation}
which yields
\begin{equation}\label{2-m}
    \int\limits_{S^k(R)} g(|x-y|) d\,S_y=\frac{R\,^k\cdot
    |\sigma_k|}{|x|^{k-1}}=\frac{|S^k(R)|}{|x|^{k-1}}\,,
\end{equation}
where, as before, $S^k(r)$ and
$S^k(R)=S^k$ are two concentric $k$-dimensional
spheres of radii $r$ and $R$ respectively centered at the origin $O$. Thus, the potential at point $x$ is equal to the potential created by the point charge of measure $|S^k|$ and located at $O$, which also coincide with the well known fact from electrostatics.
\end{proof}

\begin{remark}
Identity \eqref{25-g} implies that the function $1/|x-y|^{k-1}$ is
harmonic. This follows from Theorem~\ref{Sufficient-Dista-Condi-Thm}, p.~\pageref{Sufficient-Dista-Condi-Thm} above.
\end{remark}

\begin{remark} As a bonus, using the identity \eqref{Sphe-Pro-Statement-9}
from p.~\pageref{Sphe-Pro-Statement-9} and formulae \eqref{25-g}, \eqref{26}, we obtain

\begin{enumerate}
  \item For $x$ inside a sphere,
    \begin{equation}\label{28-a} \int\limits_{S^k}
        \frac{R^2-|x|^2}{|x-y|^{k+1}}\,dS_y=R\,|\sigma_k|\,.
    \end{equation}
  \item For $x$ outside of a sphere
    \begin{equation}\label{29}
        \int\limits_{S^k}
        \frac{|x|^2-R^2}{|x-y|^{k+1}}\,dS_y=\left(\frac{R}{|x|}\right)^{k-1}
        R\,|\sigma_k|\,.
    \end{equation}
\end{enumerate}
\end{remark}

\begin{remark}\label{Direct_Poisson_compute_Geometrically} The identity \eqref{28-a} can be obtained
directly. Using formulae \eqref{GeoInte-5}, \eqref{GeoInte-6} from p.~\pageref{GeoInte-5} and Figure~\ref{Geometric_Interpretation} from p.~\pageref{Geometric_Interpretation}, observe that
\begin{equation}\label{2-x}
    \frac{R^2-|x|^2}{|x-y|^{k+1}}=\frac{l\cdot
    q}{q^{k+1}}=\frac{l}{q^{k}}
\end{equation}
and repeat the process shown in \eqref{25}.
\end{remark}

\begin{theorem}\label{Stokes_Formula_Computation_Theorem}
Let function $\Phi(x,y)=\tau(|x|,|y|)\cdot g(|x-y|)$ be harmonic
in $x$ for every $x\in\mathbb{R}^{k+1}$ such that
$|x|\neq |y|$. Let $|x|=r$, $|y|=R$. Define
$\widetilde{g}(x)=\int\limits_{S^k(R)} g(|x-y|)
d\,S_y$. Then

\begin{description}
  \item[for every $x$ inside $S^k(R)$]
    \begin{equation}\label{Stokes_Inside}
        \widetilde{g}(x)
        =\int\limits_{S^k(R)} g(|x-y|) d\,S_y =
        \frac{\tau(0,R)\cdot g(R)\cdot
        |S^k(R)|}{\tau(|x|,R)}
    \end{equation}
  \item[and for $x$ outside of $S^k(R)$]
    \begin{equation}\label{Stokes_Outside}
        \widetilde{g}(x)
        =\int\limits_{S^k(R)} g(|x-y|)
        d\,S_y=\frac{\tau(0,|x|)\cdot g(|x|)\cdot
        |S^k(R)|}{\tau(R,|x|)}\,,
    \end{equation}
    where $|S^k(R)|$, as before, is the volume of
    $S^k(R)$.
\end{description}
\end{theorem}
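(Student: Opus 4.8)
The plan is to reduce both formulas to one elementary fact — a rotation–invariant harmonic function on a ball of $\mathbb{R}^{k+1}$ must be constant — and then to pass from the interior formula \eqref{Stokes_Inside} to the exterior formula \eqref{Stokes_Outside} by the Sphere Exchange Rule of Lemma~\ref{Basic_lemma}.

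\textbf{Interior case.} Fix $x$ with $|x|<R$. Since $|y|=R$ is constant on $S^k(R)$, the factor $\tau(|x|,|y|)=\tau(|x|,R)$ comes out of the integral, so
\[
\tau(|x|,R)\,\widetilde g(x)=\int_{S^k(R)}\Phi(x,y)\,dS_y .
\]
I would then observe that the right–hand side is a rotation–invariant harmonic function of $x$ on the ball $|x|<R$. Rotation invariance is immediate because $g(|x-y|)$ is invariant under rotations about $O$ and $S^k(R)$ is a rotation–invariant domain of integration. Harmonicity follows by differentiating under the integral sign (equivalently, by integrating the mean–value identity of $\Phi(\cdot,y)$ over $y$): for each fixed $y\in S^k(R)$ the integrand $\Phi(\cdot,y)$ is harmonic on all of $\{|x|<R\}$, which is exactly the hypothesis since there $|x|<R=|y|$. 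A rotation–invariant harmonic function on a ball of $\mathbb{R}^{k+1}$ is constant, because every radial harmonic function on a punctured ball has the form $A+Br^{1-k}$ (or $A+B\ln r$ when $k=1$) and the singular part is excluded by harmonicity at the centre. Evaluating this constant at $x=0$, where $\widetilde g(0)=\int_{S^k(R)}g(R)\,dS_y=g(R)\,|S^k(R)|$, yields
\[
\tau(|x|,R)\,\widetilde g(x)=\tau(0,R)\,g(R)\,|S^k(R)| ,
\]
which is \eqref{Stokes_Inside} after dividing by $\tau(|x|,R)$.

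\textbf{Exterior case.} Let $|x|=r>R$ and regard $x$ as a point of $S^k(r)$. Applying \eqref{ExchaSphere-1} of Lemma~\ref{Basic_lemma} to the two spheres $S^k(r)$, $S^k(R)$ and to the function $g$, with an arbitrary fixed auxiliary point $y\in S^k(R)$, gives
\[
\widetilde g(x)=\int_{S^k(R)}g(|x-y_1|)\,dS_{y_1}=\frac{R^k}{r^k}\int_{S^k(r)}g(|x_1-y|)\,dS_{x_1}.
\]
The integral on the right is precisely the interior potential associated with the sphere $S^k(r)$, evaluated at the point $y$, which lies inside $S^k(r)$ because $|y|=R<r$; and the relevant separable function is $\tau(|y|,|x_1|)\,g(|y-x_1|)$, which is harmonic in $y$ — this is the hypothesis of the theorem, now read with $y$ occupying the first slot. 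Hence the interior identity just proved, applied with sphere radius $r$ and interior point $y$, gives
\[
\int_{S^k(r)}g(|x_1-y|)\,dS_{x_1}=\frac{\tau(0,r)\,g(r)\,|S^k(r)|}{\tau(R,r)} .
\]
Substituting this back and using that $|S^k(\rho)|$ is proportional to $\rho^k$, so that the prefactor $R^k/r^k$ converts $|S^k(r)|$ into $|S^k(R)|$, one obtains exactly \eqref{Stokes_Outside}.

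\textbf{Where the work is.} The only point that needs care is the interchange of Laplacian and integral (equivalently, propagation of the mean–value property), together with the observation that the hypothesis ``$\Phi$ is harmonic in its first argument'' is precisely what the exterior step requires after the Sphere Exchange Rule interchanges which of the two spheres is the larger one: one might fear that harmonicity ``in $x$'' is too weak to handle the swapped integral, but since the hypothesis asserts that the single function $\tau(|p|,|q|)g(|p-q|)$ is harmonic in $p$ for each $q$, it applies verbatim with $p=y$, $q=x_1$. Everything else is the classification of radial harmonic functions. As a sanity check, the hypotheses are met by $\tau\equiv1$, $g(\xi)=\xi^{1-k}$, recovering \eqref{25-g}–\eqref{26}, and by $\tau(a,b)=b^2-a^2$, $g(\xi)=\xi^{-(k+1)}$, recovering \eqref{28-a}–\eqref{29}.
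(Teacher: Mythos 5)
Your proof is correct and follows essentially the same route as the paper: both establish the interior formula by showing $\widetilde\Phi(x)=\int_{S^k(R)}\Phi(x,y)\,dS_y$ is a radially symmetric harmonic function on the ball and hence equal to its value at the origin, and then pass to the exterior formula via the Sphere Exchange Rule applied to the two spheres $S^k(r)$, $S^k(R)$. The only cosmetic difference is that you obtain constancy from the classification of radial harmonic functions $A+Br^{1-k}$ (or $A+B\ln r$), whereas the paper invokes Stokes' formula to conclude $\nabla\widetilde\Phi\equiv 0$; these are equivalent.
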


\begin{proof}[\textbf{Proof of Theorem \ref{Stokes_Formula_Computation_Theorem}.}] Note, at first, that the function
\begin{equation}\label{Stokes-1}
    \widetilde{\Phi}(x)=\int\limits_{S^k(R)}
    \Phi(x,y)d\,S_y=\int\limits_{S^k(R)}
    \tau(|x|,|y|)\cdot g(|x-y|) d\,S_y
\end{equation}
is harmonic and depends only on $|x|$. Then $\nabla
\widetilde{\Phi}(\overline{x})$ is orthogonal to $S^k(r)$
for every $\overline{x}\in S^k(r)$ and, moreover,
$|\widetilde{\Phi}(\overline{x})|\equiv$ const for every
$\overline{x}\in S^k(r)$. By Stokes' formula, for any
$x\in S^k(r)$, we have
\begin{equation}\label{Stokes-2}
    0=\int\limits_{S^k(r)}
    \bigtriangleup\widetilde{\Phi}(\overline{x})
    d\,S_{\overline{x}}=\int\limits_{S^k(r)}
    (\nabla\widetilde{\Phi}(\overline{x}),n_{\overline{x}})    d\,S_{\overline{x}}=\pm
   |\nabla\widetilde{\Phi}(x)|\cdot |S^k(r)|\,,
\end{equation}
where $n_{\overline{x}}$ is a unit vector orthogonal to
$S^k(r)$ at point $\overline{x}$. Therefore,
$\widetilde{\Phi}(x)$=const=$\widetilde{\Phi}(0)$ for every $x$
inside $S^k(R)$. Thus,
\begin{equation}\label{Stokes-3}
   \widetilde{\Phi}(x)=\widetilde{\Phi}(0)=
    \int\limits_{S^k(R)} \tau(0,R)\cdot g(R) d\,S_y=
    \tau(0,R)\cdot g(R)\cdot|S^k(R)|\,,
\end{equation}
which yields \eqref{Stokes_Inside}.

The identity \eqref{Stokes_Outside} can easily be obtained from \eqref{Stokes_Inside}
by Lemma~\ref{Basic_lemma}, page~\pageref{Basic_lemma}. Indeed, for $|x|>|y|$, we have
\begin{equation}\label{Stokes-4}
\begin{split}
    & \int\limits_{S^k(R)} g(|x-y_1|)
    d\,S_{y_1}=\left(\frac{R}{r}\right)^k
    \int\limits_{S^k(r)} g(|y-x_1|)
    d\,S_{x_1}
    \\& =\left(\frac{R}{r}\right)^k\,\,\frac{\tau(0,r)\cdot g(r)\cdot
    |S^k(r)|}{\tau(R,|x|)}=\frac{\tau(0,|x|)\cdot g(|x|)\cdot
    |S^k(R)|}{\tau(R,|x|)}\,,
\end{split}
\end{equation}
which completes the proof of Theorem~\ref{Stokes_Formula_Computation_Theorem}.
\end{proof}

\section {Dirichlet Problem for a ball or its exterior.}

Let $f(y)$ be a continuous function defined on the $k$-dimensional
sphere $S^k$ of radius R centered at the origin $O$. The
Dirichlet problem is to find a harmonic function $U(x)$ inside
(outside) the sphere $S^k$ such that $\lim_{x\rightarrow
x_0} U(x)=f(x_0)$ for every $x_0\in S^k$.

It is known that the solution for this problem can be written as

\begin{equation}\label{Standard-Dir-Solution}
    U(x)=
    \frac{1}{R\,|\sigma_k|}\int\limits_{S^k}
    \frac{|R\,^2-|x|^2|}{|x-y|^{k+1}}f(y)\,dS_y\,,
\end{equation}
where the ratio in this integrand is called Poisson Kernel, see~\cite{Kellog}, p.~237 or~\cite{Sheldon}, p.12. The standard way to deduce formula~\eqref{Standard-Dir-Solution} relies on Green's formula and Green's function. In particular, the Poisson kernel is obtained as the normal derivative of Green's function, see~\cite{Kellog}, pp.~240-243.

On the other hand, as we have seen, Poisson kernel can be obtained just by formula \eqref{Sphe-Pro-Statement-9}, p.~\pageref{Sphe-Pro-Statement-9}. Thus, we have an opportunity to show that formula~\eqref{Standard-Dir-Solution} can be deduced in a shorter way by using formula~\eqref{Sphe-Pro-Statement-9} mentioned above.

\begin{proof}[\textbf{Solution.}]

Let us solve, first, the simplest Dirichlet problem corresponding
to $f(y)\equiv 1$. It turns out that the solution is $U(x)\equiv
1$. Note, also, that the solution can be described by means of
electrostatic potential function defined in \eqref{23}, since this
function is harmonic. According to \eqref{24} and \eqref{25-g}, we can distribute
charge on $S^k$ in such a way as to make unit potential
inside the sphere. It is not hard to see that the density of this
distribution should be $\rho=1/R\,|\sigma_k|$. Therefore, a
solution for the simplest Dirichlet problem can be presented in
the following form

%It is not hard to see that, by proposition from previous section
%(see \eqref{25-g}), that the solution for a simple Dirichlet
%problem corresponding to $f(y)\equiv 1$ can be presented in the
%following form
\begin{equation}\label{1-y}
    U(x)\equiv 1=\frac{1}{R\,|\sigma_k|}\int\limits_{S^k}
    \frac{d\,S_y}{|x-y|^{k-1}}=
    \frac{1}{R\,|\sigma_k|}\int\limits_{S^k}
    \frac{R\,^2-|x|^2}{|x-y|^{k+1}}f(y)\,dS_y\,,
\end{equation}
where $f(y)\equiv1$ and the last equality was obtained by using \eqref{Sphe-Pro-Statement-9}, p.~\pageref{Sphe-Pro-Statement-9}, or the
last integral can be computed directly by using the argument
described in Remark \ref{Direct_Poisson_compute_Geometrically} above.

%Note that $U(x)$, given in \eqref{1-y}, represents a potential
%generated by evenly distributed charge on $S^k$ with
%density of distribution $\rho=1/R\,|\sigma_k|$.

To show that the last expression in \eqref{1-y} gives a solution for a Dirichlet problem with arbitrary boundary values represented by an arbitrary continuous function $f(y)$, we need to note that the
integrand
    \begin{equation}\label{2-y}
    P_k(x,y)=\frac{1}{R\,|\sigma_k|}\cdot
    \frac{R\,^2-|x|^2}{|x-y|^{k+1}}\,,
\end{equation}
obtained in \eqref{1-y}, is harmonic with respect to $x$, which
can be easily checked by the direct computation, and
\begin{equation}\label{3-y}
    \lim_{x\rightarrow x_0} \int\limits_{S^k} P_k(x,y)
    \cdot f(y) d\,S_y =f(x_0)
\end{equation}
for every $x_0\in S^k$ and for every continuous function
$f$ defined on $S^k$. The last property \eqref{3-y} is
proved carefully for $k=1$ in \cite{Ahlfors}, (p.~168). The proof for
arbitrary $k$ can be obtained in the same way.
Note also that both of these properties hold for
$|x|<R$, as well as, for $|x|>R$. Therefore, the function
\begin{equation}\label{4-y}
    W(x)=\frac{1}{R\,|\sigma_k|}\int\limits_{S^k}
    \frac{|R\,^2-|x|^2|}{|x-y|^{k+1}}\cdot f(y) d\,S_y
\end{equation}
gives a solution for a Dirichlet problem with arbitrary boundary values,
i.e. $W(x)$ is
harmonic inside (outside) the $S^k$ and
$\lim_{x\rightarrow x_0} W(x)=f(x_0)$ for every continuous
function $f$ defined on $S^k$.
\end{proof}

\section {Inequalities.}

Recall that the following trivial inequalities
\begin{equation}\label{Trivial-Inequalities}
 ||x|-|y||\leq|x-y|\leq|x|+|y|
\end{equation}
imply that for $y\in S^k(R)$ and for $x\notin S^k(R)$, we have
\begin{equation}\label{1-s}
    C=\frac{|S|^k}{|R+|x|\,|^\alpha}\leq \int\limits_{S^k}
    \frac{d\,S_y}{|x-y|^\alpha}\leq
    \frac{|S|^k}{|R-|x|\,|^\alpha}=D\,,
\end{equation}
where equalities hold only for $|x|=0$.

The goal of this section is to show that for $\alpha>k\geq 1$ these lower
and upper bounds can be essentially improved if we apply first the integral identity~\eqref{Sphe-Pro-Statement-9.0}, from p.~\pageref{Sphe-Pro-Statement-9.0} to the middle integral in \eqref{1-s} and only then, use \eqref{Trivial-Inequalities}. The result obtained by this procedure is stated in the following theorem.

 \begin{theorem}\label{Inequality_theorem} Let $\alpha$ be real, $x\notin S^k$ and $y\in
 S^k$. Then
 \begin{equation}\label{37}
 \frac{|S^k|}{|R-s|x||^k
    |R+s|x||^{\alpha-k}}\leq
    \int\limits_{S^k}
    \frac{d\,S_y}{|x-y|^\alpha}\leq \frac{|S^k|}
    {|R+s|x||^k |R-s|x||^{\alpha-k}}\,,
\end{equation}
where $s=\text{sign}(\alpha-2k)$ for $\alpha \neq 2k$ and $s=1$
for $\alpha=2k$. In each inequality, the equality holds if and only if
$\alpha=2k$ or $x=0$.
\end{theorem}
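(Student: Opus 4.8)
The plan is to follow the hint given just before the theorem: first apply the distance identity \eqref{Sphe-Pro-Statement-9.0} of Corollary~\ref{sphe-pro-distance-basic-statement} to convert the exponent $\alpha$ into its ``reflected'' exponent $2k-\alpha$, and only afterwards use the elementary two-sided estimate \eqref{Trivial-Inequalities}. Taking $\beta=2k-\alpha$ in \eqref{Sphe-Pro-Statement-9.0} is legitimate (that corollary holds for all real, indeed complex, exponents), and then $(\beta-\alpha)/2=k-\alpha$, so
\[
\int_{S^k}\frac{dS_y}{|x-y|^{\alpha}}=|R^2-|x|^2|^{\,k-\alpha}\int_{S^k}\frac{dS_y}{|x-y|^{2k-\alpha}}\,,
\]
where the factor $|R^2-|x|^2|^{\,k-\alpha}$ is a well-defined positive real since $x\notin S^k$. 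The point of introducing $s=\text{sign}(\alpha-2k)$ is that the new integrand is $|x-y|^{\,\alpha-2k}$, and $s$ records whether $t\mapsto t^{\,\alpha-2k}$ is increasing ($s=1$) or decreasing ($s=-1$) on $(0,\infty)$; in the borderline case $\alpha=2k$ the new integrand is identically $1$.

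Next I would bound the reduced integral. From \eqref{Trivial-Inequalities} we have $\bigl||x|-R\bigr|\le|x-y|\le|x|+R$ for every $y\in S^k$. One of the two numbers $|R-s|x||$, $|R+s|x||$ equals $\bigl||x|-R\bigr|$ and the other equals $|x|+R$, the assignment depending on the sign of $s$, and in either case this sign convention is precisely the one for which
\[
|R-s|x||^{\,\alpha-2k}\le|x-y|^{\,\alpha-2k}\le|R+s|x||^{\,\alpha-2k}\qquad(y\in S^k)
\]
holds by monotonicity of $t\mapsto t^{\,\alpha-2k}$. Integrating over $S^k$ and then multiplying through by the positive factor $|R^2-|x|^2|^{\,k-\alpha}=\bigl(|R-s|x||\cdot|R+s|x||\bigr)^{\,k-\alpha}$ (using $|R^2-|x|^2|=\bigl||x|-R\bigr|\,(|x|+R)$), the powers of $|R-s|x||$ and $|R+s|x||$ combine to give exactly the lower and upper bounds of \eqref{37}; the exponent arithmetic is routine and symmetric in the two ends. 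When $\alpha=2k$ the reduced integral is simply $|S^k|$, so the displayed identity gives the exact value $|S^k|\,|R^2-|x|^2|^{-k}$, which equals both sides of \eqref{37}.

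For the equality clause, note that multiplication by a positive constant and the monotone map $t\mapsto t^{\,\alpha-2k}$ both preserve strict inequality, so equality in \eqref{37} forces $y\mapsto|x-y|^{\,\alpha-2k}$ to be constant on $S^k$ off a null set. The extreme values $\bigl||x|-R\bigr|$ and $|x|+R$ of $|x-y|$ are each attained at exactly one point of $S^k$ when $x\ne0$ (namely $\pm(R/|x|)x$), so this function is non-constant unless $x=0$ or $\alpha=2k$; conversely it is constant in those two cases, and then both inequalities become the identity above. Hence equality holds if and only if $\alpha=2k$ or $x=0$. I do not expect a genuine obstacle: the only step needing care is the bookkeeping of how $s$ sorts $\bigl||x|-R\bigr|$ and $|x|+R$ into $|R-s|x||$ and $|R+s|x||$ and the verification that this sorting is the one compatible with the monotonicity direction of $t^{\,\alpha-2k}$; once that is pinned down, the remainder is elementary algebra.
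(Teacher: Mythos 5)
Your proof is correct and follows the paper's approach: first convert $|x-y|^{-\alpha}$ to a constant times $|x-y|^{\alpha-2k}$ via the identity \eqref{Sphe-Pro-Statement-9.0}, then apply the elementary bounds $\bigl||x|-R\bigr|\le|x-y|\le|x|+R$. The only difference is bookkeeping: you use the sign $s$ uniformly to treat both bounds and both monotonicity regimes at once, whereas the paper splits explicitly into the cases $\alpha\le 2k$ and $\alpha\ge 2k$ and proves the lower bound ``similarly''; you also spell out the justification of the equality clause that the paper leaves implicit.
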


We will show in the following remarks and examples that for $\alpha>k\geq 1$ these lower
and upper bounds in \eqref{37} are finer then the obvious bounds $C$
and $D$ introduced in \eqref{1-s}.

\begin{remark} For $\alpha>k$ and $x\neq 0$, we have
\begin{equation}\label{38}
C<\frac{|S^k|}{|R+s\,|x|\,|^k\cdot
|R-s\,|x|\,|^{\alpha-k}} \leq
\frac{|S^k|}{|R-s\,|x|\,|^k\cdot
|R+s\,|x|\,|^{\alpha-k}}<D
\end{equation}
where the equality holds only for $\alpha=2k$.
\end{remark}

\begin{remark} For $k=1$, and $a>b>0$, we can write
\begin{equation}\label{39}
\begin{split}
   A= \frac{2\pi}{(a-s\,b)^{1/2}\cdot (a+s\,b)^{p-1/2}} & \leq
    \int\limits_0^{2\pi}\frac{d\theta}{(a-b\,\sin(\theta))^p} \\& \leq
    \frac{2\pi}{(a+s\,b)^{1/2}\cdot (a-s\,b)^{p-1/2}}=B\,,
\end{split}
\end{equation}
where $s=\text{sign}(p-1)$ for $p\neq 1$ and $s=1$ for $p=1$. A
and B are just notation for these upper and lower bounds. The
equality holds if and only if $p=1$ or $b=0$.

For $p>1/2$, we have
\begin{equation}\label{40}
\frac{2\pi}{(a+b)^p}<A\leq B < \frac{2\pi}{(a-b)^p}\,,
\end{equation}
where the equality holds only for $p=1$.
\end{remark}

\textbf{Example 1.} If $a>b>0$, then
\begin{equation}\label{41}
    \int\limits_0^{2\pi}\frac{d\theta}{(a-b\,\sin\theta)^{3/2}} \leq
    \frac{2\pi}{(a-b)\sqrt{a+b}} < \frac{2\pi}{(a-b)^{3/2}}\,.
\end{equation}
\medskip

\textbf{Example 2.}
\begin{equation}\label{42}
    \int\limits_{S^2} \frac{dS_y}{|x-y|^5} \leq
    \frac{4\pi R^2}{||x|+R|^2\cdot ||x|-R|^3} < \frac{4\pi
    R^2}{||x|-R|^5}\,,
\end{equation}
where $x\notin S^2$ and $y\in S^2$.\medskip

\begin{proof}[\textbf{Proof of theorem \ref{Inequality_theorem}.}] Using the basic identity
\eqref{Sphe-Pro-Statement-2}, p.~\pageref{Sphe-Pro-Statement-2}, we have

\begin{enumerate}
  \item \textbf{For $\alpha\leq 2k$,}
    \begin{equation}\label{1-p}
    \begin{split}
        & \int\limits_{S^k}
        \frac{d\,S_y}{|x-y|^{\alpha}}=|R\,^2-|x|^2|^{k-\alpha}\int\limits_{S^k}
        \frac{d\,S_y}{|x-y|^{2k-\alpha}}\\& \leq
        |S^k| \frac{|R-|x||^{k-\alpha}\cdot
        |R+|x||^{k-\alpha}}{|R-|x||^{2k-\alpha}}=\frac{|S^k|}{|R-|x||^k\cdot
        |R+|x||^{\alpha-k}} \\& =\frac{|S^k|}{|R+s|x||^k\cdot
        |R-s|x||^{\alpha-k}}\,,
    \end{split}
    \end{equation}
    where $s=\text{sign} (\alpha-2k)$ for $\alpha\neq 2k$.
  \item \textbf{For $\alpha\geq 2k$,}
    \begin{equation}\label{2-p}
    \begin{split}
        & \int\limits_{S^k}
        \frac{d\,S_y}{|x-y|^{\alpha}}=|R\,^2-|x|^2|^{k-\alpha}\int\limits_{S^k}
        |x-y|^{\alpha-2k} d\,S_y \\& \leq
        |R-|x||^{k-\alpha}\cdot |R+|x||^{k-\alpha}\cdot
        |R+|x||^{\alpha-2k}\cdot |S^k| \\& =\frac{|S^k|}{|R+|x||^k\cdot
        |R-|x||^{\alpha-k}} =\frac{|S^k|}{|R+s|x||^k\cdot
        |R-s|x||^{\alpha-k}}
    \end{split}
    \end{equation}
    where $s=\text{sign}(\alpha-2k)$ for $\alpha \neq 2k$.
\end{enumerate}
It is clear that for $\alpha=2k$ we have to set $s=1$
or $s=-1$. This, according to \eqref{Sphe-Pro-Statement-9.0}, p.~\pageref{Sphe-Pro-Statement-9.0}, gives us the same equality in both cases. Thus, combining
the results obtained in \eqref{1-p} and \eqref{2-p}, we get the
upper bound in \eqref{37}. Similarly, we obtain the lower
bound given in \eqref{37}. Using the same method we can
prove the inequalities from \eqref{39}.
\end{proof}

\begin{proof}[\textbf{Proof of \eqref{38}}.] This proof can be obtained just by the direct computation applied separately for the following intervals for $\alpha$.
\begin{equation}\label{4-p}
     k<\alpha<2k\,, \quad \alpha\geq2k\,, \quad \alpha=k\,, \quad \alpha<k\,.
\end{equation}
These direct computations complete the proof of \eqref{38}.
\end{proof}

\chapter{Analysis in Hyperbolic Geometry.}

In this chapter we consider some applications of the geometric interpretation introduced in \eqref{GeoInte-4} to hyperbolic geometry.

\section {The relationship of the Main Theorem to Hyperbolic Geometry.}

\subsection {Notations and some geometric preliminaries.}

For the future reference, let us introduce the following notation. Let $H_\rho^n$ and $B^n_\rho$ be the half-space model and the ball model, respectively, of a hyperbolic $n-$dimensional space with a constant sectional curvature $\kappa=1/{\rho^2}<0$. Recall that the half-space model $H_\rho^n$ consists of the open half-space of points $(x_1, ..., x_{n-1}, t)$ in $R^n$ for all $t>0$ and the metric is given by $(\rho/t)|ds|$, where $|ds|$ is the Euclidean distance element. The ball model $B^n_\rho$ consists of the open unit ball $|X|^2+T^2<\rho^2, (X,T)=(X_1, ..., X_{n-1}, T)$ in $R^n$, and the metric for this model is given by $2\rho^2|ds|/(\rho^2-|X|^2-T^2)$.

Recall also that in a hyperbolic space a Laplacian can be represent as
\begin{equation}\label{Uniq-6}
\begin{split}
    \triangle
    & =\frac{1}{\rho^2}\left[ t^2\left( \frac{\partial^2}{\partial x_1^2}+...+\frac{\partial^2}{\partial x_{n-1}^2} + \frac{\partial^2}{\partial t^2} \right)-(n-2)t\frac{\partial}{\partial t} \right]
    \\& =\frac{\partial^2}{\partial r^2}+
    \frac{n-1}{\rho}\coth\left(\frac{r}{\rho}\right)\frac{\partial}{\partial r}+\triangle_{S(0;r)}\,,
\end{split}
\end{equation}
where the first expression is the hyperbolic Laplacian in $H^n_\rho$ expressed by using Euclidean rectangular coordinates and the second expression represents  the hyperbolic Laplacian in $B^n_\rho$ expressed in the geodesic hyperbolic polar coordinates, where $\triangle_{S(0;r)}$ is the Laplacian on the geodesic sphere of a hyperbolic radius $r$ about the origin.

The next step is to show that $\omega^\alpha$ as an eigenfunction of the Hyperbolic Laplacian in $B^n_{\rho}$.

\begin{proposition}\label{eigenfunction-omega-proposition}
Let $u$ be any point of $S(\rho)$ and $m=(X,T)\in B(\rho)$, where $S(O;\rho)$ and $B(\rho)$ are the Euclidean sphere and ball, respectively, both of the same radius $\rho$ centered at the origin $O$. Let $k=n-1$ and
\begin{equation}\label{Uniq-11}
    \omega=\omega(u,m)=\frac{\rho^2-\eta^2}{|u-m|^2}=\frac{|u|^2-|m|^2}{|u-m|^2}\,.
\end{equation}
Then
\begin{equation}\label{Omega-as-Eigenfunction}
    \triangle_m \omega^\alpha+\frac{\alpha k-\alpha^2}{\rho^2}\, \omega^\alpha=0\,.
\end{equation}
\end{proposition}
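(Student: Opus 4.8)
The plan is to reduce the identity to a one‑line computation in the half–space model $H^n_\rho$ and then transport it to the ball model $B^n_\rho$, exploiting that $\omega(u,\cdot)$ is, up to a positive multiplicative constant, the image of the height function under the M\"obius map identifying the two models.

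First, in $H^n_\rho$ I would apply the first expression for $\triangle$ in \eqref{Uniq-6} to the power $t^\alpha$ of the height coordinate. Since $t^\alpha$ depends on $t$ only, all $x_j$‑derivatives vanish, $\partial_t(t^\alpha)=\alpha t^{\alpha-1}$ and $\partial_t^2(t^\alpha)=\alpha(\alpha-1)t^{\alpha-2}$, so, using $k=n-1$,
\begin{equation*}
    \triangle\, t^\alpha=\frac{1}{\rho^2}\Bigl[\alpha(\alpha-1)-(n-2)\alpha\Bigr]t^\alpha=\frac{\alpha(\alpha-k)}{\rho^2}\,t^\alpha=-\frac{\alpha k-\alpha^2}{\rho^2}\,t^\alpha .
\end{equation*}
Thus $t^\alpha$ already solves the analogue of \eqref{Omega-as-Eigenfunction} in $H^n_\rho$.

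Next I would invoke the geometric interpretation of the Poisson kernel (Schwarz; cf.\ \cite{Ahlfors}, p.~168, and Theorem~\ref{Geometric-Interpretation-theorem}): if $\Phi$ is the M\"obius transformation carrying $H^n_\rho$ onto $B^n_\rho$, then the pushforward under $\Phi$ of the height function $t$ equals $c\,\omega(u_0,\cdot)$ on $B^n_\rho$, where $u_0=\Phi(\infty)\in S(\rho)$ and $c>0$ is a constant. Because the Laplace--Beltrami operator is intrinsic to the metric, $\triangle$ on $B^n_\rho$ is the pushforward under $\Phi$ of $\triangle$ on $H^n_\rho$; applying the previous step to $g=t^\alpha$ and cancelling the factor $c^\alpha$ by linearity of $\triangle$ gives \eqref{Omega-as-Eigenfunction} for $u=u_0$.

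Finally, for a general $u\in S(\rho)$ I would use that the rotations about $O$ act transitively on $S(\rho)$ by isometries of $B^n_\rho$, hence commute with $\triangle_m$, while $\omega(\sigma u,\sigma m)=\omega(u,m)$ for every such rotation $\sigma$, since $\omega$ is assembled from the rotation‑invariant quantities $|u|,|m|,|u-m|$. Choosing $\sigma$ with $\sigma u_0=u$ and applying it to the identity just established yields \eqref{Omega-as-Eigenfunction} in general. The one genuinely delicate step is the middle one --- identifying the explicit model‑identifying inversion and its conformal factor so as to see that $t$ maps to a constant multiple of $\omega(u_0,\cdot)$; this can instead be checked by a short direct computation with a single concrete inversion, or avoided altogether by substituting the conformal Laplacian $\triangle_{B}=\phi^{-2}\triangle_e+(n-2)\phi^{-3}\,\nabla_e\phi\cdot\nabla_e$ with $\phi=2\rho^2/(\rho^2-|m|^2)$ and grinding out the Euclidean derivatives of $\omega^\alpha$ from \eqref{4-16-dd}.
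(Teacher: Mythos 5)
Your proposal is correct and follows essentially the same route as the paper: the paper likewise computes $\triangle t^\alpha=-\frac{\alpha k-\alpha^2}{\rho^2}t^\alpha$ in $H^n_\rho$ and transports it by the Cayley transform, whose explicit formula \eqref{Uniq-13} shows that the height function pulled back to $B^n_\rho$ is exactly $\omega(u_0,m)$ with $u_0=(0,\rho)$ (so your constant $c$ equals $1$), which settles the step you flagged as delicate. Your explicit rotation argument for general $u\in S(\rho)$ merely makes precise what the paper leaves implicit.
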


\begin{proof}[Proof of the Proposition.]
Notice that
\begin{equation}\label{Uniq-12}
    \triangle t^\alpha+\frac{\alpha k-\alpha^2}{\rho^2} t^\alpha=0
\end{equation}
and the relationship between $H^n_\rho$ and $B^n_\rho$ is given by Cayley Transform \\
 $K:B^n_\rho\rightarrow H^n_\rho$ expressed by the following formulae
\begin{equation}\label{Uniq-13}
\begin{split}
& x=\frac{2\rho X}{|X|^2+(T-\rho)^2}
\\& t=\frac{\rho^2-|X|^2-T^2}{|X|^2+(T-\rho)^2}=\frac{\rho^2-\eta^2}{|u-m|^2}=\omega(u,m)\,,
\end{split}
\end{equation}
where $u=(0,\rho)\in\partial B(O;\rho)=S(O;\rho)$. Since Cayley transform is an isometry, an orthogonal system of geodesics defining the Laplacian in $B^n_\rho$ is mapped isometrically to an orthogonal system of geodesics in $H^n_\rho$ and then,
\begin{equation}\label{Uniq-14}
    \triangle t^\alpha=\triangle_m\omega^\alpha(u,m)\quad \text{and}\quad t^\alpha=\omega^\alpha(u,m)\,.
\end{equation}
Therefore, equation \eqref{Omega-as-Eigenfunction} is precisely equation \eqref{Uniq-12} written in $B^n_\rho$, which completes the proof of Proposition~\ref{eigenfunction-omega-proposition}.
\end{proof}

%For the further reference, it is convenient to rewrite Lemma~\ref{Basic_lemma} from p.~\pageref{Basic_lemma} using new variables. We call this version of Lemma~\ref{Basic_lemma} by Lemma~\ref{Basic_lemma-hyper}.

%\begin{lemma}[Sphere exchange rule]\label{Basic_lemma-hyper}
%Let $S^k(\eta)$ and $S^k(\rho)$ be
%two $k$-dimensional spheres of radii $\eta$ and $\rho$ respectively. Let $m,m_1,u,u_1\in \mathbb{R}^{k+1}$ be arbitrary points satisfying $|m|=|m_1|=\eta$ and $|u|=|u_1|=\rho$. We assume that $m,u$ are fixed, while $m_1,u_1$ be the parameters of integration. If $g:\mathbb{R}\rightarrow \mathbf{C}$ is an integrable complex-valued function on
%$[|\rho-\eta|,|\rho+\eta|]$, then
%\begin{equation}\label{ExchaSphere-hype-1}
%    \rho\,^k\cdot \int\limits_{S^k(\eta)}
%    g(|m_1-u|)\,d\,S_{m_1}=\eta^k\cdot \int\limits_{S^k(\rho)}
%    g(|m-u_1|)\,d\,S_{u_1}
%\end{equation}
%and
%\begin{equation}\label{ExchaSphere-hype-2}
%    \rho\,^k\cdot \int\limits_{S^k(\eta)} g\circ \omega(m_1,u)
%    d\,S_{m_1}=\eta^k\cdot \int\limits_{S^k(\rho)}
%    g\circ\omega(m,u_1)d\,S_{u_1}\,,
%\end{equation}
%where $g\circ\omega$ is the composition of two functions.
%\end{lemma}

\subsection {Explicit representation of radial eigenfunctions.}

In this subsection we shall see that every radial eigenfunction in $B^{k+1}_{\rho}$ depending only on the distance from the origin has an explicit integral representation.

\begin{definition}\label{sharp-definition} Recall that $B^{k+1}_\rho$ is the ball model of the hyperbolic space~$H^{k+1}_\rho$ with the sectional curvature $\kappa=-1/\rho^2$ and let $B^{k+1}(O,\rho)$ be the Euclidean ball of radius $\rho$ centered at the origin and represents the ball model $B^{k+1}_\rho$. Suppose that $f$ is a function on $B^{k+1}_\rho$. We define its radialization about the origin $O$, written $f^\sharp_O (m)$, by setting
\begin{equation}\label{Radialization-Definition}
    f^\sharp_O(m)=\frac{1}{|S^k(|m|)|}\int\limits_{S^k(|m|)} f(m_1) dS_{m_1}\,,
\end{equation}
where the integration is considered with respect to the measure on $S^k(|m|)$ induced by the Euclidean metric of $\mathbb{R}^n\supset B^{k+1}(O,\rho)$; $|m|$ is the Euclidean distance between the origin $O$ and a point $m\in B^{k+1}_\rho$; $|S^k(|m|)|$ is the Euclidean volume of $S^k(|m|)$.
\end{definition}

\begin{lemma}\label{Laplacian-Commutes-lemma}
\begin{equation}\label{Laplacian-Commutes}
    \triangle_m f^\sharp_O(m)=\frac{1}{|S^k(|m|)|}\int\limits_{S^k(|m|)}\triangle_{m_1} f(m_1) dS_{m_1}\,.
\end{equation}
\end{lemma}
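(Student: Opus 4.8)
The plan is to realize radialization about $O$ as an average over the compact isotropy group of $O$, and then to use the isometry–invariance of the hyperbolic Laplacian — exactly in the spirit of the invariance argument already used for \eqref{ExchaSphere-7} in the proof of Lemma~\ref{Basic_lemma} and for $\triangle$ in the proof of Proposition~\ref{eigenfunction-omega-proposition}.

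Let $K=O(k+1)$ act on the Euclidean ball $B^{k+1}(O,\rho)$ by rotations about $O$; in the ball model $B^{k+1}_\rho$ these are precisely the hyperbolic isometries fixing $O$, so each $g\in K$ satisfies $\triangle_m\bigl(f(g m)\bigr)=(\triangle f)(g m)$. Since $K$ acts transitively on every Euclidean sphere $S^k(|m|)$, and the Euclidean surface measure on $S^k(|m|)$ is, up to normalization, the unique $K$-invariant measure, the push-forward of the normalized Haar measure $dg$ of $K$ under the orbit map $g\mapsto g m_0$, for any fixed $m_0\in S^k(|m|)$, is exactly $dS_{m_1}/|S^k(|m|)|$. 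Consequently
\begin{equation}
    f^\sharp_O(m)=\int_K f(g m)\,dg .
\end{equation}

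Next I would differentiate under the integral sign. Because $K$ is compact and $f$ is $\mathcal{C}^2$ on $B^{k+1}(O,\rho)$, the map $(g,m)\mapsto f(g m)$ together with its partial derivatives in $m$ up to second order is jointly continuous, so the Leibniz rule applies and
\begin{equation}
    \triangle_m f^\sharp_O(m)=\int_K \triangle_m\bigl(f(g m)\bigr)\,dg
    =\int_K (\triangle f)(g m)\,dg=(\triangle f)^\sharp_O(m),
\end{equation}
where the middle equality is the isometry–invariance of $\triangle$ noted above and the last equality re-reads the Haar integral as the average over $S^k(|m|)$ with the normalized Euclidean measure. This is precisely \eqref{Laplacian-Commutes}.

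The only points requiring care — and hence the main obstacles — are the identification of the normalized surface measure on $S^k(|m|)$ with the push-forward of Haar measure (standard once one notes transitivity of $K$ on $S^k(|m|)$ and uniqueness of the $K$-invariant probability measure), and the justification of differentiation under the integral, which is routine given compactness of $K$ and the $\mathcal{C}^2$ regularity of $f$. If one prefers to avoid group averaging entirely, the same conclusion follows from the polar form of $\triangle$ in \eqref{Uniq-6}: writing $\triangle=\partial_r^2+\tfrac{k}{\rho}\coth(r/\rho)\,\partial_r+\triangle_{S(0;r)}$ with $r$ the hyperbolic radius, note that the Euclidean sphere $S^k(|m|)$ is simultaneously the geodesic sphere $S(0;r)$, that the purely radial part of $\triangle$ commutes with averaging over $S(0;r)$, and that $\int_{S(0;r)}\triangle_{S(0;r)}f\,dS=0$ by the divergence theorem on the closed manifold $S(0;r)$; combining these two facts yields \eqref{Laplacian-Commutes} again.
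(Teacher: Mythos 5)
Your main proof is correct and takes essentially the same route as the paper's: both realize the radialization as an average over the rotation group via push-forward of Haar measure to the sphere, and both then invoke the fact that each rotation about $O$ is an isometry that commutes with the Laplacian (you spell out explicitly the slightly subtle point that these Euclidean rotations are also hyperbolic isometries of $B^{k+1}_\rho$, and you justify the differentiation under the integral, both of which the paper leaves implicit). The alternative you sketch via the polar decomposition of $\triangle$ is a genuinely different and valid route, though note it differs from the paper's own alternative appendix proof, which uses a convolution-kernel/test-function argument rather than the radial-plus-spherical splitting.
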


\begin{remark}
     A proof of \eqref{Laplacian-Commutes} involving substantial amount of elementary computations will be given in the Appendix, see Lemma~\ref{Laplacian-Commutes-lemma-Appendix}, p.~\pageref{Laplacian-Commutes-lemma-Appendix}. Here we recall a geometrical proof relying on Haar measure which yields~\eqref{Laplacian-Commutes} directly.
\end{remark}

\begin{proof}[Proof of Lemma~\ref{Laplacian-Commutes-lemma}]
Let us introduce two measures on $S^k$. One is the Lebesque measure of the sphere, i.e. the Riemannian volume of the flat metric of the ambient space restricted to $S^k$ and the other one is the Haar measure of $\text{SO}(k+1,R)$ pushed forward to the sphere by the map $T\rightarrow Tv_0$, where $v_0$ is a fixed vector on the sphere and $T\in\text{SO}(k+1,R)$. These two measures agree up to constant multiple factor so that these two averaging procedures must be the same. It is clear that Laplacian commutes with averaging over $\text{SO}(k+1,R)$, since any individual isometry of $\mathbb{R}^{k+1}$ commutes with Laplacian. Therefore, Laplacian must commute with radialization defined in \eqref{Radialization-Definition}. This completes the proof of Lemma~\ref{Laplacian-Commutes-lemma}.
\end{proof}

%Note that the Haar measure  $\mu$ introduced on $SO(k+1,\mathbb{R})$ agrees with the Lebesque measure $\nu$ on $S^k(|m|)$, i.e., there exists a constant $\tau\in\mathbb{R}$ such that $d\nu=\tau d\mu$.

%Let $\Sigma^0$ be the $k-$dimensional sphere centered at the origin. For every $v\in\Sigma^0$ and $m\in\mathbb{R}^{k+1}$ we define $T_v m\in\mathbb{R}^{k+1}$ as the point symmetric to $m$ with respect to the $k-$dimensional hyperplane passing through the origin $O$ and orthogonal to the line segment $vO$. It is clear that $T_v:\mathbb{R}^{k+1}\rightarrow \mathbb{R}^{k+1}$ is an isometry and then
%\begin{equation}\label{Isometry-for-Laplacian}
%    \triangle_m f(T_v m)=\triangle_{T_v m}f(T_v m)\,.
%\end{equation}
%Fix an arbitrary point $m\in \mathbb{R}^{k+1}\setminus\{O\}$. Using \eqref{Isometry-for-Laplacian} and elementary calculus we may write the following sequence of equalities.
%\begin{equation}
%\begin{split}
%    & \triangle_m \frac{1}{|S^k(|m|)|}\int\limits_{S^k(|m|)} f(m_1) dS_{m_1}
%        =\triangle_m \frac{1}{\text{vol} (\Sigma^0)}\int\limits_{\Sigma^0} f(T_v m)d\Sigma_v^0
%    \\& =\frac{1}{\text{vol} (\Sigma^0)}\int\limits_{\Sigma^0}\triangle_{T_v m} f(T_v m) d\Sigma^0_v
%        = \frac{1}{|S^k(|m|)|}\int\limits_{S^k(|m|)} \triangle_{m_1} f(m_1) dS_{m_1}\,,
%\end{split}
%\end{equation}
%which completes the proof of Lemma \ref{Laplacian-Commutes-lemma}.

The next step is to obtain the explicit representation for radial eigenfunctions.

\begin{definition} Let $u\in S^k(\rho)$ be a fixed point and $|m_1|=|m|=\eta<\rho=|u|$. Then let us define
\begin{equation}\label{Radialization-of-omega}
    V_\alpha(\eta)=(\omega_O^\alpha)^\sharp(m)=
    \frac{1}{|S(\eta)|}\int\limits_{S(\eta)}\omega^\alpha(u,m_1)dS_{m_1}\,,
\end{equation}
where $\alpha$ is a complex number. Thus, $V_\alpha(\eta)$ is the radialization of $\omega^\alpha(u,m)$ about the origin.
\end{definition}

%The important equivalent representation of $V$ is obtained in the following proposition.

%%\begin{proposition} Let $u_1\in S^k(\rho)$ be a parameter of integration. Then
%\begin{equation}\label{Radiali-of-omega-big-sphere}
%    V_\alpha(\eta)=(\omega_O^\alpha)^\sharp(m)=
%    \frac{1}{|S(\rho)|}\int\limits_{S(\rho)}\omega^\alpha(u_1,m)dS_{u_1}\,,
%\end{equation}
%where $|m|=\eta<\rho$.
%\end{proposition}

%\begin{proof}
%The equivalence of the integral expressions in \ref{Radialization-of-omega} and \ref{Radiali-of-omega-big-sphere} follows from the formula \eqref{ExchaSphere-hype-2}, p.~\pageref{ExchaSphere-hype-2} by assuming that $g(t)=t^\alpha$. This completes the proof of the Proposition~\ref{Radiali-of-omega-big-sphere}.
%tre\end{proof}

\begin{theorem}\label{Radial-func-repre-Thoerem} Let $r$ be the hyperbolic distance between the origin $O$ and $S(\eta)$. Then, the following function
\begin{equation}\label{Radial-func-presentation}
    \varphi_\mu(r)= V_\alpha(\eta(r))= V_\alpha\left(\rho\tanh\left(\frac{r}{2\rho}\right)\right)\,,
\end{equation}
where $\mu=(\alpha k-\alpha^2)/\rho^2$, is the unique radial eigenfunction assuming the value $1$ at the origin and corresponding to an eigenvalue $\mu$, i.e.,

\begin{equation}\label{Equation-for-Radi-Eigenfunction}
    \triangle\varphi_\mu(r)+\mu\varphi_\mu(r)=0.
\end{equation}
\end{theorem}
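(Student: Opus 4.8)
The plan is to combine Proposition~\ref{eigenfunction-omega-proposition}, which says that $\omega^\alpha(u,\cdot)$ is an eigenfunction of the hyperbolic Laplacian with eigenvalue $\mu=(\alpha k-\alpha^2)/\rho^2$, with Lemma~\ref{Laplacian-Commutes-lemma}, which says that radialization about $O$ commutes with $\triangle$, and then to read off the resulting radial equation in geodesic polar coordinates from formula \eqref{Uniq-6}. Uniqueness is then supplied by the classical fact (Chavel, p.~272) that \eqref{Intro-2} has a unique solution for each $\mu$.

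First I would check that $V_\alpha$ is a well-defined, smooth \emph{radial} function on the Euclidean ball $B^{k+1}(O,\rho)$. For a fixed $u\in S(\rho)$ and any $m_1\in S(\eta)$ with $\eta<\rho$ one has $|u-m_1|\in[\rho-\eta,\rho+\eta]$, so $\omega(u,m_1)=(\rho^2-\eta^2)/|u-m_1|^2$ lies in the compact interval $[(\rho-\eta)/(\rho+\eta),(\rho+\eta)/(\rho-\eta)]\subset(0,\infty)$; hence $\omega^\alpha$ (principal branch) is smooth in $m_1$ and depends smoothly on $\eta$ for $\eta\in[0,\rho)$, which legitimizes differentiation under the integral sign. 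Rotational invariance of the construction shows $V_\alpha(\eta)$ does not depend on the choice of $u\in S(\rho)$, so $V_\alpha$ genuinely depends only on $|m|$.

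Next, for a fixed $u$, Proposition~\ref{eigenfunction-omega-proposition} gives $\triangle_{m_1}\omega^\alpha(u,m_1)=-\mu\,\omega^\alpha(u,m_1)$ with $\mu=(\alpha k-\alpha^2)/\rho^2$, so applying Lemma~\ref{Laplacian-Commutes-lemma} to $f=\omega^\alpha(u,\cdot)$ and using the definition \eqref{Radialization-of-omega} of $V_\alpha$,
$$\triangle_m V_\alpha(|m|)=\frac{1}{|S^k(|m|)|}\int\limits_{S^k(|m|)}\triangle_{m_1}\omega^\alpha(u,m_1)\,dS_{m_1}=-\mu\,V_\alpha(|m|).$$
Thus $V_\alpha$ is a radial eigenfunction of the hyperbolic Laplacian on $B^{k+1}_\rho$ with eigenvalue $\mu$. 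Since $V_\alpha$ depends only on the Euclidean radius $\eta=|m|$, it depends only on the hyperbolic radius $r$ via $\eta=\rho\tanh(r/2\rho)$; substituting the radial function $\varphi_\mu(r)=V_\alpha(\rho\tanh(r/2\rho))$ into the geodesic-polar form of $\triangle$ in \eqref{Uniq-6} (with $n=k+1$, and noting that $\triangle_{S(0;r)}$ annihilates radial functions) turns $\triangle V_\alpha+\mu V_\alpha=0$ into exactly $\varphi_\mu''(r)+\tfrac{k}{\rho}\coth(r/\rho)\,\varphi_\mu'(r)+\mu\,\varphi_\mu(r)=0$, i.e. \eqref{Equation-for-Radi-Eigenfunction}.

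Finally I would verify the normalization and quote uniqueness: at $\eta=0$ we have $m_1=O$ and $\omega(u,O)=\rho^2/|u|^2=1$, so $\omega^\alpha(u,O)=1$ and hence $\varphi_\mu(0)=V_\alpha(0)=1$ (equivalently, $\omega(u,m_1)\to1$ uniformly as $|m_1|\to0$). Because for every complex $\mu$ the initial value problem \eqref{Intro-2} has a unique solution, $\varphi_\mu$ is the unique radial eigenfunction with value $1$ at the origin and eigenvalue $\mu$, which is the assertion of the theorem. The main obstacle is not conceptual but the analytic bookkeeping: establishing the smoothness of $V_\alpha$ and of $\omega^\alpha$ (principal branch) and the validity of differentiation under the integral sign for $\eta\in[0,\rho)$, and confirming that $\eta=\rho\tanh(r/2\rho)$ is precisely the radial coordinate change compatible with the metric of $B^{k+1}_\rho$, so that the second expression in \eqref{Uniq-6} applies verbatim.
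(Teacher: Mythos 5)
Your argument is correct and follows essentially the same route as the paper: combine Proposition~\ref{eigenfunction-omega-proposition} with Lemma~\ref{Laplacian-Commutes-lemma} to show $V_\alpha$ is a radial eigenfunction with eigenvalue $\mu=(\alpha k-\alpha^2)/\rho^2$, verify $V_\alpha(0)=1$, and invoke the classical uniqueness of radial solutions (Chavel, p.~272). The extra care you take about smoothness of $V_\alpha$ and differentiation under the integral sign is sound bookkeeping that the paper leaves implicit.
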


\begin{proof}
    Recall that $\eta=|m|$ is the Euclidean distance between $m=(X,T)\in B(\rho)$ and the origin, while $r=r(m)=r(\eta)$ is the hyperbolic distance between the origin and $m$. Therefore, the relationship between $r$ and $\eta$ is
\begin{equation}\label{Eucli-Hype-Coordinates-relationship}
    r=\rho\ln\frac{\rho+\eta}{\rho-\eta}\quad\text{or}\quad\eta=\rho\tanh\left(\frac{r}{2\rho}\right)\,,
\end{equation}
which justifies the last expression in \eqref{Radial-func-presentation}.

Recall also that according to \eqref{Omega-as-Eigenfunction}, $\omega^\alpha$ is the eigenfunction of the hyperbolic Laplacian with the eigenvalue $(\alpha k-\alpha^2)/\rho^2$. Therefore, according to Lemma \ref{Laplacian-Commutes-lemma}, p.~\pageref{Laplacian-Commutes-lemma}, the radialization of $\omega^\alpha$ defined in \eqref{Radialization-of-omega} is also an eigenfunction with the same eigenvalue.

The last step is to show that the eigenfunction $\varphi_\mu(r)$ assuming the value $1$ at the origin is unique. We obtain the uniqueness by the procedure described in \cite{Olver}, pp.~148-153 or in \cite{Chavel}, p.~272 resulting that the general radial eigenfunction is on the form $c_1f_1(r)+c_2f_2(r)$, where $f_1(r)$ is an entire function of $r$ with $f_1(0)=1$, and $f_2(r)$ is defined for all $r>0$, with a singularity at $r=0$. Therefore, for a radial eigenfunction to have the value $1$ at the origin, we must set $c_2=0$ and then, we can see that the radial eigenfunction is defined uniquely by its value at the origin and by the eigenvalue. Observing that $\varphi_\mu(0)=V_{\alpha}(0)=1$ completes the proof of Theorem~\ref{Radial-func-repre-Thoerem}.
\end{proof}

\subsection {The Hyperbolic Geometry interpretation of Statement (A) of Theorem 2.}

Here we give another proof of (A) of Theorem~\ref{Basic-theorem}, p.~\pageref{Basic-theorem} using uniqueness of radial eigenfunctions Theorem~\ref{Radial-func-repre-Thoerem}. Recall Statement (A) of Theorem~\ref{Basic-theorem} from p.~\pageref{Basic-theorem}.

\begin{proposition}[Statement (A).]\label{Statement-A'-Proposition} Let $\alpha, \beta$ be complex numbers such that $\alpha+\beta=k$; let $\rho, \eta$ be two positive numbers such that $\rho\neq\eta$; let $m\in S^k(\eta), u\in S^k(\rho)$. Then
\begin{equation}\label{Uniq-27}
   \int\limits_{S^k(\eta)}\omega^\alpha(m,u)dS_m=
   \int\limits_{S^k(\eta)}\omega^\beta(m,u)dS_m
\end{equation}
\end{proposition}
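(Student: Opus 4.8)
The plan is to derive this from the uniqueness of radial eigenfunctions of the hyperbolic Laplacian, Theorem~\ref{Radial-func-repre-Thoerem}, rather than from the change-of-variables computation used in Chapter~3. Place $S^k(\rho)$ as the boundary sphere of the ball model $B^{k+1}_\rho$ and, for $\eta<\rho$, let $S^k(\eta)$ be a Euclidean sphere inside it. With $u\in S^k(\rho)$ fixed, Proposition~\ref{eigenfunction-omega-proposition} tells us that $\omega^\alpha(u,\cdot)$ is an eigenfunction of $\triangle_m$ on $B^{k+1}_\rho$ with eigenvalue $\mu(\alpha)=(\alpha k-\alpha^2)/\rho^2$. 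The observation that makes everything work is that $\mu$ is invariant under the substitution $\alpha\mapsto k-\alpha$: since $\beta=k-\alpha$,
\[
\beta k-\beta^2=(k-\alpha)\bigl(k-(k-\alpha)\bigr)=(k-\alpha)\alpha=\alpha k-\alpha^2,
\]
so $\omega^\alpha(u,\cdot)$ and $\omega^\beta(u,\cdot)$ are eigenfunctions for one and the same eigenvalue $\mu$.

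Next I would radialize about the origin. By Lemma~\ref{Laplacian-Commutes-lemma}, radialization commutes with $\triangle$, so the radializations $V_\alpha(\eta)$ and $V_\beta(\eta)$ defined in \eqref{Radialization-of-omega} are both radial eigenfunctions on $B^{k+1}_\rho$ with the same eigenvalue $\mu$. They also agree at the origin: since $|u|=\rho$ and $|O|=0$ we get $\omega(u,O)=\rho^2/\rho^2=1$, hence $V_\alpha(0)=V_\beta(0)=1$. Theorem~\ref{Radial-func-repre-Thoerem} asserts that a radial eigenfunction is determined uniquely by its eigenvalue together with its value at the origin (and the Frobenius/power-series argument behind that uniqueness is insensitive to $\mu$ being complex). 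Therefore $V_\alpha(\eta)=V_\beta(\eta)$ for every $\eta\in[0,\rho)$; multiplying through by the common Euclidean volume $|S^k(\eta)|$ and using the symmetry $\omega(m,u)=\omega(u,m)$ gives exactly \eqref{Uniq-27} in the case $\eta<\rho$.

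It remains to treat $\eta>\rho$. Here I would reduce to the case just handled by the Sphere Exchange Rule: applying \eqref{ExchaSphere-2} of Lemma~\ref{Basic_lemma} to the concentric pair $S^k(\rho)$, $S^k(\eta)$ transfers the integral of $g\circ\omega$ over $S^k(\eta)$ with $u$ fixed into an integral of $g\circ\omega$ over $S^k(\rho)$ with a point of $S^k(\eta)$ fixed, and in that configuration the fixed point lies outside the sphere of integration, so the eigenfunction argument above applies; equivalently one may simply cite Case~2 of the original proof of Statement~(A). Taking $g(w)=w^\alpha$ and $g(w)=w^\beta$ and comparing then yields \eqref{Uniq-27} for $\eta>\rho$ as well.

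I do not expect a genuine obstacle here: the whole argument rests on the two-line eigenvalue-invariance identity above plus the already-established facts that $\omega^\alpha$ is an eigenfunction, that radialization commutes with $\triangle$, and that radial eigenfunctions with value $1$ at the origin are unique. The only points requiring a little care are confirming that Theorem~\ref{Radial-func-repre-Thoerem} is legitimately applied for complex eigenvalues and keeping the $\eta>\rho$ reduction clean; neither needs new machinery beyond what is in the excerpt.
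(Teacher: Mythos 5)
Your proposal is correct and follows essentially the same route as the paper's own proof: the paper also rewrites \eqref{Uniq-27} as $V_\alpha(|m|)=V_\beta(|m|)$, notes via Lemma~\ref{Laplacian-Commutes-lemma} that these radializations are radial eigenfunctions sharing the eigenvalue determined by $\alpha+\beta=k$, checks $V_\alpha(0)=V_\beta(0)=1$, invokes the uniqueness in Theorem~\ref{Radial-func-repre-Thoerem}, and handles $\eta>\rho$ by the Sphere Exchange Rule of Lemma~\ref{Basic_lemma}. No gaps beyond the points you already flagged, which the paper treats the same way.
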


\begin{proof}[Proof of Proposition \ref{Statement-A'-Proposition}.] Let us observe that the equivalent form of the identity in \eqref{Uniq-27} can be written as $V_\alpha(|m|)=V_\beta(|m|)$, where $V$ was defined in \eqref{Radialization-of-omega}. Notice also that $V_\alpha(|m|)$ as well as $V_\beta(|m|)$, according to \eqref{Laplacian-Commutes}, are the radial eigenfunctions of the Hyperbolic Laplacian. Clearly that the condition $\alpha+\beta=k$ implies
\begin{equation}\label{Uniq-28}
   \frac{\alpha(\alpha-k)}{\rho^2}=\frac{\beta(\beta-k)}{\rho^2}\,,
\end{equation}
which means that $V_\alpha(|m|)$ and $V_\beta(|m|)$ have the same eigenvalue. In addition, the direct computation shows that $V_\alpha(0)=V_\beta(0)=1$. According to Theorem~\ref{Radial-func-repre-Thoerem}, p.~\pageref{Radial-func-repre-Thoerem}, there exists exactly one radial eigenfunction with such properties. Therefore, $V_\alpha(|m|)=V_\beta(|m|)$ for all $|m|=\eta<\rho$, which is equivalent to \eqref{Uniq-27} for all $\eta<\rho$. To see that \eqref{Uniq-27} remains true for $\eta>\rho$, we can apply Lemma~\ref{Basic_lemma} from p.~\pageref{Basic_lemma}. This completes the proof of Proposition~\ref{Statement-A'-Proposition}.
\end{proof}

\section {One Radius Theorem.}

In this section we shall see that if $\varphi_\mu(r)=\varphi_\nu(r)$ for $r=0$ and for some $r=r_0$ chosen sufficiently close to the origin, then $\mu=\nu$ and $\varphi_\mu(r)=\varphi_\nu(r)$ for all $r\in[0,\infty)$. In other words, we shall introduce a condition for an eigenvalue under which the eigenfunction is uniquely determined just by its value at $\emph{one}$ point sufficiently close to the origin. Moreover, we shall see that with an additional condition for the eigenvalue, a radial eigenfunction is uniquely determined by its value at an arbitrary point.

\begin{proposition}\label{Parabola-Strip-equivalence-Lemma} Let $\alpha\in\mathbb{C}$ and $\mu=\Phi(\alpha)$, where $\Phi:\mathbb{C}\rightarrow \mathbb{C}$ is given by $\Phi(\alpha)=-\kappa(\alpha k-\alpha^2)$, where $\kappa<0, k>0$ are constants. Then
\begin{equation}\label{One_Rad_1}
   \Im(\alpha)\in[-p,p]\iff \mu\in\{\text{Shaded area inside the parabola bellow}\}
\end{equation}

\begin{figure}[!h]
    \centering
    \epsfig{figure=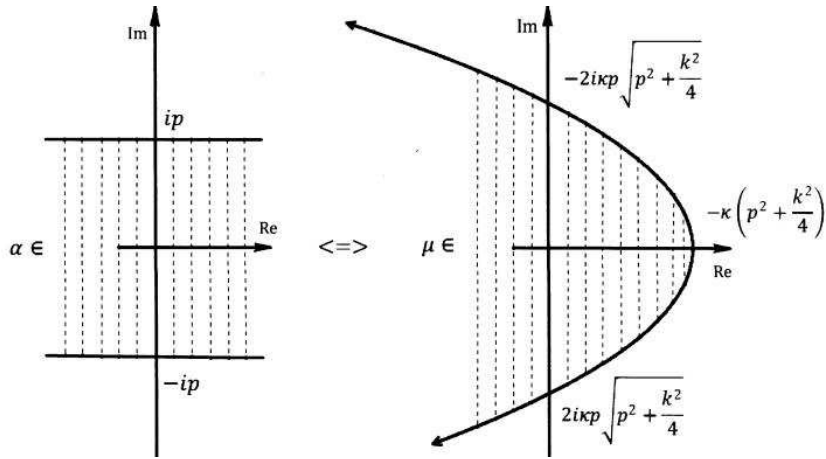,height=6cm}\\
  \caption{Parabola and Strip equivalence.}\label{Parabola-Strip-Equiv}
\end{figure}

\end{proposition}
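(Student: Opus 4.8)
The plan is to pass to real and imaginary parts and compute the image of the closed strip $\{\,\alpha:\ |\Im\alpha|\le p\,\}$ under $\Phi$ directly. Write $\alpha=\xi+i\zeta$. Expanding $\alpha k-\alpha^{2}=\alpha(k-\alpha)$ gives
\[
\alpha k-\alpha^{2}=\bigl(\xi(k-\xi)+\zeta^{2}\bigr)+i\,\zeta\,(k-2\xi),
\]
so, since $-\kappa>0$, writing $\mu=\mu_{1}+i\mu_{2}=\Phi(\alpha)$ we obtain
\[
\mu_{1}=-\kappa\bigl(\xi(k-\xi)+\zeta^{2}\bigr),\qquad \mu_{2}=-\kappa\,\zeta\,(k-2\xi).
\]
Completing the square, $\xi(k-\xi)=\tfrac{k^{2}}{4}-(\xi-\tfrac k2)^{2}$, and when $\zeta\neq0$ the equation for $\mu_{2}$ forces $\xi-\tfrac k2=\mu_{2}/(2\kappa\zeta)$, whence
\[
\mu_{1}=-\frac{\kappa k^{2}}{4}-\kappa\zeta^{2}+\frac{\mu_{2}^{\,2}}{4\kappa\zeta^{2}}.
\]
Thus for each fixed $\zeta\neq0$ the horizontal line $\Im\alpha=\zeta$ is carried onto a parabola $P_{\zeta}$ with axis $\{\Im\mu=0\}$, opening toward $\Re\mu=-\infty$, with vertex at $\Re\mu=-\tfrac{\kappa k^{2}}{4}-\kappa\zeta^{2}$; and the line $\zeta=0$ is carried onto the half-line $\{\Im\mu=0,\ \Re\mu\le -\kappa k^{2}/4\}$.

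Next I would identify the union $\bigcup_{|\zeta|\le p}P_{\zeta}$. The crucial observation is that $\mu_{1}$ depends on $\zeta$ only through $t=\zeta^{2}$, and for fixed $\mu_{2}=y$ the map $t\mapsto -\kappa t+\tfrac{y^{2}}{4\kappa t}$ is strictly increasing on $(0,\infty)$, since both summands increase there ($-\kappa>0$ and $\kappa<0$). Consequently, among all points of the strip with $\Im\mu=y$, the real part $\Re\mu$ is largest at $t=p^{2}$, with value $-\tfrac{\kappa k^{2}}{4}-\kappa p^{2}+\tfrac{y^{2}}{4\kappa p^{2}}$, while letting $t\to0^{+}$ (for $y\neq0$) or invoking the degenerate locus $\zeta=0$ (for $y=0$) shows that every smaller real part with $\Im\mu=y$ is attained as well. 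Hence the image of the strip is exactly
\[
\Bigl\{\,\mu:\ \Re\mu\ \le\ -\frac{\kappa k^{2}}{4}-\kappa p^{2}+\frac{(\Im\mu)^{2}}{4\kappa p^{2}}\,\Bigr\},
\]
which is precisely the closed region enclosed by the parabola of Figure~\ref{Parabola-Strip-Equiv} (its boundary being $P_{p}=P_{-p}$). This gives the forward implication in \eqref{One_Rad_1}. For the reverse implication I would run the same monotonicity in the opposite direction: if $|\Im\alpha|>p$, then $t=\zeta^{2}>p^{2}$ forces $\Re\mu$ strictly above the boundary value $-\tfrac{\kappa k^{2}}{4}-\kappa p^{2}+\tfrac{(\Im\mu)^{2}}{4\kappa p^{2}}$, so $\mu$ lies outside the region.

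I do not expect a genuine obstacle here — the whole argument is the elementary computation above — but two points deserve care in the write-up. First, the degenerate line $\zeta=0$ must be treated separately, and one must check that the real values it contributes combine with those coming from the parabolas $P_{\zeta}$, $0<|\zeta|\le p$, to fill exactly the half-line $\Re\mu\le -\kappa k^{2}/4-\kappa p^{2}$ on the real axis. Second, one should make explicit that $\bigcup_{|\zeta|\le p}P_{\zeta}$ is a solid two-dimensional region rather than merely a one-parameter family of curves; this is exactly what the strict monotonicity of $t\mapsto -\kappa t+\tfrac{y^{2}}{4\kappa t}$ secures, so that is the step I would write out in full.
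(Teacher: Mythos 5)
Your argument is correct and follows essentially the same route as the paper's: both parametrize by horizontal lines $\Im\alpha=\zeta$ in the strip and show each is carried to a parabola with the vertex formula coming from completing the square around $\xi=k/2$ (the paper does this via the substitution $\alpha=k/2+t$ and $\Phi(k/2+t)=\Phi(k/2-t)$; you do it directly in $(\xi,\zeta)$ with $t=\zeta^2$). The one substantive addition in your write-up is the observation that, for fixed $y=\Im\mu$, the map $t\mapsto -\kappa t + y^2/(4\kappa t)$ is strictly increasing on $(0,\infty)$. The paper only remarks informally that the smaller $p$ the narrower the parabola; your monotonicity turns this into an actual proof of both inclusions in the ``iff'': it shows the parabolas $P_\zeta$ nest as $|\zeta|$ grows, that every $\mu$ with $\Im\mu\neq0$ in the closed region is hit by a unique $t\in(0,p^2]$ (by the intermediate value theorem, since $\Re\mu\to-\infty$ as $t\to0^+$), and that $|\Im\alpha|>p$ forces $\Re\mu$ strictly above the bounding parabola. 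The point you flag about the degenerate locus is indeed what remains to be checked for $\Im\mu=0$: the ray $\zeta=0$ supplies $\Re\mu\le -\kappa k^2/4$, while $\xi=k/2$, $0<\zeta\le p$ supplies $\Re\mu\in(-\kappa k^2/4,\,-\kappa k^2/4-\kappa p^2]$; together these fill exactly the segment of the real axis inside the parabola, so there is no gap. The write-up is sound; you should simply carry out that final bookkeeping explicitly.
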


\begin{proof}[Proof of Proposition \ref{Parabola-Strip-equivalence-Lemma}.]  Let $\alpha=a+ib; a_1=a-k/2$ and $t=a_1+ib$. Then $\alpha=a+ib=k/2+t$. The direct computation shows that $\Phi(k/2+t)=\Phi(k/2-t)$, which means that the image of the following strip
\begin{equation}\label{One_Rad_2}
   \Xi(-p,p)=\{\alpha\mid\Im(\alpha)\in[-p,p]\}
\end{equation}
must be the same as the image of the upper half of the strip above. I.e.,
\begin{equation}\label{One_Rad_3}
   \Phi(\Xi(-p,p))=\Phi(\Xi(0,p))\,,
\end{equation}
where
\begin{equation}\label{One_Rad_4}
   \Xi(0,p)=\{\alpha\mid\Im(\alpha)\in[0,p]\}\,.
\end{equation}
To figure out what is the image of $\Xi(0,p)$, it is enough to look at the image of a horizontal line from $\Xi(0,p)$. Let us choose the boundary line: $\alpha=\alpha(a_1)=k/2+ip+a_1$, where $p$ is fixed and $a_1$ serves as a parameter. Then,
\begin{equation}\label{One_Rad_5}
\begin{split}
   & \Phi(\alpha(a_1))=-\kappa\left(\frac{k^2}{4}+p^2 \right)+\kappa a_1^2+2i\kappa p a_1
   \\& =-\kappa\left(\frac{k^2}{4}+p^2\right)+\frac{1}{4\kappa p^2}(\Im\Phi(\alpha))^2+i\Im\Phi(\alpha)\,,
\end{split}
\end{equation}
where, clearly, the imaginary part depends on $a_1$ linearly, while the real part depends quadratically. Therefore, the horizontal line chosen is mapped to a parabola. Any parabola is uniquely determined by three arbitrary points on it. Thus, to determine this parabola, we find all points of intersections with the coordinate axis. If $a_1=0$, then
\begin{equation}\label{One_Rad_6}
   \Phi(\alpha(0))=-\kappa\left(\frac{k^2}{4}+p^2 \right)
\end{equation}
gives the point of intersection the parabola with the horizontal coordinate axis. If $a_1=\pm\sqrt{p^2+k^2/4}$, then
\begin{equation}\label{One_Rad_7}
   \Phi(\alpha(\pm\sqrt{p^2+k^2/4}))=\pm2i\kappa p\sqrt{p^2+k^2/4}\,,
\end{equation}
what is denoted on Figure \ref{Parabola-Strip-Equiv} above. Therefore, both of these two parallel lines $\alpha(a_1)=\pm ip+k/2+a_1$ are mapped to the parabola described above. Notice also that this parabola is symmetric with respect to the horizontal axis. In addition, the smaller the parameter $p$, the narrower the parabola and its tip is closer to the origin.
So, the proof of Proposition~\ref{Parabola-Strip-equivalence-Lemma} is complete.
\end{proof}

\begin{remark}
Note that if the parameter $p$ is zero, then the parabola is folded up to the half of the real line $(-\infty, -\kappa k^2/4]$, which is the image of the real line.
\end{remark}

\subsection {One Radius Theorem for radial eigenfunctions.}

\begin{theorem}[One Radius Theorem]\label{One-radius-Theorem}

Let $\mu\neq\nu$ and $\varphi_\nu(r)$, $\varphi_\mu(r)$ be two radial eigenfunctions for the hyperbolic Laplacian given by
\begin{equation}\label{One_Rad_8}
    \vartriangle \varphi(r)=\varphi^{''}(r)+\frac{k}{\rho}\coth\left(\frac{r}{\rho}\right)\varphi^{'}(r) \,,\,\text{where}\,\,0\leq r<\infty
\end{equation}
in a ball model of a hyperbolic $(k+1)-$ dimensional space of a constant sectional curvature $\kappa=-1/\rho^2<0$. Let $\varphi_\nu(0)=\varphi_\mu(0)=1$.
Then $\varphi_\nu(r)\neq\varphi_\mu(r)$ for every $r\in(0, \pi\rho/(2p)]$, where $p=\max\{|\Im(\alpha)|, |\Im(\beta)|\}$ and $\alpha, \beta$ are complex numbers related to $\mu, \nu$ by the following quadratic equations
\begin{equation}\label{One_Rad_9}
    \mu=-\kappa(\alpha k-\alpha^2) \quad \text{and} \quad \nu=-\kappa(\beta k-\beta^2)\,.
\end{equation}
\end{theorem}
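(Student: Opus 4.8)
The plan is to reduce the assertion to Statement~(D) of the Main Theorem (Theorem~\ref{Basic-theorem}), applied on the sphere $S^k(\rho)$ with a fixed interior point. Fix $r_0\in(0,\pi\rho/(2p)]$ and put $\eta_0=\rho\tanh(r_0/2\rho)$, so $0<\eta_0<\rho$. By Theorem~\ref{Radial-func-repre-Thoerem}, $\varphi_\mu(r_0)=V_\alpha(\eta_0)$ and $\varphi_\nu(r_0)=V_\beta(\eta_0)$, where $V_\alpha(\eta_0)=\frac{1}{|S^k(\eta_0)|}\int_{S^k(\eta_0)}\omega^\alpha(u,m_1)\,dS_{m_1}$ with $u\in S^k(\rho)$ fixed. (Of the two roots of $\mu=-\kappa(\alpha k-\alpha^2)$, namely $\alpha$ and $k-\alpha$, either may be chosen: they yield the same $V$ by Proposition~\ref{Statement-A'-Proposition} and have the same modulus of imaginary part, so $p$ is well defined.) The first real step is to rewrite $V_\alpha(\eta_0)$ as an integral over $S^k(\rho)$: using the symmetry $\omega(u,m_1)=\omega(m_1,u)$ together with the Sphere Exchange Rule (Lemma~\ref{Basic_lemma}, formula~\eqref{ExchaSphere-2}) with radii $\eta_0$ and $\rho$ and $g(t)=t^\alpha$, one gets $\rho^k\int_{S^k(\eta_0)}\omega^\alpha(m_1,u)\,dS_{m_1}=\eta_0^k\int_{S^k(\rho)}\omega^\alpha(m,y)\,dS_y$ for any fixed $m$ with $|m|=\eta_0$; since $|S^k(\eta_0)|=\sigma_k\eta_0^k$ and $\eta_0^k/(\rho^k\sigma_k\eta_0^k)=1/|S^k(\rho)|$, this gives
\[
\varphi_\mu(r_0)=\frac{1}{|S^k(\rho)|}\int_{S^k(\rho)}\omega^\alpha(m,y)\,dS_y,\qquad
\varphi_\nu(r_0)=\frac{1}{|S^k(\rho)|}\int_{S^k(\rho)}\omega^\beta(m,y)\,dS_y .
\]

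Now suppose, for contradiction, that $\varphi_\mu(r_0)=\varphi_\nu(r_0)$; then $\int_{S^k(\rho)}\omega^\alpha\,dS_y=\int_{S^k(\rho)}\omega^\beta\,dS_y$. I would then apply Statement~(D) with $R=\rho$ and fixed point $x=m$, $r=|m|=\eta_0$. Its hypothesis reads $p=\max\{|\Im\alpha|,|\Im\beta|\}\le(\pi/2)/\ln\frac{\rho+\eta_0}{\rho-\eta_0}$; but by the coordinate relation~\eqref{Eucli-Hype-Coordinates-relationship} one has $\ln\frac{\rho+\eta_0}{\rho-\eta_0}=r_0/\rho$, so the hypothesis becomes exactly $r_0\le\pi\rho/(2p)$, which holds by our choice of $r_0$ (and holds for every $r_0>0$ when $p=0$). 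Hence Statement~(D) applies and forces $\alpha+\beta=k$ or $\alpha=\beta$.

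It remains to rule out both alternatives. If $\alpha=\beta$, then directly $\mu=-\kappa(\alpha k-\alpha^2)=-\kappa(\beta k-\beta^2)=\nu$. If $\alpha+\beta=k$, then $k-\beta=\alpha$ and $\nu=-\kappa\beta(k-\beta)=-\kappa\beta\alpha=-\kappa\alpha(k-\alpha)=\mu$. Either way $\mu=\nu$, contradicting $\mu\ne\nu$, so $\varphi_\mu(r_0)\ne\varphi_\nu(r_0)$ for every $r_0\in(0,\pi\rho/(2p)]$. Essentially all the substance is carried by the Main Theorem and the explicit representation of radial eigenfunctions; the main obstacle in writing this out cleanly is the translation step — keeping the Sphere Exchange normalization constants straight so that $V_\alpha(\eta_0)$ becomes a genuine $S^k(\rho)$-integral with a fixed interior point, and verifying that the logarithmic threshold in Statement~(D) collapses precisely to $r_0/\rho$.
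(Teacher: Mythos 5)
Your proposal is correct and follows essentially the same route as the paper: represent $\varphi_\mu,\varphi_\nu$ via Theorem~\ref{Radial-func-repre-Thoerem}, observe that equality at one radius $r_0\le\pi\rho/(2p)$ gives the integral identity under the hypothesis of Statement~(D) of the Main Theorem (since $\ln\frac{\rho+\eta_0}{\rho-\eta_0}=r_0/\rho$), and conclude $\alpha=\beta$ or $\alpha+\beta=k$, hence $\mu=\nu$, a contradiction. The only difference is cosmetic: you first use the Sphere Exchange Rule to integrate over $S^k(\rho)$ with the interior point $m$ fixed, while the paper applies Statement~(D) directly to the integral over $S(\eta)$ with the exterior point $u$ fixed; both are admissible because the threshold in~(D) is symmetric in $R$ and $r$.
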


\begin{remark} Recall that the first quadratic equation above together with \eqref{One_Rad_5} implies that
\begin{equation}\label{One_Rad_10}
    \Im(\alpha)\in [-p, p] \iff
    \Re\mu\leq-\kappa\left(p^2+\frac{k^2}{4}\right)+\frac{1}{4\kappa p^2}(\Im\mu)^2\,,
\end{equation}
i.e., $\mu$ belongs to the inner part of the parabola depending on parameter $p$ and pictured on Figure~\ref{Parabola-Strip-Equiv}, p.~\pageref{Parabola-Strip-Equiv}.

 If $p=0$, we arrive at the case when $\alpha, \beta$ are real or, equivalently, $\nu, \mu\leq -\kappa k^2/4$, which, according to the One Radius Theorem, means that the condition $\mu\neq\nu$ implies that $\varphi_\nu(r)\neq\varphi_\mu(r)$ for every $r\in (0, \pi\rho/(2p)]_{p=0}=(0,\infty)$.
\end{remark}

\begin{proof}[Proof of Theorem~\ref{One-radius-Theorem}.]

Let us assume that $\varphi_\mu(r)=\varphi_\nu(r)$ for some $r\in(0, \pi\rho/(2p)]$. Recall that according to Theorem~\ref{Radial-func-repre-Thoerem} from p.~\pageref{Radial-func-repre-Thoerem}, a radial eigenfunction in $B^{k+1}_\rho$ assuming the value~$1$ at the origin is uniquely defined by its eigenvalue and this eigenfunction can be expressed by using the Euclidean coordinates as
\begin{equation}\label{One_Rad_11}
   \varphi_\mu(r)=V_\alpha(\eta(r))=\frac{1}{|S(\eta)|}\int\limits_{S(\eta)}\omega^\alpha(u,m)dS_m\,,
\end{equation}
where $\mu=-\kappa(\alpha k-\alpha^2);\,\, |m|=\eta;\,\, r=\rho\ln[(\rho+\eta)/(\rho-\eta)]$ and $|u|=\rho$. Let us set $x=u\notin S^k(\eta)$ and $y=m\in S^k(\eta)$ in Statement (D) of Theorem~\ref{Basic-theorem}, p.~\pageref{Basic-theorem}.
Using \eqref{One_Rad_11} we can observe that
\begin{equation}\label{One_Rad_12}
   \varphi_\nu(r)=\varphi_\mu(r)\quad\text{for some}\,\,r=\rho\ln\frac{\rho+\eta}{\rho-\eta}\in(0, \pi\rho/2p]
\end{equation}
is equivalent to say that there exists $\eta=\rho\tanh(r/2\rho)$ such that
\begin{equation}\label{One_Rad_13}
\begin{split}
   & \int\limits_{S(\eta)}\omega^\alpha(u,m)dS_m=\int\limits_{S(\eta)}\omega^\beta(u,m)dS_m\quad\text{and}
   \\& p=\max\{|\Im(\alpha)|,|\Im(\beta)|\}\leq\left.\frac\pi2\right/\ln\frac{\rho+\eta}{\rho-\eta}\,.
\end{split}
\end{equation}
It is clear that \eqref{One_Rad_13}, according to Statement~(D) of Main Theorem~\ref{Basic-theorem}, p.~\pageref{Basic-theorem} yields $\alpha+\beta=k$ or $\alpha=\beta$, which implies that $\mu=\nu$. The last equality contradicts to the assumption of the theorem, which completes the proof of Theorem~\ref{One-radius-Theorem}.
\end{proof}

% ******** Povtorenie statementa D v novyh peremennyh ******************************

%then this Statement can be written as the following proposition.

%\begin{proposition}[Statement (D').]\label{Statement-D'-Proposition}
%Fix a number $\eta=|m|$. Suppose that for a fixed point $u\notin S^k(\eta)$ there exists $p\geq0$ such that
%\begin{equation}\label{Sphe-Pro-Statement-5-1}
%    \max\{|\Im(\alpha)|, |\Im(\beta)|\}\leq p\quad\text{and}\quad
%    \min\left\{\frac{\eta}{|u|}, \frac{|u|}{\eta}\right\}\leq\tanh\left(\frac{\pi}{4p}\right),
%\end{equation}
%where we assume that $\tanh(\pi/4p)=\infty$ for $p=0$. If the following integral identity
%\begin{equation}\label{Sphe-Pro-Statement-6}
%\int\limits_{S^k(\eta)} \omega(u,m)^\alpha dS_m =
%\int\limits_{S^k(\eta)} \omega(u,m)^\beta dS_m
%\end{equation}
%holds for the chosen sphere $S^k(\eta)$ and point $u\notin S^k(\eta)$, then $\alpha+\beta=k$ or $\alpha=\beta$.
%\end{proposition}

\begin{corollary} Using \eqref{One_Rad_10}, we observe that
\begin{equation}\label{One_Rad_14}
   \Omega(p)=
   \left\{\varphi_\mu(r)\mid\varphi_\mu(0)=1\,\,\text{and}\,\,\,\Re\mu\leq-\kappa\left(p^2+\frac{k^2}{4}\right)+\frac{(\Im\mu)^2}{4\kappa p^2} \right\}
\end{equation}
is the set of all radial eigenfunctions such that $\varphi_\mu(0)=1$ and with eigenvalues within the shaded area inside the parabola depending on $p$. The parabola was pictured on Figure~\ref{Parabola-Strip-Equiv}, p.~\pageref{Parabola-Strip-Equiv}. Then the value of $\varphi_\mu(r_0)$ at any point $r_0\in(0,\pi\rho/(2p)]$ determines the radial eigenfunction $\varphi_\mu(r)\in\Omega(p)$ uniquely. Meanwhile, according to Statement (C) of Theorem~\ref{Basic-theorem} form p.~\pageref{Basic-theorem}, there are infinitely many eigenfunctions $\varphi_\nu(r)\notin\Omega(p)$ such that $\varphi_\nu(r_0)=\varphi_\mu(r_0)$.
\end{corollary}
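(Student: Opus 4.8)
The plan is to assemble the corollary from three ingredients already proved: Proposition~\ref{Parabola-Strip-equivalence-Lemma}, the One Radius Theorem~\ref{One-radius-Theorem}, and Statements~(C) and~(D) of the Main Theorem~\ref{Basic-theorem}.

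First I would record that $\Omega(p)$ is nothing but a parameter strip. By Theorem~\ref{Radial-func-repre-Thoerem} every radial eigenfunction with value $1$ at the origin equals $\varphi_\mu$ for a unique $\mu$, where $\mu=-\kappa(\alpha k-\alpha^2)$ for a suitable $\alpha\in\mathbb{C}$; and the equivalence~\eqref{One_Rad_10}, a direct consequence of~\eqref{One_Rad_5}, says $\Im(\alpha)\in[-p,p]$ iff $\Re\mu\le-\kappa(p^2+k^2/4)+(\Im\mu)^2/(4\kappa p^2)$, i.e. iff $\mu$ lies inside the parabola. Hence $\Omega(p)=\{\varphi_\mu\mid\Im(\alpha)\in[-p,p]\}$, which is the first assertion (for $p=0$ this is the condition that $\alpha$ be real, equivalently $\mu\le-\kappa k^2/4$, as in the remark following Proposition~\ref{Parabola-Strip-equivalence-Lemma}).

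Next, the uniqueness claim. Suppose $\varphi_\mu,\varphi_\nu\in\Omega(p)$ and $\varphi_\mu(r_0)=\varphi_\nu(r_0)$ for some $r_0\in(0,\pi\rho/(2p)]$; let $\alpha,\beta$ be the parameters attached to $\mu,\nu$ by~\eqref{One_Rad_9}. The previous paragraph gives $|\Im(\alpha)|,|\Im(\beta)|\le p$, so $p':=\max\{|\Im(\alpha)|,|\Im(\beta)|\}\le p$, hence $\pi\rho/(2p)\le\pi\rho/(2p')$ and therefore $r_0\in(0,\pi\rho/(2p')]$. If $\mu\ne\nu$ this contradicts Theorem~\ref{One-radius-Theorem}, which asserts $\varphi_\mu(r)\ne\varphi_\nu(r)$ throughout $(0,\pi\rho/(2p')]$. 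Thus $\mu=\nu$, and then $\varphi_\mu\equiv\varphi_\nu$ by the uniqueness in Theorem~\ref{Radial-func-repre-Thoerem}. (When $p=0$ the interval is $(0,\infty)$ and the same argument applies.)

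Finally, the abundance of eigenfunctions sharing the value at $r_0$ but lying outside $\Omega(p)$. Fix $u\in S^k(\rho)$ and put $\eta_0=\rho\tanh(r_0/2\rho)$, so $x=u$ is a fixed point off $S^k(\eta_0)$ and, by~\eqref{One_Rad_11}, $\varphi_\mu(r_0)$ is a fixed multiple of $\int_{S^k(\eta_0)}\omega^\alpha(u,m)\,dS_m=V_\alpha(\eta_0)$. Statement~(C) of Theorem~\ref{Basic-theorem}, applied to this fixed configuration, yields infinitely many $\gamma\in\mathbb{C}$ with $V_\gamma(\eta_0)=V_\alpha(\eta_0)$; each produces a radial eigenfunction $\varphi_\nu$, $\nu=-\kappa(\gamma k-\gamma^2)$, with $\varphi_\nu(r_0)=\varphi_\mu(r_0)$, and since $\gamma$ and $k-\gamma$ are the only pairs giving the same $\nu$, infinitely many distinct eigenvalues $\nu$, hence infinitely many distinct $\varphi_\nu$, arise. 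Because $r_0\le\pi\rho/(2p)$ is exactly the bound $p\le(\pi/2)/\ln((\rho+\eta_0)/(\rho-\eta_0))$ required by Statement~(D), that statement applies here and shows that at most $\gamma=\alpha$ and $\gamma=k-\alpha$ (both giving $\nu=\mu$) can have $\Im(\gamma)\in[-p,p]$; all the other $\varphi_\nu$ therefore lie outside $\Omega(p)$ while still agreeing with $\varphi_\mu$ at $r_0$. I expect the only real point of care to be the bookkeeping linking the $p$ of the One Radius Theorem (defined per pair of eigenfunctions) to the global $p$ of $\Omega(p)$, and the matching of $r_0\le\pi\rho/(2p)$ with the hypothesis of Statement~(D); everything else is a direct appeal to the results cited.
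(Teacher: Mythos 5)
Your proof is correct and takes exactly the route the paper intends: $\Omega(p)$ is identified as the strip $|\Im(\alpha)|\le p$ via~\eqref{One_Rad_10}, uniqueness inside $\Omega(p)$ follows by reducing $p$ to $p'=\max\{|\Im\alpha|,|\Im\beta|\}\le p$ and invoking Theorem~\ref{One-radius-Theorem}, and the abundance of $\varphi_\nu\notin\Omega(p)$ agreeing with $\varphi_\mu$ at $r_0$ comes from Statement~(C) together with Statement~(D) to exclude all but $\gamma=\alpha$ and $\gamma=k-\alpha$ from the strip. The paper states this corollary without proof but explicitly points to Statement~(C); your bookkeeping (matching $r_0\le\pi\rho/(2p)$ with the hypothesis $p\le(\pi/2)/\ln\frac{\rho+\eta_0}{\rho-\eta_0}$ of Statement~(D), and noting $|\Im(k-\gamma)|=|\Im\gamma|$ so that each $\nu$ is inside or outside the parabola unambiguously) is exactly the fill-in required.
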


\begin{corollary} If $p=0$ or, equivalently, if $\Omega(0)$ is defined as
\begin{equation}\label{One_Rad_15}
   \Omega(0)=
   \left\{\varphi_\mu(r)\mid\varphi_\mu(0)=1\quad\text{and}\quad\mu\leq\frac{-\kappa k^2}{4}\right\}\,,
\end{equation}
then the value $\varphi_\mu(r_0)$ defines $\varphi_\mu(r)\in\Omega(0)$ uniquely for any $r_0\in(0,\infty)$. And again, by Statement (C) of Theorem~\ref{Basic-theorem} form p.~\pageref{Basic-theorem}, there are infinitely many radial eigenfunctions $\varphi_\nu(r)\notin\Omega(0)$ such that $\varphi_\nu(r_0)=\varphi_\mu(r_0)$.
\end{corollary}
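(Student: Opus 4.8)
The plan is to read this corollary as the $p=0$ endpoint of the One Radius Theorem~\ref{One-radius-Theorem} and of the corollary preceding it, combined with Statements~(B) and~(C) of the Main Theorem~\ref{Basic-theorem}. First I would set up the dictionary between eigenvalues and exponents. By \eqref{One_Rad_9} the eigenvalue attached to an exponent $\alpha$ is $\mu=-\kappa(\alpha k-\alpha^2)=-\kappa k^2/4+\kappa(\alpha-k/2)^2$; since $\kappa<0$, writing $\alpha=k/2+t$ gives $\mu=-\kappa k^2/4+\kappa t^2$, so a \emph{real} eigenvalue satisfies $\mu\leq-\kappa k^2/4$ precisely when $t^2$ is real and nonnegative, i.e.\ precisely when $\alpha$ (equivalently $\beta=k-\alpha$) is real. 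This is exactly the degenerate case of Proposition~\ref{Parabola-Strip-equivalence-Lemma}, in which the parabola collapses onto the ray $(-\infty,-\kappa k^2/4]$ (see the remark following that proposition). Consequently $\Omega(0)$ is exactly the family of radial eigenfunctions $\varphi_\mu=V_\alpha(\eta(\cdot))$ with $\alpha\in\mathbb{R}$.

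For the uniqueness assertion, suppose $\varphi_\mu,\varphi_\nu\in\Omega(0)$ and $\varphi_\mu(r_0)=\varphi_\nu(r_0)$ for some $r_0\in(0,\infty)$. If $\mu\neq\nu$, then by the first paragraph the associated exponents $\alpha,\beta$ are real, so $p=\max\{|\Im(\alpha)|,|\Im(\beta)|\}=0$, and the One Radius Theorem~\ref{One-radius-Theorem} gives $\varphi_\mu(r)\neq\varphi_\nu(r)$ for every $r$ in $(0,\pi\rho/(2p)]$, which for $p=0$ is all of $(0,\infty)$ (as recorded in the remark after Theorem~\ref{One-radius-Theorem}); this contradicts the equality at $r_0$. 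Hence $\mu=\nu$, and Theorem~\ref{Radial-func-repre-Thoerem}, which says a radial eigenfunction with value $1$ at the origin is determined by its eigenvalue, forces $\varphi_\mu\equiv\varphi_\nu$. Thus the evaluation map $\varphi_\mu\mapsto\varphi_\mu(r_0)$ is injective on $\Omega(0)$.

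For the last sentence, fix any $\varphi_\mu\in\Omega(0)$ (so $\alpha$ is real) and any $r_0\in(0,\infty)$; set $\eta=\rho\tanh(r_0/2\rho)<\rho$ and $u\in S^k(\rho)$, so that $\varphi_\mu(r_0)=V_\alpha(\eta)$ is, up to the fixed constant $1/|S^k(\eta)|$, the value $F(\alpha)=\int_{S^k(\eta)}\omega^\alpha(u,m)\,dS_m$ with the point $x=u\notin S^k(\eta)$ held fixed. Statement~(C) of Theorem~\ref{Basic-theorem} then supplies infinitely many $\beta\in\mathbb{C}$ with $F(\beta)=F(\alpha)$, hence $V_\beta(\eta)=V_\alpha(\eta)$ and $\varphi_{\nu(\beta)}(r_0)=\varphi_\mu(r_0)$, where $\nu(\beta)=-\kappa(\beta k-\beta^2)$. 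Since $\nu(\beta)=\nu(\beta')$ only for $\beta'\in\{\beta,k-\beta\}$, these exponents descend to infinitely many distinct eigenvalues; by Statement~(B) at most two of the $\beta$ are real, so all but finitely many satisfy $\Im\beta\neq 0$, and then by the dictionary of the first paragraph $\nu(\beta)\notin(-\infty,-\kappa k^2/4]$, i.e.\ $\varphi_{\nu(\beta)}\notin\Omega(0)$.

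No new analysis is needed: the whole argument is a repackaging of the One Radius Theorem and of Statements~(B)--(C). The one place calling for care is the bookkeeping in the last paragraph — verifying that the infinitely many complex exponents produced by Picard's theorem (Statement~(C)) really yield infinitely many \emph{distinct} eigenfunctions lying outside $\Omega(0)$, i.e.\ that they neither collapse under $\beta\mapsto k-\beta$ nor slip back onto the real axis; the ``at most two real solutions'' count from Statement~(B) settles both issues. A purely cosmetic point is the convention that $\pi\rho/(2p)$ be read as $\infty$ when $p=0$ in the One Radius Theorem, which is already justified in the remark following Theorem~\ref{One-radius-Theorem}.
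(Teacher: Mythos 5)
Your argument is correct and follows the paper's own (essentially implicit) route: the paper states this corollary with no separate proof, relying on the remark after Theorem~\ref{One-radius-Theorem} (which already records that $p=0$ collapses $(0,\pi\rho/2p]$ to $(0,\infty)$ and identifies $\Omega(0)$ with real exponents) for the uniqueness claim, and on Statement~(C) of the Main Theorem for the infinite non-uniqueness outside $\Omega(0)$. The one point worth calling out is that your third paragraph does genuinely more bookkeeping than the paper: you verify that the infinitely many $\beta$'s supplied by Picard's theorem survive the two-to-one quotient $\beta\mapsto\nu(\beta)$ to give infinitely many distinct eigenvalues, and you use Statement~(B) to show all but finitely many have $\Im\beta\neq0$ so that the resulting eigenfunctions fall outside $\Omega(0)$; the paper leaves both checks implicit. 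Those verifications are correct and tighten the argument without changing its structure.
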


\section {Radial eigenfunctions vanishing \\ at some finite point.}

In this section we describe the radial eigenfunctions corresponding to real eigenvalues and vanishing at some finite radius $r$. We obtain also all radial eigenfunctions together with their eigenvalues for the Dirichlet Eigenvalue Problem in a hyperbolic 3-dimensional disk.

\subsection {Representations of a radial eigenfunction \\ vanishing  at some finite point.}

We shall see here the explicit representation of a radial eigenfunction corresponding to a real eigenvalue and vanishing at a finite radius.

\begin{theorem}\label{Radial-Vanishing-EF-Theorem} Let $\lambda$ is real. Then $\varphi_\lambda(r)$ is a radial eigenfunction vanishing at some $r<\infty$ if and only if $\lambda>-\kappa k^2/4$ or equivalently, if
\begin{equation}\label{Real-Lambda-and-alpha-b-relation}
   \lambda=\frac{\alpha k-\alpha^2}{\rho^2}\quad\text{with}\quad \alpha=\frac k2+ib\quad\text{and}\quad b\neq0\,.
\end{equation}
Every vanishing radial eigenfunction changes sign infinitely many times, vanishes at infinity and for every
fixed $\lambda>-\kappa k^2/4$ there exist three positive constants $T_1, T_2$ and $\Lambda_0$ such that
$T_1<|r_2-r_1|<T_2$ for every two successive zeros $r_1, r_2$ of $\varphi_\lambda(r)$ satisfying $r_2>r_1>\Lambda_0$.
Moreover,
\begin{equation}\label{Radial-Vanishing-EF}%431
    \varphi_\lambda(r(\eta))=\frac{1}{|S(\eta)|}\int\limits_{S(\eta)}\omega^{k/2\pm ib}(u,m)dS_m\,,
\end{equation}
where $b=\sqrt{\frac{\lambda}{-\kappa}-\frac{k^2}{4}}>0$
or, equivalently,
\begin{equation}\label{Radial-Vanishing-EF-cos}
    \varphi_\lambda(r(\eta))=\frac{1}{|S(\eta)|}\int\limits_{S(\eta)}\omega^{k/2}\cos\left( \sqrt{\frac{\lambda}{-\kappa}-\frac{k^2}{4}}\ln\omega \right) dS_m\,,
\end{equation}
where $\rho=|u|$, $\eta=|m|$, $r(\eta)=\rho\ln\left[(\rho+\eta)/(\rho-\eta)\right]$, $\theta=\pi-\widehat{uOm}$ and
\begin{equation}\label{Three_D_Dirichlet-2}
    \omega=\omega(u,m)=\frac{\rho^2-\eta^2}{|u-m|^2}=
    \frac{\rho^2-\eta^2}{\rho^2+\eta^2+2\eta\rho\cos\theta}=\omega(\eta,\theta)\,.
\end{equation}
\end{theorem}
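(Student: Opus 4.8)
The plan is to read everything off the integral representation of Theorem~\ref{Radial-func-repre-Thoerem} together with classical Sturm--Liouville oscillation theory for the radial equation $\varphi''+\tfrac k\rho\coth(r/\rho)\varphi'+\lambda\varphi=0$.

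\emph{Step 1 (representation and the ``only if'' half).} For a real $\lambda$ solve $\alpha k-\alpha^{2}=\lambda\rho^{2}$, whose roots are $\alpha=\tfrac k2\pm\sqrt{\tfrac{k^{2}}4-\lambda\rho^{2}}$. If $\lambda\le -\kappa k^{2}/4=k^{2}/(4\rho^{2})$ the discriminant is non-negative, so $\alpha$ may be chosen real; if $\lambda>-\kappa k^{2}/4$ then $\alpha=\tfrac k2+ib$ with $b=\sqrt{\lambda\rho^{2}-k^{2}/4}=\sqrt{\lambda/(-\kappa)-k^{2}/4}>0$, which is \eqref{Real-Lambda-and-alpha-b-relation}. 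In both cases $(\alpha k-\alpha^{2})/\rho^{2}=\lambda$, so Theorem~\ref{Radial-func-repre-Thoerem} gives $\varphi_{\lambda}(r)=V_{\alpha}(\eta(r))=\frac1{|S(\eta)|}\int_{S(\eta)}\omega^{\alpha}(u,m)\,dS_{m}$ with $\eta=\rho\tanh(r/2\rho)$; when $\alpha=k/2+ib$, Corollary~\ref{imaginary-part-vanishing-corollary} converts this to \eqref{Radial-Vanishing-EF} and then \eqref{Radial-Vanishing-EF-cos}. When $\lambda\le-\kappa k^{2}/4$ we chose $\alpha$ real, so $\omega^{\alpha}>0$ pointwise and hence $V_{\alpha}(\eta)>0$ for every $\eta\in[0,\rho)$; thus $\varphi_{\lambda}(r)>0$ for all finite $r$ and has no zero. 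This is the contrapositive of ``$\varphi_\lambda$ vanishes somewhere $\Rightarrow\lambda>-\kappa k^{2}/4$.''

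\emph{Step 2 (oscillation when $\lambda>-\kappa k^{2}/4$).} Multiplying the radial equation by $\sinh^{k}(r/\rho)$ puts it in self-adjoint form $\bigl(\sinh^{k}(r/\rho)\varphi'\bigr)'+\lambda\sinh^{k}(r/\rho)\varphi=0$, and the Liouville substitution $\varphi(r)=\bigl(\sinh(r/\rho)\bigr)^{-k/2}\psi(r)$ reduces it to $\psi''+Q(r)\psi=0$ with $Q(r)=\lambda-\tfrac{k^{2}}{4\rho^{2}}\coth^{2}(r/\rho)+\tfrac{k}{2\rho^{2}}\sinh^{-2}(r/\rho)$, so that $Q(r)\to c^{2}:=\lambda-\tfrac{k^{2}}{4\rho^{2}}=\lambda+\kappa k^{2}/4>0$ and $Q(r)-c^{2}=O(e^{-2r/\rho})$. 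Picking $\Lambda_{0}$ with $Q\ge c^{2}/2$ on $[\Lambda_{0},\infty)$ and comparing (Sturm) with $v''+\tfrac{c^{2}}2v=0$ shows $\psi$, hence $\varphi_{\lambda}$, has infinitely many zeros; and no zero is double (else $\varphi_{\lambda}\equiv0$ by ODE uniqueness, contradicting $\varphi_{\lambda}(0)=1$), so $\varphi_{\lambda}$ changes sign at each of them. As a cross-check, the substitution $s=\ln\omega$ collapses \eqref{Radial-Vanishing-EF-cos} — the exponential factors in the Jacobian cancel — to a constant multiple of $\bigl((\rho^{2}-\eta^{2})/(\eta\rho)\bigr)^{k-1}\int_{-r/\rho}^{r/\rho}(\cosh(r/\rho)-\cosh s)^{(k-2)/2}\cos(bs)\,ds$, from which a Laplace/endpoint analysis as $\eta\to\rho^{-}$ yields $\varphi_{\lambda}(r)\sim\text{const}\cdot e^{-kr/(2\rho)}\cos(br/\rho+\delta)$, reproducing the same conclusion with an explicit profile.

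\emph{Step 3 (spacing of zeros, decay, and the hard part).} Given $\varepsilon\in(0,c^{2})$, enlarge $\Lambda_{0}$ so that $c^{2}-\varepsilon\le Q(r)\le c^{2}+\varepsilon$ for $r\ge\Lambda_{0}$; Sturm comparison with the constant-coefficient equations $v''+(c^{2}\pm\varepsilon)v=0$ then forces $\pi/\sqrt{c^{2}+\varepsilon}\le|r_{2}-r_{1}|\le\pi/\sqrt{c^{2}-\varepsilon}$ for any two successive zeros $r_{2}>r_{1}>\Lambda_{0}$, giving the required $T_{1},T_{2}$. For vanishing at infinity, since $Q\to c^{2}>0$ with $Q-c^{2}$ integrable, asymptotic integration of $\psi''+Q\psi=0$ shows every solution is bounded (in fact $\psi(r)=A\cos(cr)+B\sin(cr)+o(1)$), and multiplying by $(\sinh(r/\rho))^{-k/2}\to0$ gives $\varphi_{\lambda}(r)\to0$. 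The qualitative items are short; the main obstacle is this quantitative layer — the two-sided zero-spacing bound and the boundedness/decay statement — which genuinely needs the exponential decay of $Q-c^{2}$ (or, on the integral side, a uniform-in-$r$ remainder estimate for the endpoint asymptotics), and that is where the bulk of the careful work sits.
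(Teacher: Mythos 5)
Your proposal is correct, and for the characterization and the oscillation/spacing statements it follows essentially the paper's own route: solve $\alpha k-\alpha^2=\lambda\rho^2$, observe that real $\alpha$ (i.e.\ $\lambda\le-\kappa k^2/4$) makes the integrand $\omega^\alpha$ strictly positive so $\varphi_\lambda$ never vanishes, pass to $\alpha=k/2+ib$ and Corollary~\ref{imaginary-part-vanishing-corollary} for \eqref{Radial-Vanishing-EF} and \eqref{Radial-Vanishing-EF-cos}, and then use the substitution $\varphi=(\sinh(r/\rho))^{-k/2}\psi$ (your $Q$ is algebraically identical to the paper's \eqref{Expression-q-after-substitution}) together with Sturm comparison near $Q\to\lambda+\kappa k^2/4>0$ for infinitely many zeros and the two-sided bound $\pi/\sqrt{c^2+\varepsilon}\le|r_2-r_1|\le\pi/\sqrt{c^2-\varepsilon}$; the paper cites Simmons' lemma for exactly this, so your direct comparison argument is the same content. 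The one genuinely different ingredient is the claim $\varphi_\lambda(r)\to0$ as $r\to\infty$: the paper defers this to Theorem~\ref{Infinity-Behavior-radial-Theorem} (Proposition~\ref{Limit-for-alpha=k/2+ib}), proved elsewhere by elementary geometric estimates on the integral representation, whereas you obtain it from asymptotic integration of $\psi''+Q\psi=0$ (boundedness of $\psi$ since $Q-c^2=O(e^{-2r/\rho})\in L^1$, then multiply by the decaying factor $(\sinh(r/\rho))^{-k/2}$). That route is legitimate — indeed the paper's own remark notes Olver's Theorem~2.2 covers this regime — and it keeps the proof self-contained within ODE theory, at the cost of invoking a Levinson-type asymptotic theorem instead of the paper's geometric machinery; your added observation that zeros are simple (hence genuine sign changes) and the Mehler--Dirichlet-type cross-check are harmless extras not present in the paper.
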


\begin{proof} Let us assume first that $\varphi_\lambda(r)=0$ for some finite $r$. Recall that according to Theorem~\ref{Radial-func-repre-Thoerem} from p.~\pageref{Radial-func-repre-Thoerem}, the unique radial eigenfunction with an eigenvalue $\lambda\in\mathbb{C}$ is
\begin{equation}\label{Three_D_Dirichlet-3}
    \varphi_\lambda(r)=V_\alpha(\eta(r))=\frac{1}{|S(\eta)|}\int\limits_{S(\eta)}\omega^\alpha dS_m\,,
\end{equation}
where
\begin{equation}\label{Three_D_Dirichlet-4}
    |m|=\eta(r)=\rho\tanh\left(\frac{r}{2\rho}\right)\quad\text{and}\quad\lambda=
    \frac{\alpha k-\alpha^2}{\rho^2}=-\kappa(\alpha k-\alpha^2)\,.
\end{equation}
Let $\alpha=a+ib$. Then
\begin{equation}\label{Three_D_Dirichlet-5}
    \lambda=-\kappa\left( (a+ib)k-(a+ib)^2\right)=-\kappa\left[ak+b^2-a^2+ib(k-2a)\right]\,.
\end{equation}
Therefore, $\lambda$ is real if and only if
\begin{equation}\label{Three_D_Dirichlet-6}
    \alpha\in\gimel
    =\{(a+ib)|\,\, a=k/2\,\, \text{or}\,\, b=0\}
\end{equation}
or equivalently, if
\begin{equation}\label{Iff-conditions-for-Lambda-real}
    \alpha=\frac k2\pm s\quad\text{or}\quad\alpha=\frac k2\pm ib\,,\quad\text{where}\,\,s,b\in\mathbb{R}\,.
\end{equation}
It is clear that $\gimel$ is mapped to the real line $\lambda$ as it is pictured on Figure~\ref{Cross-Image} below.

\begin{figure}[!h]
    \centering
    \epsfig{figure=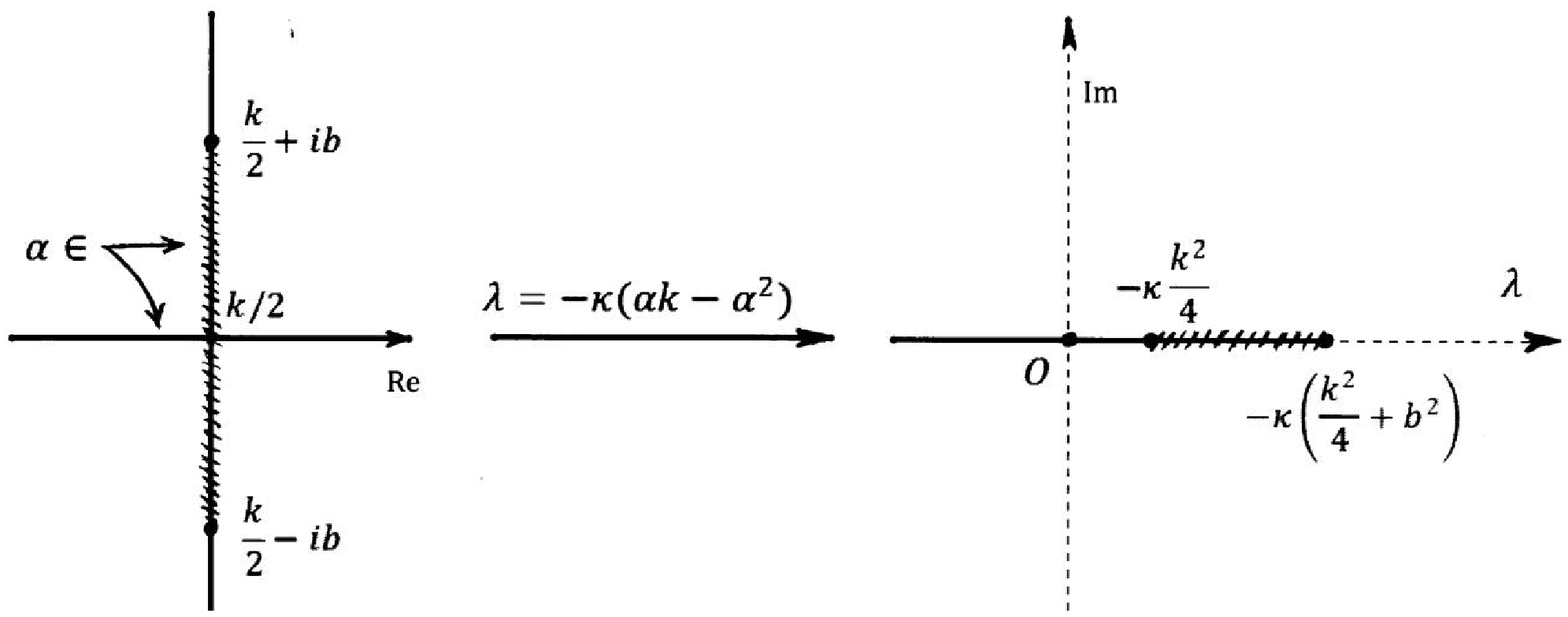,height=5cm}\\
  \caption{The image of the cross $\gimel$.}\label{Cross-Image}
\end{figure}

Using \eqref{Three_D_Dirichlet-5} and \eqref{Three_D_Dirichlet-6} we observe that $\alpha$ is real if and only if
\begin{equation}\label{Three_D_Dirichlet-7}
    \lambda\in\mathbb{R}\quad\text{and}\quad\lambda\leq-\kappa k^2/4\,.
\end{equation}
Therefore, for every $\lambda$ satisfying \eqref{Three_D_Dirichlet-7} the radial eigenfunction does not vanish for any finite $r$ because
\begin{equation}\label{Three_D_Dirichlet-8}
    \varphi_\lambda(r)=V_\alpha(\eta(r))=\frac{1}{|S(\eta)|}\int\limits_{S(\eta)}\omega^\alpha dS_m>0
\end{equation}
for every $\eta\in[0,\rho)$ and for every real $\alpha$. As we shall see later, in Theorem~\ref{Infinity-Behavior-radial-Theorem}, p.~\pageref{Infinity-Behavior-radial-Theorem}, $\lim\limits_{r\rightarrow\infty}\varphi_\lambda(r)=0$ for every $\lambda\in(0,\infty)$, but up to now, $\delta$ is supposed to be finite. Thus, to allow the radial eigenfunction to vanish at a finite $r=\delta$, $\lambda$ must be strictly greater than $-\kappa k^2/4$. Again, using \eqref{Three_D_Dirichlet-5} and \eqref{Three_D_Dirichlet-6} we can observe that
\begin{equation}\label{Three_D_Dirichlet-9}
    \lambda>-\kappa \frac{k^2}{4}\quad%\text{iff}\quad\alpha=\frac{k}{2}+ib\,\,\,\text{with}\,\,\,b\neq 0\,,
\end{equation}
which yields $\eqref{Radial-Vanishing-EF}$. Note also that $\lambda=-\kappa\left(\alpha k-\alpha^2\right)$ and $\alpha=k/2+ib$ imply together that $b=\pm\sqrt{-\lambda/\kappa-k^2/4}$. Formula~\eqref{imaginary-part-vanishing} of Corollary~\ref{imaginary-part-vanishing-corollary}, p.~\pageref{imaginary-part-vanishing-corollary}, applied to \eqref{Radial-Vanishing-EF} leads directly to~\eqref{Radial-Vanishing-EF-cos}. Therefore, for a vanishing radial eigenfunction all the formulae \eqref{Real-Lambda-and-alpha-b-relation}, \eqref{Radial-Vanishing-EF}, \eqref{Radial-Vanishing-EF-cos}, p.~\pageref{Radial-Vanishing-EF-Theorem} as well as $\lambda>-\kappa k^2/4$ must hold.

The next step is to show that $\varphi_\lambda(r)$ has infinitely many zeros as long as $\lambda>-\kappa k^2/4$. Recall that $\varphi_\lambda(r)$ is a solution for the following equation
\begin{equation}\label{radial-laplac-Recall}
    Y''+\frac k\rho\coth\left(\frac r\rho\right)Y'+\lambda Y=0.
\end{equation}
The direct computation shows that the substitution $Y=U(\sinh(r/\rho))^{-k/2}$ suggested in~\cite{Simmons}, p.~119,   reduces~\eqref{radial-laplac-Recall} to
$U''+QU=0$, where
%qqq
\begin{equation}\label{Expression-q-after-substitution}
    Q=Q(r)=\lambda-\frac{k^2}{4\rho^2}-\frac{k(k-2)}{4\rho^2\sinh^2(r/\rho)}
\end{equation}
It is clear that
\begin{equation}\label{limit-for-q(r)}
    \lim\limits_{r\rightarrow\infty}Q(r)=\lambda+\frac{\kappa k^2}{4}\,,\quad\text{where}\quad \kappa=-\frac{1}{\rho^2}\,.
\end{equation}
Therefore, for $\lambda>-\kappa k^2/4$ there exist positive numbers $\tau$ and $\Lambda$ such that
\begin{equation}\label{Inequalities-for-q-on-large-r}
    Q(r)>\tau>0\quad\text{for every}\quad r>\Lambda.
\end{equation}
Let us introduce $Z(r)$ as a solution of the following elementary equation $Z''+\tau Z=0$. According to the Sturm comparison theorem in \cite{Simmons}, p.~122, condition \eqref{Inequalities-for-q-on-large-r} implies that $U(r)$ vanishes at least once between any two successive zeros of $Z(r)$. It clear that $Z(r)$ has infinitely many zeros. Therefore, $U(r)$ as well as $\varphi_\lambda(r)$ must also have infinitely many zeros as $r\rightarrow\infty$. In addition, as a consequence of Theorem~\ref{Infinity-Behavior-radial-Theorem}, p.~\pageref{Infinity-Behavior-radial-Theorem}, we shall see that $\varphi_\lambda(r)$ vanishes at $\infty$.

To see that there exist constants $T_1, T_2, \Lambda_0$ let us recall the following Lemma from~\cite{Simmons}, p.~134.

\begin{lemma}[Simmons, \cite{Simmons}, p. 134, (Lemma 2)]\label{Simmons-zero-distance-estim-Lemma}
    Let $Q(r)$ be a positive continuous function that satisfies the inequalities
    $$0<M_1^2<Q(r)<M_2^2$$
    on a closed interval $[A,B]$. If $Y(r)$ is a nontrivial solution of $Y''+Q(r)Y=0$ on this interval, and if $r_1$ and $r_2$ are successive zeros of $Y(r)$, then
    $$\pi/M_2<r_2-r_1<\pi/M_1\,.$$
    Furthermore, if $Y(r)$ vanishes at $A$ and $B$, and at $n-1$ points in the open interval $(A,B)$, then
    $$M_1(B-A)/\pi<n<M_2(B-A)/\pi\,.$$
\end{lemma}

Since $Q(r)\rightarrow\lambda+\kappa k^2/4>0$ as $r\rightarrow\infty$, then the existence of $M_1, M_2$ for large $r>\Lambda_0$ follows and the conditions of Lemma~\ref{Simmons-zero-distance-estim-Lemma} are satisfied. Therefore, if we denote $T_1=\pi/M_2$ and $T_2=\pi/M_1$, we have
\begin{equation}\label{zero-distance-estimation-simmons}
    T_1=\pi/M_2<r_2-r_1<\pi/M_1=T_2
\end{equation}
for every $[A,B]\subset[\Lambda_0,\infty]$. Observe now that the estimation~\eqref{zero-distance-estimation-simmons} is independent on $A,B>\Lambda_0$, since $M_1, M_2$ can be fixed for all $r_i>\Lambda_0\,\,(i=1,2)$ and then \eqref{zero-distance-estimation-simmons} remains true for all $r_1, r_2>\Lambda_0$. This completes the proof of Theorem~\ref{Radial-func-repre-Thoerem}, p.~\pageref{Radial-func-repre-Thoerem}.
\end{proof}

\subsection {Geometric representations of a radial \\ eigenfunction vanishing at some finite point.}

The formulae presented in Theorem \ref{Radial-Vanishing-EF-Theorem}, p.~\pageref{Radial-Vanishing-EF-Theorem}, lead us to the following geometric presentations of radial eigenfunctions. All notations used in the following lemma are pictured on Figure~\ref{Euclidean-Notations-in-HD} below.

\begin{lemma}\label{Lemma-532} Geometric representation of radial eigenfunctions can be described as follows.

\begin{description}
  \item[\textbf{(A)}] A radial eigenfunction can be expressed as
\begin{equation}\label{Three_D_Dirichlet-15}
   \varphi_\lambda(r(\eta))=\frac{1}{|S(\rho)|}
   \int\limits_{S(\rho)}\omega^\alpha dS_u=
   \frac{1}{|S(\rho)|}
   \int\limits_{\Sigma}\left(\frac{l}{q}\right)^\alpha\frac{2\rho}{l+q}q^k d\Sigma_{\widetilde{u}}\,,
\end{equation}
where $\alpha$ is chosen in such a way that the eigenvalue $\lambda=(\alpha k-\alpha^2)/\rho^2$.

  \item[\textbf{(B)}] If a radial eigenfunction has a real eigenvalue and vanishes at some finite radius $r_0<\infty$ or, equivalently, by Theorem~\ref{Radial-Vanishing-EF-Theorem}, p.~\pageref{Radial-Vanishing-EF-Theorem}, if $\alpha=k/2+ib$ with $b\neq0$, then
\begin{equation}\label{Three_D_Dirichlet-16}
    \varphi_\lambda(r(\eta))
    =\frac{1}{|S(\rho)|}\int\limits_{S(\rho)}\left(\frac{l}{q}\right)^{k/2}
    \cos\left(b\ln\frac{l}{q}\right)dS_{u} \,,
\end{equation}
where $u\in S^k(\rho)$ is the parameter of integration,
\begin{equation}\label{Three_D_Dirichlet-17}
    \frac{l}{q}=\frac{|m-u|}{|m-u^*|}=\omega(u,m)\quad\text{and}\quad
    b=\pm\sqrt{\frac{\lambda}{-\kappa}-\frac{k^2}{4}}\,.
\end{equation}

\item[\textbf{(C)}] The integral given in \eqref{Three_D_Dirichlet-16} may be transformed to the following form
\begin{equation}\label{Three_D_Dirichlet-18}
    \varphi_\lambda(r(\eta))
    =\frac{4\rho(\rho^2-\eta^2)^{k/2}\sigma_{k-1}}{|S(\rho)|}\int\limits_0^{\pi/2}
    \frac{\sin^{k-1}\psi}{l+q}\cos\left(b\ln\frac{l}{q}\right)d\psi\,,
\end{equation}
which also represent a vanishing radial eigenfunction, with $b$ defined in~\eqref{Three_D_Dirichlet-17}.
\end{description}
\end{lemma}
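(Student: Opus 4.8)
The plan is to derive all three displayed formulas from the representation of the radial eigenfunction established in Theorem~\ref{Radial-func-repre-Thoerem}, namely $\varphi_\lambda(r(\eta))=V_\alpha(\eta)=\frac{1}{|S(\eta)|}\int_{S(\eta)}\omega^\alpha(u,m_1)\,dS_{m_1}$ with $u$ fixed on $S^k(\rho)$, together with two tools already in hand: the Sphere Exchange Rule (Lemma~\ref{Basic_lemma}) and the change of variables of Lemma~\ref{VarExchange-Lemma}. The essential conceptual point is that this radialization, which Theorem~\ref{Radial-func-repre-Thoerem} writes as an average over the \emph{small} sphere $S(\eta)$, can be rewritten as an average over the \emph{large} sphere $S(\rho)$; the latter is the form to which the Main Theorem machinery (which fixes a point and integrates a power of $\omega$ over a sphere centered at the origin) applies directly.

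For \textbf{(A)}, I would first move the integration from $S(\eta)$ to $S(\rho)$. Since $\omega$ is symmetric in its two arguments, formula \eqref{ExchaSphere-2} with $g(t)=t^\alpha$, $r=\eta$, $R=\rho$ gives $\rho^k\int_{S(\eta)}\omega^\alpha(m_1,u)\,dS_{m_1}=\eta^k\int_{S(\rho)}\omega^\alpha(m,u_1)\,dS_{u_1}$; dividing by $|S(\eta)|=\sigma_k\eta^k$ and using $|S(\rho)|=\sigma_k\rho^k$ converts the left side into $V_\alpha(\eta)$ and yields $\varphi_\lambda(r(\eta))=\frac{1}{|S(\rho)|}\int_{S(\rho)}\omega^\alpha\,dS_u$. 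Then, treating $m$ as the fixed interior point (it has $|m|=\eta<\rho$) and $u$ as the variable on $S^k(\rho)$, the second formula in \eqref{VarExchange-1} with $\Sigma=S^k(m;1)$ gives $dS_u=\frac{2\rho}{l+q}q^k\,d\Sigma_{\widetilde u}$, and since $\omega(m,u)=l/q$ by the Geometric Interpretation, the second equality in \eqref{Three_D_Dirichlet-15} follows.

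For \textbf{(B)}, I would specialize (A) to $\alpha=k/2+ib$ and invoke Corollary~\ref{imaginary-part-vanishing-corollary}: applied to $\int_{S(\rho)}\omega^{k/2+ib}\,dS_u$ with $m$ the fixed point, formula \eqref{imaginary-part-vanishing} replaces $\omega^{k/2+ib}$ by $\omega^{k/2}\cos(b\ln\omega)$ under the integral, which after writing $\omega=l/q$ is exactly \eqref{Three_D_Dirichlet-16}. The relation $b=\pm\sqrt{\lambda/(-\kappa)-k^2/4}$ comes from $\lambda=-\kappa(\alpha k-\alpha^2)=-\kappa\,\alpha(k-\alpha)=-\kappa(k^2/4+b^2)$ when $\alpha=k/2+ib$; by Theorem~\ref{Radial-Vanishing-EF-Theorem} this is precisely the case of a real eigenvalue $\lambda>-\kappa k^2/4$ with a vanishing eigenfunction. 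For \textbf{(C)}, I would combine (A) and (B): starting from \eqref{Three_D_Dirichlet-16}, substitute $dS_u=\frac{2\rho}{l+q}q^k\,d\Sigma_{\widetilde u}$ and observe that $(l/q)^{k/2}q^k=(lq)^{k/2}$ while $lq=|\rho^2-\eta^2|=\rho^2-\eta^2$ is constant along $\Sigma$ (this is \eqref{GeoInte-6}); pulling this constant out gives $\varphi_\lambda(r(\eta))=\frac{2\rho(\rho^2-\eta^2)^{k/2}}{|S(\rho)|}\int_\Sigma\frac{\cos(b\ln(l/q))}{l+q}\,d\Sigma_{\widetilde u}$. The integrand depends only on $\psi=\angle Om\widetilde u$, so by the slicing identity \eqref{VarExchange-10} it equals $\sigma_{k-1}\int_0^\pi\frac{\cos(b\ln(l/q))}{l+q}\sin^{k-1}\psi\,d\psi$; finally, exactly as in Step~4 of the proof of Corollary~\ref{Basic-Th-Integrable-Case-general-Item} (using $l(\psi)=q(\pi-\psi)$ and $q(\psi)=l(\pi-\psi)$ from Lemma~\ref{Feature-Feature-lemma}, so that $l+q$, $\cos(b\ln(l/q))$ and $\sin^{k-1}\psi$ are all symmetric about $\psi=\pi/2$), fold to $[0,\pi/2]$, gaining a factor $2$ and producing \eqref{Three_D_Dirichlet-18}.

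None of the three parts presents a genuine difficulty; the work is essentially bookkeeping on top of results already proved. The only points that need care are matching the radii and the fixed/variable roles correctly when invoking the Sphere Exchange Rule in (A), in particular exploiting $\omega(x,y)=\omega(y,x)$, and, in (C), verifying that $(l/q)^{k/2}q^k$ genuinely collapses to the constant $(\rho^2-\eta^2)^{k/2}$ so that it may be taken out of the integral before reducing to the one-dimensional integral over $\psi$.
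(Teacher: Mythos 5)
Your proposal is correct and follows essentially the same route as the paper: move the radialization from $S(\eta)$ to $S(\rho)$ via the Sphere Exchange Rule, convert $dS_u$ to $d\Sigma_{\widetilde u}$ by Lemma~\ref{VarExchange-Lemma}, use $lq=\rho^2-\eta^2$ to pull out the constant, slice by $\psi$, and fold about $\psi=\pi/2$ using Lemma~\ref{Feature-Feature-lemma}. The only (immaterial) difference is in (B), where you apply Corollary~\ref{imaginary-part-vanishing-corollary} after passing to $S(\rho)$, whereas the paper quotes formula~\eqref{Radial-Vanishing-EF-cos} (itself obtained from that corollary over $S(\eta)$) and then exchanges spheres.
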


\begin{remark}
We shall see that the last integral formula can be simplified using integration by parts for $k>1$. In particular, for $k=2$ this integral can be computed explicitly.
\end{remark}

\begin{proof}[Proof of Lemma \ref{Lemma-532}, Statement (A)] Notice that the restrictions $u\in S(\rho)$ and $m\in S(\eta)$ make $\omega^\alpha(u,m)$ a function depending only on the distance between $u$ and $m$, since
\begin{equation}\label{Three_D_Dirichlet-19}
    \omega^\alpha(u,m)=(\rho^2-\eta^2)^\alpha\cdot\frac{1}{|u-m|^{2\alpha}}=g(|u-m|)\,.
\end{equation}
Therefore, we can apply the Lemma~\ref{Basic_lemma}, p.~\pageref{Basic_lemma} to $\omega^\alpha(u,m)$. Let $u\in S(\rho)$ and $m_1\in S(\eta)$ pictured below serve as the parameters of integration, while $u_1\in S^k(\rho)$ is a fixed point. Let $\Sigma$ be the unit sphere centered at point $m$ and $u^*$ is defined as the intersection of $\Sigma$ and segment $mu$; $\psi=\angle Omu$.

\begin{figure}[!h]
    \centering
    \epsfig{figure=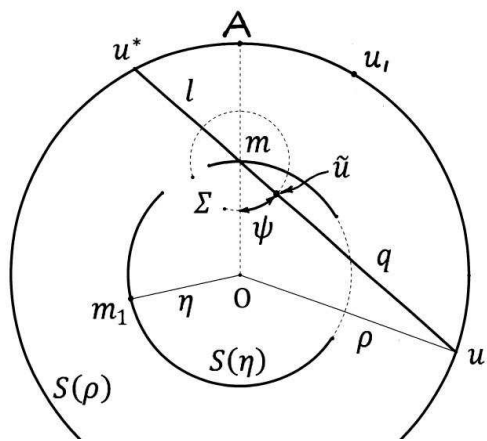,height=6cm}\\
  \caption{Euclidean variables in the hyperbolic disc.}\label{Euclidean-Notations-in-HD}
\end{figure}

Recall also that according to the geometric interpretation of $\omega$ presented in Theorem~\ref{Geometric-Interpretation-theorem}, p.~\pageref{Geometric-Interpretation-theorem}, we have
\begin{equation}\label{Three_D_Dirichlet-20}
    \omega(u,m)=\frac{|u^*-m|}{|m-u|}=\frac{l}{q}\,,
\end{equation}
where $u^*$ is the intersection of $S(\rho)$ and the line defined by $m$ and $u$.

Using the representation \eqref{Radial-func-presentation} of p.~\pageref{Radial-func-presentation} for a radial eigenfunction,  Lemma~\ref{Basic_lemma} of p.~\pageref{Basic_lemma} and the second exchange rule of~\eqref{VarExchange-1}, p.~\pageref{VarExchange-1}, we obtain the following sequence of equalities
\begin{equation}\label{Three_D_Dirichlet-21}
\begin{split}
     \varphi_\lambda(r)
    & =V_\alpha(\eta(r))=
    \frac{1}{|S(\eta)|}\int\limits_{S(\eta)}\omega^\alpha(u_1,m_1) dS_{m_1}
    \\& =\frac{1}{|S(\rho)|}\int\limits_{S(\rho)}\omega^\alpha(u,m) dS_{u}=
         \frac{1}{|S(\rho)|}\int\limits_\Sigma\left(\frac{l}{q}\right)^\alpha\frac{2\rho}{l+q}q^k d\Sigma_{\widetilde{u}}\,,
\end{split}
\end{equation}
where the third equality is a consequence of Lemma~\ref{Basic_lemma}. This completes the proof of the Statement~(A) of Lemma~\ref{Lemma-532}.
\end{proof}

\begin{proof}[Proof of Lemma \ref{Lemma-532}, Statement (B)]

Using the representation of a radial eigenfunction vanishing at some finite point obtained in Theorem~\ref{Radial-Vanishing-EF-Theorem}, see formula~\eqref{Radial-Vanishing-EF-cos}, p.~\pageref{Radial-Vanishing-EF-cos}, and then Lemma~\ref{Basic_lemma}, p.~\pageref{Basic_lemma}, we have
\begin{equation}\label{Three_D_Dirichlet-22}
\begin{split}
    & \varphi_\lambda(r(\eta))=\frac{1}{|S(\eta)|}\int\limits_{S(\eta)}\omega^{k/2}(u_1,m_1)\cos\left( \sqrt{\frac{\lambda}{-\kappa}-\frac{k^2}{4}}\ln\omega \right) dS_{m_1}
    \\& =\frac{1}{|S(\rho)|}\int\limits_{S(\rho)}\omega^{k/2}(u,m)\cos\left( \sqrt{\frac{\lambda}{-\kappa}-\frac{k^2}{4}}\ln\omega \right) dS_{u}    \,,
\end{split}
\end{equation}
where, according to \eqref{Three_D_Dirichlet-20}, $\omega=l/q$. This completes the proof of Statement~(B).
\end{proof}

\begin{proof}[Proof of Lemma \ref{Lemma-532}, Statement (C)]

 As we saw in Statement (B), a radial eigenfunction vanishing at some finite point and corresponding to a real eigenvalue can be written as
\begin{equation}\label{Three_D_Dirichlet-23}
    \varphi_\lambda(r(\eta))
    =\frac{1}{|S(\rho)|}\int\limits_{S(\rho)}\left(\frac{l}{q}\right)^{k/2}
    \cos\left(b\ln\frac{l}{q}\right)dS_{u} \,,
\end{equation}
where $b=\sqrt{(-\lambda/\kappa)-k^2/4}$ and the variables $l,q,u,m$ were introduced above. If we apply the argument used in the proof of Theorem~\ref{Geometric-Interpretation-theorem}, p.~\pageref{Geometric-Interpretation-theorem}, see formulas~\eqref{GeoInte-5} and~\eqref{GeoInte-6}, we observe that $lq=\rho^2-\eta^2$. According to the second formula of~\eqref{VarExchange-1}, p.~\pageref{VarExchange-1},
\begin{equation}\label{Var-Excha-New-Notation-dS_u}
    dS_u=\frac{2\rho}{l+q}q^k d\Sigma_{\widetilde{u}}\,.
\end{equation}
These two expressions for $lq$ and for $dS_u$ allow us to rewrite the integral formula \eqref{Three_D_Dirichlet-23} for $\varphi_\lambda(r(\eta))$ in the following way.
\begin{equation}\label{Three_D_Dirichlet-24}
\begin{split}
    \varphi_\lambda(r(\eta))
    & =\frac{1}{|S(\rho)|}\int\limits_{\Sigma}\left(\frac{l}{q}\right)^{k/2}
    \cos\left(b\ln\frac{l}{q}\right) \frac{2\rho}{l+q}q^k d\Sigma_{\widetilde{u}} \,,
    \\& =\frac{2\rho(\rho^2-\eta^2)^{k/2}}{|S(\rho)|}\int\limits_{\Sigma}
    \frac{\cos(b\ln l/q)}{l+q}d\Sigma_{\widetilde{u}}\,.
\end{split}
\end{equation}
For the next step we need the following observation. For an angle $\psi\in[0,\pi]$ define $\Sigma(\psi)$ as follows.
\begin{equation}\label{Three_D_Dirichlet-25}
    \Sigma(\psi)=\{\widetilde{u}\in\Sigma \mid \angle Om\widetilde{u}=\psi\}\,.
\end{equation}
It is clear that $\Sigma(\psi)$ is a $(k-1)-$dimensional sphere of radius $\sin\psi$ and for any fixed $\psi\in[0,\pi]$ the last integrand in \eqref{Three_D_Dirichlet-24} does not depend on $\widetilde{u}\in\Sigma(\psi)$. Therefore,
\begin{equation}\label{Three_D_Dirichlet-26}
    \int\limits_{\Sigma}\frac{\cos(b\ln l/q)}{l+q}d\Sigma_{\widetilde{u}}
    =\int\limits_0^{\pi}\frac{\cos(b\ln l/q)}{l+q}|\Sigma(\psi)|d\psi\,,
\end{equation}
where $|\Sigma(\psi)|=\sigma_{k-1}(\sin\psi)^{k-1}$ is the volume of the sphere $\Sigma(\psi)$ and $\sigma_{k-1}$ is the volume of $(k-1)-$dimensional unit sphere. Therefore, using \eqref{Three_D_Dirichlet-26}, we may continue the sequence of equalities in \eqref{Three_D_Dirichlet-24}, which yields
\begin{equation}\label{Three_D_Dirichlet-27}
    \varphi_\lambda(r(\eta))
    =\frac{2\rho(\rho^2-\eta^2)^{k/2}\sigma_{k-1}}{|S(\rho)|}\int\limits_0^\pi
    \frac{\sin^{k-1}\psi}{l+q}\cos\left(b\ln\frac{l}{q}\right)d\psi\,.
\end{equation}
According to \eqref{Three_D_Dirichlet-29}, p.~\pageref{Three_D_Dirichlet-29}, the last integrand is symmetric with respect to $\psi=\pi/2$. Therefore, the integral formula for a radial function presented in \eqref{Three_D_Dirichlet-27} can be written as
\begin{equation}\label{Three_D_Dirichlet-30}
    \varphi_\lambda(r(\eta))
    =\frac{4\rho(\rho^2-\eta^2)^{k/2}\sigma_{k-1}}{|S(\rho)|}\int\limits_0^{\pi/2}
    \frac{\sin^{k-1}\psi}{l+q}\cos\left(b\ln\frac{l}{q}\right)d\psi\,,
\end{equation}
which completes the proof of Statement~(C) and the proof of Lemma~\ref{Lemma-532}.
\end{proof}
%555

%\begin{figure}[h]
%  % Requires \usepackage{graphicx}
%  \quad\quad\quad\quad\quad\includegraphics*[bb= 0 0 270 170]{Expli_Sol_Extra_Extra.bmp}\\
%  \caption{The symmetry with respect to $\pi/2$.}\label{Symmetry-of-pi/2}
%\end{figure}

%we can observe that
%\begin{equation}\label{Three_D_Dirichlet-28}
%    l\left(\frac{\pi}{2}+\tau\right)=q\left(\frac{\pi}{2}-\tau\right)\quad\text{and}\quad
%    l\left(\frac{\pi}{2}-\tau\right)=q\left(\frac{\pi}{2}+\tau\right)\,,
%\end{equation}
%which yields
%\begin{equation}\label{Three_D_Dirichlet-29}
%\begin{split}
%    & (l+q)\circ\left(\frac{\pi}{2}-\tau\right)=(l+q)\circ\left(\frac{\pi}{2}+\tau\right)
%    \\& \text{and}\quad\ln\frac{l}{q}\circ\left(\frac{\pi}{2}-\tau\right)
%    =-\ln\frac{l}{q}\circ\left(\frac{\pi}{2}+\tau\right)\,,
%\end{split}
%\end{equation}
%where the symbol $\circ$ denotes composition.

\subsection {Explicit solutions.}

The following theorem gives us explicit solutions for a Dirichlet Eigenvalue Problem with a non-zero condition at the origin in $B^3_\rho$.

\begin{theorem}\label{Explicit-Solution-Theorem}

Let $B^3_\rho$, as before, be the ball model of 3-dimensional hyperbolic space with a constant sectional curvature $\kappa=-1/\rho^2$ and
\begin{equation}\label{Three_D_Dirichlet-31}
\left\{
  \begin{array}{ll}
     & \hbox{$\triangle\varphi_\lambda(\upsilon,r)+\lambda \varphi_\lambda(\upsilon,r)=0 \quad \forall r\in [0,\delta], \,\,\lambda - \text{real}$;} \\
     & \varphi_\lambda(\upsilon,0)=1\quad\text{and}\quad \hbox{$\varphi_\lambda(\upsilon, \delta)=0 \quad \text{for some}\,\, \delta\in(0,\infty)$,}
  \end{array}
\right.
\end{equation}
where $\upsilon$ is a point of the unit $k-$dimensional sphere centered at the origin and $r$ is the geodesic distance between a point in $B^3_\rho$ and the origin.

Then the system above has a solution if and only if
\begin{equation}\label{Three_D_Dirichlet-32}
    \lambda=\lambda_j=-\kappa+\left(\frac{\pi j}{\delta}\right)^2 \quad\text{for some}\,\, j=1,2,3,\cdots\,.
\end{equation}
In addition, for each such $\lambda_j$ there is a unique radial eigenfunction given by
\begin{equation}\label{Expli-Diri-Solu-3-D}
    \varphi_{\lambda_j}(r)=\frac{\delta(\rho^2-\eta^2)}{2\pi\rho^2 \eta j}\cdot\sin\left(\frac{\pi jr}{\delta}\right)
    =\frac{\delta}{\pi j\rho \sinh(r/\rho)}\cdot\sin\left(\frac{\pi jr}{\delta}\right)\,,
\end{equation}
where $0\leq r\leq\delta<\infty$ and $\eta=\rho\tanh(r/2\rho)$.
\end{theorem}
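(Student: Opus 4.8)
The plan is to exploit the explicit integral representation of radial eigenfunctions from Theorem~\ref{Radial-func-repre-Thoerem} and its geometric/trigonometric simplification from Lemma~\ref{Lemma-532}, specialized to $k=2$, together with the boundary condition $\varphi_\lambda(\delta)=0$. First I would reduce to the case of real eigenvalues admitting a zero: by Theorem~\ref{Radial-Vanishing-EF-Theorem}, if $\varphi_\lambda$ vanishes at some finite $r$ then $\lambda > -\kappa k^2/4 = -\kappa$ (here $k=2$), and $\alpha = 1 + ib$ with $b = \sqrt{-\lambda/\kappa - 1} \neq 0$. So I may assume $\lambda$ has this form and work with the representation \eqref{Three_D_Dirichlet-30} with $k=2$, namely
\begin{equation}\label{plan-eq-1}
    \varphi_\lambda(r(\eta)) = \frac{4\rho(\rho^2-\eta^2)\sigma_1}{|S^2(\rho)|}\int\limits_0^{\pi/2}\frac{\sin\psi}{l+q}\cos\left(b\ln\frac{l}{q}\right)d\psi\,.
\end{equation}

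Next I would compute this integral explicitly. The key is the substitution for $d\psi$ from Corollary~\eqref{Substitution-for-d-psi}, p.~\pageref{Substitution-for-d-psi}, which gives $\sin\psi\, d\psi = \frac{l+q}{4|m|}\, d\ln\omega$ (with $|m| = \eta$), so that $\frac{\sin\psi}{l+q}\cos(b\ln\omega)\,d\psi = \frac{1}{4\eta}\cos(b\ln\omega)\,d\ln\omega$; this is exactly the computation carried out in Steps~5--7 of the proof of Corollary~\ref{Basic-Th-Integrable-Case-general-Item}. Integrating and using $\omega(\pi/2) = 1$, $\omega(0) = (\rho-\eta)/(\rho+\eta)$ (note $|u| = \rho > \eta = |m|$, so the smaller radius is $\eta$), I get $\int_0^{\pi/2}\frac{\sin\psi}{l+q}\cos(b\ln\omega)\,d\psi = \frac{1}{4\eta b}\sin\!\left(b\ln\frac{\rho+\eta}{\rho-\eta}\right)$. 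Combining with \eqref{plan-eq-1}, $|S^2(\rho)| = 4\pi\rho^2$, and $\sigma_1 = 2\pi$, and using the coordinate relation $r = \rho\ln\frac{\rho+\eta}{\rho-\eta}$ from \eqref{Eucli-Hype-Coordinates-relationship} so that $b\ln\frac{\rho+\eta}{\rho-\eta} = br/\rho$, I obtain
\begin{equation}\label{plan-eq-2}
    \varphi_\lambda(r) = \frac{\rho^2-\eta^2}{2\pi\rho^2\eta b}\sin\!\left(\frac{br}{\rho}\right) = \frac{1}{\pi\rho b \sinh(r/\rho)}\sin\!\left(\frac{br}{\rho}\right)\,,
\end{equation}
where the last equality uses $\eta = \rho\tanh(r/2\rho)$ and the identity $\frac{\rho^2-\eta^2}{2\rho\eta} = \frac{1}{\sinh(r/\rho)}$. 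A quick check that $\varphi_\lambda(0) = 1$ (via $\lim_{r\to0}\frac{\sin(br/\rho)}{\pi\rho b\sinh(r/\rho)} = \frac{1}{\pi\rho^2 b}\cdot\frac{b}{1/\rho}$ — I should verify the normalizing constant carefully here, since the representation $V_\alpha(0) = 1$ forces it) confirms the formula.

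Finally I would impose the boundary condition. From \eqref{plan-eq-2}, $\varphi_\lambda(\delta) = 0$ with $\delta < \infty$ forces $\sin(b\delta/\rho) = 0$, i.e. $b\delta/\rho = \pi j$ for some positive integer $j$ (the case $j \le 0$ is excluded since $b \neq 0$, $\delta > 0$). Hence $b = \pi j\rho/\delta$, and since $\lambda = -\kappa(\alpha k - \alpha^2) = -\kappa(1 + b^2/\rho^2 \cdot \rho^2)$... more precisely $\lambda = -\kappa(2\cdot 1 - (1+ib)^2)$; with $\alpha = 1+ib$ one has $\alpha k - \alpha^2 = 2(1+ib) - (1+ib)^2 = 1 + b^2$, so $\lambda = -\kappa(1+b^2) = -\kappa + (-\kappa) b^2 = -\kappa + b^2/\rho^2$, giving $\lambda_j = -\kappa + (\pi j/\delta)^2$, which is \eqref{Three_D_Dirichlet-32}. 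Substituting $b = \pi j\rho/\delta$ into \eqref{plan-eq-2} yields $\varphi_{\lambda_j}(r) = \frac{\delta}{\pi j\rho\sinh(r/\rho)}\sin(\pi jr/\delta) = \frac{\delta(\rho^2-\eta^2)}{2\pi\rho^2\eta j}\sin(\pi jr/\delta)$, which is \eqref{Expli-Diri-Solu-3-D}. Conversely, for each such $\lambda_j$ this function is the unique radial eigenfunction with value $1$ at the origin by Theorem~\ref{Radial-func-repre-Thoerem}, and it indeed vanishes at $\delta$, establishing the "if" direction. The main obstacle I anticipate is bookkeeping of the multiplicative constants: getting the normalization in \eqref{plan-eq-2} exactly right (so that $\varphi_\lambda(0)=1$) and correctly tracking factors of $\rho$, $\kappa = -1/\rho^2$, and $\eta$ through the change of variables — the geometric content is light, but a sign or factor error anywhere propagates into both \eqref{Three_D_Dirichlet-32} and \eqref{Expli-Diri-Solu-3-D}.
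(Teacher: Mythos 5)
Your route is essentially the paper's: the paper gets the explicit form of a vanishing radial eigenfunction in $B^3_\rho$ by combining the representation \eqref{Radial-Vanishing-EF} with the already-proved evaluation \eqref{Basic-Th-Integrable-Case-1} of Corollary~\ref{Basic-Th-Integrable-Case-general-Item}, whereas you re-derive that evaluation inline by repeating Steps 5--7 of that corollary's proof starting from \eqref{Three_D_Dirichlet-30}; the quantization of $b$ from the boundary condition and the appeal to Theorem~\ref{Radial-func-repre-Thoerem} for uniqueness are the same. The only defect is exactly the constant you flagged: with $k=2$ the prefactor in \eqref{Three_D_Dirichlet-30} is
\begin{equation*}
    \frac{4\rho(\rho^2-\eta^2)\,\sigma_1}{|S^2(\rho)|}=\frac{4\rho(\rho^2-\eta^2)\cdot 2\pi}{4\pi\rho^2}=\frac{2(\rho^2-\eta^2)}{\rho}\,,
\end{equation*}
and the integral equals $\frac{1}{4\eta b}\sin\bigl(b\ln\frac{\rho+\eta}{\rho-\eta}\bigr)$, so the product is
\begin{equation*}
    \varphi_\lambda(r)=\frac{\rho^2-\eta^2}{2\rho\eta\, b}\,\sin\!\left(\frac{br}{\rho}\right)=\frac{\sin(br/\rho)}{b\sinh(r/\rho)}\,,
\end{equation*}
i.e.\ the paper's \eqref{Varphi-of-b-Explicit-3-D}. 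Your intermediate display $\frac{\rho^2-\eta^2}{2\pi\rho^2\eta b}\sin(br/\rho)$ carries a spurious factor $\frac{1}{\pi\rho}$; your own check at $r=0$ detects this (it gives $\varphi_\lambda(0)=\frac{1}{\pi\rho}$, not $1$), and it is inconsistent with the correctly normalized final formula \eqref{Expli-Diri-Solu-3-D} that you then write down. Since the boundary condition only uses the zeros of $\sin(b\delta/\rho)$, the eigenvalue list \eqref{Three_D_Dirichlet-32} is unaffected, and once the constant is corrected the eigenfunction formula follows exactly as you say. One small point worth adding for the ``only if'' direction: the system \eqref{Three_D_Dirichlet-31} admits a priori non-radial solutions, so, as the paper does, you should first radialize an arbitrary solution (Lemma~\ref{Laplacian-Commutes-lemma}); the condition $\varphi_\lambda(\upsilon,0)=1$ guarantees the radialization is a nontrivial radial eigenfunction with value $1$ at the origin vanishing at $\delta$, after which your argument applies.
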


\begin{proof}[Proof of Theorem \ref{Explicit-Solution-Theorem}]
%ppp
First of all let us observe that
\begin{equation}\label{eta-rho-equivalence}
    \eta=\rho\tanh\left(\frac{r}{2\rho}\right)\quad\text{is equivalent to}\quad
    \frac{\rho^2-\eta^2}{2\eta\rho}=\frac{1}{\sinh(r/\rho)}
\end{equation}
and then, two expression for $\varphi_{\lambda_j}$ given in~\eqref{Expli-Diri-Solu-3-D} are equivalent. Take any solution of \eqref{Three_D_Dirichlet-31} and radialize it. Since we are looking for a vanishing radial eigenfunction, we can combine~\eqref{Radial-Vanishing-EF}, p.~\pageref{Radial-Vanishing-EF} and~\eqref{Basic-Th-Integrable-Case-1}, p.~\pageref{Basic-Th-Integrable-Case-1}, which for the properly chosen $b$ yields
\begin{equation}\label{Varphi-of-b-Explicit-3-D}
    \varphi_\lambda(r(\eta))=\frac{\rho^2-\eta^2}{2\eta\rho b}\cdot\sin\left(\frac{br}{\rho}\right)
    \quad\text{and}\quad\varphi_\lambda(\delta)=0\,,
\end{equation}
where $b=\sqrt{-\lambda/\kappa-1}$ and $\kappa=-1/\rho^2$. Therefore, to satisfy the second equation in~\eqref{Varphi-of-b-Explicit-3-D} we have to set
$$\frac{\delta}{\rho}\sqrt{\frac{\lambda_j}{-\kappa}-1}=\pi j\,,\quad j=1,2,3,\cdots\,,$$
which leads directly to \eqref{Three_D_Dirichlet-32}. Note that in this case $b_j=\pi\rho j/\delta$ and~\eqref{Expli-Diri-Solu-3-D} follows. This completes the proof of Theorem~\ref{Explicit-Solution-Theorem}.
\end{proof}

\begin{remark}[An alternative proof of Theorem~\ref{Explicit-Solution-Theorem}]
    The other way to obtain the proof of Theorem~\ref{Explicit-Solution-Theorem} is to observe, using~\eqref{Expression-q-after-substitution}, p.~\pageref{Expression-q-after-substitution}, that for $k=2$, $Q(r)=\lambda+\kappa=\text{const}$ and the solution $Y(r)$ in~\eqref{radial-laplac-Recall}, p.~\pageref{radial-laplac-Recall}, satisfying the initial condition $Y(0)=1$ can be written explicitly as
    \begin{equation}\label{Varphi-with-C0}
        \varphi_\lambda(r)=Y(r)=C_0\cdot(\sinh(r/\rho))^{-1}\sin(r\sqrt{\lambda+\kappa})\,,
    \end{equation}
    where $C_0$ is to be defined. Since $\sqrt{\lambda+\kappa}=b/\rho$, we must set $C_0=1/b$ and observe that, according to~\eqref{eta-rho-equivalence}, the expression of $\varphi_\lambda(r)$ in~\eqref{Varphi-with-C0} is equivalent to the expression of $\varphi$ presented in~\eqref{Varphi-of-b-Explicit-3-D}. Repeating the argument related to the boundary condition we complete the alternative proof of Theorem~\ref{Explicit-Solution-Theorem}.% $\square$
\end{remark}

\begin{corollary}\label{Minimal-EV-for-3-disc}
 Recall that the standard Dirichlet Eigenvalue Problem does not have the requirement about a non-zero initial condition at the origin. If we consider the Dirichlet Eigenvalue Problem \eqref{Three_D_Dirichlet-31} without the condition $\varphi_\lambda(0)=1$, then formula \eqref{Three_D_Dirichlet-32} gives for $j=1$ the precise value of the minimal eigenvalue.
\end{corollary}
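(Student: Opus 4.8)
The plan is to combine the explicit description of vanishing radial eigenfunctions from Theorem~\ref{Explicit-Solution-Theorem} with the classical fact that a lowest Dirichlet eigenfunction of a rotationally symmetric domain is, after radialization, a radial eigenfunction regular at the center. Let $\lambda_{\min}$ be the smallest eigenvalue of the full Dirichlet problem $\triangle\varphi+\lambda\varphi=0$ on the geodesic ball $\{r\le\delta\}$ in $B^3_\rho$ with $\varphi|_{r=\delta}=0$, and let $\varphi$ be a corresponding eigenfunction.

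First I would note that $\varphi$ may be taken strictly positive on $\{r<\delta\}$: this is the standard Rayleigh quotient/nodal domain argument, since $|\varphi|$ also minimizes the Rayleigh quotient and is therefore an eigenfunction, hence smooth and strictly positive in the interior by the strong maximum principle for $\triangle+\lambda$. Being an eigenfunction of an elliptic operator, $\varphi$ is smooth in the interior; and since in the ball model the Euclidean spheres about the origin coincide with the geodesic spheres about the origin, Lemma~\ref{Laplacian-Commutes-lemma} shows that the radialization $\varphi^\sharp_O$ is again an eigenfunction with eigenvalue $\lambda_{\min}$. Because $\varphi>0$ we get $\varphi^\sharp_O>0$, so $\varphi^\sharp_O\not\equiv 0$, and $\varphi^\sharp_O$ inherits smoothness, hence regularity at $r=0$, from $\varphi$. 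Finally $\varphi^\sharp_O$ evaluated at $r=\delta$ is the average of $\varphi$ over the boundary sphere $\{r=\delta\}$, which is $0$; thus $\varphi^\sharp_O$ is a radial eigenfunction, regular at the origin, vanishing at $r=\delta$.

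Normalizing $\varphi^\sharp_O(0)=1$, Theorem~\ref{Radial-func-repre-Thoerem} identifies $\varphi^\sharp_O$ with the unique $\varphi_{\lambda_{\min}}$, so $\varphi_{\lambda_{\min}}(\delta)=0$, and Theorem~\ref{Explicit-Solution-Theorem} then forces $\lambda_{\min}=-\kappa+(\pi j/\delta)^2$ for some integer $j\ge 1$, hence $\lambda_{\min}\ge -\kappa+(\pi/\delta)^2$. Conversely, the function $\varphi_{\lambda_1}$ of \eqref{Expli-Diri-Solu-3-D} is smooth at $r=0$ (the factor $\sin(\pi r/\delta)$ cancels the $1/\sinh(r/\rho)$ singularity), strictly positive on $[0,\delta)$, and vanishes at $r=\delta$, so it is an admissible Dirichlet eigenfunction and $\lambda_1=-\kappa+(\pi/\delta)^2$ is genuinely an eigenvalue. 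Combining the two inequalities gives $\lambda_{\min}=\lambda_1$, i.e.\ formula \eqref{Three_D_Dirichlet-32} with $j=1$ yields the minimal eigenvalue.

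The main obstacle is the input from general elliptic theory in the middle step — that a lowest Dirichlet eigenfunction can be chosen positive and is smooth up to the center — since the rest is a direct appeal to Theorems~\ref{Explicit-Solution-Theorem} and~\ref{Radial-func-repre-Thoerem} and to Lemma~\ref{Laplacian-Commutes-lemma}. One must also check the bookkeeping that the geodesic ball of radius $\delta$ about the origin is exactly the Euclidean ball $\{|m|\le \rho\tanh(\delta/2\rho)\}$ entering the radialization, and that its boundary is the Euclidean sphere carrying the boundary condition; this is immediate from $r=\rho\ln[(\rho+\eta)/(\rho-\eta)]$ and the rotational symmetry of the ball model.
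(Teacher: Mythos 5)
Your argument is correct, and it reaches the same endpoint as the paper --- reduce the full Dirichlet problem on the hyperbolic $3$-disk to a radial eigenfunction that is regular (indeed nonzero) at the origin, normalize, and then invoke Theorem~\ref{Explicit-Solution-Theorem} to pin $\lambda_{\min}$ down to the family $\lambda_j=-\kappa+(\pi j/\delta)^2$ --- but it gets to that reduction by a different route. The paper simply cites Chavel (Theorem 2, p.~44: the lowest Dirichlet eigenvalue of a disk admits a nontrivial radial eigenfunction; p.~272: a nontrivial radial eigenfunction cannot vanish at the origin), whereas you reprove this input from scratch: you take a positive ground state (Rayleigh quotient plus strong maximum principle), radialize it about the center using Lemma~\ref{Laplacian-Commutes-lemma}, and observe that positivity guarantees the radialization is nontrivial, regular at $r=0$, and vanishes on $\{r=\delta\}$. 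This costs you an appeal to standard elliptic theory (positivity and interior smoothness of the first eigenfunction) that the paper outsources to Chavel, but it buys a more self-contained argument that reuses machinery already developed in the paper, and it makes explicit the converse step --- that $\varphi_{\lambda_1}$ of \eqref{Expli-Diri-Solu-3-D} is itself an admissible Dirichlet eigenfunction, so $\lambda_1$ is attained and not merely a lower bound --- which the paper leaves implicit in Theorem~\ref{Explicit-Solution-Theorem}. The only bookkeeping point, which you correctly flag, is that geodesic spheres about the origin in the ball model coincide with Euclidean spheres via $r=\rho\ln[(\rho+\eta)/(\rho-\eta)]$, so the radialization of Definition~\ref{sharp-definition} is the right averaging and the boundary sphere carries the zero boundary value.
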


\begin{proof} According to Theorem 2 of \cite{Chavel}, p.44, the lowest positive Dirichlet eigenvalue in the $n-$disk of radius $\delta$ must have a non-trivial radial eigenfunction. According to \cite{Chavel}, p.272, a non-trivial radial eigenfunction can not vanish at the origin. This is why the non-zero condition at the origin does not affect the estimation of the lower bound and the smallest positive eigenvalue corresponding to a radial eigenfunction is the smallest one for the standard Dirichlet Eigenvalue Problem, i.e., without a non-zero initial condition.
\end{proof}

\section {Lower and upper bounds for the minimal eigenvalue in a Dirichlet Eigenvalue Problem.}

In this section we obtain the lower and the upper bounds for the minimal positive eigenvalue of a Dirichlet Eigenvalue Problem stated in a connected compact domain $\overline{M^n}\subseteq H^n$ with $\partial M^n\neq\emptyset$. According to~\cite{Chavel}, p.~8, the Dirichlet Eigenvalue Problem for $\overline{M^n}$ is stated as follows.

\textbf{\underline{Dirichlet Eigenvalue Problem:}}\label{Dirichlet-Eigen-General-Chavel} Let $M^n$ be relatively compact and connected domain with smooth boundary $\partial M^n\neq\emptyset$ and $\lambda_{\min}$ denotes the minimal eigenvalue. We are looking for all real numbers $\lambda$ for which there exists a nontrivial solution $\varphi\in C^2(M^n)\cap C^0(\overline{M^n})$ satisfying the following system of equations.
\begin{equation}
    \triangle\varphi+\lambda\varphi=0\quad\text{and}\quad\left.\varphi\right|_{\partial M}=0\,.
\end{equation}

First, in Theorem~\ref{Lower-and-Upper-bound-Theorem} below, we prove the set of inequalities for $M^n=D^n(\delta)\subseteq B^n_\rho$, which is a hyperbolic disc of radius $\delta$ centered at the origin and considered in the hyperbolic space model $B^n_\rho$, where $n=k+1$. Then, in Theorem~\ref{Arbitrary-Domain-Estimation-Thm}, p.~\pageref{Arbitrary-Domain-Estimation-Thm}, we obtain a set of inequalities for an arbitrary relatively compact and connected domain $\overline{M^n}\subseteq H^n$ with $\partial M^n\neq\emptyset$.

\begin{theorem}\label{Lower-and-Upper-bound-Theorem}

Let $B^{k+1}_\rho$ be the ball model of $(k+1)-$dimensional hyperbolic space with a constant sectional curvature $\kappa=-1/\rho^2$ and $\varphi_\lambda(\upsilon, r)$ satisfies the following conditions:
\begin{equation}\label{Lower_Bound-1}
\left\{
  \begin{array}{ll}
     & \hbox{$\triangle\varphi_\lambda(\upsilon,r)+\lambda \varphi_\lambda(\upsilon,r)=0 \quad
     \text{for every}\,\,\, r\in [0,\delta], \,\,\lambda - \text{real}$;} \\
     & \hbox{$\varphi_\lambda(\upsilon, \delta)=0 \quad \text{for some}\,\, \delta\in(0,\infty)$,}
  \end{array}
\right.
\end{equation}
where $\upsilon$ is a point of the unit $k-$dimensional sphere centered at the origin and $r$ is the geodesic distance between a point in $B^{k+1}_\rho$ and the origin, i.e., $(\upsilon, \delta)$ are the geodesic polar coordinates. Then the following statements hold.

\textbf{(A)} If $k=1$, then
\begin{equation}\label{Lower_Bound-2}
    -\kappa\frac{k^2}{4}+\left(\frac{\pi}{2\delta}\right)^2<\lambda_{\min}<
    -\kappa\frac{k^2}{4}+\left(\frac{\pi}{\delta}\right)^2 \,.
\end{equation}

\textbf{(B)} If $k=2$, then
\begin{equation}\label{Lower_Bound-3}
    \lambda_{\min} =-\kappa+\left(\frac{\pi}{\delta}\right)^2\,.
\end{equation}

\textbf{(C)} If $k\geq 3$, then
\begin{equation}\label{Lower_Bound-4}
    \lambda_{\min} >-\kappa\frac{k^2}{4}+\left(\frac{\pi}{\delta}\right)^2\,.
\end{equation}

\begin{remark} For $k=1$, a stronger upper bound was obtained by Gage, see~\cite{Gage} or see~\cite{Chavel}, p.~80, but the lower bound given in \eqref{Lower_Bound-2} is new.
\end{remark}
\end{theorem}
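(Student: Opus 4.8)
The plan is to reduce the Dirichlet eigenvalue problem for the disc $D^{k+1}(\delta)\subseteq B^{k+1}_\rho$ to the second order ODE for the radial profile and then run comparison arguments of increasing precision. First, by Chavel's Theorem~2 (\cite{Chavel}, p.~44), exactly as invoked in Corollary~\ref{Minimal-EV-for-3-disc}, the minimal eigenvalue $\lambda_{\min}$ is attained by a non-trivial \emph{radial} eigenfunction; being regular at the origin this function is, up to a constant, the $\varphi_{\lambda_{\min}}$ of Theorem~\ref{Radial-func-repre-Thoerem}, and being a ground state it does not change sign. So after normalization $\varphi:=\varphi_{\lambda_{\min}}$ satisfies $\varphi''+\tfrac k\rho\coth(r/\rho)\varphi'+\lambda_{\min}\varphi=0$ on $(0,\delta)$, with $\varphi(0)=1$, $\varphi'(0)=0$, $\varphi>0$ on $[0,\delta)$ and $\varphi(\delta)=0$. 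Since $\varphi$ vanishes at a finite radius, Theorem~\ref{Radial-Vanishing-EF-Theorem} forces $\lambda_{\min}>-\kappa k^2/4>0$, a fact used throughout.

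Second, I would prove the ``universal'' lower bound $\lambda_{\min}>-\kappa k^2/4+(\pi/2\delta)^2$, which already yields the lower bound in~(A). One first checks $\varphi'<0$ on $(0,\delta)$: from $\varphi(0)=1$, $\varphi'(0)=0$ and $\lambda_{\min}>0$ the equation gives $\varphi''(0)<0$, so $\varphi'<0$ near $0$; and if $\varphi'$ vanished again at a first interior point $r_0$, the equation would give $\varphi''(r_0)=-\lambda_{\min}\varphi(r_0)<0$, making $r_0$ a strict local maximum, impossible since $\varphi'<0$ just to the left of $r_0$. Because $\coth(r/\rho)>1$, it follows that $\varphi''+\tfrac k\rho\varphi'+\lambda_{\min}\varphi=\tfrac k\rho\bigl(1-\coth(r/\rho)\bigr)\varphi'>0$ on $(0,\delta)$. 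Let $\psi$ solve the constant-coefficient equation $\psi''+\tfrac k\rho\psi'+\lambda_{\min}\psi=0$ with $\psi(0)=1$, $\psi'(0)=0$; since $\lambda_{\min}>k^2/(4\rho^2)$ one gets $\psi(r)=e^{-kr/(2\rho)}\bigl(\cos(\beta r)+\tfrac{k}{2\rho\beta}\sin(\beta r)\bigr)$ with $\beta=\sqrt{\lambda_{\min}+\kappa k^2/4}$, whose first positive zero is $r_1=\beta^{-1}\bigl(\pi-\arctan\tfrac{2\rho\beta}{k}\bigr)>\pi/(2\beta)$. The Wronskian $W=e^{kr/\rho}(\varphi'\psi-\varphi\psi')$ has $W(0)=0$ and $W'=e^{kr/\rho}\bigl[\psi(\varphi''+\tfrac k\rho\varphi')-\varphi(\psi''+\tfrac k\rho\psi')\bigr]>0$ as long as $\psi>0$; hence $(\varphi/\psi)'>0$ on $(0,r_1)$ and $\varphi>\psi>0$ there, so $\delta\ge r_1>\pi/(2\beta)$, i.e. $\lambda_{\min}>-\kappa k^2/4+(\pi/2\delta)^2$.

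Third, I would sharpen this via the substitution $U(r)=\varphi(r)\bigl(\sinh(r/\rho)\bigr)^{k/2}$, which (as in the proof of Theorem~\ref{Radial-Vanishing-EF-Theorem}) turns the equation into $U''+QU=0$ with $Q$ as in~\eqref{Expression-q-after-substitution}, while $U(0)=U(\delta)=0$ and $U>0$ on $(0,\delta)$, so $0$ and $\delta$ are \emph{consecutive} zeros of $U$. For $k\ge3$ the correction $-k(k-2)/(4\rho^2\sinh^2(r/\rho))$ is strictly negative, so $Q(r)<\lambda_{\min}+\kappa k^2/4$ on $(0,\delta)$; comparing (Sturm, cf.\ Lemma~\ref{Simmons-zero-distance-estim-Lemma}) with $Z''+(\lambda_{\min}+\kappa k^2/4)Z=0$, whose consecutive zeros are $\pi/\sqrt{\lambda_{\min}+\kappa k^2/4}$ apart, gives $\delta>\pi/\sqrt{\lambda_{\min}+\kappa k^2/4}$, which is~(C). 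For $k=1$ that same correction equals $+1/(4\rho^2\sinh^2(r/\rho))>0$, so $Q(r)>\lambda_{\min}+\kappa/4>0$ and the comparison runs in the opposite direction, giving $\delta<\pi/\sqrt{\lambda_{\min}+\kappa/4}$, i.e. the upper bound in~(A). For $k=2$ we have $Q\equiv\lambda_{\min}+\kappa$, so $U=c\sin\bigl(\sqrt{\lambda_{\min}+\kappa}\,r\bigr)$ and the first-zero condition forces $\sqrt{\lambda_{\min}+\kappa}\,\delta=\pi$, i.e.~(B); equivalently~(B) follows at once from Corollary~\ref{Minimal-EV-for-3-disc} together with Theorem~\ref{Explicit-Solution-Theorem}.

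The part I expect to require the most care is the legitimacy of these Sturm-type comparisons on $(0,\delta)$, an interval containing the regular singular point $r=0$: after the substitution $U\sim(r/\rho)^{k/2}$ near $0$, so $U$ is not smooth there, and one must check that its Pr\"ufer angle still tends to $0$ as $r\to0^+$ (or apply the comparison on $(\varepsilon,\delta)$ and let $\varepsilon\to0$), in the same spirit as the oscillation arguments already quoted from~\cite{Simmons}. The genuinely subtle case is the $k=1$ lower bound: there the reduced potential $Q$ blows up like $1/(4r^2)$ near $0$ and is unbounded above, so $U$ cannot be compared from above with a constant-coefficient equation; this is exactly why, for that bound, I work with the unsubstituted equation, using the sign of $\varphi'$ together with the explicit comparison function $\psi$, which happens to produce precisely the exponent $\pi/2$.
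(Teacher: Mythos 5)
Your strategy is sound and, once the singular endpoint $r=0$ is handled as you indicate (the limiting Wronskian argument works, since $U\sim(r/\rho)^{k/2}$ makes all the relevant Wronskians tend to $0$ as $r\to0^{+}$), it proves all three statements; but it is a genuinely different route from the paper's. The paper never touches Sturm comparison here: it works with the explicit integral representation \eqref{Lower_Bound-5} of the vanishing radial eigenfunction coming from the geometric interpretation of $\omega$. The universal lower bound is obtained from Proposition~\ref{Maximal-position-for-Omega}, which shows $\max_\Gamma|b\ln(l/q)|=|b\delta/\rho|$, so the integrand stays nonnegative (hence $\varphi_\lambda>0$ on $[0,\delta]$, contradicting the boundary condition) unless $|b|\delta/\rho>\pi/2$; Statement (C) is the same positivity argument applied after one integration by parts, i.e.\ to \eqref{Lower_Bound-22}, pushing the threshold to $\pi$; the upper bound in (A) comes from Proposition~\ref{EF-runs-under-zero}, an integration-by-parts evaluation showing $\varphi(\delta,b^*)<0$ at $b^*=\pi\rho/\delta$ together with continuity in $b^*$; and (B) is quoted from Theorem~\ref{Explicit-Solution-Theorem} via Corollary~\ref{Minimal-EV-for-3-disc}, exactly as in your last reduction. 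You instead argue on the ODE itself: a Wronskian comparison with the damped oscillator $\psi''+(k/\rho)\psi'+\lambda\psi=0$ (using $\coth(r/\rho)>1$ and $\varphi'<0$) for the $(\pi/2\delta)^2$ bound, and Sturm comparison after the substitution $U=\varphi\,\sinh^{k/2}(r/\rho)$ --- a substitution the paper does use, but only in the proof of Theorem~\ref{Radial-Vanishing-EF-Theorem} --- for (C), the $k=1$ upper bound and (B). What each buys: your version is more elementary and self-contained (it needs neither the representation \eqref{Lower_Bound-5} nor the monotonicity of $\omega$ in $\psi$), it isolates the only delicate point (the regular singular endpoint, and correctly the $k=1$ case where $Q$ blows up like $1/(4r^2)$), and your constant-coefficient comparison actually yields the slightly sharper inequality $\beta\delta\geq\pi-\arctan(2\rho\beta/k)$ with $\beta=\sqrt{\lambda_{\min}+\kappa k^2/4}$, which implies the stated $(\pi/2\delta)^2$ bound. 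The paper's route keeps the whole proof inside the $\omega$-calculus that is the theme of the thesis, and the same integral representation is then re-used elsewhere (e.g.\ for the asymptotics in Theorem~\ref{Lambda-Positively-Large-Theorem}). One small point of care in your write-up: the claim $\varphi''(0)<0$ requires evaluating the limit of the term $(k/\rho)\coth(r/\rho)\varphi'(r)$ at $r=0$ (it tends to $k\varphi''(0)$, giving $(k+1)\varphi''(0)=-\lambda_{\min}<0$), but the sign conclusion you need is correct.
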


%\textbf{Corollary 1.} For $k\geq3$, the implicit relationship between $\lambda$ and $\delta$ is given by
%\begin{equation}\label{Lower_Bound-4-a}
%    \int\limits_0^{\pi/2}\sin\left(b\ln\frac{q}{l}\right)d\sin^{k-2}\psi=0\,,\,\,\,
%    \text{where} \,\,|m|=\eta=\delta_E
%\end{equation}

\begin{proof}[Proof of Statement (A) of Theorem \ref{Lower-and-Upper-bound-Theorem}]

First we derive the lower bound of \eqref{Lower_Bound-2}. Recall that according to Theorem 2 of \cite{Chavel}, p.44, the lowest positive Dirichlet eigenvalue in the $n-$disk of radius $\delta$ must have a non-trivial radial eigenfunction satisfying \eqref{Lower_Bound-1}. We have already seen in \eqref{Three_D_Dirichlet-18}, p.~\pageref{Three_D_Dirichlet-18}, any radial eigenfunction assuming value one at the origin and vanishing at some finite point $r=r_0$, can be expressed as
\begin{equation}\label{Lower_Bound-5}
    \varphi_\lambda(r(\eta))
    =\frac{4\rho(\rho^2-\eta^2)^{k/2}\sigma_{k-1}}{|S^k(\rho)|}\int\limits_0^{\pi/2}
    \frac{\sin^{k-1}\psi}{l+q}\cos\left(b\ln\frac{l}{q}\right)d\psi\,,
\end{equation}
where
\begin{equation}\label{Lower_Bound-6}
    b=\pm\sqrt{\frac{\lambda}{-\kappa}-\frac{k^2}{4}}\,,\,\,\,l=|m-u^*|\,,\,\,\,q=|m-u|\,.
\end{equation}
For reader's convenience the notations $\eta, \rho, l, q, u, u^*$ are pictured on Figure~\ref{Maximal-position-for-Omega-figure} below. To obtain the minimal Dirichlet Eigenvalue for problem \eqref{Lower_Bound-1} it is enough to find the minimal $\lambda$ for which $\varphi_\lambda(r(\eta))$ represented by \eqref{Lower_Bound-5} vanishes at $r=\delta$.

\begin{proposition}\label{Maximal-position-for-Omega} If
\begin{equation}\label{Lower_Bound-7}
    f(\eta,\psi)=\left| b\ln\frac{l}{q} \right|=\left| b\ln\frac{\rho^2-\eta^2}{\rho^2+\eta^2+2\rho\eta\cos(\theta(\psi))}\right|
\end{equation}
and
\begin{equation}\label{Lower_Bound-8}
    \Gamma=\{ (\eta,\psi) \mid \eta\in[0,\delta_E]\,\,\, \text{and} \,\,\,\psi\in[0,\pi/2] \}\,,
\end{equation}
then
\begin{equation}\label{Lower_Bound-9}
    \max\limits_{\Gamma}f(\eta,\psi)=\left|\frac{b\delta}{\rho}\right|\,,\quad\text{where}\,\,
    \delta=\delta_H=\rho\ln\frac{\rho+\delta_E}{\rho-\delta_E}\,,
\end{equation}
\end{proposition}
where $\eta=|Om|=r_E=\rho\tanh(r/(2\rho))$ and $\delta_E=\rho\tanh(\delta/(2\rho))$.

\begin{figure}[!h]
    \centering
    \epsfig{figure=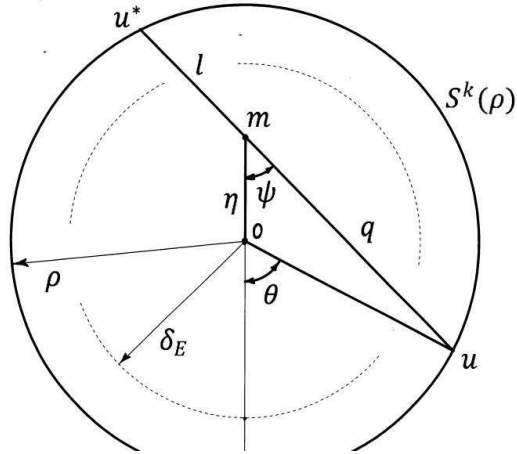,height=6cm}\\
  \caption{Euclidean variables affecting $\omega$.}\label{Maximal-position-for-Omega-figure}
\end{figure}

\begin{proof}[Proof of Proposition \ref{Maximal-position-for-Omega}]
 Notice first that
\begin{equation}\label{Lower_Bound-10}
    \frac{d}{d\theta}\left(\frac{l}{q}\right)\geq0\quad\text{for}\,\,\,(\eta, \theta)\in[0,\rho)\times[0,\pi]\,,
\end{equation}
since the denominator in \eqref{Lower_Bound-7} is a strictly decreasing function for all $\theta\in[0,\pi]$ and for $\eta\in(0,\rho)$. Thus,
\begin{equation}\label{Lower_Bound-11}
    \frac{d}{d\psi}\left(\frac{l}{q}\right)=\frac{d(l/q)}{d\theta}\frac{d\theta}{d\psi}\geq0
    \quad\text{for}\,\,\,(\eta, \psi)\in[0,\rho)\times[0,\pi]\,,
\end{equation}
since $\theta$ is increasing whenever $\psi$ is increasing or by \eqref{VarExchange-1}, p.~\pageref{VarExchange-1},
\begin{equation}\label{Lower_Bound-12}
    \frac{d\theta}{d\psi}=\frac{2q}{l+q}>0\quad\text{for all}
    \,\,\,\psi\in[0,\pi]\,\,\,\text{and}\,\,\,\eta\in[0,\rho)\,.
\end{equation}
Therefore, taking into account that
\begin{equation}\label{Lower_Bound-13}
    \frac{l}{q}<1\quad\text{for}\,\,\,(\eta,\psi)\in(0,\rho)\times[0,\pi/2)\,,
\end{equation}
we have
\begin{equation}\label{Lower_Bound-14}
    \frac{df(\eta,\psi)}{d\psi}=\frac{d}{d\psi}\left|b\ln\frac{l}{q}\right|\leq0\quad
    \text{for all}\,\,\,(\eta,\psi)\in[0,\rho)\times[0,\pi/2]\,.
\end{equation}
Thus, for any $\eta\in[0,\rho)$,
\begin{equation}\label{Lower_Bound-15}
    \max\limits_{\psi\in[0,\pi/2]}f(\eta,\psi)=f(\eta,0)=
    \left|\frac{b}{\rho}\cdot\rho\ln\frac{\rho+\eta}{\rho-\eta}\right|=\left|\frac{br}{\rho}\right|\,.
\end{equation}
Hence,
\begin{equation}\label{Lower_Bound-16}
    \max\limits_{\Gamma}f(\eta,\psi)=\max\limits_{\eta\in[0,\delta_E]}f(\eta,0)=
    \max\limits_{\eta\in[0,\delta_E]} \left|\frac{b\cdot r(\eta)}{\rho}\right|
    =\left|\frac{b\delta}{\rho}\right|\,,
\end{equation}
which completes the proof of Proposition~\ref{Maximal-position-for-Omega}.
\end{proof}

Using Proposition \ref{Maximal-position-for-Omega} above, we conclude that
\begin{equation}\label{Lower_Bound-17}
    \left|\frac{b\delta}{\rho}\right|\leq\frac{\pi}{2}\quad\text{implies}\quad
    \left|b\ln\frac{l}{q}\right|_{\Gamma}\leq\frac{\pi}{2}\,,
\end{equation}
which means that the integrand in \eqref{Lower_Bound-5} remains non-negative for all $(\eta,\psi)\in\Gamma$, and therefore, $\varphi_\lambda(r)$ remains positive for all $r\in[0, \delta]$. Thus, because of \eqref{Lower_Bound-17}, the boundary condition in the Dirichlet Eigenvalue Problem \eqref{Lower_Bound-1} will fail. Conversely, if $\varphi(\delta)=0$, then the first inequality in \eqref{Lower_Bound-17} must fail, i.e,
\begin{equation}\label{Lower_Bound-18}
    \left|\frac{b\delta}{\rho}\right|=
    \frac{\delta}{\rho}\sqrt{\frac{\lambda}{-\kappa}-\frac{k^2}{4}}>\frac{\pi}{2},\quad
    \text{with}\,\,\,\kappa=-\frac{1}{\rho^2}
\end{equation}
or, equivalently,
\begin{equation}\label{Lower_Bound-19}
    \lambda>-\kappa\frac{k^2}{4}+\left(\frac{\pi}{2\delta}\right)^2
\end{equation}
is a necessary condition for $\varphi_\lambda(\delta)=0$. This completes the proof of the lower bound in Statement (A). Note that we did not use the assumption that $k=1$ and therefore, this result holds for all $k\geq1$. On the other hand, for $k>1$ we have much stronger statements given in (B) and (C) of Theorem~\ref{Lower-and-Upper-bound-Theorem}, p.~\pageref{Lower-and-Upper-bound-Theorem}. To obtain the upper bound in \eqref{Lower_Bound-2} of the Statement (A), the assumption $k=1$ is essential. Fix $k=1$, $\eta=\delta_E$ and recall that $\sigma_0=2$. Then, from \eqref{Lower_Bound-5}, the value of radial eigenfunction at $r=\delta$ or, equivalently at $r_E=\delta_E$ can be written as
\begin{equation}\label{Upper_Bound-1}
    \varphi_{\lambda}(\delta)=\frac{4\rho}{\pi}(\rho^2-\delta_E^2)^{1/2}\cdot
    \int\limits_0^{\pi/2}\frac{1}{l+q}\cos\left(b\ln\frac{l}{q}\right)d\psi\,.
\end{equation}
Recall from \eqref{Substitution-for-d-psi}, p.~\pageref{Substitution-for-d-psi} that
\begin{equation}\label{Upper_Bound-2-1}
    d\psi=\frac{l+q}{4\eta b\sin\psi}\,\,d\left(b\ln\frac{l}{q}\right)\,.
\end{equation}
If we use the substitution \eqref{Upper_Bound-2-1} for $d\psi$, then the integration by parts applied to \eqref{Upper_Bound-1} and the fact that $l=q$ for $\psi=\pi/2$ yield the following expression for $\varphi_\lambda(\delta)=\varphi(\delta, b^*(\lambda))$.
\begin{equation}\label{Upper_Bound-3}
    \varphi(\delta, b^*(\lambda))= W\left[ \left. \frac{\sin(b^*\ln q/l)}{\sin\psi} \right|_{\psi\rightarrow0}-
    \int\limits_0^{\pi/2}\frac{\cos\psi}{\sin^2\psi}\sin\left( b^*\ln\frac{q}{l}\right)d\psi \right]\,,
\end{equation}
where
\begin{equation}\label{Upper_Bound-4}
    W=\frac{\rho(\rho^2-\delta_E^2)^{1/2}}{\pi\delta_E b^*}\quad\text{and}\quad b^*=|b|=\sqrt{\frac{\lambda}{-\kappa}-\frac{k^2}{4}}\,.
\end{equation}
We are looking for $b_0$ for which $\varphi(\delta, b_0(\lambda_{\min}))=0$, where $\lambda_{\min}$ denotes the minimal eigenvalue. As we saw in the proof of the lower bound, see \eqref{Lower_Bound-17},
\begin{equation}\label{Upper_Bound-5}
    b_1^*\leq\frac{\pi\rho}{2\delta}\quad\text{implies}\quad\varphi(\delta, b_1^*(\lambda_1))>0\,.
\end{equation}
Our goal now is to find $b_2^*>b_1^*$ such that $\varphi(\delta, b_2^*(\lambda_2))<0$. It will follow that  $b_1^*<b_0^*<b_2^*$, since, according to \eqref{Upper_Bound-1} $\varphi(\delta, b^*)$ is $\mathbf{C}^{\infty}$ with respect to $b^*$. For the next step we need the following proposition.

\begin{proposition}\label{EF-runs-under-zero}
\begin{equation}\label{Upper_Bound-6}
    \text{If}\quad b_2^*=\frac{\pi\rho}{\delta}\,,\quad\text{then}\quad\varphi(\delta,b_2^*)
    =\varphi\left(\delta, \frac{\pi\rho}{\delta}\right)<0\,.
\end{equation}
\end{proposition}

\begin{proof}[Proof of Proposition~\ref{EF-runs-under-zero}]

We are going to use formula \eqref{Upper_Bound-3}. Notice that
\begin{equation}\label{Upper_Bound-7}
    \left. \left(b_2^*\ln\frac{q}{l}\right)\right|_{\eta=\delta_E}=
    \frac{\pi\rho}{\delta}\ln\frac{\rho+\delta_E}{\rho-\delta_E}=\pi\,.
\end{equation}
and then, we apply the L'H\^opital's rule to compute all necessary limits in \eqref{Upper_Bound-3}. Recall from \eqref{Upper_Bound-2-1} that
\begin{equation}\label{Upper_Bound-8}
    \left. \frac{d(b^*\ln l/q)}{d\psi}\right|_{\eta=\delta_E}=\frac{4\delta_E b^*\sin\psi}{l+q}\,.
\end{equation}
So, the L'H\^opital's rule together with \eqref{Upper_Bound-8} yields
\begin{equation}\label{Upper_Bound-9}
    \left.\lim\limits_{\psi\rightarrow0}\frac{\sin(b_2^*\ln l/q)}{\sin\psi}\right|_{\eta=\delta_E}=
    \lim\limits_{\psi\rightarrow0}\frac{4\delta_E b_2^*}{l+q}\frac{\cos(b_2^*\ln l/q)\sin\psi}{\cos\psi}=0\,.
\end{equation}
Therefore,
\begin{equation}\label{Upper_Bound-10}
    \varphi(\delta, b_2^*)=-\frac{\rho(\rho^2-\delta_E^2)^{1/2}}{\delta_E\pi b_2^*}
    \int\limits_0^{\pi/2}\frac{\cos\psi}{\sin^2\psi}\sin\left(b_2^*\ln\frac{q}{l}\right)d\psi\,.
\end{equation}
The integral in \eqref{Upper_Bound-10} is well defined, since again, using \eqref{Upper_Bound-8} and the L'H\^opital's rule, we have:
\begin{equation}\label{Upper_Bound-11}
    \left.\lim\limits_{\psi\rightarrow0}\frac{\sin(b_2^*\ln q/l)}{\sin^2\psi}\right|_{\eta=\delta_E}
    =\frac{\pi\delta_E}{\delta}<\infty\,.
\end{equation}
We also see that the integral in \eqref{Upper_Bound-10} is positive, since $q/l$ is a decreasing function, such that
\begin{equation}\label{Upper_Bound-12}
    \frac{\rho+\delta_E}{\rho-\delta_E}=\left.\frac{q}{l}\right|_{\psi=0}\geq\frac{q(\psi)}{l(\psi)}
    \geq\left.\frac{q}{l}\right|_{\psi=\pi/2}=1\,,
\end{equation}
while $0\leq\psi\leq\pi/2$ and $\eta=\delta_E$. Hence,
\begin{equation}\label{Upper_Bound-13}
    b_2^*\ln\frac{\rho+\delta_E}{\rho-\delta_E}=b_2^*\frac{\delta}{\rho}
    =\pi\geq b_2^*\ln\frac{q}{l}\geq0\,,
\end{equation}
which implies that
\begin{equation}\label{Upper_Bound-14}
    \sin\left(b_2^*\ln\frac{q}{l}\right)\geq0\quad\text{for}\quad\psi\in\left[0,\frac{\pi}{2}\right]\,.
\end{equation}
Therefore, the integrand in \eqref{Upper_Bound-10} is strictly positive for all $\psi\in(0, \pi/2)$, and then, $\varphi(\delta, b_2^*)<0$. This completes the proof of the Proposition~\ref{EF-runs-under-zero}.
\end{proof}

Note now that if we choose $b_1^*=\pi\rho/(2\delta)$, then the claim of the Proposition~\ref{EF-runs-under-zero} together with \eqref{Upper_Bound-5} implies that
\begin{equation}\label{Upper_Bound-15}
    b_1^*(\lambda_1)<b_0^*(\lambda_{\min})<b_2^*(\lambda_2)\,,
\end{equation}
where $b^*$ and $\lambda$ are related as usual, by the following formula
\begin{equation}\label{Upper_Bound-16}
    b^*(\lambda)=\sqrt{\frac{\lambda}{-\kappa}-\frac{k^2}{4}}
\end{equation}
and $\lambda_{\min}$ is the minimal eigenvalue, such that
$\varphi(\delta, b_0^*(\lambda_{\min}))=0$.
Therefore,
\begin{equation}\label{Upper_Bound-17}
   \frac{\pi\rho}{2\delta}=b_1^*<b_0^*(\lambda_{\min})=
   \sqrt{\frac{\lambda_{\min}}{-\kappa}-\frac{k^2}{4}}<b_2^*=\frac{\pi\rho}{\delta}\,,
\end{equation}
which leads exactly to what we need, i.e.,
\begin{equation}\label{Upper_Bound-18}
    -\kappa\frac{k^2}{4}+\left(\frac{\pi}{2\delta}\right)^2<\lambda_{\min}<
    -\kappa\frac{k^2}{4}+\left(\frac{\pi}{\delta}\right)^2\,,
\end{equation}
and then, the proof of Statement (A) of Theorem~\ref{Lower-and-Upper-bound-Theorem} is complete.
\end{proof}

Note that Statement (B) was obtained in previous section, see Corollary~\ref{Minimal-EV-for-3-disc}, p.~\pageref{Minimal-EV-for-3-disc}.

\begin{proof}[Proof of Statement (C) of Theorem \ref{Lower-and-Upper-bound-Theorem}]

We shall see here that using the integration by parts we can improve the inequality \eqref{Lower_Bound-19} for $k\geq3$. Recall again from \eqref{Lower_Bound-5}, p.~\pageref{Lower_Bound-5} that a vanishing radial eigenfunction can be expressed as
\begin{equation}\label{Lower_Bound-20}
    \varphi_\lambda(r(\eta))
    =\frac{4\rho(\rho^2-\eta^2)^{k/2}\sigma_{k-1}}{|S^k(\rho)|}\int\limits_0^{\pi/2}
    \frac{\sin^{k-1}\psi}{l+q}\cos\left(b\ln\frac{l}{q}\right)d\psi
\end{equation}
and by \eqref{Substitution-for-d-psi}, p.~\pageref{Substitution-for-d-psi} that
\begin{equation}\label{Lower_Bound-21}
    d\psi=\frac{l+q}{4\eta b\sin\psi}\cdot d\left( b\ln\frac{l}{q}\right)\,.
\end{equation}
If we use the substitution \eqref{Lower_Bound-21} for $d\psi$, then the integration by parts applied to \eqref{Lower_Bound-20} and the fact that $l=q$ for $\psi=\pi/2$ yield the following expression for $\varphi_{\lambda}(r(\eta))$.
\begin{equation}\label{Lower_Bound-22}
    \varphi_\lambda(r(\eta))=\frac{\rho(\rho^2-\eta^2)^{k/2}\sigma_{k-1}}{\eta|S^k(\rho)|\cdot|b|}
    \int\limits_{\psi=0}^{\psi=\pi/2}\sin\left(|b|\ln\frac{q}{l}\right)d\sin^{k-2}\psi\,.
\end{equation}
Notice that $q\geq l$ for $(\eta,\psi)\in[0,\rho)\times[0,\pi/2]$ and $\sin\psi$ is strictly increasing function for $0<\psi<\pi/2$. Hence, the integrand in \eqref{Lower_Bound-22} remains non-negative as long as
\begin{equation}\label{Lower_Bound-23}
    0\leq|b|\ln\frac{q}{l}=\left|b\ln\frac{q}{l}\right|\leq\pi\,.
\end{equation}
Recall from \eqref{Lower_Bound-9}, p.~\pageref{Lower_Bound-9} that
\begin{equation}\label{Lower_Bound-24}
    \max\limits_{\Gamma}\left|b\ln\frac{q}{l}\right|=\left|\frac{b\delta}{\rho}\right|\,,
\end{equation}
where
\begin{equation}\label{Lower_Bound-25}
    \Gamma=\{ (\eta,\psi) \mid \eta\in[0,\delta_E]\,\,\, \text{and} \,\,\,\psi\in[0,\pi/2] \}\,.
\end{equation}
Therefore,
\begin{equation}\label{Lower_Bound-26}
    \left|\frac{b\delta}{\rho}\right|\leq\pi\quad\text{implies}\quad\left|b\ln\frac{q}{l}\right|_{\Gamma}\leq\pi\,.
\end{equation}
Then, using the integral representation \eqref{Lower_Bound-22}, we conclude that $\varphi_\lambda(r)>0$ for all $r\in[0,\delta]$. This means that the boundary condition in the Dirichlet Eigenvalue Problem \eqref{Lower_Bound-1}, p.~\pageref{Lower_Bound-1}, can not be satisfied. Thus, finally,
\begin{equation}\label{Lower_Bound-27}
    \left|\frac{b\delta}{\rho}\right|=\frac{\delta}{\rho}\sqrt{\frac{\lambda}{-\kappa}-\frac{k^2}{4}}>\pi
    \quad\text{with}\,\,\,\kappa=-\frac{1}{\rho^2}\,,
\end{equation}
or, equivalently,
\begin{equation}\label{Lower_Bound-28}
    \lambda>-\kappa\frac{k^2}{4}+\left(\frac{\pi}{\delta}\right)^2
\end{equation}
is necessary condition for $\varphi_\lambda(\delta)=0$ to hold. This completes the proof of Statement~(C) of Theorem~\ref{Lower-and-Upper-bound-Theorem} and completes the proof of the Theorem as well.
\end{proof}

Now we are ready to describe the set of inequalities for an arbitrary domain $M^n$ stated in Theorem~\ref{Arbitrary-Domain-Estimation-Thm} below.

\begin{theorem}\label{Arbitrary-Domain-Estimation-Thm}
    Let $M^n$ and $\lambda\in\mathbb{R}$ be as defined in the Dirihlet Eigenvalue Problem, p.~\pageref{Dirichlet-Eigen-General-Chavel}.
    Let $D^n_1$ and $D^n_2$ be two disks in $\mathbb{H}^n$ such that
    \begin{equation}\label{Disc-Manifold-Inclusion-Relationship}
        D^n_1\subseteq M^n\subseteq D^n_2
    \end{equation}
    and let $d_1$ and $d_2$ be the diameters of $D^n_1$ and $D^n_2$ respectively. Then
    \begin{description}
      \item[(A)] For $n=2$
        \begin{equation}
            -\frac\kappa4+\left(\frac{\pi}{d_2}\right)^2\leq\lambda_{\min}(M^2)\leq
            -\frac\kappa4+\left(\frac{2\pi}{d_1}\right)^2\,.
        \end{equation}
      \item[(B)] For $n=3$
        \begin{equation}
            -\kappa+\left(\frac{2\pi}{d_2}\right)^2\leq\lambda_{\min}(M^3)\leq
            -\kappa+\left(\frac{2\pi}{d_1}\right)^2\,.
        \end{equation}
      \item[(C)] For $n>3$
        \begin{equation}
            -\kappa\frac{(n-1)^2}{4}+\left(\frac{2\pi}{d_2}\right)^2\leq\lambda_{\min}(M^n)\,.
        \end{equation}
    \end{description}
\end{theorem}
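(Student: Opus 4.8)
The plan is to reduce the statement to the disk estimates of Theorem~\ref{Lower-and-Upper-bound-Theorem} by invoking the domain monotonicity of the first Dirichlet eigenvalue. Recall the Rayleigh quotient characterization
$$\lambda_{\min}(\Omega)=\inf\left\{\frac{\int_\Omega|\nabla f|^2\,dV}{\int_\Omega f^2\,dV}: f\in C_c^\infty(\Omega),\ f\not\equiv 0\right\}\,,$$
valid for any relatively compact connected domain $\Omega\subseteq H^n$ with smooth boundary, see~\cite{Chavel}. Since a test function supported in a subdomain extends by zero to an admissible test function on any larger domain, the inclusions $D^n_1\subseteq M^n\subseteq D^n_2$ give at once the monotonicity chain
$$\lambda_{\min}(D^n_2)\leq\lambda_{\min}(M^n)\leq\lambda_{\min}(D^n_1)\,,$$
so it remains only to insert two-sided bounds for the eigenvalues of the disks $D^n_1$ and $D^n_2$.

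To obtain those, I would first note that by homogeneity of $H^n$ all geodesic disks of a given radius are isometric, hence the minimal Dirichlet eigenvalue of a disk depends only on its radius; thus Theorem~\ref{Lower-and-Upper-bound-Theorem}, although stated there for a disk centered at the origin of $B^{k+1}_\rho$, applies to any hyperbolic disk. A disk of diameter $d$ has radius $\delta=d/2$, so putting $\delta=d/2$ and $k=n-1$ into Statements (A), (B), (C) of that theorem shows that the eigenvalue of a hyperbolic disk of diameter $d$ satisfies: for $n=2$,
$$-\frac\kappa4+\left(\frac{\pi}{d}\right)^2<\lambda_{\min}<-\frac\kappa4+\left(\frac{2\pi}{d}\right)^2\,;$$
for $n=3$, the exact identity $\lambda_{\min}=-\kappa+\left(2\pi/d\right)^2$; and for $n>3$,
$$\lambda_{\min}>-\kappa\frac{(n-1)^2}{4}+\left(\frac{2\pi}{d}\right)^2\,.$$

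The last step is to feed these into the monotonicity chain. For $n=2$, the lower bound applied to $D^n_2$ (diameter $d_2$) yields the left inequality of Statement (A), while the upper bound applied to $D^n_1$ (diameter $d_1$) yields the right one; for $n=3$, the exact disk value applied to both $D^n_2$ and $D^n_1$ yields Statement (B); for $n>3$ only the lower bound for $D^n_2$ survives, since Theorem~\ref{Lower-and-Upper-bound-Theorem} provides no disk upper bound in that range, and this gives Statement (C). The only substantial ingredient is Theorem~\ref{Lower-and-Upper-bound-Theorem}, which is already proved, and domain monotonicity is classical; consequently the one place requiring care is the bookkeeping — keeping radii and diameters straight, and placing $D^n_1$ and $D^n_2$ on the correct sides of the monotonicity inequality (the smaller domain gives the larger eigenvalue).
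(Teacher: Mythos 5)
Your proposal is correct and follows essentially the same route as the paper: the paper also invokes the Rayleigh/domain-monotonicity principle (citing Chavel, p.~16) to obtain $\lambda_{\min}(D^n_2)\leq\lambda_{\min}(M^n)\leq\lambda_{\min}(D^n_1)$ and then substitutes the disk bounds from Theorem~\ref{Lower-and-Upper-bound-Theorem} with $\delta=d/2$. Your added remarks on the extension-by-zero mechanism behind the monotonicity and on the homogeneity of $H^n$ justifying arbitrary disk centers are harmless elaborations of steps the paper leaves implicit.
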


\begin{proof}[Proof of Theorem \ref{Arbitrary-Domain-Estimation-Thm}]\label{Proof-of-Arbitrary-Estimation}
    By the Rayleigh's Theorem in \cite{Chavel}, p.~16, the relationship \eqref{Disc-Manifold-Inclusion-Relationship}, p.~\pageref{Disc-Manifold-Inclusion-Relationship} yields
    \begin{equation}\label{Eigenvalue-Disc-Manifold-Inequalities}
        \lambda_{\min}(D^n_2)\leq\lambda_{\min}(M^n)\leq\lambda_{\min}(D^n_1)\,.
    \end{equation}
    For $n>3$, using \eqref{Lower_Bound-4} from p.~\pageref{Lower_Bound-4} together with \eqref{Eigenvalue-Disc-Manifold-Inequalities}, we have
    \begin{equation}
        -\kappa\frac{(n-1)^2}{4}+\left(\frac{2\pi}{d_2}\right)^2\leq
        \lambda_{\min}(D^n_2)\leq\lambda_{\min}(M^n)\,,
    \end{equation}
    which completes the proof of Statement~(C) of Theorem~\ref{Arbitrary-Domain-Estimation-Thm}, p.~\pageref{Arbitrary-Domain-Estimation-Thm}.

    The similar argument together with formulae~\eqref{Lower_Bound-2}, \eqref{Lower_Bound-3}, p.~\pageref{Lower_Bound-2} and~\eqref{Eigenvalue-Disc-Manifold-Inequalities} yields directly Statements (A) and (B) of Theorem~\ref{Arbitrary-Domain-Estimation-Thm}, p.~\pageref{Arbitrary-Domain-Estimation-Thm}. This completes the proof of Theorem~\ref{Arbitrary-Domain-Estimation-Thm}.
\end{proof}

%\end{document}

\section {Dirichlet eigenvalue problem at $\infty$ with a non-zero condition at the origin.}

Recall that according to Theorem \ref{Lower-and-Upper-bound-Theorem}, p.~\pageref{Lower-and-Upper-bound-Theorem}, the lower bound for the minimal eigenvalue in the Dirichlet Eigenvalue Problem in a $(k+1)-$dimensional disk of radius $\delta=\delta_H<\infty$ is
\begin{equation}\label{Infinity_Case-1}
    \lambda_{\min}\geq\frac{-\kappa k^2}{4}+\left(\frac{\pi}{\delta(2+\text{sign}(1-k))}\right)^2\,,
\end{equation}
where $\kappa=-1/\rho^2$ and $k=1,2,3,\ldots$.

Can we use this formula to obtain the lower bound for the minimal eigenvalue in the case when $\delta=\infty$? The answer is no. I.e., the lower bound for the minimal eigenvalue in the limit case of the Dirichlet Eigenvalue Problem \eqref{Lower_Bound-1} is not the same as the limit for the lower bound in the \eqref{Infinity_Case-1} when $\delta\rightarrow\infty$.

In this section we describe the behavior at $\infty$ of all radial eigenfunctions with real eigenvalues that do not vanish at the origin.

%It is clear that any of such function vanishing at $\infty$ may be interpreted as a solution of the limit case of the Dirichlet Eigenvalue Problem with a non-zero initial condition.

\begin{theorem}\label{Infinity-Behavior}

By analogy with \eqref{Lower_Bound-1}, let us call the following system of conditions in $B^{k+1}_\rho$ the Dirichlet eigenvalue problem at $\infty$.
\begin{equation}\label{Th4-1}
\left\{
  \begin{array}{ll}
     & \hbox{$\triangle\varphi_\lambda(\upsilon,r)+\lambda \varphi_\lambda(\upsilon,r)=0 \quad
      \text{for every}\,\,\, r\in [0,\infty), \,\,\lambda - \text{real}$;} \\
     & \varphi_\lambda(\upsilon,0)=1\quad\text{and}\quad \hbox{$\lim\varphi_\lambda(\upsilon, r)=0 \quad \text{as}\,\, r\rightarrow\infty\,.$}
  \end{array}
\right.
\end{equation}
The system above has a solution if and only if $\lambda\in(0,\infty)$.
\end{theorem}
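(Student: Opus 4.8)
The plan is to analyze the radial eigenfunction $\varphi_\lambda(r)$, which by Theorem~\ref{Radial-func-repre-Thoerem} is the unique radial solution of $\triangle\varphi_\lambda+\lambda\varphi_\lambda=0$ with $\varphi_\lambda(0)=1$, and to decide for which real $\lambda$ one has $\varphi_\lambda(r)\to 0$ as $r\to\infty$. I would split the analysis into the three natural regimes for $\lambda$ relative to the threshold $-\kappa k^2/4 = k^2/(4\rho^2)$, using the Liouville substitution already recorded in the excerpt: with $Y=\varphi_\lambda$, setting $Y=U(\sinh(r/\rho))^{-k/2}$ reduces the radial equation to $U''+Q(r)U=0$ with
\begin{equation}\label{Inf-Q-recall}
    Q(r)=\lambda-\frac{k^2}{4\rho^2}-\frac{k(k-2)}{4\rho^2\sinh^2(r/\rho)}\,,\qquad
    \lim_{r\to\infty}Q(r)=\lambda+\frac{\kappa k^2}{4}=\lambda-\frac{k^2}{4\rho^2}\,.
\end{equation}
Since $(\sinh(r/\rho))^{-k/2}$ decays like $e^{-kr/(2\rho)}$ as $r\to\infty$, the behavior of $\varphi_\lambda$ at infinity is governed by whether $U$ grows, decays, or merely stays bounded/oscillatory.

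First, for $\lambda>0=$, I would use the explicit integral representation. By the One Radius analysis and Theorem~\ref{Radial-Vanishing-EF-Theorem}, when $\lambda>-\kappa k^2/4$ we have $\alpha=k/2\pm ib$ with $b=\sqrt{\lambda/(-\kappa)-k^2/4}$ real, and $\varphi_\lambda(r(\eta))=\frac{1}{|S(\eta)|}\int_{S(\eta)}\omega^{k/2}\cos(b\ln\omega)\,dS_m$; combined with the geometric interpretation $\omega=l/q$ and the exchange formulae~\eqref{VarExchange-1}, this gives, as in Lemma~\ref{Lemma-532}(C), a representation of the form $\varphi_\lambda(r(\eta))=C(\eta)\int_0^{\pi/2}\frac{\sin^{k-1}\psi}{l+q}\cos(b\ln(l/q))\,d\psi$ with $C(\eta)\asymp(\rho^2-\eta^2)^{k/2}\to 0$ as $\eta\to\rho$, i.e. as $r\to\infty$. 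The remaining task is to show the integral factor stays bounded (indeed $O(1)$ in $\eta$); for $k\ge 2$ one integrates by parts using $d\psi=\frac{l+q}{4\eta b\sin\psi}d(b\ln(l/q))$ as in~\eqref{Lower_Bound-22}, reducing it to $\int_0^{\pi/2}\sin(b\ln(q/l))\,d\sin^{k-2}\psi$, which is bounded by total variation; for $k=1$ one uses the explicit formula~\eqref{Varphi-of-b-Explicit-3-D}-type closed form $\varphi_\lambda(r)=\frac{\sin(br/\rho)}{(\text{something})}$ obtained via Corollary~\ref{Basic-Th-Integrable-Case-general-Item}. In all cases $\varphi_\lambda(r)\to 0$, so every $\lambda\in(0,\infty)$ is admissible.

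Conversely, for $\lambda\le 0$ I would argue $\varphi_\lambda$ does not tend to $0$. For $-\kappa k^2/4<\lambda\le 0$ — which does not occur when $\kappa<0$ only if $k^2/(4\rho^2)>0$ is compared correctly, so this is the band $0<\lambda\le -\kappa k^2/4$... — more carefully: when $\lambda\le -\kappa k^2/4 = k^2/(4\rho^2)$, i.e. $\alpha$ real, Theorem~\ref{Radial-Vanishing-EF-Theorem} already gives $\varphi_\lambda(r)=V_\alpha(\eta(r))>0$ for all finite $r$ and, more to the point, I must show the decay fails precisely at $\lambda=-\kappa k^2/4$ and below... Actually the clean statement is: for $\lambda\le 0$ the problem has no solution. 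When $\lambda<0$, $Q(r)\to\lambda-k^2/(4\rho^2)<0$, so by a standard ODE comparison ($U''=|Q|U$ eventually) $U$ is eventually either exponentially growing like $e^{\sqrt{|\lambda|}\,r/\dots}$ or, in the recessive direction, decays only like $e^{-\sqrt{(k^2/(4\rho^2)-\lambda)}\,r}$ — and since $k^2/(4\rho^2)-\lambda>k^2/(4\rho^2)$ when $\lambda<0$, even the recessive solution, after multiplication by $(\sinh(r/\rho))^{-k/2}\sim e^{-kr/(2\rho)}$, would decay; so the decay test by itself is inconclusive and I instead must use positivity: $\varphi_\lambda=V_\alpha$ with $\Re\alpha=k/2$ and the representation shows $\varphi_\lambda>0$ with a definite lower bound argument, OR — cleaner — use that for $\lambda\le 0$ the operator $\triangle+\lambda$ has no $L^2$ or bounded decaying radial solution because $-\triangle\ge -\kappa k^2/4>0$ on radial functions (the bottom of the radial spectrum of $-\triangle$ on $B^{k+1}_\rho$ is $-\kappa k^2/4$), hence $(\triangle+\lambda)\varphi=0$ with $\lambda\le 0<-\kappa k^2/4$ forces, via the substitution $U''+QU=0$ with $Q(r)<0$ for all large $r$ together with $Q$ decreasing, that $U$ has no zeros for large $r$ and $U/U'$ has a definite sign; tracking this shows $\varphi_\lambda$ either blows up or tends to a nonzero limit, never to $0$. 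I would present the $\lambda=0$ case separately, where $Q(r)\to -k^2/(4\rho^2)$ and the equation is essentially the modified-Bessel regime; again $U$'s recessive solution times $(\sinh(r/\rho))^{-k/2}$ tends to a nonzero constant.

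The main obstacle is the negative direction, specifically ruling out decay for $0\ge\lambda$ and, at the boundary, understanding $\lambda=-\kappa k^2/4$: one must show that the genuinely decaying branch of $U$ exists but decays strictly slower than $e^{kr/(2\rho)}$ grows, so that $\varphi_\lambda$ does not vanish at infinity — this requires a careful WKB/comparison estimate on $U''+Q(r)U=0$ with $Q\to$ negative constant, using the precise asymptotics $Q(r)=\lambda-k^2/(4\rho^2)+O(e^{-2r/\rho})$ so that $U$ behaves like $c_+e^{\nu r/\rho}+c_-e^{-\nu r/\rho}$ with $\nu=\sqrt{k^2/4-\lambda\rho^2}$, and checking $\nu\ge k/2$ for $\lambda\le 0$ with equality only at $\lambda=0$. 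Everything on the positive side ($\lambda>0$) follows from the representation-plus-integration-by-parts machinery already developed in Lemma~\ref{Lemma-532} and Corollary~\ref{Basic-Th-Integrable-Case-general-Item}, so that half is essentially bookkeeping; I expect the write-up weight to fall on the comparison argument for $\lambda\le 0$.
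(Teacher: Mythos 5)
Your proposal takes a genuinely different route from the paper and leaves two real gaps.  The paper deduces Theorem~\ref{Infinity-Behavior} from the stronger Theorem~\ref{Infinity-Behavior-radial-Theorem}, which computes $\lim_{r\to\infty}\varphi_\lambda(r)$ exactly for every real $\lambda$ by four propositions (Propositions~\ref{Limit-for-alpha-(0,k)}--\ref{Limit-for-alpha=k/2+ib}); the only tools are the geometric interpretation $\omega=l/q$ together with two elementary chord estimates, Lemma~\ref{Minimum-for-(l+q)} giving a lower bound for $l+q$ away from $\psi=\pi/2$, and Lemma~\ref{l-q-estimation-Lemma} giving $l\geq\rho$ and $q\leq 2(\rho-\eta)$ on a fixed angular sector, plus the symmetry $F(\alpha)=F(k-\alpha)$.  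No Liouville substitution or asymptotic ODE theory appears anywhere.  You instead propose to decide the question through $U''+QU=0$ and WKB/comparison, which is defensible in principle, but you have not closed it.

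The concrete gaps are these.  First, for $0<\lambda\leq -\kappa k^2/4$ the exponent $\alpha$ is real in $(0,k)$, $b$ is imaginary, and the representation $\omega^{k/2}\cos(b\ln\omega)$ you invoke from Lemma~\ref{Lemma-532}(C) simply does not apply; you go straight from the case $\lambda>-\kappa k^2/4$ to ``in all cases $\varphi_\lambda\to 0$'' and never handle this band, which in the paper is Proposition~\ref{Limit-for-alpha-(0,k)} and requires a separate, non-trivial geometric argument (split into $k\geq2$ and $k=1$).  Second, and you partly concede this, the negative direction $\lambda\leq 0$ hinges on showing that the regular-at-the-origin solution $U=(\sinh(r/\rho))^{k/2}\varphi_\lambda$ is the \emph{dominant} branch of $U''+QU=0$ at infinity; if $U$ were recessive, $\varphi_\lambda$ would decay and your argument would fail.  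You do not establish dominance, and your remark that for $\lambda=0$ ``the recessive solution times $(\sinh(r/\rho))^{-k/2}$ tends to a nonzero constant'' is in fact backwards: $\varphi_0\equiv1$, so $U\sim e^{kr/(2\rho)}$ is the \emph{dominant} solution, and it is exactly this dominance that must be propagated to $\lambda<0$.  The paper avoids the dominant/recessive question entirely by a lower bound for the integral itself: by Lemma~\ref{l-q-estimation-Lemma}, on the sector $\psi\in[2\pi/3,\pi]$ the integrand $l^\alpha q^{k-\alpha}/(l+q)$ is at least $\rho^{\alpha-1}\bigl(2(\rho-\eta)\bigr)^{k-\alpha}/2$, which blows up as $\eta\to\rho$ whenever $\alpha>k$, and the case $\alpha<0$ then follows from Statement~(A) of the Main Theorem.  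If you wish to keep the ODE route, you would need to supply a Liouville--Green/Levinson-type asymptotic theorem identifying the regular solution as dominant; the paper's geometric route is self-contained and avoids that machinery.
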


\begin{remark}
Note that the system \eqref{Th4-1} may have a solution only if it has a radial solution, since any solution can be radialized. Therefore, Theorem~\ref{Infinity-Behavior} is a consequence of Theorem~\ref{Infinity-Behavior-radial-Theorem} below, which describes the behavior of all radial eigenfunctions at $\infty$.
\end{remark}

\begin{theorem}\label{Infinity-Behavior-radial-Theorem}
\begin{equation}\label{Infinity-Behavior-radial-Formula}
\lim\limits_{r\rightarrow\infty}\varphi_\lambda(r)=
\left\{
  \begin{array}{ll}
    \infty, & \hbox{if $\quad \lambda<0$;} \\
    1, & \hbox{if $\quad \lambda=0$;} \\
    0, & \hbox{if $\quad \lambda>0$.}
  \end{array}
\right.
\end{equation}
\end{theorem}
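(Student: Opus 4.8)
The plan is to reduce the asymptotic analysis of $\varphi_\lambda(r)$ to the behaviour of the solution $U(r)$ of the normal form $U''+Q(r)U=0$, where $Q(r)=\lambda-\frac{k^2}{4\rho^2}-\frac{k(k-2)}{4\rho^2\sinh^2(r/\rho)}$ obtained via the substitution $Y=U(\sinh(r/\rho))^{-k/2}$ used already in the proof of Theorem~\ref{Radial-Vanishing-EF-Theorem}. Since $\varphi_\lambda(r)=Y(r)=U(r)(\sinh(r/\rho))^{-k/2}$ and $(\sinh(r/\rho))^{-k/2}\sim 2^{k/2}e^{-kr/(2\rho)}$ as $r\to\infty$, the exponential decay factor $e^{-kr/(2\rho)}$ will drive the three cases once we control the growth of $U(r)$. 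Note $\lim_{r\to\infty}Q(r)=\lambda+\kappa k^2/4=\lambda-k^2/(4\rho^2)$, so the sign of $\lambda$ relative to $0$ is exactly the sign of the limiting value of $Q$ relative to $-\kappa k^2/4=k^2/(4\rho^2)$... wait, more precisely: $\lambda>0 \iff \lim Q > k^2/(4\rho^2)-k^2/(4\rho^2)$? Let me restate: the relevant threshold is whether $\lambda+\kappa k^2/4$ is positive, zero, or negative, i.e. whether $\lambda \gtrless -\kappa k^2/4 = k^2/(4\rho^2)$. But the theorem trichotomy is at $\lambda=0$, not at $\lambda=k^2/(4\rho^2)$, so the decay of $e^{-kr/(2\rho)}$ must be weighed against the growth rate of $U$.

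The key steps, in order: First I would handle $\lambda>0$. Here I would split into the subcase $\lambda > -\kappa k^2/4$, where (as in Theorem~\ref{Radial-Vanishing-EF-Theorem}) $Q(r)\to\lambda+\kappa k^2/4>0$, so $U(r)$ is oscillatory and bounded-from-above in amplitude; using a Pr\"ufer-type argument or the explicit representation \eqref{Radial-Vanishing-EF-cos} one shows $|U(r)|$ stays $O(1)$ (in fact the amplitude tends to a finite limit), hence $\varphi_\lambda(r)=O(e^{-kr/(2\rho)})\to 0$; and the subcase $0<\lambda\le -\kappa k^2/4$, where $\alpha=k/2+ib$ with $b$ real, and one uses the integral representation $\varphi_\lambda(r(\eta))=\frac{1}{|S(\eta)|}\int_{S(\eta)}\omega^{k/2}\cos(b\ln\omega)dS_m$ together with the geometric interpretation: as $\eta\to\rho$ the sphere $S(\eta)$ approaches $S(\rho)$ and $\omega=l/q$ becomes small on most of the sphere while the measure weight forces the integral to vanish — I would make this rigorous by the change of variables \eqref{VarExchange-1} turning the integral into $\int_0^\pi \frac{\sin^{k-1}\psi}{l+q}\cos(b\ln\omega)d\psi$ times $(\rho^2-\eta^2)^{k/2}$ type factors and observing $(\rho^2-\eta^2)^{k/2}\to 0$. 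Second, for $\lambda=0$: then $\alpha=0$ or $\alpha=k$, and $\omega^\alpha\equiv 1$ (for $\alpha=0$), so $V_0(\eta)\equiv 1$ and $\varphi_0(r)\equiv 1$, giving the limit $1$ trivially. Third, for $\lambda<0$: here $\alpha$ is real with $\alpha\in(0,k)$ (taking the root in that interval), and I would show $\varphi_\lambda(r)\to\infty$ by using $\varphi_\lambda(r)=V_\alpha(\eta)=\frac{1}{|S(\eta)|}\int_{S(\eta)}\omega^\alpha dS_m$ with $\omega=\frac{\rho^2-\eta^2}{\rho^2+\eta^2+2\eta\rho\cos\theta}\ge \frac{\rho^2-\eta^2}{(\rho+\eta)^2}=\frac{\rho-\eta}{\rho+\eta}$, but that lower bound alone gives $\omega^\alpha\to 0$, so instead I would exploit that $\alpha\in(0,k)$ makes $\int_{S(\eta)}\omega^\alpha dS_m$ diverge: the dominant contribution comes from $\theta$ near $0$ (i.e. $m$ near $u$), where $\omega\approx \frac{\rho^2-\eta^2}{|u-m|^2}$ blows up, and since $\alpha<k=\dim S^k$ the singularity $|u-m|^{-2\alpha}$ is integrable but its integral over $S(\eta)$ grows like $(\rho-\eta)^{\alpha-k}$ up to constants; combined with the normalizing $1/|S(\eta)|$ and the $(\rho^2-\eta^2)^\alpha$ in the numerator one gets growth $(\rho-\eta)^{-\,(k-\alpha)}\cdot(\text{bounded})\to\infty$ as $\eta\to\rho$, equivalently $r\to\infty$. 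I would make the power count precise by the substitution to the $\psi$-variable and a Laplace/Watson-type estimate near $\psi=0$.

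Alternatively, and perhaps more cleanly for $\lambda<0$, I would argue via the normal form: $Q(r)\to\lambda+\kappa k^2/4<0$, so for large $r$ the equation $U''+Q(r)U=0$ is "exponentially unstable," and a standard comparison (Sturm/Riccati) shows any nontrivial solution with $U(0)\ne 0$ satisfies $|U(r)|\ge c\,e^{\mu r}$ for some $\mu$ with $\mu > k/(2\rho)$ when $\lambda<0$ — indeed $\mu$ can be taken arbitrarily close to $\sqrt{-(\lambda+\kappa k^2/4)}=\sqrt{k^2/(4\rho^2)-\lambda}$, which exceeds $k/(2\rho)$ precisely when $\lambda<0$. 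Then $\varphi_\lambda(r)=U(r)(\sinh(r/\rho))^{-k/2}\ge c'\,e^{(\mu-k/(2\rho))r}\to\infty$. For $\lambda>0$ the same normal-form viewpoint gives $|U(r)|$ bounded (oscillatory regime, $Q\to$ positive constant, WKB amplitude $\sim Q^{-1/4}$ bounded) hence $\varphi_\lambda\to 0$; but the borderline $0<\lambda\le -\kappa k^2/4$ still needs care because then $\alpha=k/2+ib$ and $\varphi_\lambda$ could in principle oscillate without the $e^{-kr/(2\rho)}$ being enough — however it is enough since $U$ stays bounded.

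The main obstacle I anticipate is the sharp two-sided control of $U(r)$ (equivalently of the amplitude of $\varphi_\lambda$) in the oscillatory regime and, for $\lambda<0$, proving the lower bound $|U(r)|\gtrsim e^{\mu r}$ with $\mu>k/(2\rho)$ rather than merely $U(r)\not\to 0$; a crude Sturm comparison gives oscillation or monotonicity but pinning down the exponential rate requires either a careful Riccati-equation estimate on $U'/U$ showing $U'/U\to \sqrt{-\lim Q}$, or the explicit integral-representation power count near $\eta=\rho$. I would carry out the Riccati argument: set $w=U'/U$, so $w'=-Q-w^2$; for $r$ past the last zero of $U$ (which exists by Theorem~\ref{Radial-Vanishing-EF-Theorem}'s complement — when $\lambda<0$, $\alpha$ real, $\varphi_\lambda>0$ everywhere, so $U>0$ everywhere) one shows $w(r)\to\sqrt{k^2/(4\rho^2)-\lambda}$, integrate to get the exponential rate, subtract $k/(2\rho)$, done. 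The $\lambda=0$ and the "easy" part of $\lambda>0$ are immediate from the representations already established.
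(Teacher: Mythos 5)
Your translation between $\lambda$ and $\alpha$ is wrong in the two places where it matters most, and this breaks your main argument for the $\lambda<0$ case. Since $\lambda\rho^2=\alpha(k-\alpha)$, one has $\lambda<0$ exactly when $\alpha\in\mathbb{R}\setminus[0,k]$ (one root negative, one root $>k$), while $0<\lambda\le -\kappa k^2/4$ corresponds to \emph{real} $\alpha\in(0,k)$, and $\alpha=k/2+ib$ with real $b\neq0$ corresponds to $\lambda>-\kappa k^2/4$. You assign $\alpha\in(0,k)$ to $\lambda<0$ and $\alpha=k/2+ib$ to $0<\lambda\le-\kappa k^2/4$, which is backwards; with $\alpha\in(0,k)$ the integral representation actually gives limit $0$ (this is precisely the paper's Proposition~\ref{Limit-for-alpha-(0,k)}), so your power-count route for $\lambda<0$ would prove the opposite of what is claimed. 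The power count itself is also off: for $u\in S(\rho)$ fixed, $\int_{S(\eta)}|u-m|^{-2\alpha}dS_m$ grows like $(\rho-\eta)^{k-2\alpha}$ when $2\alpha>k$ (and stays bounded when $2\alpha<k$), not like $(\rho-\eta)^{\alpha-k}$; multiplying by $(\rho^2-\eta^2)^{\alpha}$ gives net exponent $k-\alpha$, which blows up only for $\alpha>k$ — consistent with the corrected dictionary, and this is exactly how the paper gets divergence in Proposition~\ref{Limit-for-alpha-outside-[0,k]} (via the elementary bounds $l\ge\rho$, $q\le2(\rho-\eta)$ of Lemma~\ref{l-q-estimation-Lemma}, plus Statement~(A) to handle $\alpha<0$). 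Your Riccati fallback for $\lambda<0$ also has a real hole: positivity of $\varphi_\lambda$, hence of $U=\varphi_\lambda(\sinh(r/\rho))^{k/2}$, does not rule out that $U$ is the recessive solution with $U'/U\to-\sqrt{-Q_\infty}$, because the recessive solution is also eventually positive; you need an extra input, e.g. integrating $\bigl(\sinh^k(r/\rho)\,\varphi_\lambda'\bigr)'=-\lambda\sinh^k(r/\rho)\,\varphi_\lambda$ to see $\varphi_\lambda'>0$ and $\varphi_\lambda\ge1$, which already forces $\varphi_\lambda\to\infty$ without any WKB rate (and note that for $k=1$ the sign of $Q-Q_\infty$ flips, so your trapping estimate at $w=-\sqrt{-Q_\infty}$ needs separate treatment).

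For the record, the paper's proof never touches the normal form: it works entirely with $\varphi_\lambda(r(\eta))=\frac{1}{|S(\rho)|}\int_{S(\rho)}\omega^\alpha\,dS_u$ and the geometric quantities $l,q$, splitting into $\alpha\in(0,k)$ (limit $0$, with a separate elementary argument for $k=1$ using Lemma~\ref{Minimum-for-(l+q)}), $\alpha\in\{0,k\}$ (limit $1$), $\alpha\notin[0,k]$ (limit $\infty$), and $\alpha=k/2+ib$, the last case reduced to $\alpha=k/2$ by $|\cos(b\ln\omega)|\le1$ rather than by any oscillation/amplitude analysis; the paper explicitly remarks that the ODE route through Olver and Hartman is available but deliberately avoids it. Your $\lambda=0$ case and your bounded-amplitude treatment of $\lambda>-\kappa k^2/4$ are fine in spirit, but the $\lambda<0$ case and the intermediate range $0<\lambda\le-\kappa k^2/4$ need to be redone with the correct $\alpha$-ranges before the proposal stands.
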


\begin{remark}
Recall that as it was mentioned in the proof of Theorem~\ref{Radial-Vanishing-EF-Theorem}, p.~\pageref{Radial-Vanishing-EF-Theorem}, $\varphi_\lambda(r)$ satisfies the following differential equation
$$\left[(\sinh(r/\rho))^{k/2}\varphi_\lambda(r)\right]''=
\left(\frac{k^2}{4\rho^2}-\lambda+\frac{k(k-2)}{4\rho^2\sinh^2(r/\rho)}\right)
\left[(\sinh(r/\rho))^{k/2}\varphi_\lambda(r)\right],$$
which can be investigated by the standard theory of ODE. Theorem~2.1 from \cite{Olver}, p.~193 yields the behavior of $\varphi_\lambda(r)$ for $k^2/4\rho^2>\lambda$ and $r\rightarrow\infty$. Theorem~2.2 from~\cite{Olver}, p.~196 yields the behavior of $\varphi_\lambda(r)$ for $k^2/4\rho^2<\lambda$. To describe the behavior of $\varphi_\lambda(r)$ for $\lambda=k^2/4\rho^2$ it is enough to apply Corollary~9.1 from~\cite{Hartman}, p.~380. Here we shall see the elementary proof of Theorem~\ref{Infinity-Behavior-radial-Theorem} relying only on the geometric interpretation of $\omega$ introduced in Theorem~\ref{Geometric-Interpretation-theorem}, p.~\pageref{Geometric-Interpretation-theorem} and the elementary geometry.
\end{remark}

\begin{proof}[Proof of Theorem \ref{Infinity-Behavior-radial-Theorem}]
The proof of Theorem~\ref{Infinity-Behavior-radial-Theorem} can be split into four Propositions~\ref{Limit-for-alpha-(0,k)} - \ref{Limit-for-alpha=k/2+ib} below. Combining all of them we arrive at~\eqref{Infinity-Behavior-radial-Formula}, which completes the proof of Theorem~\ref{Infinity-Behavior-radial-Theorem}.
\end{proof}

\begin{proposition}\label{Limit-for-alpha-(0,k)}
\begin{equation}\label{Th4-2}
   \lim\limits_{r\rightarrow\infty}\varphi_\lambda(r(\eta))=
   \lim\limits_{\eta\rightarrow\rho}\frac{1}{|S(\rho)|}
   \int\limits_{S(\rho)}\left(\frac{\rho^2-\eta^2}{|u-m|^2}\right)^\alpha dS_u=0
\end{equation}
for all $\alpha\in(0,k)$ or, equivalently, for all $\lambda\in(0,-\kappa k^2/4]$.
\end{proposition}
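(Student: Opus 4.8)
The plan is to turn $V_\alpha(\eta)$ into a one–dimensional integral in the angle $\psi$ and then let $\eta\uparrow\rho$ by dominated convergence. By Theorem~\ref{Radial-func-repre-Thoerem} we have $\varphi_\lambda(r(\eta))=V_\alpha(\eta)$ with $\lambda=-\kappa(\alpha k-\alpha^2)=-\kappa\,\alpha(k-\alpha)$, and since $\eta=\rho\tanh(r/2\rho)\to\rho$ as $r\to\infty$, it suffices to show $V_\alpha(\eta)\to0$ as $\eta\to\rho$. Starting from the representation $\varphi_\lambda(r(\eta))=\frac{1}{|S^k(\rho)|}\int_{S^k(\rho)}\omega^\alpha(u,m)\,dS_u$ and applying Statement~(A) of Lemma~\ref{Lemma-532} (formula \eqref{Three_D_Dirichlet-15}), followed by integrating out the $(k-1)$–sphere of constant $\psi=\angle Om\widetilde u$ exactly as in \eqref{Three_D_Dirichlet-25}--\eqref{Three_D_Dirichlet-27}, I obtain
\begin{equation*}
   V_\alpha(\eta)=\frac{2\sigma_{k-1}}{\sigma_k\,\rho^{\,k-1}}\int_0^\pi
   \frac{l^\alpha q^{\,k-\alpha}}{l+q}\,(\sin\psi)^{k-1}\,d\psi ,
\end{equation*}
where, as in the proof of Theorem~\ref{Geometric-Interpretation-theorem} (formulae \eqref{GeoInte-5}--\eqref{GeoInte-6}) and of Lemma~\ref{(l+q)-Differentiation-Lemma}, one has $lq=\rho^2-\eta^2$ and $l+q=2\sqrt{\rho^2-\eta^2\sin^2\psi}$, both regarded as functions of $\psi\in[0,\pi]$.

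Next I extract the two facts needed about the integrand $g_\eta(\psi):=\dfrac{l^\alpha q^{\,k-\alpha}}{l+q}(\sin\psi)^{k-1}$. For the \emph{uniform bound}: since $0<\alpha<k$ both exponents are positive, so $l^\alpha q^{\,k-\alpha}\le\max(l,q)^k$; together with $\max(l,q)\le l+q=2\sqrt{\rho^2-\eta^2\sin^2\psi}\le 2\rho$ and $(\sin\psi)^{k-1}\le1$ this gives, for all $\psi\in[0,\pi]$ and all $\eta\in[0,\rho)$,
\begin{equation*}
   0\le g_\eta(\psi)\le\frac{\max(l,q)^k}{l+q}\le\max(l,q)^{k-1}\le(2\rho)^{k-1}.
\end{equation*}
For \emph{pointwise vanishing}: $\min(l,q)^2\le lq=\rho^2-\eta^2\to0$, hence $\min(l,q)\to0$; as the smaller of $l,q$ occurs in $l^\alpha q^{\,k-\alpha}$ with a strictly positive exponent and the larger stays $\le2\rho$, we get $l^\alpha q^{\,k-\alpha}\to0$, while $l+q\to2\rho|\cos\psi|>0$ for every fixed $\psi\in[0,\pi]\setminus\{\pi/2\}$; therefore $g_\eta(\psi)\to0$ as $\eta\to\rho$ for each such $\psi$, i.e. almost everywhere on $[0,\pi]$.

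Finally, the constant $(2\rho)^{k-1}$ is integrable on $[0,\pi]$ and $g_\eta\to0$ a.e., so dominated convergence gives $\int_0^\pi g_\eta\,d\psi\to0$, hence $V_\alpha(\eta)\to0$, which is \eqref{Th4-2}; the equivalence $\alpha\in(0,k)\iff\lambda\in(0,-\kappa k^2/4]$ is immediate from $\lambda=-\kappa\,\alpha(k-\alpha)$ and $-\kappa>0$. Should one wish to avoid invoking dominated convergence, the same conclusion follows by an elementary splitting: excise a $\delta$–neighbourhood of $\psi=\pi/2$ (its contribution is $\le 2(2\rho)^{k-1}\delta$), and on the complement use $1/(l+q)\le1/(2\rho\sin\delta)$ together with $\min(l,q)\le\sqrt{\rho^2-\eta^2}$ to see $g_\eta\to0$ uniformly there, then send $\delta\to0$; Lemma~\ref{Minimum-for-(l+q)} and Lemma~\ref{l-q-estimation-Lemma} may be fed in to control the tails near $\psi=0$ and near $\psi=\pi$. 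There is no deep obstacle here; the only mildly delicate point is that for $k=1$ the convergence $g_\eta\to0$ is \emph{not} uniform — indeed $g_\eta(\pi/2)=\tfrac12(\rho^2-\eta^2)^{(k-1)/2}\equiv\tfrac12$ — but since this occurs on a single point it is harmless, and it is precisely why the argument is phrased through almost-everywhere convergence rather than a uniform estimate.
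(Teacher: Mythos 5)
Your argument is correct: the reduction to the angular integral with the explicit identities $lq=\rho^2-\eta^2$ and $l+q=2\sqrt{\rho^2-\eta^2\sin^2\psi}$ is valid, the bound $g_\eta\leq(2\rho)^{k-1}$ and the a.e.\ pointwise vanishing are justified, and dominated convergence (or your $\delta$-excision variant) finishes the proof; you also correctly isolate $\psi=\pi/2$ as the only point where uniformity fails (for $k=1$, $g_\eta(\pi/2)\equiv\tfrac12$). The paper proves the same statement from the same representation \eqref{Th4-10}, but organizes the limit differently: for $k\geq2$ it rewrites the integrand in the two complementary forms $\frac{l}{l+q}\,l^{\alpha-1}q^{k-\alpha}$ and $\frac{q}{l+q}\,l^{\alpha}q^{k-1-\alpha}$, so that on each of the overlapping ranges $\alpha\in(1,k)$ and $\alpha\in(0,k-1)$ the product has two strictly positive exponents and tends to zero (with a prefactor bounded by $1$), covering all of $(0,k)$ without any measure-theoretic convergence theorem; the case $k=1$, where this trick leaves a gap, is then handled separately by splitting $[0,\pi]$ into four angular regions around $\psi=\pi/2$ and using Lemma~\ref{Minimum-for-(l+q)}, exactly in the spirit of your alternative splitting. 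So your route buys a single unified argument for all $k\geq1$ at the modest price of invoking dominated convergence (or the explicit $\delta$-splitting, which in your hands works uniformly in $k$), whereas the paper's route stays entirely elementary and essentially uniform for $k\geq2$ but pays with a dimension split and a separate, more delicate treatment of $k=1$. One cosmetic remark: in your closing sentence the appeal to Lemma~\ref{l-q-estimation-Lemma} is unnecessary for this proposition (your bound $l+q\geq2\rho|\cos\psi|\geq2\rho\sin\delta$ already controls everything away from $\psi=\pi/2$); that lemma is used in the paper only for Proposition~\ref{Limit-for-alpha-outside-[0,k]}.
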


\begin{proposition}\label{Limit-for-alpha-0-and-k}
\begin{equation}\label{Th4-3}
   \lim\limits_{r\rightarrow\infty}\varphi_\lambda(r(\eta))=
   \lim\limits_{\eta\rightarrow\rho}\frac{1}{|S(\rho)|}
   \int\limits_{S(\rho)}\left(\frac{\rho^2-\eta^2}{|u-m|^2}\right)^\alpha dS_u=1
\end{equation}
for $\alpha=0,k$ or, equivalently, for $\lambda=0$.
\end{proposition}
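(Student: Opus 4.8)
The plan is to observe that this proposition is an immediate consequence of Statement~(A) of the Main Theorem together with the explicit representation of radial eigenfunctions, so that essentially no new estimation is needed. First I would record that the eigenvalue attached to the exponent $\alpha$ by $\lambda=(\alpha k-\alpha^2)/\rho^2=\alpha(k-\alpha)/\rho^2$ vanishes precisely when $\alpha=0$ or $\alpha=k$; this is the content of the phrase ``$\alpha=0,k$ or, equivalently, $\lambda=0$'' in the statement, and it isolates the two exponents that must be treated.

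For $\alpha=0$ the integrand $\omega^0\equiv1$, so by the representation \eqref{Radial-func-presentation} of Theorem~\ref{Radial-func-repre-Thoerem} (equivalently, directly from the definition \eqref{Radialization-of-omega}),
\[
\varphi_0(r(\eta))=V_0(\eta)=\frac{1}{|S(\rho)|}\int_{S(\rho)}dS_u=1
\]
for every $\eta\in[0,\rho)$; in particular the limit as $\eta\to\rho^-$, i.e. as $r\to\infty$, equals $1$.

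For $\alpha=k$ I would invoke Statement~(A) of the Main Theorem in the form of Proposition~\ref{Statement-A'-Proposition} with the pair $\alpha=k$, $\beta=0$ (which satisfies $\alpha+\beta=k$) and with $m\in S^k(\eta)$, $u\in S^k(\rho)$, $\eta<\rho$: this yields $\int_{S(\rho)}\omega^k(u,m)\,dS_u=\int_{S(\rho)}\omega^0(u,m)\,dS_u=|S(\rho)|$, hence $V_k(\eta)=1$ identically on $[0,\rho)$, and again the limit as $\eta\to\rho^-$ is $1$. Combining the two cases gives \eqref{Th4-3} and proves the proposition.

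The only point requiring care --- and it is not a genuine obstacle --- is to note that the radialization $V_\alpha(\eta)$ is exactly the quantity $\varphi_\lambda(r(\eta))$ displayed in \eqref{Th4-3}, which is Theorem~\ref{Radial-func-repre-Thoerem}, and that Statement~(A) applies on the whole range $\eta\in[0,\rho)$ (the ``$x$ inside $S^k$'' case), so that the value $1$ is attained at every finite $r$ and therefore persists in the limit. In contrast to Proposition~\ref{Limit-for-alpha-(0,k)}, no inequalities for $l$ and $q$ and no delicate behaviour as $\eta\to\rho$ enter here, because the relevant radial eigenfunction is literally the constant function $1$.
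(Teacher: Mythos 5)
Your proposal is correct and follows essentially the same route as the paper: the paper's proof also reduces the case $\alpha=k$ to $\alpha=0$ via Statement~(A) of the Main Theorem (through formula \eqref{Th4-23}), so that $\varphi_\lambda(r(\eta))\equiv1$ for all $\eta\neq\rho$ and the limit is immediate. Your explicit handling of the trivial case $\alpha=0$ and the identification $\lambda=0\iff\alpha\in\{0,k\}$ only spell out what the paper leaves implicit.
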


\begin{proposition}\label{Limit-for-alpha-outside-[0,k]}
\begin{equation}\label{Th4-4}
   \lim\limits_{r\rightarrow\infty}\varphi_\lambda(r(\eta))=
   \lim\limits_{\eta\rightarrow\rho}\frac{1}{|S(\rho)|}
   \int\limits_{S(\rho)}\left(\frac{\rho^2-\eta^2}{|u-m|^2}\right)^\alpha dS_u=\infty
\end{equation}
for all $\alpha\in\mathbb{R}\setminus[0,k]$ or, equivalently, for $\lambda<0$.
\end{proposition}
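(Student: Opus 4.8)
The plan is to reduce the statement to the single regime $\alpha>k$ and then extract from the integral representation of $\varphi_\lambda$ an explicit lower bound that blows up as $\eta\to\rho$. First I would use the symmetry $V_\alpha(\eta)=V_{k-\alpha}(\eta)$, which follows either from Statement~(A) of the Main Theorem (equivalently Proposition~\ref{Statement-A'-Proposition}) or, more directly, from the $l\leftrightarrow q$ symmetry of Lemma~\ref{Feature-Feature-lemma} combined with the change of variables of Lemma~\ref{VarExchange-Lemma}. Since the involution $\alpha\mapsto k-\alpha$ interchanges the two components $(-\infty,0)$ and $(k,\infty)$ of $\mathbb{R}\setminus[0,k]$, it suffices to treat $\alpha>k$; the case $\alpha<0$ then follows verbatim after replacing $\alpha$ by $k-\alpha$.

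Next, fix $\alpha>k$. Writing $\omega=l/q$ with $lq=\rho^2-\eta^2$ and applying the change of variables $dS_u=\frac{2\rho}{l+q}q^k\,d\Sigma_{\widetilde u}$ of Lemma~\ref{VarExchange-Lemma}, followed by integration over the $(k-1)$-sphere $\{\widetilde u\in\Sigma:\angle Om\widetilde u=\psi\}$ of radius $\sin\psi$ exactly as in the proof of Lemma~\ref{Lemma-532}(C), one gets
\[
\varphi_\lambda(r(\eta))=\frac{2\rho\,\sigma_{k-1}}{|S^k(\rho)|}\int_0^\pi\frac{l^\alpha\,q^{k-\alpha}}{l+q}\,(\sin\psi)^{k-1}\,d\psi .
\]
Every factor of the integrand is non-negative, so I may restrict the integral to $\psi\in[2\pi/3,5\pi/6]$. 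On this range Lemma~\ref{l-q-estimation-Lemma} gives $l\geq\rho$ and $0<q\leq2(\rho-\eta)$; hence $l^\alpha\geq\rho^\alpha$ (as $\alpha>0$), $q^{k-\alpha}\geq(2(\rho-\eta))^{k-\alpha}$ (since $t\mapsto t^{k-\alpha}$ is decreasing for $k-\alpha<0$), $l+q\leq2\rho$, and $(\sin\psi)^{k-1}\geq 2^{1-k}$. Combining these bounds yields a constant $c=c(\rho,k,\alpha)>0$, independent of $\eta$, with
\[
\varphi_\lambda(r(\eta))\geq c\,(\rho-\eta)^{k-\alpha},
\]
and since $k-\alpha<0$ the right-hand side tends to $+\infty$ as $\eta\to\rho^-$, i.e. as $r\to\infty$. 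Translating through $\lambda=-\kappa(\alpha k-\alpha^2)$, the hypothesis $\alpha\in\mathbb{R}\setminus[0,k]$ is exactly $\lambda<0$, so this establishes \eqref{Th4-4}.

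The only point requiring a bit of care is the bookkeeping in the reduction $\alpha\leftrightarrow k-\alpha$, which is what lets us avoid analyzing the small-$\omega$ region near $\psi=0$ separately, together with checking that the constant $c$ above can indeed be taken uniform in $\eta$ for $\eta$ close to $\rho$. The genuine content of the estimate is already isolated in Lemma~\ref{l-q-estimation-Lemma}, so beyond invoking it there is no substantive obstacle; alternatively, one could dispense with the reduction and estimate directly on $\psi\in[0,\pi/3]$ using $l(\psi)=q(\pi-\psi)$ and $q(\psi)=l(\pi-\psi)$, but the symmetric reduction is cleaner.
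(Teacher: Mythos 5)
Your proposal is correct and follows essentially the same route as the paper: the lower bound comes from Lemma~\ref{l-q-estimation-Lemma} applied on the region $\psi\geq 2\pi/3$ in the representation \eqref{Th4-10}, giving $\varphi_\lambda(r(\eta))\gtrsim(\rho-\eta)^{k-\alpha}\to\infty$ for $\alpha>k$, with the case $\alpha<0$ handled by the $\alpha\mapsto k-\alpha$ symmetry from Statement~(A). The only cosmetic differences are that you perform the symmetry reduction at the outset rather than at the end, and restrict to $\psi\in[2\pi/3,5\pi/6]$ so that $\sin\psi$ stays bounded below in the one-dimensional $\psi$-integral, whereas the paper integrates over the spherical cap $\Omega_0$ directly.
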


\begin{proposition}\label{Limit-for-alpha=k/2+ib} If $\alpha=k/2+ib$ with $b\neq0$ or, equivalently, if $\lambda~>~-\kappa k^2/4$, then $\varphi_\lambda(r)$ is a non-trivial radial eigenfunction that vanishes at some finite point and
\begin{equation}\label{Th4-4.1}
   \lim\limits_{r\rightarrow\infty}\varphi_\lambda(r(\eta))=
    \lim\limits_{\eta\rightarrow\rho}\frac{1}{|S(\rho)|}\int\limits_{S(\rho)}\omega^{k/2}\cos(b\ln\omega)dS_u=0
\end{equation}
\end{proposition}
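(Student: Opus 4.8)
The plan is to analyze the integral
$$
\varphi_\lambda(r(\eta))=\frac{1}{|S(\rho)|}\int\limits_{S(\rho)}\omega^{k/2}\cos(b\ln\omega)\,dS_u
$$
as $\eta\to\rho$, i.e.\ as the interior point $m$ approaches the boundary sphere. I would use the $\psi$-parametrization from Lemma~\ref{VarExchange-Lemma} to write this as a one-dimensional integral, exactly as in Lemma~\ref{Lemma-532}, Statement~(C):
$$
\varphi_\lambda(r(\eta))=\frac{4\rho(\rho^2-\eta^2)^{k/2}\sigma_{k-1}}{|S(\rho)|}\int\limits_0^{\pi/2}\frac{\sin^{k-1}\psi}{l+q}\cos\left(b\ln\frac{l}{q}\right)d\psi,
$$
with the notation $l=|u^*-m|$, $q=|u-m|$ from the chapter on geometric results, and $lq=\rho^2-\eta^2$. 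The first observation is that $\cos(b\ln(l/q))$ is bounded by $1$ and $\sin^{k-1}\psi/(l+q)$ is absolutely integrable, so once I show the prefactor times the integral tends to $0$ the claim follows; the delicate point is that both $(\rho^2-\eta^2)^{k/2}\to 0$ and the integral may blow up, so I must control the rate.

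First I would split the $\psi$-integral into a region near $\psi=\pi/2$, where $l\approx q$ and everything is harmless, and a region $\psi\in[0,\psi_0]$ bounded away from $\pi/2$, where $l+q$ is bounded below by $2\rho\cos\psi_0$ (Lemma~\ref{Minimum-for-(l+q)}). On the second region the integral is bounded by a constant times $\int_0^{\psi_0}\sin^{k-1}\psi\,d\psi$, a fixed finite number, so the contribution to $\varphi_\lambda$ is at most $C(\rho^2-\eta^2)^{k/2}\to 0$. Near $\psi=\pi/2$ one has $l+q\to 2\rho\cos\psi$ is not small, in fact $l+q$ is comparable to $\rho$ there, and $q\to\rho$, $l\to\rho$ as $\eta\to\rho$ on that range; hence the integrand stays bounded and the contribution is again $O((\rho^2-\eta^2)^{k/2})$. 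Combining, $\varphi_\lambda(r(\eta))\to 0$. (The statement that $\varphi_\lambda$ is nontrivial and vanishes at a finite point is already contained in Theorem~\ref{Radial-Vanishing-EF-Theorem}, so I would simply cite it rather than reprove it.)

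The main obstacle will be making the estimate near $\psi=\pi/2$ genuinely uniform: as $\eta\to\rho$ the chord $u^*u$ has length $l+q=2\sqrt{\rho^2-\eta^2\sin^2\psi}$, which for $\psi$ close to $\pi/2$ behaves like $2\sqrt{\rho^2-\eta^2}$ — exactly the same order as the prefactor's vanishing — so one cannot simply bound $1/(l+q)$ by a constant on a fixed neighborhood of $\pi/2$. I would handle this by changing variables via $d\psi=\frac{l+q}{4\eta b\sin\psi}\,d(b\ln\omega)$ from the Corollary on p.~\pageref{Substitution-for-d-psi}, which cancels the dangerous $1/(l+q)$ factor and turns the near-$\pi/2$ piece into $\int \cos(b\ln\omega)\,d(b\ln\omega)$ over a shrinking $\omega$-range near $\omega=1$; this contributes $O(\rho^2-\eta^2)$ after multiplying by the prefactor, which still goes to $0$. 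Alternatively one can invoke the integration-by-parts representation~\eqref{Lower_Bound-22} used in the proof of Theorem~\ref{Lower-and-Upper-bound-Theorem}, which already exhibits $\varphi_\lambda$ in a form where the $\eta\to\rho$ limit is transparent. Either route yields~\eqref{Th4-4.1}, completing the proof.
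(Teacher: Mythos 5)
Your route is genuinely different from the paper's, and it is worth saying first what the paper actually does: it dominates the integrand by $|\omega^{k/2}\cos(b\ln\omega)|\le\omega^{k/2}$ and then invokes Proposition~\ref{Limit-for-alpha-(0,k)} with $\alpha=k/2\in(0,k)$ (together with Theorem~\ref{Radial-Vanishing-EF-Theorem} for the non-triviality/vanishing clause, which you also just cite). In other words, the paper's proof is a one-line comparison with an already proven limit, while your argument re-derives a cosine-decorated version of Proposition~\ref{Limit-for-alpha-(0,k)} directly from the representation \eqref{Three_D_Dirichlet-18}. That is legitimate, and your treatment of the region $\psi\in[0,\psi_0]$ via Lemma~\ref{Minimum-for-(l+q)} is exactly in the spirit of the paper's own $k=1$ case of that proposition; but you should notice that the domination shortcut makes the whole computation unnecessary.

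Within your own argument, the step you correctly identify as the crux — the region near $\psi=\pi/2$ — is justified by two incorrect claims. For fixed $\psi_0$, as $\eta\to\rho$ one has $l+q\to2\rho\cos\psi_0$, hence $q(\psi_0)\to2\rho\cos\psi_0$ and $l(\psi_0)=(\rho^2-\eta^2)/q(\psi_0)\to0$, so $\omega(\psi_0)=l/q\to0$: after the substitution \eqref{Substitution-for-d-psi} the variable $b\ln\omega$ runs over an interval of length $|b\ln\omega(\psi_0)|\to\infty$, not over a ``shrinking range near $\omega=1$''; moreover the leftover factor $\sin^{k-2}\psi$ means the piece is literally $\int\cos(b\ln\omega)\,d(b\ln\omega)$ only when $k=2$, so the claimed $O(\rho^2-\eta^2)$ contribution is not justified as written. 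The repair is easy and stays inside your scheme: after the substitution bound $|\cos|\le1$ and $\sin^{k-2}\psi\le C$ near $\pi/2$, which gives at most $\frac{C}{4\eta}\,|\ln\omega(\psi_0)|=O\bigl(\ln\frac{1}{\rho^2-\eta^2}\bigr)$, and the prefactor $(\rho^2-\eta^2)^{k/2}$ still forces the product to $0$. Two smaller caveats: the cruder bound $1/(l+q)\le(2\sqrt{\rho^2-\eta^2})^{-1}$ without the substitution suffices only for $k\ge2$ and fails at $k=1$; and your alternative via \eqref{Lower_Bound-22} is literally valid only for $k\ge3$, since for $k=1,2$ the integration by parts produces boundary terms (compare \eqref{Upper_Bound-3}), so it does not by itself cover all dimensions.
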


\begin{proof}[Proof of Proposition \eqref{Limit-for-alpha=k/2+ib}] Note that the integral presentation of $\varphi_\lambda(r)$ is justified in the proof of Lemma~\ref{Lemma-532} from p.~\pageref{Lemma-532}, look at \eqref{Three_D_Dirichlet-16}, p.~\pageref{Three_D_Dirichlet-16}. Therefore, the statement is a simple consequence of Proposition A.
\end{proof}

\begin{proposition}\label{Harold-Donelly-proposition}
For any solution of \eqref{Th4-1}, we have
\begin{equation}\label{Th4-4.2}
    \varphi_\lambda(\upsilon, r)\notin L^2(B_\rho^{k+1}) \quad\forall\,\,\lambda\in R\,.
\end{equation}
\end{proposition}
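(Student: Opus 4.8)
The plan is to convert the hyperbolic $L^{2}$-norm into a one-dimensional integral, reusing the Liouville substitution from the proof of Theorem~\ref{Radial-Vanishing-EF-Theorem}, p.~\pageref{Radial-Vanishing-EF-Theorem}, and then to estimate that integral in the three ranges $\lambda\le 0$, $0<\lambda\le-\kappa k^{2}/4$ and $\lambda>-\kappa k^{2}/4$. In geodesic polar coordinates about the origin the hyperbolic volume element of $B^{k+1}_\rho$ is $(\rho\sinh(r/\rho))^{k}\,dr\,d\Omega_k$ — this is the density for which the radial part of the Laplacian takes the form $\varphi''(r)+\frac{k}{\rho}\coth(r/\rho)\varphi'(r)$ of \eqref{One_Rad_8} — so for a radial eigenfunction $\varphi_\lambda$,
$$\|\varphi_\lambda\|_{L^{2}(B^{k+1}_\rho)}^{2}=\sigma_k\int_0^\infty|\varphi_\lambda(r)|^{2}\,(\rho\sinh(r/\rho))^{k}\,dr\,.$$
Setting $U(r)=(\sinh(r/\rho))^{k/2}\varphi_\lambda(r)$, which by the proof of Theorem~\ref{Radial-Vanishing-EF-Theorem} satisfies $U''+Q(r)U=0$ with $Q$ as in \eqref{Expression-q-after-substitution} and $Q(r)\to Q_\infty:=\lambda+\kappa k^{2}/4$ by \eqref{limit-for-q(r)}, one has $|\varphi_\lambda(r)|^{2}(\rho\sinh(r/\rho))^{k}=\rho^{k}|U(r)|^{2}$, hence $\|\varphi_\lambda\|_{L^{2}}^{2}=\sigma_k\rho^{k}\int_0^\infty|U(r)|^{2}\,dr$. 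It therefore suffices to show $\int_0^\infty|U(r)|^{2}\,dr=\infty$ for every real $\lambda$ (in particular for the values $\lambda\in(0,\infty)$ for which \eqref{Th4-1} has a solution, by Theorem~\ref{Infinity-Behavior}).

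Two ranges are immediate. For $\lambda\le 0$, Theorem~\ref{Infinity-Behavior-radial-Theorem}, p.~\pageref{Infinity-Behavior-radial-Theorem}, gives $\varphi_\lambda(r)\to\infty$ (if $\lambda<0$) or $\varphi_\lambda(r)\to1$ (if $\lambda=0$), so $|U(r)|^{2}=(\sinh(r/\rho))^{k}|\varphi_\lambda(r)|^{2}\to\infty$ and the integral diverges. For $0<\lambda\le-\kappa k^{2}/4$ I would use the integral representation from Theorem~\ref{Radial-func-repre-Thoerem}, p.~\pageref{Radial-func-repre-Thoerem}, namely $\varphi_\lambda(r(\eta))=\frac{1}{|S(\eta)|}\int_{S(\eta)}\omega^{\alpha}(u,m_1)\,dS_{m_1}$ with $|u|=\rho$, $|m_1|=\eta$, and a real $\alpha$ with $\lambda=-\kappa\alpha(k-\alpha)$, where by Proposition~\ref{Limit-for-alpha-(0,k)} we may take $\alpha\in(0,k)$ and, by Statement~(A) of the Main Theorem, $\alpha\le k/2$. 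Since $\omega(u,m_1)=(\rho^{2}-\eta^{2})/|u-m_1|^{2}\ge(\rho^{2}-\eta^{2})/(\rho+\eta)^{2}=(\rho-\eta)/(\rho+\eta)=e^{-r/\rho}$ and $\alpha>0$, averaging over $S(\eta)$ gives $\varphi_\lambda(r)\ge e^{-\alpha r/\rho}$, hence $\rho^{k}|U(r)|^{2}\ge\rho^{k}e^{-2\alpha r/\rho}(\sinh(r/\rho))^{k}$, which stays bounded below by a positive constant for large $r$ because $k-2\alpha\ge0$; once more $\int_0^\infty|U|^{2}\,dr=\infty$.

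The remaining range $\lambda>-\kappa k^{2}/4$ (so $Q_\infty>0$) is the crux: here $U''+QU=0$ is oscillatory at infinity and $\varphi_\lambda(r)\to0$ by Proposition~\ref{Limit-for-alpha=k/2+ib}, so one must show the decay is no faster than $(\sinh(r/\rho))^{-k/2}$ times a factor that stays bounded away from $0$. I would argue with the energy $E(r):=U'(r)^{2}+Q_\infty U(r)^{2}$. Then $E'=2UU'(Q_\infty-Q)$, and from $2|UU'|\le E/\sqrt{Q_\infty}$ together with the fact that $Q_\infty-Q(r)=k(k-2)/(4\rho^{2}\sinh^{2}(r/\rho))$ is integrable on $[1,\infty)$, a Gronwall estimate ($|(\ln E)'|\le|Q_\infty-Q|/\sqrt{Q_\infty}$) traps $E(r)$ between two positive constants for $r\ge1$; here $E$ never vanishes on $[1,\infty)$, since $E(r_1)=0$ would force $U\equiv0$, hence $\varphi_\lambda\equiv0$, contradicting $\varphi_\lambda(0)=1$. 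Finally $\int_1^{T}E\,dr\ge C_1(T-1)$, while $\int_1^{T}(U'^{2}-QU^{2})\,dr=[UU']_1^{T}$ is bounded uniformly in $T$ (as $E$, hence $|U|$ and $|U'|$, and $Q$ are bounded on $[1,\infty)$); combining the two and bounding $\int_1^{T}QU^{2}$ by $(\sup_{[1,\infty)}|Q|)\int_1^{T}U^{2}$ forces $\int_1^{T}U^{2}\,dr\to\infty$, i.e.\ $\int_0^\infty|U|^{2}\,dr=\infty$. This finishes the proof. The step I expect to be the main obstacle is exactly this last case: a crude exponential lower bound on $\varphi_\lambda$ as in the previous paragraph is useless once $\varphi_\lambda$ oscillates, so the two-sided energy control via Gronwall seems unavoidable, and one must track carefully that the perturbation $Q-Q_\infty$ is genuinely integrable at infinity, which it is thanks to its $\sinh^{-2}(r/\rho)$ decay. (A shorter but non-self-contained route would be to quote the absence of $L^{2}$-eigenfunctions for the Laplacian on the hyperbolic space $H^{k+1}_\rho$.)
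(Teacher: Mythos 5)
Your proof is correct, but it takes a genuinely different route from the paper's. The paper proves this proposition in two lines by citing Donnelly's theorem (\cite{Donelly}) that the Laplacian on $L^2(H^{k+1})$ has no point spectrum, so no eigenvalue can possess an $L^2$ eigenfunction; your argument is instead self-contained and elementary: the substitution $U=(\sinh(r/\rho))^{k/2}\varphi_\lambda$ converts the hyperbolic $L^2$-norm into $\sigma_k\rho^k\int_0^\infty U^2\,dr$, the ranges $\lambda\le 0$ and $0<\lambda\le-\kappa k^2/4$ follow from Theorem~\ref{Infinity-Behavior-radial-Theorem} and the pointwise bound $\omega\ge e^{-r/\rho}$ (giving $\varphi_\lambda(r)\ge e^{-\alpha r/\rho}$ with $2\alpha\le k$), and in the oscillatory range $\lambda>-\kappa k^2/4$ your Gronwall control of $E=U'^2+Q_\infty U^2$, which uses precisely the integrability of $Q_\infty-Q=k(k-2)/(4\rho^2\sinh^2(r/\rho))$, together with the identity $\frac{d}{dr}(UU')=U'^2-QU^2$, correctly forces $\int_1^T U^2\,dr\to\infty$; I checked each of these steps and they hold (in particular $E$ cannot vanish without forcing $\varphi_\lambda\equiv 0$, contradicting $\varphi_\lambda(0)=1$). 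What the paper's citation buys is brevity and the fact that Donnelly's result applies to arbitrary, not necessarily radial, eigenfunctions; what your argument buys is a quantitative proof within the paper's own toolkit, valid for every real $\lambda$. One small point to patch: the proposition concerns any solution of \eqref{Th4-1}, whereas your estimates address radial eigenfunctions with $\varphi_\lambda(0)=1$; for a non-radial solution, radialize it as in the remark preceding Theorem~\ref{Infinity-Behavior} and observe that the Cauchy--Schwarz inequality applied to the spherical average gives $\|\varphi^\sharp\|_{L^2(B_\rho^{k+1})}\le\|\varphi\|_{L^2(B_\rho^{k+1})}$, so the divergence you establish for the radialization transfers to the original solution.
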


\begin{proof}[Proof of Proposition \ref{Harold-Donelly-proposition}] %\cite{MR612619}
The statement of the Proposition is a simple consequence of theorem taken from Harold Donnelly's paper, see~\cite{Donelly},
p.366. This theorem restricted to functions in $L^2(H^{k+1})$ says that the Laplacian $\triangle$ defined on $L^2(H^{k+1})$ has no point spectrum, which means that the Laplacian does not have any eigenvalue corresponding to $L^2$ eigenfunction. This completes the proof of Proposition~\ref{Harold-Donelly-proposition}.
\end{proof}

\begin{proof}[Proof of Proposition \ref{Limit-for-alpha-(0,k)}]\label{Proof-of-Limit-for-alpha-(0,k)}

Recall from \eqref{Three_D_Dirichlet-15}, p.~\pageref{Three_D_Dirichlet-15} that a radial eigenfunction can be written as
\begin{equation}\label{Th4-10}
   \varphi_\lambda(r(\eta))=\frac{1}{|S(\rho)|}
   \int\limits_{\Sigma}\left(\frac{l}{q}\right)^\alpha\frac{2\rho}{l+q}q^k d\Sigma_{\widetilde{u}}\,.
\end{equation}
All notations used in the proof are pictured on Figure~\ref{Euclidean-Notations-in-HD}, p.~\pageref{Euclidean-Notations-in-HD}.
It is convenient to split the following argument into two cases.

%All notations used in the proof are pictured on Figure~\ref{Euclidean-Notations-in-HD-recall} below.

%\begin{figure}[h]
  % Requires \usepackage{graphicx}
%  \quad\quad\quad\quad\quad\quad\includegraphics*[bb= 0 0 220 170]{Infinity_1.bmp}\\
%  \caption{Euclidean variables recall.}\label{Euclidean-Notations-in-HD-recall}
%\end{figure}

\textbf{Case 1 $(k\geq 2).$} Formula \eqref{Th4-10} gives
\begin{equation}\label{Th4-11}
\begin{split}
   \varphi_\lambda(r(\eta)) & =\frac{2\rho}{|S(\rho)|}
   \int\limits_{\Sigma}\frac{l}{l+q}\,l^{\alpha-1}q^{k-\alpha} d\Sigma_{\widetilde{u}}
   \\& =\frac{2\rho}{|S(\rho)|}
   \int\limits_{\Sigma}\frac{q}{l+q}\,l^{\alpha}q^{k-1-\alpha} d\Sigma_{\widetilde{u}}\,.
\end{split}
\end{equation}
From the first integral in \eqref{Th4-11}, it is clear that

\begin{equation}\label{Th4-12}
\begin{split}
   & \frac{l}{l+q}< 1\quad\text{for every}\quad\frac{l}{q}\quad\text{and}
   \\& \lim\limits_{m\rightarrow A}l^{\alpha-1}q^{k-\alpha}=0\quad
   \text{for every}\,\,\,\psi\,\,\text{and every}\,\,\alpha\in(1,k)\,,
\end{split}
\end{equation}
Recall that $|m|=\eta$, $|A|=\rho$. Therefore, $m\rightarrow A$ if and only if $\eta\rightarrow\rho$. Hence, $\lim\limits_{r\rightarrow\infty}\varphi_\lambda(r(\eta))=0$ for every $\alpha\in(1,k)$. In a similar way, using the second integral in \eqref{Th4-11}, we can see that $\lim\limits_{r\rightarrow\infty}\varphi_\lambda(r(\eta))=0$ for every $\alpha\in(0,k-1)$. Therefore, the proof of Proposition~\ref{Limit-for-alpha-(0,k)} is complete for $k\geq 2$.

\textbf{Case 2 $(k=1).$} This case requires more detailed analysis. We are going to use notations from the Figure~\ref{Limit-Behavior-picture} below. Choose an arbitrary small $\tau>0$. Our goal is to show that for every point $m$ sufficiently close to $A$ $\varphi_\lambda(r(\eta))<3\tau\cdot 4\rho/|S(\rho)|$. Let us set $\psi_0=\pi/2-\tau$ and define $\Omega_i$ as follows.

\begin{figure}[!h]
    \centering
    \epsfig{figure=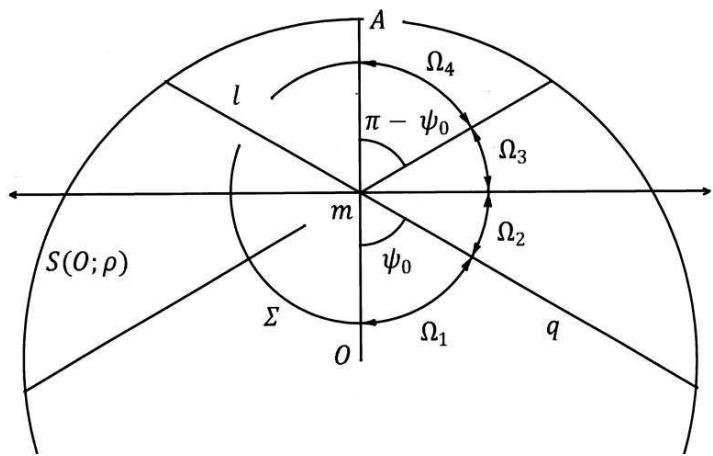,height=6cm}\\
  \caption{The limit behavior in $B^2_\rho$.}\label{Limit-Behavior-picture}
\end{figure}

\begin{equation}\label{Th4-13}
\begin{split}
   & \Omega_1=\{\psi\mid0<\psi<\psi_0=\pi/2-\tau\};\,\Omega_2=\{\psi\mid\pi/2-\tau<\psi<\pi/2\};
   \\& \Omega_3=\{\psi\mid\pi/2<\psi<\pi/2+\tau\};\,\,\Omega_4=\{\psi\mid\pi/2+\tau<\psi<\pi\}\,.
\end{split}
\end{equation}
It is clear that $|\Omega_2|=|\Omega_3|=\tau=\pi/2-\psi_0$. Applying \eqref{Th4-11} for the case $k=1$, we have
\begin{equation}\label{Th4-14}
   \varphi_\lambda(r(\eta))=\frac{4\rho}{|S(O;\rho)|}
   \left( \int\limits_{\Omega_1}+\int\limits_{\Omega_2}+\int\limits_{\Omega_3}+\int\limits_{\Omega_4} \right)
   \frac{l^\alpha q^{1-\alpha}}{l+q}\,d\sigma_\psi\,,
\end{equation}
and we need to study each of the integrals separately. Let $\epsilon_1, \epsilon_2$ be auxiliary variables introduced below by formulae \eqref{Th4-15} and \eqref{Th4-16} respectively. Recall that according to Lemma~\ref{Minimum-for-(l+q)}, p.~\pageref{Minimum-for-(l+q)} we have
$\l+q\geq 2\rho\cos\psi_0$ and then
\begin{equation}\label{Th4-15}
   \int\limits_{\Omega_1}\frac{l^\alpha q^{1-\alpha}}{l+q}\,d\sigma_\psi\leq
   \frac{l^\alpha(2\rho)^{1-\alpha}}{2\rho\cos\psi_0}|\Omega_1|=\epsilon_1\longrightarrow0
\end{equation}
for every $\tau>0$ as long as $\alpha>0$ and $m\rightarrow A$. Similarly,
\begin{equation}\label{Th4-16}
   \int\limits_{\Omega_4}\frac{l^\alpha q^{1-\alpha}}{l+q}\,d\sigma_\psi
   \leq\frac{(2\rho)^\alpha q^{1-\alpha}}{2\rho\cos\psi_0}|\Omega_4|=\epsilon_2
   \longrightarrow0
\end{equation}
for every $\tau>0$ as long as $\alpha<1$ and $m\rightarrow A$. Further,
\begin{equation}\label{Th4-17}
   \int\limits_{\Omega_2}\frac{l^\alpha q^{1-\alpha}}{l+q}\,d\sigma_\psi
   =\int\limits_{\Omega_2}\frac{q}{q+l}\left(\frac{l}{q}\right)^\alpha d\sigma_\psi
   \leq|\Omega_2|=\tau\quad\forall\,\alpha>0\,,
\end{equation}
since $q/(l+q)\leq1$ and $l/q\leq1$ while $\psi\in\Omega_2$. Similarly,
\begin{equation}\label{Th4-18}
   \int\limits_{\Omega_2}\frac{l^\alpha q^{1-\alpha}}{l+q}\,d\sigma_\psi
   =\int\limits_{\Omega_2}\frac{l}{q+l}\left(\frac{q}{l}\right)^{1-\alpha} d\sigma_\psi
   \leq|\Omega_3|=\tau
\end{equation}
for every $\alpha\leq1$. Note now that according to \eqref{Th4-15} and \eqref{Th4-16}, for every $\tau>0$ and for every $m$ sufficiently close to $A$, $\epsilon_1+\epsilon_2\leq\tau$. Therefore, \eqref{Th4-14} together with the inequalities above, implies that for every $\tau>0$ and for every $m$ sufficiently close to $A$ we have
\begin{equation}\label{Th4-19}
   \frac{|S(\rho)|}{4\rho}\varphi_\lambda(r(\eta))\leq\epsilon_1+\epsilon_2+2\tau\leq3\tau\,.
\end{equation}
This means that $\varphi_\lambda(r(\eta))\rightarrow0$ as long as $m\rightarrow A$ and $\alpha\in(0,1)$. Therefore, the description of Case 2 as well as the proof of Proposition~\ref{Limit-for-alpha-(0,k)} is complete.
\end{proof}

\begin{proof}[Proof of Proposition \ref{Limit-for-alpha-outside-[0,k]}]\label{Proof-of-Limit-for-alpha-outside-[0,k]}

In this proof we use notations introduced in the proof of Proposition~\ref{Limit-for-alpha-(0,k)} and so, a reader may refer to the Figure~\ref{Limit-Behavior-picture} above. Let us fix $\psi_0=\pi/3$ and let
\begin{equation}\label{Th4-21}
   \Omega_0=\{\widetilde{u}\in\Sigma\mid \pi-\psi_0=2\pi/3\leq\psi(\widetilde{u})\leq\pi \}\,.
\end{equation}
Combining the estimations \eqref{l-q-estimation-formulae}, p.~\pageref{l-q-estimation-formulae} together with the representation of $\varphi_\lambda(r)$ given in \eqref{Th4-10}, p.~\pageref{Th4-10}, and taking into account that $\alpha>k$, we obtain
\begin{equation}\label{Th4-22}
\begin{split}
    \frac{|S(\rho)|}{2\rho}\varphi_\lambda(r(\eta))
   & =\int\limits_{\Sigma}\frac{l^\alpha q^{k-\alpha}}{l+q}\,d\Sigma_{\widetilde{u}}
    \geq\int\limits_{\Omega_0}\frac{\rho^\alpha}{2\rho}
   (2(\rho-\eta))^{k-\alpha}d\Omega_0
   \\& =
   \frac{|\Omega_0|\rho^{\alpha-1}}{2^{\alpha-k+1}}\left(\frac{1}{\rho-\eta}\right)^{\alpha-k}\rightarrow\infty\,,
\end{split}
\end{equation}
as $\eta\rightarrow\rho$ or, equivalently, as $m\rightarrow A$. Thus, for every $\alpha>k$ the proof of Proposition~\ref{Limit-for-alpha-outside-[0,k]} is complete. Note now that the representation of $\varphi_\lambda(r)$ given in \eqref{Th4-10}, p.~\pageref{Th4-10}, together with Statement (A) of Theorem~\ref{Basic-theorem}, p.~\pageref{Basic-theorem}, yields
\begin{equation}\label{Th4-23}
    \varphi_\lambda(r(\eta))=\frac{1}{|S(\rho)|}\int\limits_{S(\rho)}\omega^\alpha(u,m) dS_u
    =\frac{1}{|S(\rho)|}\int\limits_{S(\rho)}\omega^{\alpha-k}(u,m) dS_u
\end{equation}
for all $\eta\neq\rho$. Therefore,
\begin{equation}\label{Th4-24}
\begin{split}
    & \lim\limits_{\eta\rightarrow\rho}\varphi_\lambda(r(\eta))=\infty\quad\text{for}\quad\alpha>k
    \\& \quad\Leftrightarrow\quad\lim\limits_{\eta\rightarrow\rho}\varphi_\lambda(r(\eta))=\infty\quad\text{for}\quad k-\alpha>k\,,
\end{split}
\end{equation}
which completes the proof of Proposition~\ref{Limit-for-alpha-outside-[0,k]}.
\end{proof}

\begin{proof}[Proof of Proposition \ref{Limit-for-alpha-0-and-k}]
From \eqref{Th4-23},
\begin{equation}\label{Th4-25}
    |S(\rho)|\varphi_\lambda(r(\eta))=\int\limits_{S(\rho)}\omega^k(u,m) dS_u
    =\int\limits_{S(\rho)}\omega^0(u,m) dS_u=|S(\rho)|
\end{equation}
for all $\eta\neq\rho$ and then, $\lim\limits_{\eta\rightarrow\rho}\varphi_\lambda(r(\eta))=1$ for $\alpha=0, k$. This completes the proof of Proposition~\ref{Limit-for-alpha-0-and-k}.
\end{proof}

\section{Asymptotic behavior of a radial eigenfunction $\varphi_\lambda(r)$ as $\lambda\rightarrow\pm\infty$ and $r$ is fixed.}

This section is split into two subsections. The first one is devoted to the case when $\lambda\rightarrow\infty$ and the second one describes the case when $\lambda\rightarrow-\infty$.

\subsection{Asymptotic behavior of $\varphi_\lambda(r)$ as $\lambda\rightarrow-\infty$.}

In this subsection we are going to describe the asymptotic behavior of $F(\alpha)=\int\limits_{S^k(\rho)} \omega^\alpha dS_u$, when $\alpha=k/2+s\in\mathbb{R}$ and $s\rightarrow\pm\infty$. Theorem~\ref{Laplace-method-Theorem} taken from~\cite{Olver}, (p.~81) and stated below will be used as the basic tool to obtain the leading term of the asymptotic decomposition of $F(k/2+s)$ and as a consequence, the leading term of $\varphi_\lambda(r)$ as $\lambda\rightarrow-\infty$.

\begin{theorem}[Laplace's Method from \cite{Olver}, p.~81, (Theorem 7.1)]\label{Laplace-method-Theorem} Let
\begin{equation}\label{Patch-16}
    I(s)=\int\limits_c^d e^{-sp(t)}q(t)dt
\end{equation}
and we want to study the behavior of $I(s)$ as $s\rightarrow\infty$. Assume $c$ is finite and $p(t)$ has a minimum over the interval $[c,d]$ at $t=c$. Assume also that
\begin{enumerate}
  \item $p(t)>p(c)$ when $t\in(c,d)$, and for every $\tau\in(a,b)$ the infimum of $p(t)-p(c)$ in $[\tau,d)$ is positive (in other words, the minimum of $p(t)$ is approached only at $c$);
  \item $p'(t)$ and $q(t)$ are continuous in a neighborhood of $c$, except possibly at $c$;
  \item for $t \rightarrow c^{+}$, we have $p(t)-p(c)\sim P(t-c)^\mu$, $q(t)\sim Q(t-c)^{\nu-1}$ and the first of this relation is differentiable. Here $P,\nu,\mu$ are positive real constants, and $Q$ is a real or complex constant;
  \item $I$ converges absolutely throughout its range for all sufficiently large $s$.
\end{enumerate}
Then
\begin{equation}\label{Patch-17}
I(s)\sim \frac{Q}{\mu}\Gamma(\nu/\mu)\frac{e^{-sp(c)}}{(Ps)^{\nu/\mu}}=L(s) \qquad \text{as}\,\,\, s\rightarrow\infty \,,
\end{equation}
which means that $I(s)-L(s)=o(L(s))$ as $s\rightarrow\infty$.
\end{theorem}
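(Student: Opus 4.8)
The plan is to prove this by the classical three-step recipe of Laplace's method: normalize, localize, and then evaluate the local contribution by reduction to a Gamma integral. First I would normalize by writing $I(s)=e^{-sp(c)}\int_c^d e^{-s(p(t)-p(c))}q(t)\,dt$, so that it suffices to treat the case $p(c)=0$ and show the remaining integral is asymptotic to $\frac{Q}{\mu}\Gamma(\nu/\mu)(Ps)^{-\nu/\mu}$.

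Second comes the localization step. Fix a small $\delta>0$ and split the integral at $c+\delta$. On $[c+\delta,d)$ hypothesis (1) furnishes a constant $\mu_\delta:=\inf_{[c+\delta,d)}\bigl(p(t)-p(c)\bigr)>0$; writing $e^{-s(p(t)-p(c))}=e^{-(s-s_0)(p(t)-p(c))}\,e^{-s_0(p(t)-p(c))}$ with $s_0$ chosen large enough that hypothesis (4) guarantees absolute convergence, one bounds the tail by $e^{-(s-s_0)\mu_\delta}\int_{c+\delta}^d e^{-s_0(p(t)-p(c))}|q(t)|\,dt=O(e^{-s\mu_\delta})$. Since the claimed leading term $L(s)$ decays only like a power of $s$, this tail is $o(L(s))$ and can be discarded.

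Third is the local contribution on $[c,c+\delta]$. By hypothesis (3), with the first asymptotic relation differentiable, $p'(t)\sim P\mu(t-c)^{\mu-1}>0$, so for $\delta$ small $p$ is strictly increasing there; substituting $v=p(t)-p(c)$ gives $\int_0^{v_\delta}e^{-sv}\hat q(v)\,dv$ with $\hat q(v)=q(t(v))/p'(t(v))$. Combining $q(t)\sim Q(t-c)^{\nu-1}$, $t-c\sim (v/P)^{1/\mu}$ and $1/p'(t)\sim \frac{1}{\mu}P^{-1/\mu}v^{(1-\mu)/\mu}$ yields $\hat q(v)\sim \frac{Q}{\mu}P^{-\nu/\mu}v^{\nu/\mu-1}$ as $v\to 0^+$. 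For any $\varepsilon>0$ choose $v_\delta$ so small that $\hat q(v)$ is squeezed between $(1\pm\varepsilon)\frac{Q}{\mu}P^{-\nu/\mu}v^{\nu/\mu-1}$ (treating $\Re Q$ and $\Im Q$ separately when $Q$ is complex); bound the integral above and below by the corresponding model integrals, extend those to $[0,\infty)$ at the cost of another exponentially small error, and evaluate by the substitution $u=sv$, using $\int_0^\infty e^{-u}u^{\nu/\mu-1}\,du=\Gamma(\nu/\mu)$, which converges since $\nu/\mu>0$. Letting $s\to\infty$ and then $\varepsilon\to 0$ produces the stated asymptotic; equivalently, one may invoke Watson's lemma directly on $\int_0^{v_\delta}e^{-sv}\hat q(v)\,dv$.

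The main obstacle I expect is making the squeeze in the local step fully rigorous: one must keep track of the fact that $\delta$, hence $v_\delta$, depends on $\varepsilon$, commute the limits $s\to\infty$ and $\varepsilon\to 0$ in the right order, and accommodate a complex constant $Q$ together with a possibly non-integer, possibly negative exponent $\nu-1$ (integrability near $c$ is nonetheless fine because $\nu>0$). The localization estimate and the Gamma-integral evaluation are otherwise entirely routine.
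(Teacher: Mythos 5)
This theorem is not proved in the paper at all: it is quoted verbatim as a known tool, with the citation to Olver (Theorem 7.1, p.~81), and is then only \emph{applied} in the proof of Theorem~\ref{Asymptotic-behavior-Theorem}. So there is no in-paper argument to compare yours against; the relevant comparison is with Olver's own proof, and your outline is essentially that proof: normalize by $e^{-sp(c)}$, discard the tail $[c+\delta,d)$ using hypotheses (1) and (4) (the $e^{-(s-s_0)\mu_\delta}$ trick is exactly the right way to use absolute convergence at a single large $s_0$), and convert the local piece by the monotone substitution $v=p(t)-p(c)$ into a Watson-lemma integral, where your computation $\hat q(v)\sim \frac{Q}{\mu}P^{-\nu/\mu}v^{\nu/\mu-1}$ is correct.

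One small caveat on the local step: a literal two-sided squeeze between $(1\pm\varepsilon)$ multiples of the model integrand does not quite work when $Q$ is complex or when $\Re Q$ or $\Im Q$ vanishes or changes sign, and your parenthetical ``treat $\Re Q$ and $\Im Q$ separately'' does not fully repair this. The clean fix is additive rather than multiplicative: write $\hat q(v)=\frac{Q}{\mu}P^{-\nu/\mu}v^{\nu/\mu-1}+R(v)$ with $R(v)=o\bigl(v^{\nu/\mu-1}\bigr)$ as $v\to0^+$, note that for any $\varepsilon>0$ one has $|R(v)|\le\varepsilon v^{\nu/\mu-1}$ on $[0,v_\delta]$ for $v_\delta$ small, and bound $\int_0^{v_\delta}e^{-sv}|R(v)|\,dv\le\varepsilon\,\Gamma(\nu/\mu)s^{-\nu/\mu}$, which disposes of the order-of-limits worry you flag (first $s\to\infty$, then $\varepsilon\to0$ is never actually needed; the estimate is uniform once $v_\delta$ is fixed). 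With that adjustment your argument is complete and matches the standard (Olver's) proof of the cited result.
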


From now on we are going to prove the following Theorem.

\begin{theorem}\label{Asymptotic-behavior-Theorem}
If $s\rightarrow\pm\infty$, then
\begin{equation}\label{Asymptotic-behavior-Formula-1}
    F(u+s)\sim \left(\frac{\pi\rho(\rho^2-\eta^2)}{\eta|s|}\right)^{k/2}
    \left(\frac{\rho+\eta}{\rho-\eta}\right)^{|s|}
\end{equation}
or equivalently, if $\lambda\rightarrow-\infty$, then
\begin{equation}\label{Asymptotic-behavior-Formula-2}
    \varphi_\lambda(r)\sim
    \left(\frac 2\rho\right)^{k/2}\frac{\Gamma\left(\frac{k+1}{2}\right)}{\sqrt{\pi}\sinh(r/\rho)}
    \cdot
    \frac{e^{r\sqrt{-\lambda}}}{(-\lambda)^{k/4}}\,.
\end{equation}
\end{theorem}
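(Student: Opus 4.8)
The plan is to read off the required asymptotics from the one–dimensional integral representation of $F(k/2+s)$, apply Laplace's method (Theorem~\ref{Laplace-method-Theorem}) with the exponentially large parameter $s$, and then translate the result back to the variables $r$ and $\lambda$ by substitution.

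First I would record the reductions. For $\alpha=k/2+s$ with $s$ real one has $\lambda=(\alpha k-\alpha^{2})/\rho^{2}=(k^{2}/4-s^{2})/\rho^{2}$, so the regime $\lambda\to-\infty$ is exactly $s\to\pm\infty$ (and, by Theorem~\ref{Radial-Vanishing-EF-Theorem}, $\alpha$ is genuinely real there). By Statement~(A) of Theorem~\ref{Basic-theorem} we have $F(\alpha)=F(k-\alpha)$, hence $F(k/2+s)=F(k/2-s)$; thus it suffices to analyse $s\to+\infty$ and replace $s$ by $|s|$ at the end. Next, writing the integral over $S^{k}(\rho)$ in geodesic polar form about the direction of $u$ and using the explicit $\omega=\omega(\theta)$ from \eqref{Three_D_Dirichlet-2} together with $dS_{u}=\rho^{k}\sigma_{k-1}\sin^{k-1}\theta\,d\theta$, one gets
\begin{equation}
    F(k/2+s)=\rho^{k}\sigma_{k-1}\int_{0}^{\pi}\omega(\theta)^{k/2+s}\sin^{k-1}\theta\,d\theta
\end{equation}
(equivalently, one may pass to the variable $\psi$ by Lemma~\ref{VarExchange-Lemma}). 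Here $\omega(\theta)$ is smooth and strictly increasing on $[0,\pi]$ and attains its maximum $\omega(\pi)=(\rho+\eta)/(\rho-\eta)$ only at the endpoint $\theta=\pi$.

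Then I would apply Laplace's method. Put $\theta=\pi-t$ to move the maximum to the left endpoint and write $\omega(\pi-t)^{k/2+s}\sin^{k-1}(\pi-t)=\omega(\pi)^{s}\,e^{-sp(t)}q(t)$ with $p(t)=\ln\omega(\pi)-\ln\omega(\pi-t)\ge 0$ and $q(t)=\omega(\pi-t)^{k/2}\sin^{k-1}t$; the point is that the factor $\omega^{k/2}$ goes into the smooth, nonvanishing amplitude $q$, not into the phase. As $t\to0^{+}$ one has $p(t)\sim \dfrac{\rho\eta}{(\rho-\eta)^{2}}\,t^{2}$ and $q(t)\sim\omega(\pi)^{k/2}t^{k-1}$, so in the notation of Theorem~\ref{Laplace-method-Theorem} we have $\mu=2$, $\nu=k$, $P=\rho\eta/(\rho-\eta)^{2}$, $Q=\omega(\pi)^{k/2}$, and the remaining hypotheses (the infimum condition away from $t=0$, continuity, absolute convergence for large $s$) follow at once from smoothness and strict monotonicity of $\omega$. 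The theorem then yields
\begin{equation}
    F(k/2+s)\sim \rho^{k}\sigma_{k-1}\,\omega(\pi)^{s}\cdot\frac{Q}{2}\,\Gamma\!\left(\frac{k}{2}\right)(Ps)^{-k/2},
\end{equation}
and a short simplification using $\sigma_{k-1}\Gamma(k/2)=2\pi^{k/2}$ together with $\omega(\pi)^{k/2}\bigl((\rho-\eta)^{2}/(\rho\eta)\bigr)^{k/2}=\bigl((\rho^{2}-\eta^{2})/(\rho\eta)\bigr)^{k/2}$ collapses this to \eqref{Asymptotic-behavior-Formula-1}.

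Finally I would convert to $r$ and $\lambda$ via $\varphi_{\lambda}(r)=F(\alpha)/|S(\rho)|$ with $|S(\rho)|=\rho^{k}\sigma_{k}$, $\sigma_{k}=2\pi^{(k+1)/2}/\Gamma((k+1)/2)$, using the dictionary $\eta=\rho\tanh(r/2\rho)$, $(\rho+\eta)/(\rho-\eta)=e^{r/\rho}$, $\rho^{2}-\eta^{2}=\rho^{2}/\cosh^{2}(r/2\rho)$ (so that $(\rho^{2}-\eta^{2})/\eta=2\rho/\sinh(r/\rho)$), and $|s|^{2}=k^{2}/4-\rho^{2}\lambda$, which gives $|s|=\rho\sqrt{-\lambda}\,(1+O(1/(-\lambda)))$; substituting into \eqref{Asymptotic-behavior-Formula-1} produces \eqref{Asymptotic-behavior-Formula-2}. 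I expect the main obstacle to be twofold: verifying the Laplace hypotheses with the correct local exponents at $\theta=\pi$ (in particular keeping $\omega^{k/2}$ in the amplitude rather than the phase, so it affects only the constant $Q$), and the bookkeeping of the last step — one must check that the correction in $|s|=\rho\sqrt{-\lambda}+O((-\lambda)^{-1/2})$ does not survive in the leading term, which is obvious for the algebraic prefactor but more delicate for the exponential, where one needs $r|s|/\rho-r\sqrt{-\lambda}\to0$ as $\lambda\to-\infty$.
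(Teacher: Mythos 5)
Correct, and essentially the paper's own argument: the paper uses the symmetry $F(k/2+s)=F(k/2-s)$ to write the integrand as $e^{-s\ln\omega}\,\omega^{k/2}\sin^{k-1}t$ with the phase minimized at $t=0$, whereas you keep the exponent $k/2+s$, shift $\theta=\pi-t$ and factor out $\omega(\pi)^{s}$ --- a mirror image of the same Laplace-method computation with $\mu=2$, $\nu=k$, followed by the same conversion via $(\rho+\eta)/(\rho-\eta)=e^{r/\rho}$ and $|s|\sim\rho\sqrt{-\lambda}$. Your closing check that $r\bigl(|s|/\rho-\sqrt{-\lambda}\bigr)\to0$ is precisely the point the paper glosses over when it asserts $\bigl((\rho+\eta)/(\rho-\eta)\bigr)^{|s|}\sim\bigl((\rho+\eta)/(\rho-\eta)\bigr)^{\rho\sqrt{-\lambda}}$.
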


\begin{proof}[Proof of Theorem \ref{Asymptotic-behavior-Theorem}.]

Fix $r$ and recall that
\begin{equation}
    \varphi_\lambda(r(\eta))=\frac{1}{|S^k(\rho)|}\int\limits_{S^k(\rho)}\omega^{\alpha}(m,u)dS_u\,,
\end{equation}
where $|m|=\eta=\rho\tanh(r/\rho)$. Let us set $F(\alpha)=|S^k(\rho)|\cdot\varphi_\lambda(r(\eta))$ and
\begin{equation}
    \omega(t)=\frac{\rho^2-\eta^2}{\rho^2+\eta^2+2\rho\eta\cos(t)}\,, \quad t\in[0,\pi]\,.
\end{equation}
Recall from \eqref{Three_D_Dirichlet-5} - \eqref{Iff-conditions-for-Lambda-real}, p.~\pageref{Iff-conditions-for-Lambda-real} that $\lambda$ is real if and only if
\begin{equation}\label{Lambda-Real-Conditions}
    \alpha=\frac k2\pm s\quad\text{or}\quad\alpha=\frac k2\pm ib\,,\quad s,b\in\mathbb{R}\,.
\end{equation}
Observe now that $\lambda\rightarrow-\infty$ means that $\alpha=k/2\pm s$ and $s\rightarrow\infty$. As we have seen, $F(k/2+s)=F(k/2-s)$. Hence,
\begin{equation}
    F(k/2+s)=\int\limits_{S^k(\rho)}\omega^{k/2-s}dS_u
    =\rho^k\sigma_{k-1}\int\limits_0^{\pi}e^{-s\ln\omega}\omega^{k/2}\sin^{k-1}(t)\,dt\,.
\end{equation}

Note that the last integral has precisely the same form as the integral in~\eqref{Patch-16} from Laplace's Method Theorem and then, the behavior of \\ $F(k/2\pm{s})$ can be investigated by using Laplace's Method Theorem~\ref{Laplace-method-Theorem}, p.~\pageref{Laplace-method-Theorem}. To get the asymptotic term let us put
$$p(t)=\ln\omega(t),\quad q(t)=\omega^{k/2}\sin^{k-1}(t)\quad\text{and}\quad t\in[0,\pi]\,.$$
Note that $p(t)$ is strictly increasing on $[0,\pi]$ and the Taylor series decomposition yields
\begin{equation}\label{Taylor-for-p}
p(t)-p(0)\sim (1/2)p''(0) (t-0)^2=\frac{\rho\eta}{(\rho+\eta)^2}(t-0)^2 \,,
\end{equation}
 which implies $\mu=2$ and $P=\rho\eta/(\rho+\eta)^2$. The direct computation shows that the relation \eqref{Taylor-for-p} is differentiable. Indeed,
\begin{equation}
    p^{'}(t)=\frac{2\rho\eta\sin(t)}{\rho^2+\eta^2+2\rho\eta\cos(t)}\sim\frac{2\rho\eta t}{(\rho+\eta)^2}
    \quad\text{as}\quad t\rightarrow 0\,,
\end{equation}
since
\begin{equation}
    \frac{2\rho\eta\sin(t)}{\rho^2+\eta^2+2\rho\eta\cos(t)}-\frac{2\rho\eta t}{(\rho+\eta)^2}
    =o\left(\frac{2\rho\eta t}{(\rho+\eta)^2}\right)
    \quad\text{as}\quad t\rightarrow 0\,.
\end{equation}
Note also that if $t\rightarrow0$, then $q(t)\sim [(\rho-\eta)/(\rho+\eta)]^{k/2}t^{k-1}$, which yields
\begin{equation}
    Q=\left(\frac{\rho-\eta}{\rho+\eta}\right)^{k/2}t^{k-1}\quad\text{and}\quad\nu=k\,.
\end{equation}
Conditions~2 and~4 at the Laplace's Method Theorem can also be easily seen. Therefore,
\begin{equation}\label{asymptotic-Last-Formula}
    F\left(\frac k2+s\right)\sim \frac{\rho^k\sigma_{k-1}}{2}\left.
    \left(\frac{\rho-\eta}{\rho+\eta}\right)^{k/2}\Gamma\left(\frac k2\right)e^{-sp(0)}\right/
    \left(\frac{\rho\eta s}{(\rho+\eta)^2}\right)^{k/2}
\end{equation}
as $s\rightarrow\infty$. Since $p(0)=\ln\frac{\rho-\eta}{\rho+\eta}$, $F(k/2+s)=F(k/2-s)$ and $\sigma_{k-1}=2\pi^{k/2}/\Gamma(k/2)$, we can observe that~\eqref{asymptotic-Last-Formula} leads directly to~\eqref{Asymptotic-behavior-Formula-1}.

Recall that $\lambda\rho^2=\alpha k-\alpha^2$ and $\alpha=k/2\pm s$ yields $|s|=\sqrt{k^2/4-\lambda\rho^2}$. Observe also that if $s\rightarrow\pm\infty$, then
\begin{equation}\label{Equivalences}
    |s|\sim\rho\sqrt{-\lambda}\quad\text{and}\quad\left(\frac{\rho+\eta}{\rho-\eta}\right)^{|s|}\sim
    \left(\frac{\rho+\eta}{\rho-\eta}\right)^{\rho\sqrt{-\lambda}}\,.
\end{equation}
If take into account that $\sigma_{k-1}=2\pi^{k/2}/\Gamma(k/2)$, then the combination of \eqref{Equivalences} and \eqref{Asymptotic-behavior-Formula-1} leads to \eqref{Asymptotic-behavior-Formula-2}. This completes the proof of Theorem~\ref{Asymptotic-behavior-Theorem}.
\end{proof}

\subsection{Asymptotic behavior of $\varphi_\lambda(r)$ as $\lambda\rightarrow+\infty$.}

In this subsection we are going to describe the asymptotic behavior of $F(\alpha)=\int\limits_{S^k(\rho)} \omega^\alpha dS_u$, when $\alpha=k/2+ib\in\mathbb{R}$ and $b\rightarrow\pm\infty$. The asymptotic behavior of $\varphi_\lambda(r)=F(\alpha)/|S^k(\rho)|$ depends on the dimension of $H^{k+1}_\rho$. This is why we consider cases $k=1$, $k=2$ and $k\geq3$ separately. Theorem~\ref{Stationary-Phase-Theorem} taken from~\cite{Olver}, (p.~101) and stated below will be used as the basic tool to obtain the leading term of the asymptotic decomposition of $F(1/2+ib)$ and as a consequence, the leading term of $\varphi_\lambda(r)$ as $\lambda\rightarrow+\infty$ and $k=1$. To investigate the asymptotic behavior for $k>1$ we use integration by parts.

\begin{theorem}[Stationary Phase Theorem, \cite{Olver}, p.~101, (Theorem 13.1)]\label{Stationary-Phase-Theorem}
Suppose that in integral
\begin{equation}
    I(b)=\int\limits_c^d e^{ibp(t)}q(t)\,dt
\end{equation}
the limits $c$ and $d$ are independent on $b$. $p(t)$ and $q(t)$ are independent on $x$, $p(t)$ being real and $q(t)$ either real or complex. We assume that in the closure of $(c,d)$, the only possible point at which $p'(t)$ vanishes is $c$. In addition:
\begin{enumerate}
  \item In $(c,d)$, the functions $p'(t)$ and $q(t)$ are continuous, $p'(t)>0$, and $p''(t)$ and $q'(t)$ have at most a finite number of discontinuities and infinities.
  \item As $t\rightarrow c^+$ $$p(t)-p(c)\sim P(t-c)^\mu,\quad q(t)\sim Q(t-c)^{\nu-1},\quad \nu<\mu,$$
    the first of these relations being twice differentiable and the second one is differentiable. Here $P$, $\mu$ and $\nu$ are positive constants, and $Q$ is a real or complex constant. When $\mu=1$ this is to be interpreted as $p'(t)\rightarrow P$ and $p''(t)=o\{(t-c)^{-1}\}$. Similarly, $q'(t)=o\{(t-c)^{-1}\}$ in the case $\nu=1$.
  \item
        $$\int\limits_{\tau}^d \left|\left(\frac{q(t)}{p'(t)}\right)'\right|dt<\infty\quad\text{for each}\quad \tau\in(c,d)\,.$$
  \item As $t\rightarrow d^{-}$, $q(t)/p'(t)$ tends to a finite limit, and this limit is zero when $p(b)=\infty$.
\end{enumerate}
Then $$I(b)\sim e^{\nu\pi i/(2\mu)}\frac{Q}{\mu}\Gamma\left(\frac \nu\mu\right)\frac{e^{ibp(c)}}{(Pb)^{\nu/\mu}}$$
\end{theorem}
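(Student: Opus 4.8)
The final statement is F.~W.~J.~Olver's stationary phase theorem, which the paper quotes verbatim from \cite{Olver}, p.~101, rather than reproving; strictly speaking the ``proof'' here is the citation. For completeness, the reasoning behind it is the classical method of stationary phase, and the plan would run as follows. First I would localize: fix a small $\delta>0$ and split
\begin{equation}
I(b)=\int_c^{c+\delta}e^{ibp(t)}q(t)\,dt+\int_{c+\delta}^{d}e^{ibp(t)}q(t)\,dt\,.
\end{equation}
On the second interval $p'(t)$ is continuous and strictly positive, so one writes $e^{ibp(t)}=(ibp'(t))^{-1}\,\frac{d}{dt}\,e^{ibp(t)}$ and integrates by parts once: the boundary term at $d$ is controlled by hypothesis~(4) (and vanishes when $p(d)=\infty$), the boundary term at $c+\delta$ is $O(1/b)$, and what remains is bounded by $b^{-1}\int_{c+\delta}^{d}\bigl|(q/p')'\bigr|\,dt<\infty$ by hypothesis~(3). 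Hence this part is $O(1/b)=o(b^{-\nu/\mu})$ because $\nu<\mu$ --- which is exactly why the condition $\nu<\mu$ is imposed.

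For the leading contribution I would change variables on $[c,c+\delta]$ by $v=p(t)-p(c)$, legitimate since $p$ is increasing there. Using $p(t)-p(c)\sim P(t-c)^\mu$ one has $t-c\sim(v/P)^{1/\mu}$, and together with $q(t)\sim Q(t-c)^{\nu-1}$ the quantity $q(t)\,dt/dv$ is asymptotic to $\frac{Q}{\mu}P^{-\nu/\mu}v^{\nu/\mu-1}$ as $v\to0^+$, so that
\begin{equation}
\int_c^{c+\delta}e^{ibp(t)}q(t)\,dt\sim e^{ibp(c)}\,\frac{Q}{\mu P^{\nu/\mu}}\int_0^{\delta'}e^{ibv}v^{\nu/\mu-1}\,dv\,,
\end{equation}
the tail beyond $\delta'$ being again $O(1/b)$. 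The model integral $\int_0^{\infty}e^{ibv}v^{s-1}\,dv$ with $s=\nu/\mu\in(0,1)$ equals, after a rotation of the contour onto the real axis, $\Gamma(s)(-ib)^{-s}=\Gamma(s)\,b^{-s}e^{i\pi s/2}$; assembling the pieces gives
\begin{equation}
I(b)\sim e^{\nu\pi i/(2\mu)}\,\frac{Q}{\mu}\,\Gamma\!\left(\frac{\nu}{\mu}\right)\frac{e^{ibp(c)}}{(Pb)^{\nu/\mu}}\,.
\end{equation}

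The genuinely delicate step --- and the reason the hypotheses (1)--(4) are stated so carefully --- is the \emph{uniform} control of the two asymptotic replacements $p(t)-p(c)\approx P(t-c)^\mu$ and $q(t)\approx Q(t-c)^{\nu-1}$ over the whole of $[c,c+\delta]$: one must verify that the errors they introduce, once multiplied by the oscillatory factor and integrated, are $o(b^{-\nu/\mu})$ rather than merely $O(b^{-\nu/\mu})$. This is dispatched by the differentiability clauses of hypothesis~(2) together with a further integration by parts applied to the error terms, and it is precisely the part carried out in detail in \cite{Olver}. In this paper the theorem is invoked as a black box: applied with $p(t)=\ln\omega(t)$ and $q(t)=\omega^{k/2}\sin^{k-1}t$ it produces the leading term of $F(k/2+ib)$, hence of $\varphi_\lambda(r)$ as $\lambda\to+\infty$, in the dimension $k=1$.
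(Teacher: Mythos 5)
You correctly identify that the paper does not prove this statement but quotes it verbatim from Olver's \emph{Asymptotics and Special Functions} (p.~101, Theorem~13.1) and uses it as a black box; the only ``proof'' in the source is the citation, so there is nothing in the paper to compare against. Your supplementary sketch of the classical stationary-phase argument is correct in its main lines (localization near $c$, integration by parts away from $c$ with hypotheses (3)--(4) controlling the tail, the substitution $v=p(t)-p(c)$ yielding $q\,dt/dv\sim\frac{Q}{\mu}P^{-\nu/\mu}v^{\nu/\mu-1}$, and the contour-rotated model integral $\int_0^\infty e^{ibv}v^{s-1}\,dv=\Gamma(s)b^{-s}e^{i\pi s/2}$), and you rightly flag that the real work in Olver is establishing uniform $o(b^{-\nu/\mu})$ control of the two asymptotic replacements on $[c,c+\delta]$, which your outline defers to the differentiability clauses in hypothesis~(2).
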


The theorem below gives the asymptotic behavior of $\varphi_\lambda(r)$ if $\lambda\rightarrow+\infty$.

\begin{theorem}\label{Lambda-Positively-Large-Theorem} Fix $r$. Then the asymptotic behavior of $\varphi_\lambda(r)$ as $\lambda\rightarrow\infty$ depends on $k$ as follows.
\begin{description}
  \item[(A)] If $k=1$, then
    \begin{equation}\label{Asymptotic-for-k=1-Stationary-Fase}
        \varphi_\lambda(r)=\sqrt{\frac{2}{\pi\rho\sinh(r/\rho)}}\cdot\frac{\cos(\pi/4-r\sqrt{\lambda})}{\lambda^{1/4}}
        +o(1/\lambda^{1/4})\,.
    \end{equation}
  \item[(B)] If $k=2$, then
    \begin{equation}\label{Asymptotic-for-k=2-Stationary-Fase}
        \varphi_\lambda(r)=\frac{\sin(br/\rho)}{b\sinh(r/\rho)}=\frac{1}{\rho\sinh(r/\rho)}\cdot
        \frac{\sin(r\sqrt{\lambda})}{\sqrt{\lambda}}+o(1/\sqrt{\lambda})\,.
    \end{equation}
  \item[(C)] If $k\geq3$, then $\varphi_\lambda(r)=o(1/\sqrt{\lambda})$.
  \item[(D)] In particular, if $k=4$, then
    \begin{equation}\label{Asymptotic-for-k=4-Stationary-Fase}
        \varphi_\lambda(r)=\left(\frac{\sqrt{3}}{\rho\sinh(r/\rho)}\right)^2\frac{\cos(r\sqrt{\lambda})}{\lambda}+o(1/\lambda)\,.
    \end{equation}
\end{description}
\end{theorem}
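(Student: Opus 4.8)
The starting point for all four cases is the integral representation of a vanishing radial eigenfunction. Since $\lambda\to+\infty$ forces $\alpha=k/2+ib$ with $b=\pm\sqrt{\lambda\rho^{2}-k^{2}/4}\to\pm\infty$, Lemma~\ref{Lemma-532}(C) gives
$$\varphi_\lambda(r(\eta))=\frac{4\rho(\rho^{2}-\eta^{2})^{k/2}\sigma_{k-1}}{|S^{k}(\rho)|}\int_{0}^{\pi/2}\frac{\sin^{k-1}\psi}{l+q}\cos\Bigl(b\ln\tfrac lq\Bigr)\,d\psi ,$$
and, unfolding back to the angle $\theta$, $F(k/2+ib)=|S^{k}(\rho)|\,\varphi_\lambda=\rho^{k}\sigma_{k-1}\int_{0}^{\pi}e^{ib\ln\omega(t)}\omega^{k/2}(t)\sin^{k-1}t\,dt$, where the imaginary part of the last integral vanishes by Corollary~\ref{imaginary-part-vanishing-corollary}. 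I will use repeatedly the identities $lq=\rho^{2}-\eta^{2}$, $\ln\omega|_{\psi=0}=-r/\rho$, $\ln\omega|_{\psi=\pi/2}=0$, $(\rho^{2}-\eta^{2})/(2\rho\eta)=1/\sinh(r/\rho)$, and the substitution $d\psi=\tfrac{l+q}{4\eta\sin\psi}\,d(\ln\omega)$ from the Corollary on p.~\pageref{Substitution-for-d-psi}. Case (B) is then immediate: putting $k=2$, $R=\rho$, $r=\eta$ in Corollary~\ref{Basic-Th-Integrable-Case-general-Item} and dividing by $|S^{2}(\rho)|=4\pi\rho^{2}$ yields the \emph{exact} identity $\varphi_\lambda(r)=\sin(br/\rho)/(b\sinh(r/\rho))$ with $b=\rho\sqrt{\lambda+\kappa}$; writing $\sqrt{\lambda+\kappa}=\sqrt\lambda(1+O(1/\lambda))$ gives $r\sqrt{\lambda+\kappa}=r\sqrt\lambda+o(1)$ and $1/b=1/(\rho\sqrt\lambda)+o(1/\sqrt\lambda)$, which is \eqref{Asymptotic-for-k=2-Stationary-Fase}.

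For case (A) I apply the Stationary Phase Theorem~\ref{Stationary-Phase-Theorem} to $\int_{0}^{\pi}e^{ib\ln\omega(t)}\omega^{1/2}(t)\,dt$ after splitting the range at $t=\pi/2$. The phase $p(t)=\ln\omega(t)$ is strictly increasing on $(0,\pi)$ with $p'(0)=p'(\pi)=0$, so each half has exactly one stationary point, at an endpoint. Near $t=0$ one has $p(t)-p(0)\sim\frac{\rho\eta}{(\rho+\eta)^{2}}t^{2}$ and $q(t)\sim(\tfrac{\rho-\eta}{\rho+\eta})^{1/2}$, so $\mu=2$, $\nu=1$, and Theorem~\ref{Stationary-Phase-Theorem} contributes $e^{i\pi/4}\cdot\tfrac12(\tfrac{\rho-\eta}{\rho+\eta})^{1/2}\Gamma(\tfrac12)\,e^{-ibr/\rho}/(Pb)^{1/2}$, the quotient $Q/P^{1/2}$ simplifying to $(\tfrac{\rho^{2}-\eta^{2}}{\rho\eta})^{1/2}=\sqrt{2/\sinh(r/\rho)}$. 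The half $[\pi/2,\pi]$ is handled the same way after $t\mapsto\pi-t$; there $p$ has a \emph{maximum} at the endpoint, which flips the Fresnel factor to $e^{-i\pi/4}$ and replaces $e^{-ibr/\rho}$ by $e^{+ibr/\rho}$, while the amplitude constant is unchanged. Adding the two halves turns $e^{i\pi/4}e^{-ibr/\rho}+e^{-i\pi/4}e^{ibr/\rho}$ into $2\cos(br/\rho-\pi/4)$, and dividing by $|S^{1}(\rho)|=2\pi\rho$ gives $\varphi_\lambda(r)\sim\sqrt{2/(\pi\rho\sinh(r/\rho))}\,\cos(br/\rho-\pi/4)/b^{1/2}$; since $b=\rho\sqrt{\lambda+\kappa/4}$, one has $br/\rho=r\sqrt\lambda+o(1)$ and $b^{1/2}=\rho^{1/2}\lambda^{1/4}(1+o(1))$, which is \eqref{Asymptotic-for-k=1-Stationary-Fase}.

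For cases (C) and (D) ($k\ge3$) I integrate by parts in the variable $\ln\omega$. Using $d\psi=\tfrac{l+q}{4\eta\sin\psi}d(\ln\omega)$, the representation above becomes $\varphi_\lambda=\tfrac{C_k}{4\eta}\int_{0}^{\pi/2}\sin^{k-2}\psi\,\cos(b\ln\omega)\,d(\ln\omega)$ with $C_k=\tfrac{4\rho(\rho^{2}-\eta^{2})^{k/2}\sigma_{k-1}}{|S^{k}(\rho)|}$; one integration by parts, via $\cos(b\ln\omega)\,d(\ln\omega)=\tfrac1b\,d\sin(b\ln\omega)$, kills both boundary terms, since $\sin(b\ln\omega)=0$ at $\psi=\pi/2$ and $\sin^{k-2}\psi=0$ at $\psi=0$ for $k\ge3$, leaving $\varphi_\lambda=-\tfrac{(k-2)C_k}{4\eta b}\int_{0}^{\pi/2}\sin(b\ln\omega)\sin^{k-3}\psi\cos\psi\,d\psi$. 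The remaining integral is bounded uniformly in $b$ and tends to $0$ by the Riemann–Lebesgue lemma ($\ln\omega$ is $C^{1}$ with non-vanishing derivative on $(0,\pi/2)$), so $\varphi_\lambda=o(1/b)=o(1/\sqrt\lambda)$, which is (C). For (D), with $k=4$, I integrate by parts once more: rewriting $\sin\psi\cos\psi\,d\psi=\tfrac{\cos\psi\,(l+q)}{4\eta}\,d(\ln\omega)$ and using $\sin(b\ln\omega)\,d(\ln\omega)=-\tfrac1b\,d\cos(b\ln\omega)$, the $\psi=\pi/2$ endpoint vanishes ($\cos\psi=0$) but the $\psi=0$ endpoint does \emph{not} — the Jacobian $1/\sin\psi$ cancels the zero of $\sin\psi\cos\psi$, leaving the nonzero amplitude $\cos\psi(l+q)|_{\psi=0}=2\rho$ against the oscillatory factor $\cos(b\ln\omega)|_{\psi=0}=\cos(br/\rho)$, while the leftover integral is again $o(1)$ by Riemann–Lebesgue. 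This yields $\int_{0}^{\pi/2}\sin(b\ln\omega)\sin\psi\cos\psi\,d\psi=\tfrac{\rho}{2\eta b}\cos(br/\rho)+o(1/b)$, hence $\varphi_\lambda=\pm\tfrac{C_4\rho}{2\eta^{2}b^{2}}\cos(br/\rho)+o(1/b^{2})$; inserting $C_4=\tfrac{4\rho(\rho^{2}-\eta^{2})^{2}\sigma_3}{\sigma_4\rho^{4}}$, using $\sigma_3/\sigma_4=3/4$, $(\rho^{2}-\eta^{2})^{2}/(4\rho^{2}\eta^{2})=1/\sinh^{2}(r/\rho)$, $b^{2}=\lambda\rho^{2}-4\sim\lambda\rho^{2}$ and $br/\rho=r\sqrt\lambda+o(1)$ gives \eqref{Asymptotic-for-k=4-Stationary-Fase}.

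The routine parts are the Taylor expansions feeding Theorems~\ref{Laplace-method-Theorem}/\ref{Stationary-Phase-Theorem} and the algebra converting the Euclidean constants $\rho,\eta$ into $\sinh(r/\rho)$. The genuinely delicate points are the endpoint analyses in (A) and (D). In (A) one must recognize that both stationary points sit at endpoints and that the right half contributes the \emph{conjugate} Fresnel phase (because $p$ has a maximum, not a minimum, there), so the two halves assemble into a single cosine rather than cancelling. In (D) one must notice that although the amplitude $\sin^{k-3}\psi\cos\psi$ vanishes at $\psi=0$, the relevant oscillatory amplitude — that amplitude divided by $d(\ln\omega)/d\psi\propto\sin\psi$ — does not, so the $\psi=0$ endpoint (not any interior stationary point) produces the leading term; fixing the sign and the numerical constant of this term is the main bookkeeping hurdle.
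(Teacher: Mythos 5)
Your overall strategy tracks the paper's quite closely: Corollary~\ref{Basic-Th-Integrable-Case-general-Item} gives (B) exactly, and the two successive integrations by parts in the $\ln\omega$ variable are the paper's path for (C) and (D). For (A) you take a genuinely different (but equally valid) route: the paper applies the Stationary Phase Theorem to the $\psi$-integral $\int_0^{\pi/2} e^{ib\ln(l/q)}(l+q)^{-1}\,d\psi$, which has a \emph{single} endpoint stationary point at $\psi=0$; you instead work with the $\theta$-integral over $[0,\pi]$, split at $\pi/2$, and combine \emph{two} endpoint contributions with conjugate Fresnel phases. Your endpoint bookkeeping (both amplitudes equal to $\sqrt{(\rho^2-\eta^2)/(\rho\eta)}$, the $e^{\pm i\pi/4}e^{\mp ibr/\rho}$ factors assembling into $2\cos(br/\rho-\pi/4)$) is correct, so the extra labor buys nothing but is not wrong.

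In (D) there is a concrete arithmetic gap. From your own formula in (C), the $k=4$ prefactor is $-\tfrac{(k-2)C_k}{4\eta b}=-\tfrac{C_4}{2\eta b}$, and your inner evaluation correctly gives $\int_0^{\pi/2}\sin(b\ln\omega)\sin\psi\cos\psi\,d\psi=\tfrac{\rho}{2\eta b}\cos(br/\rho)+o(1/b)$. Multiplying these yields
$$\varphi_\lambda(r)=-\frac{C_4\,\rho}{4\,\eta^2 b^2}\cos(br/\rho)+o(1/b^2)\,,$$
not $\pm\tfrac{C_4\rho}{2\eta^2 b^2}\cos(br/\rho)$: the denominator should be $4\eta^2 b^2$, and the sign is definitely a minus. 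With the corrected $4$, the constant does reduce to $\left(\sqrt{3}/(\rho\sinh(r/\rho))\right)^2$ as needed; with your extra factor of $2$ the coefficient comes out twice the claimed value, so your stated chain of algebra does not actually reproduce \eqref{Asymptotic-for-k=4-Stationary-Fase}. As for the sign you left as ``$\pm$'': a complete proof must resolve it, and resolving it gives $-\cos(r\sqrt\lambda)$. This is also what one gets from the explicit $H^5$ spherical function $\varphi_\lambda(r)\propto(\sinh r)^{-1}\tfrac{d}{dr}\bigl[\sin(br)/(b\sinh r)\bigr]$, whose large-$b$ leading term is $-3\cos(br)/(b^2\sinh^2 r)$, and it is also the Bessel asymptotic $J_{3/2}(x)\sim\sqrt{2/\pi x}\cos(x-\pi)=-\sqrt{2/\pi x}\cos x$. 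So the sign the method produces actually \emph{disagrees} with the plus sign printed in Theorem~\ref{Lambda-Positively-Large-Theorem}(D); the ``$\pm$'' papered over a real discrepancy that should have been flagged rather than left ambiguous.
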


\begin{proof}[Proof of Theorem~\ref{Lambda-Positively-Large-Theorem}]
Note that according to Theorem~\ref{Radial-Vanishing-EF-Theorem}, p.~\pageref{Radial-Vanishing-EF-Theorem}, the condition that $\lambda$ is real and runs to $\infty$ implies $\alpha=k/2+ib$, where $b\in\mathbb{R}$ and $\lambda\rho^2=\alpha k-\alpha^2$. Recall also from \eqref{Three_D_Dirichlet-30}, p.~\pageref{Three_D_Dirichlet-30} that
\begin{equation}\label{Geometric-Representation-Radial-Solution}
    \varphi_\lambda(r(\eta))=\frac{4\rho(\rho^2-\eta^2)^{k/2}\sigma_{k-1}}{|S^k(\rho)|}
    \int\limits_0^{\pi/2}\frac{\sin^{k-1}\psi}{l+q}\cos\left(b\ln\frac lq\right)d\psi\,.
\end{equation}
Now we are ready to obtain proofs of all statements in Theorem~\ref{Lambda-Positively-Large-Theorem}.

\textbf{Proof of Statement (A)}.
If $k=1$, then $\sigma_0=2$ and then \eqref{Geometric-Representation-Radial-Solution} yields
\begin{equation}
    \varphi_\lambda(r(\eta))=\frac{4(\rho^2-\eta^2)^{1/2}}{\pi}\cdot
    \Re\int\limits_0^{\pi/2}\frac{e^{ib\ln(l/q)}}{l+q}\,d\psi\,.
\end{equation}
Let us set
\begin{equation}
    q(\psi)=\frac{1}{l+q},\quad p(\psi)=\ln\omega=\ln\frac lq,\quad I(b)=\int\limits_0^{\pi/2}\frac{e^{ib\ln(l/q)}}{l+q}\,d\psi
\end{equation}
and observe that $I(b)$ can be investigated using Theorem~\ref{Stationary-Phase-Theorem} above. Indeed the direct computation shows that $q(\psi)$ as well as $p(\psi)$ satisfy all of the conditions of Theorem~\ref{Stationary-Phase-Theorem} and we can compute the coefficients $P,\mu,Q,\nu$ as follows. First we observe that $p'(0)=0$ and $p''(0)=2\eta/\rho$, which yields
\begin{equation}
    p(\psi)-p(0)\sim\frac \eta\rho(\psi-0)^2\quad\text{and}\quad q(\psi)\sim\frac{1}{2\rho}(\psi-0)^{1-1}\,.
\end{equation}
Therefore, $$P=\frac \eta\rho,\quad \mu=2,\quad Q=\frac{1}{2\rho},\quad\nu=1\,,$$
and then
\begin{equation}
    I(b)\sim\frac{e^{i\pi/4}}{4\rho}\Gamma\left(\frac12\right)\left.e^{ib\ln\frac{\rho-\eta}{\rho+\eta}}\right/(b\eta/\rho)^{1/2}\,.
\end{equation}
If we take the real part of $I(b)$ and observe that
$$\Gamma(1/2)=\sqrt{\pi},\quad\ln\frac{\rho-\eta}{\rho+\eta}=-r/\rho,\quad b=\sqrt{\lambda\rho^2-k^2/4}\sim\rho\sqrt{\lambda}$$
as $\lambda\rightarrow\infty$, we arrive to \eqref{Asymptotic-for-k=1-Stationary-Fase}. This completes the proof of Statement~(A) of Theorem~\ref{Lambda-Positively-Large-Theorem}.

\textbf{Proof of Statement (B)}. The proof of Statement~(B) can be obtained as a combination of the asymptotic relationship $b\sim\rho\sqrt{\lambda}$ and the explicit solution obtained for the case $k=2$. Indeed, according to~\eqref{Varphi-of-b-Explicit-3-D}, p.~\pageref{Varphi-of-b-Explicit-3-D},
\begin{equation}
    \varphi_\lambda(r(\eta))=\frac{\rho^2-\eta^2}{2\eta\rho b}\sin\left(\frac{br}{\rho}\right)=
    \frac{\sin(br/\rho)}{b\sinh(r/\rho)}\,,
\end{equation}
which leads directly to \eqref{Asymptotic-for-k=2-Stationary-Fase}. This completes the proof of Statement~(B) of Theorem~\ref{Lambda-Positively-Large-Theorem}.

\textbf{Proof of Statement (C)}. If $k\geq3$, then recall from \eqref{Lower_Bound-22}, p.~\pageref{Lower_Bound-22} that
\begin{equation}
    \varphi_\lambda(r(\eta))=\frac{\rho(\rho^2-\eta^2)^{k/2}\sigma_{k-1}}{\eta|S^k(\rho)|\cdot|b|}
    \int\limits_0^{\pi/2}\sin\left(b\ln\frac ql\right) d\sin^{k-2}\psi\,.
\end{equation}
If we make the substitution $\tau=\ln(q/l)$, we can apply Riemann-Lebesque Lemma to show that the integral above vanishes as $b\rightarrow\infty$. If we recall that $b\sim\rho\sqrt{\lambda}$, then
\begin{equation}
    \varphi_\lambda(r(\eta))=\frac{(\rho^2-\eta^2)^{k/2}\sigma_{k-1}}{\eta|S^k(\rho)|}\cdot
    \frac{\tau(\sqrt{\lambda})}{\sqrt{\lambda}}\,,
\end{equation}
where $\tau(\sqrt{\lambda})\rightarrow0$ as $\lambda\rightarrow\infty$. This completes the proof of Statement~(C) of Theorem~\ref{Lambda-Positively-Large-Theorem}.

\textbf{Proof of Statement (D)}. If $k=4$, then the leading term can be computed using the integration by parts another time. Indeed, using the substitution for $d\psi$ obtained in~\eqref{Lower_Bound-21}, p.~\pageref{Lower_Bound-21} and the integration by parts, we have
\begin{equation}
\begin{split}
     \frac{\eta\rho^4\sigma_4\cdot|b|}{\rho(\rho^2-\eta^2)^2\sigma_3}\cdot\varphi_\lambda(r(\eta))  & =
     \left.\frac{(l+q)\cos\psi\cos(b\ln\omega)}{2\eta\rho}\right|_{\psi=0}^{\psi=\pi/2}
    \\& -\frac{1}{2\eta b}\int\limits_0^{\pi/2}\cos(b\ln\omega)[(l+q)\cos\psi]'d\psi\,.
\end{split}
\end{equation}
Note that according to~\eqref{(l+q)-Differentiation-formula}, p.~\pageref{(l+q)-Differentiation-formula}, $[(l+q)\cos\psi]'$ is an integrable function and then, we can apply Riemann-Lebesque Lemma as above and see that the last integral vanishes as $\lambda\rightarrow\infty$. If we take into account that
\begin{equation}
\begin{split}
     & b\sim\rho\sqrt{\lambda}\quad \text{and}\quad\frac{\rho^2-\eta^2}{\rho\eta}=\frac{2}{\sinh(r/\rho)}\quad\text{as well as}
     \quad\Gamma(2)=1,
     \\& \Gamma\left(\frac 52\right)=\frac{3\sqrt{\pi}}{4},\quad
     \sigma_{k-1}=\frac{2\pi^{k/2}}{\Gamma(k/2)}\quad\text{and then}\quad\frac{\sigma_3}{\sigma_4}=\frac34,
\end{split}
\end{equation}
we arrive at \eqref{Asymptotic-for-k=4-Stationary-Fase}. This completes the proof of Statement~(D) as well as the proof of Theorem~\ref{Lambda-Positively-Large-Theorem}.
\end{proof}

\chapter{Appendix.}

\section{The detailed description of level curves.}

In this section we shall see a more detailed description of the level set for the equation~\eqref{Level-set-equation-Appendix} below. Originally, a less detailed description was given in Proposition~\ref{Level-curve-description}, p.~\pageref{Level-curve-description}. Here we are going to use the notation introduced in the proof of Statement~(D) of Main Theorem~\ref{Basic-theorem}, see p.~\pageref{Level-Curve-Story-Proof}. Recall that $p$ is chosen in such a way that
\begin{equation}\label{p-levelset-Condition-Appendix}
        \cos(\zeta\ln\omega(\theta))\geq0\quad\text{for every}\quad(\xi,\theta)\in[-p,p]\times[0,\pi]
\end{equation}
and $Q=(k/2,\infty)\times(0,p)$.

\begin{theorem}[The description of level curves]\label{Level-curve-descri-Appendix} \
    \begin{enumerate}
      \item For every fixed $\beta=a+ib\in\overline{Q}$ the set of solutions $S(a,b)$ for the equation
      \begin{equation}\label{Level-set-equation-Appendix}
        W(\xi,\zeta)=\int\limits_{S^k}\omega^\xi\cos(\zeta\ln\omega)dS_y=W(a,b)
      \end{equation}
      is a $\mathcal{C}^1$ curve $\gamma_{(a,b)}(t)=\gamma(t)=(\xi(t),\zeta(t))\subseteq\overline{Q}$.
      \item No two of these curves have common points in $\overline{Q}$.
      \item There are three types of such level curves in $\overline{Q}$ sketched on the Figure below and described as follows.

\begin{figure}[!h]
    \centering
    \epsfig{figure=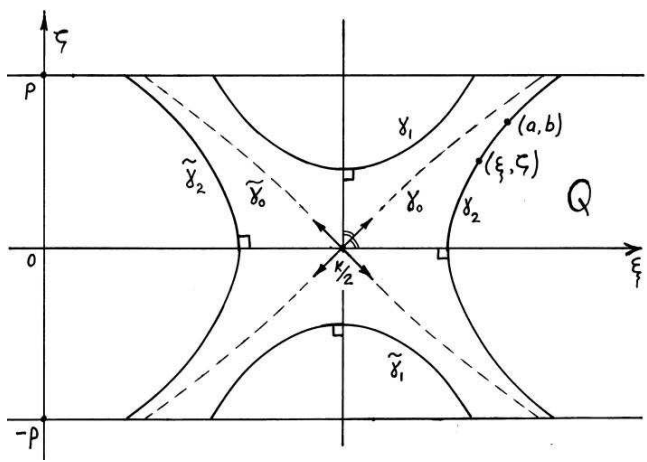,height=6cm}\\
    \caption{All possible level curves.}\label{Level-Curves-fig}
\end{figure}

      \begin{description}
        \item[First type $\gamma_1$,] a $\mathcal{C}^{\infty}$ curve $\gamma_1(t)=(t,v(t))$ defined on some finite interval $k/2\leq t\leq a_1$ such that $v(a_1)=p$. This curve starts on the vertical edge of $\overline{Q}\setminus\{(k/2,0)\}$ and is orthogonal to the edge, i.e., $v'(k/2)=0$. The function $v(t)$ is strictly increasing on the interval $[k/2,a_1]$ and $v'(t)>0$ for every $t\in(k/2,a_1]$.
        \item[Second type $\gamma_2$,] a $\mathcal{C}^{\infty}$ curve $\gamma_2(t)=(u(t),t)$ defined for every $t\in[0,p]$. This curve starts on the lower edge of $\overline{Q}\setminus\{(k/2,0)\}$ and is orthogonal to the edge, i.e., $u'(0)=0$. The function $u(t)$ is a strictly increasing on the interval $[0,p]$ and $u'(t)>0$ for every $t\in(0,p]$.
        \item[Third type $\gamma_0$,] a unique curve $\gamma_0(t,v(t))$ defined on some finite interval $k/2\leq t\leq a_1$ such that $v(a_1)=p$. This curve starts at the lower corner $(k/2,0)$ and it bisects the angle at the corner, i.e., $v'(k/2)=1$. The function $v(t)\in\mathcal{C}^{\infty}(k/2,a_1]$ is strictly increasing on the interval $[k/2,a_1]$ and $v'(t)$ is a positive continuous function of $t\in[k/2,a_1]$. This curve separates the curves of the first and the second types in $\overline{Q}$.
      \end{description}
    \end{enumerate}
\end{theorem}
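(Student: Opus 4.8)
The plan is to reduce everything to a local analysis at the boundary of $\overline Q$, taking Proposition~\ref{Level-curve-description} and the symmetry relations \eqref{Symmetry-Reations} as the starting point. Recall that that proposition already provides, for each $\beta=a+ib\in\overline Q$, a representation $S(a,b)=\{(t,v(t))\mid t\in[a_0,a_1]\}$ with $v$ continuous on $[a_0,a_1]$, of class $\mathcal C^\infty$ with $v'(t)>0$ on $(a_0,a_1)$, endpoints $(a_0,b_0)$ on the ``$L$-shaped'' boundary part $\Gamma_0$ and $(a_1,p)$ on the top edge $\Gamma_1$, and $v(t)>0$ (hence $W_\zeta(t,v(t))<0$ and the implicit function theorem applicable) for $t\in(a_0,a_1)$. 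Statement~(2) is then immediate, since the sets $S(a,b)$ are the level sets of the single-valued function $W$. For the trichotomy, observe that $(k/2,0)$ is the only point of $\overline Q$ where $\nabla W$ vanishes, by Claims~\ref{Partial-dW-d-xi-claim} and \ref{partial-vertical-claim}; since $W$ is strictly increasing along the horizontal part of $\Gamma_0$ and strictly decreasing along its vertical part, the endpoint $(a_0,b_0)$ lies on the open vertical edge when $W(a,b)<W(k/2,0)$, on the open lower edge when $W(a,b)>W(k/2,0)$, and equals $(k/2,0)$ exactly when $W(a,b)=W(k/2,0)$ --- this last alternative pins down a single level value, so the corresponding curve $\gamma_0$ is unique.

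Next I would promote the regularity to $\mathcal C^1$ up to the endpoints and read off the orthogonality; all the work is done in the strip $\Xi$, on which $W$ is $\mathcal C^\infty$. At the top endpoint $(a_1,p)$ one has $W_\xi>0$, $W_\zeta<0$, so $\nabla W\neq0$ and the level curve is $\mathcal C^\infty$ there with $v'(a_1)=-W_\xi/W_\zeta>0$. For a type-$\gamma_1$ curve through $(k/2,b_0)$, $b_0\in(0,p)$, we have $W_\zeta(k/2,b_0)<0$, so near $\xi=k/2$ the level set of $W$ is a $\mathcal C^\infty$ graph $\zeta=v(\xi)$; by the symmetry $W(\xi,\zeta)=W(k-\xi,\zeta)$ of \eqref{Symmetry-Reations} this graph is even about $\xi=k/2$, hence $v'(k/2)=0$, and its restriction to $\overline Q$ is a $\mathcal C^1$ arc meeting the vertical edge orthogonally. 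Symmetrically, a type-$\gamma_2$ curve through $(a_0,0)$, $a_0>k/2$, has $W_\xi(a_0,0)>0$, so near $\zeta=0$ the level set is a $\mathcal C^\infty$ graph $\xi=u(\zeta)$, even in $\zeta$ by $W(\xi,\zeta)=W(\xi,-\zeta)$, whence $u'(0)=0$; this is precisely the parametrization $\gamma_2(t)=(u(t),t)$ of the statement, and the curve meets the lower edge orthogonally.

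The delicate point is the corner $(k/2,0)$, where $\nabla W=0$. Here I would first compute the Hessian by differentiating under the integral sign: $W_{\xi\xi}(k/2,0)=\int_{S^k}\omega^{k/2}(\ln\omega)^2\,dS_y=:A>0$, $W_{\zeta\zeta}(k/2,0)=-A$, and $W_{\xi\zeta}(k/2,0)=-\int_{S^k}\omega^{k/2}(\ln\omega)^2\sin(0)\,dS_y=0$, so the Hessian is $\operatorname{diag}(A,-A)$, nondegenerate. On $\gamma_0=S(k/2,0)=\{(t,v(t))\mid t\in[k/2,a_1]\}$ set $s=t-k/2$, $\eta=v(t)$; continuity forces $\eta\to0$ as $s\to0^+$. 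Plugging into the second-order Taylor expansion
\begin{equation}
    W(k/2+s,\eta)=W(k/2,0)+\tfrac A2\bigl(s^2-\eta^2\bigr)+o\bigl(s^2+\eta^2\bigr)
\end{equation}
and using $W(k/2+s,\eta)=W(k/2,0)$ along $\gamma_0$ gives $s^2-\eta^2=o(s^2+\eta^2)$, hence $\eta/s\to1$. Inserting $\eta\asymp s$ into the first-order expansions $W_\xi=As+o(\sqrt{s^2+\eta^2})$, $W_\zeta=-A\eta+o(\sqrt{s^2+\eta^2})$ and into the implicit-function formula $v'(t)=-W_\xi(t,v(t))/W_\zeta(t,v(t))$ (valid for $t>k/2$) yields $v'(t)=(s/\eta)(1+o(1))\to1$. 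Thus $v'$ extends continuously to $[k/2,a_1]$ with $v'(k/2)=1$: $\gamma_0$ is $\mathcal C^1$ and bisects the corner. This $o(\cdot)$ bookkeeping at a degenerate critical point is the step I expect to require the most care.

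To finish, I would show $\gamma_0$ separates the two families. The arc $\gamma_0$ runs from $(k/2,0)$ to a point $(a_1^0,p)$ of the top edge and hence divides $\overline Q$ into two regions. The one whose closure contains the open vertical edge satisfies $W<W(k/2,0)$ throughout its interior --- it equals $W(k/2,0)$ only on $\gamma_0$ and is $<W(k/2,0)$ along the open vertical edge, so the conclusion follows by connectedness --- hence is foliated by type-$\gamma_1$ curves; the other region, whose closure contains the open lower edge where $W>W(k/2,0)$, is foliated by type-$\gamma_2$ curves. This establishes Statement~(3) and, together with the earlier paragraphs, Statements~(1) and~(2). The lone degenerate case $\beta=(k/2,p)$, for which $S(\beta)$ collapses to the single point $(k/2,p)$, is the common limit of the two families and can simply be recorded as such.
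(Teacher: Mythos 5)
Your proposal is correct, and the corner analysis takes a genuinely different route from the paper. Away from the corner $(k/2,0)$ both arguments agree (implicit function theorem plus the symmetry relations \eqref{Symmetry-Reations} to get orthogonality on the edges), and the uniqueness claim in (2) is equally trivial in both. The difference is at the corner: the paper compares $W$ along the pencil of rays $\psi_h(s)=(k/2+s,hs)$, showing $d^2W(\psi_h)/ds^2>0$ for $h<1$ while on the bisector $h=1$ the second and third derivatives vanish and $d^4W(\psi_1)/ds^4<0$; this squeezes $v(t)$ between $h(t-k/2)$ and $t-k/2$ to get $v'(k/2^+)=1$, and then a separate L'H\^opital step gives $\lim_{t\to k/2^+}v'(t)=1$. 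You instead compute the Hessian $\operatorname{Hess}W(k/2,0)=\operatorname{diag}(A,-A)$ once and extract both facts from Taylor expansions: the second-order expansion of $W$ forces $\eta/s\to1$ along $\gamma_0$, and the first-order expansions of $W_\xi$, $W_\zeta$ plugged into $v'=-W_\xi/W_\zeta$ give $\lim v'=1$. This is the standard Morse-theoretic treatment of a nondegenerate saddle; it avoids the fourth derivative along the bisector and the L'H\^opital step, and arguably makes it more transparent why the limiting slope is exactly $1$ (equal-magnitude Hessian eigenvalues of opposite sign). The ingredient shared by both approaches is the vanishing of the odd moments $\int_{S^k}\omega^{k/2}(\ln\omega)^{2m+1}dS_y$, equation \eqref{logarithm-integral}. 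One small thing you noticed that the paper glosses over: the level set of $\beta=(k/2,p)$ degenerates to a single point (the minimum of $W$ on $\overline Q$), so item (1) of the theorem as stated should really exclude it or record it as a limit of the two families, exactly as you do.
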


\begin{proof}[\textbf{Proof of Theorem \ref{Level-curve-descri-Appendix}}]

Note that $\nabla W(\xi,\zeta)\neq0$ for every $(\xi,\zeta)\in \overline{Q}\setminus\{(k/2,0)\}$. Therefore, if a level curve starts at the vertical or at the lower horizontal edge of $\overline{Q}\setminus{(k/2,0)}$, then its behavior can be obtained as the combination of Claims~\ref{Partial-dW-d-xi-claim},~\ref{partial-vertical-claim} (pp.~\pageref{Partial-dW-d-xi-claim},~\pageref{partial-vertical-claim}), the Implicit Function Theorem and the argument used in the proof of Proposition~\ref{Level-curve-description}, p.~\pageref{Level-curve-description}.

To complete the proof of Theorem~\ref{Level-curve-descri-Appendix} we only need to describe the behavior of the level curve starting with the corner $C=(k/2,0)$. This must be the curve of the third type $\gamma_0$. Let $a_0=k/2$ and $C=(a_0, 0)$. It is clear that by the Implicit Function Theorem combined with argument used in the proof of Proposition~\ref{Level-curve-description}, p.~\pageref{Level-curve-description}, there exists a $\mathcal{C}^{\infty}$ function $v(t)$ for $t\in(k/2,a_1]$ such that $\gamma_0(t)=(t,v(t))$ is the level curve for $W(\xi,\zeta)$, $W(\gamma_0(t))=W(a,b)$ for every $t\in[k/2,a_1]$ and $v(t)$ is strictly increasing function for $t\in[k/2, a_1]$. Thus, to complete the proof of Theorem~\ref{Level-curve-descri-Appendix} we need to show that $v(t)$ is continuously differentiable at $t=k/2=a_0$ and $v'(a_0)=1$. Note that The Implicit Function Theorem does not help to describe the behavior of $\gamma_0(t)$ at the corner $C$, since $\nabla W(a_0,0)=0$. So, we need to develop a different approach.
      \begin{claim}\label{upper-line-claim}
      Let $t\in[a_0,a_1]$. Then $v(t)$ is continuous at $t=a_0^{+}$ and $v(t)<t$ for every $t\in(a_0, a_1]$.
      \end{claim}
      \begin{proof}[Proof of Claim \ref{upper-line-claim}.] Let us introduce a new parameter $s=t-a_0$ and let $\psi_1(s)=(a_0+s,s)$ be the bisector of the left lower corner $C$. If we take into account \eqref{logarithm-integral}, p.~\pageref{logarithm-integral}, the direct computation yields
      \begin{equation}\label{Three-derivative-of-W}
        \frac{dW(\psi(0))}{ds}=\frac{d\,^2W(\psi(0))}{ds^2}=\frac{d\,^3W(\psi(0))}{ds^3}=0
      \end{equation}
      and
      \begin{equation}\label{Fourth-derivative-of-W}
        \frac{d\,^4W(\psi(s))}{ds^4}=-4\int\limits_{S^k}\omega^{a_0+s}(\ln\omega)^4\cos(s\ln\omega)dS_y<0
      \end{equation}
      for every $s\in[0, p]$. This implies that $W(\psi_1(s))$ is strictly decreasing for $s\in[0,p]$. Therefore, $v(t)<t-a_0$ for $t\in(a_0, a_1]$ and then
      \begin{equation}
        0\leq\lim\limits_{t\rightarrow a_0^{+}}v(t)\leq\lim\limits\limits_{t\rightarrow a_0^{+}}(t-a_0)=0\,.
      \end{equation}
      Hence, the continuity of $v(t)$ at $t=a_0$ follows. This completes the proof of the Claim \ref{upper-line-claim}.
      \end{proof}

      The following claim establishes the derivative of $v(t)$ at $t=0$.

      \begin{claim}\label{lower-line-claim}
      For any real number $h\in(0,1)$ there exists an interval $[a_0, a_0+d_h]$ such that $d_h>0$ and
      \begin{equation}\label{lower-line-estimation}
        h\leq\frac{v(t)-v(a_0)}{t-a_0}\leq1\quad\text{for every}\,\,\,t\in[a_0, a_0+d_h]\,.
      \end{equation}
      \end{claim}
      \begin{proof}[Proof the Claim \ref{lower-line-claim}.] Let us introduce the line $\psi_h=(a_0+s,hs)$. This line together with the function $v(t)$ are pictured on Figure~\ref{Curve-bisector} below.

\begin{figure}[!h]
    \centering
    \epsfig{figure=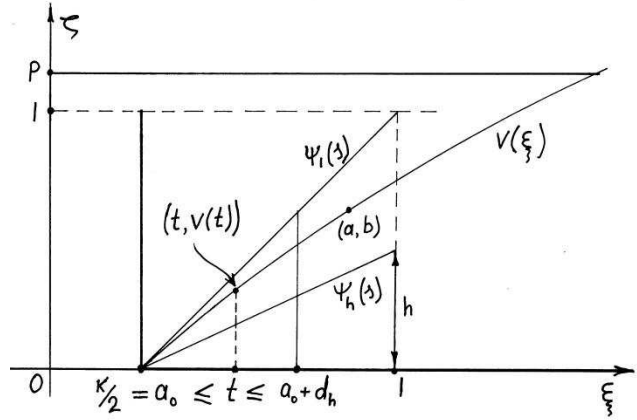,height=6cm}\\
    \caption{The bisector curve $v(t)$.}\label{Curve-bisector}
\end{figure}

      Note that $dW(\psi_h(0))/ds=0$, while
      \begin{equation}
        \frac{d^2W(\psi_h(0))}{ds^2}=(1-h^2)\int\limits_{S^k}\omega^{k/2}(\ln\omega)^2dS_y>0.
      \end{equation}
      This means that for every $h\in(0,1)$ the value of $W(\psi_h(s))$ is strictly increasing within some interval $[0,d_h]$. Clearly, $d_h$ can be chosen as the minimum between $p$ and the right boundary of the interval where $d^2W(\psi_h(s))/ds^2$ remains non-negative. Therefore,
      \begin{equation}
        W(\psi_h(s))>W(a_0,0)\quad\text{for every}\,\,\,s\in(0,d_h]\,,
      \end{equation}
      while, by \eqref{Three-derivative-of-W} and \eqref{Fourth-derivative-of-W},
      \begin{equation}
        W(\psi_1)<W(a_0,0)\quad\text{for every}\,\,\,s\in(0,p]\,.
      \end{equation}
      Hence, for every $s\in(0,d_h]$ we have
      \begin{equation}
        W(a_0+s,s)<W(a_0,0)=W(a,b)<W(a_0+s,hs)\,,
      \end{equation}
      which yields
      \begin{equation}
        h(t-a_0)\leq v(t)\leq t-a_0\quad\text{for every}\,\,\,t\in[a_0,a_0+d_h]\,.
      \end{equation}
      If we take into account that $v(a_0)=0$, we can see that the last sequence of inequalities completes the proof of the Claim~\ref{lower-line-claim}.
      \end{proof}
      \begin{corollary}\label{derivative-v-at-k/2-claim}
        \begin{equation}\label{derivative-v-at-k/2}
            v'(a_0^{+})=\lim\limits_{t\rightarrow a_0^{+}}\frac{v(t)-v(a_0)}{t-a_0}=1\,.
        \end{equation}
      \end{corollary}
      \begin{proof}[Proof of the Corollary \ref{derivative-v-at-k/2-claim}.] Formula \eqref{derivative-v-at-k/2} is a simple consequence of \eqref{lower-line-estimation} from Claim \ref{lower-line-claim}, p.~\pageref{lower-line-claim}.
      \end{proof}

%   Dokazatel'stvo, kotoroe ochevidno!*****************
%      We need to show that for every $\epsilon>0$ there exists $\eta_\epsilon>0$ such that
%        \begin{equation}
%            1-\epsilon\leq\frac{v(t)-v(a_0)}{t-a_0}\leq1+\epsilon\quad
%            \text{for every}\,\,\,t\in[a_0,a_0+\eta_\epsilon]\,.
 %       \end{equation}
%      Fix $\epsilon>0$. Then, by the Claim \ref{lower-line-claim}, there exists $d_{1-\epsilon}$ such that
%      \begin{equation}
%            1-\epsilon\leq\frac{v(t)-v(a_0)}{t-a_0}\leq1\quad\text{for every}\,\,\,t\in[a_0,a_0+d_{1-\epsilon}]\,.
%      \end{equation}
%      Hence, by the choice of $\eta_\epsilon=d_{1-\epsilon}$, we complete the proof of the Corollary~\ref{derivative-v-at-k/2-claim}.
%      \end{proof}

      The last step is to show that $v'(t)$ is continuously differentiable at $t=a_0$, i.e.,
      \begin{equation}
            \lim\limits_{t\rightarrow a_0^{+}}v'(t)=v'(a_0)=1\,.
      \end{equation}
      Recall that using Implicit Function Theorem the following formula
      \begin{equation}\label{derivative-v-by-Implicit-FT-Appendix}
        \frac{dv(t)}{dt}=\frac{W_\xi(t,v(t))}{-W_\zeta(t,v(t))}>0\quad\text{for every}\,\,t\in(a_0,a_1)
      \end{equation}
 was obtained as formula \eqref{derivative-v-by-Implicit-FT}, p.~\pageref{derivative-v-by-Implicit-FT}. Recall also that $t=a_0+s$. If we denote the numerator and the denominator in \eqref{derivative-v-by-Implicit-FT-Appendix} as
      \begin{equation}\label{Numerator-Denominator-Expressions}
      \begin{split}
        & N(\xi,\zeta)=\int\limits_{S^k}\omega^\xi\ln\omega\cos(\zeta\ln\omega)dS_y\quad\text{and}
        \\& D(\xi,\zeta)=\int\limits_{S^k}\omega^\xi\ln\omega\sin(\zeta\ln\omega)dS_y
      \end{split}
      \end{equation}
      respectively, we can observe that all of the following partial derivatives $N_\xi, N_\zeta, D_\xi, D_\zeta$ are continuous at $(a_0, 0)$. As we saw in \eqref{derivative-v-at-k/2}, $v(t)$ is differentiable at $a_0^{+}$. Therefore, both of the following compound functions $N(t,v(t))$ and $D(t,v(t))$ are differentiable at $a_0^{+}$ and
      \begin{equation}
      \begin{split}
        & \frac{dN(a_0^+,0)}{dt}=N_\xi(a_0,0)+\frac{dv(a_0^{+})}{dt}N_\zeta(a_0,0)\,;
        \\& \frac{dD(a_0^+,0)}{dt}=D_\xi(a_0,0)+\frac{dv(a_0^{+})}{dt}D_\zeta(a_0,0)\,.
      \end{split}
      \end{equation}
      Therefore, the values of these derivatives are computable at $a_0^{+}$ and the direct computation, using \eqref{Numerator-Denominator-Expressions}, yields
      \begin{equation}
        \frac{dN(a_0^+,0)}{dt}=\frac{dD(a_0^+,0)}{dt}=\int\limits_{S^k}\omega^{a_0}(\ln\omega)^2dS_y>0\,.
      \end{equation}
      Thus, the L'Hospital,s Rule is applicable to the expression \eqref{derivative-v-by-Implicit-FT-Appendix} for $v'(t)$ and again, the direct computation yields
      \begin{equation}
        \lim\limits_{t\rightarrow a_0^{+}}v'(t)=\lim\limits_{t\rightarrow a_0^{+}}\frac{N(t,v(t))}{D(t,v(t))}
        =\left.   \frac{dN(a_0^+,0)}{dt}   \right/     \frac{dD(a_0^+,0)}{dt}=1\,,
      \end{equation}
      which completes the description of the separation curve $\gamma_0(t)$ and the proof of Theorem~\ref{Level-curve-descri-Appendix}.
\end{proof}

\section{Alternative proofs.}

\subsection{The angle introduced by Hermann Schwarz.}

In this subsection we obtain an alternative proof of Statement~(A) of Theorem~\ref{Basic-theorem}, p.~\pageref{Basic-theorem}. We shall use here the angle and the change of variables introduced by Hermann Schwarz, see~\cite{Ahlfors}, p.~168.

\begin{theorem}[Statement (A) of Main Theorem~\ref{Basic-theorem}]\label{Schwartz-Angle-Theorem} Let $S^k$ be the $k-$dimensional sphere of radius $R$ centered at the origin. If $\alpha, \beta\in\mathbb{C}$ and $\alpha+\beta=k$, then
\begin{equation}\label{Schwartz-Angle-Formula}
    \int\limits_{S^k}\omega^\alpha dS_y=
    \int\limits_{S^k}\omega^\beta dS_y\,.
\end{equation}
\end{theorem}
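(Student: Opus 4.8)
The plan is to prove \eqref{Schwartz-Angle-Formula} first when $r=|x|<R$, and then to deduce the case $r>R$ from it exactly as in the proof of Statement~(A) of Theorem~\ref{Basic-theorem}, i.e. by applying the Sphere Exchange Rule, Lemma~\ref{Basic_lemma}, formula~\eqref{ExchaSphere-2}, to transfer both integrals to a sphere of the smaller radius. So fix $x$ with $|x|=r<R$ and set $\theta=\pi-\angle xOy$, so that $\omega=\omega(\theta)=\frac{R^2-r^2}{R^2+r^2+2Rr\cos\theta}$, $|x-y|^2=(R^2-r^2)/\omega$, and $dS_y=R^k\sin^{k-1}\theta\,d\theta\,d\sigma(\zeta)$ in geodesic polar coordinates on $S^k$ about the axis $Ox$, with $\zeta$ running over the sphere of directions orthogonal to that axis.

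The engine of the proof is the involution $\iota\colon S^k\to S^k$, $y\mapsto y^*$, where $y^*$ is the second point in which the line through $x$ and $y$ meets $S^k$. Using the closed form $y^*=(1+\omega)x-\omega y$ one gets $x-y^*=-\omega(x-y)$, hence $|x-y^*|=\omega|x-y|$ and
\begin{equation}\label{Schwarz-reciprocal}
\omega(x,y^*)=\frac{1}{\omega(x,y)},
\end{equation}
which is just the interchange $l/q\leftrightarrow q/l$ of the Geometric Interpretation, Theorem~\ref{Geometric-Interpretation-theorem}. Let $\theta^*=\pi-\angle xOy^*$ be Schwarz's angle. Since $O,x,y,y^*$ are coplanar, $\iota$ fixes the meridian coordinate $\zeta$, so everything reduces to the one–variable map $\theta\mapsto\theta^*$. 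From $y^*=(1+\omega)x-\omega y$ a short computation gives the two identities
\begin{equation}\label{Schwarz-two-identities}
\sin\theta^*=\omega\,\sin\theta\qquad\text{and}\qquad |d\theta^*|=\omega\,|d\theta|,
\end{equation}
the first being immediate from the coordinates $x=(r,0,\dots,0)$, $y=(R\cos(\angle xOy),R\sin(\angle xOy)\,\zeta)$, and the second obtained by differentiating the companion relation $\cos(\angle xOy^*)=\frac rR+\omega\bigl(\frac rR-\cos(\angle xOy)\bigr)$ and simplifying with $\frac{d\omega}{d\theta}=\frac{2Rr\,\omega^2\sin\theta}{R^2-r^2}$; one checks along the way that $\theta^*$ is strictly monotone in $\theta$, so it is a legitimate coordinate on $S^k$. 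Combining \eqref{Schwarz-two-identities} with the polar-coordinate form of $dS$ yields the Jacobian of the involution:
\begin{equation}\label{Schwarz-jacobian}
dS_{y^*}=R^k\sin^{k-1}\theta^*\,|d\theta^*|\,d\sigma=R^k(\omega\sin\theta)^{k-1}\,\omega\,d\theta\,d\sigma=\omega^k\,dS_y.
\end{equation}

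Granting \eqref{Schwarz-reciprocal}–\eqref{Schwarz-jacobian}, the theorem drops out: $\iota$ is a bijection of $S^k$, $\omega>0$ on $S^k$ (because $x\notin S^k$, so the integrands $\omega^\alpha=e^{\alpha\ln\omega}$ are well defined and bounded and all integrals converge), and substituting $y=\iota(z)$ gives
\begin{equation}\label{Schwarz-conclusion}
\int\limits_{S^k}\omega^\alpha(x,y)\,dS_y=\int\limits_{S^k}\omega(x,z)^{-\alpha}\,\omega^k(x,z)\,dS_z=\int\limits_{S^k}\omega^{\,k-\alpha}(x,z)\,dS_z=\int\limits_{S^k}\omega^{\beta}(x,y)\,dS_y,
\end{equation}
where the middle equality uses $\alpha+\beta=k$. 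Finally, if $r>R$, pick $x\in S^k(r)$, apply \eqref{Schwarz-conclusion} on $S^k(r)$ (for $\omega(x_1,y)$ with $y\in S^k(R)$ fixed), and use \eqref{ExchaSphere-2} of Lemma~\ref{Basic_lemma} to rewrite both sides as integrals over $S^k=S^k(R)$; this completes the proof.

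The main obstacle is \eqref{Schwarz-jacobian}, i.e. extracting Schwarz's change of variables cleanly: one must get the identities \eqref{Schwarz-two-identities}, keeping track of orientation so that $\theta^*$ is genuinely monotone, and correctly carry the $(k-1)$-dimensional latitude factor $\sin^{k-1}$ through the substitution. A shortcut worth noting (and perhaps cleaner to write up) bypasses the trigonometry: a thin cone of lines through $x$ cuts $S^k$ near $y$ and near $y^*$ in pieces of $k$-volume proportional to $|x-y|^k/|\cos\alpha_y|$ and $|x-y^*|^k/|\cos\alpha_{y^*}|$ respectively, where $\alpha_y=\angle Oyy^*$ and $\alpha_{y^*}=\angle Oy^*y$ are the angles the chord makes with the sphere's normals; these two angles are equal because the triangle $Oyy^*$ is isosceles, so the ratio of the two areas is $(|x-y^*|/|x-y|)^k=\omega^k$, which is exactly \eqref{Schwarz-jacobian}. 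Everything else — the reduction from $r>R$ to $r<R$, convergence, and the algebra in \eqref{Schwarz-conclusion} — is routine.
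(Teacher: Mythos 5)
Your proof is correct and follows essentially the same route as the paper's proof of Theorem~\ref{Schwartz-Angle-Theorem}: Schwarz's angle $\theta^*$, the reciprocal relation $\omega(x,y^*)=1/\omega(x,y)$, and the Jacobian $dS_{y^*}=\omega^k\,dS_y$ obtained from $\sin\theta^*=\omega\sin\theta$ and $|d\theta^*|=\omega|d\theta|$, with the Sphere Exchange Rule handling $r>R$. One small slip worth flagging: the involution $y\mapsto y^*$ does not fix the orthogonal direction $\zeta$ --- your own formula $y^*=(1+\omega)x-\omega y$ gives a second component $-\omega R\sin\phi\,\zeta$, so $\zeta\mapsto-\zeta$ --- but since $\zeta\mapsto-\zeta$ is measure-preserving on $S^{k-1}$ the Jacobian and the rest of the argument are unaffected.
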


\begin{proof}[Proof of Theorem \ref{Schwartz-Angle-Theorem}]

%%%%%%%%%%**************Arti_NYU*****************

Let us assume first that $x$ is inside the sphere $S^k$. For the
notations see Figure~\ref{Schwartz-Angle-Picture} below. Let $y$ be an arbitrary
point of $S^k$. Here, $y^\star$ is defined as an
intersection of the sphere $S^k$ and line $xy$. Angle
$\theta$ is defined as $\pi - \angle xOy$ and $\theta^\star$ is
defined as $\pi-\angle xOy^\star$. The circle $S^1$ shown
in the left part of Figure~\ref{Schwartz-Angle-Picture} is the cross-section of the sphere with the plane defined by the
points $O$, $x$ and $y\in S^k$. Let $q(\theta)=|xy|$,
$l(\theta)=|y^\star x|$ and let the segments $a$ shown in Figure~\ref{Schwartz-Angle-Picture} be
perpendicular to $xO$.

\begin{figure}[!h]
    \centering
    \epsfig{figure=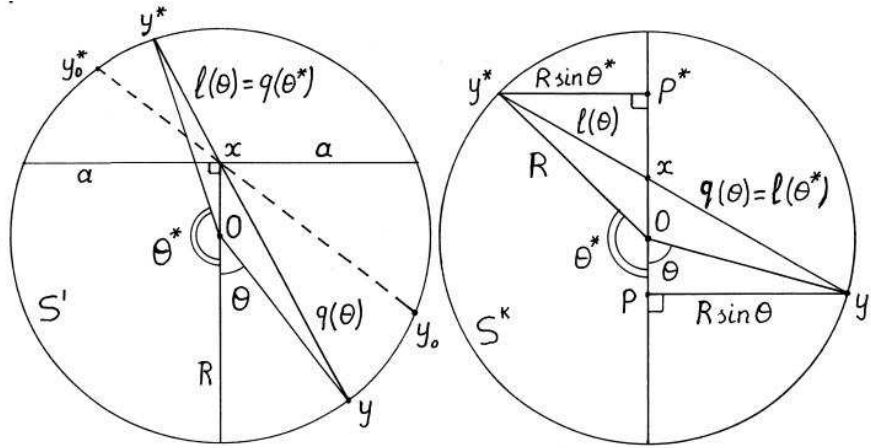,height=6cm}\\
    \caption{Schwarz Angle $\theta^*$.}\label{Schwartz-Angle-Picture}
\end{figure}

%\begin{equation}\label{6}
%    \omega=\frac{R^2-|x|^2}{|x-y|^2}=\frac{|a|^2}{q^2}
%    =\frac{q\,l}{q^2}=\frac{l}{q}\,.
%\end{equation}

Hermann Schwarz observed that for every $y_0\in S^k$, triangles $\triangle y_0^*xy^*$ and $\triangle xy_0y$ are similar and then, $l\cdot|d\theta^*|=q\cdot|d\theta|$, see~\cite{Schwarz}, p.~260. Note also that angle $\theta^*$ is decreasing, while angle $\theta$ is increasing. Therefore,
\begin{equation}\label{SpheProDod-4}
    \frac{d\theta^*}{d\theta}=-\frac{|d\theta^*|}{|d\theta|}=
    -\frac{l(\theta)}{q(\theta)}\,.
\end{equation}
The next important observation is presented in the following Proposition.

\begin{proposition}\label{l(theta)=q(theta-star)-proposition}
    \begin{equation}\label{l(theta)=q(theta-star)-formula}
        l(\theta)=q(\theta^*)\quad\text{and}\quad q(\theta)=l(\theta^*)\,.
    \end{equation}
\end{proposition}

\begin{proof}[Proof of Proposition \ref{l(theta)=q(theta-star)-proposition}]
In the proof of the Proposition a reader may refer to the Figure~\ref{Schwartz-l-q-relation-figure} below.

\begin{figure}[!h]
    \centering
    \epsfig{figure=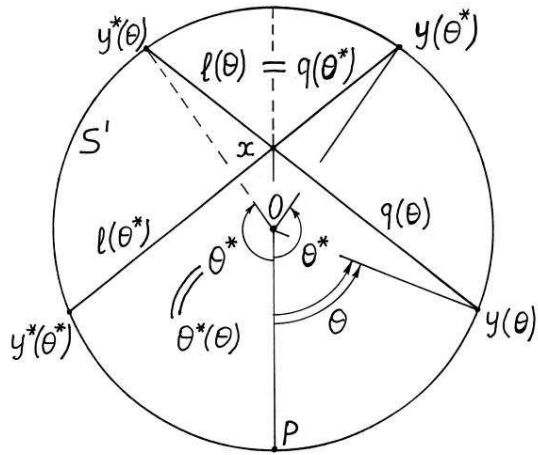,height=6cm}\\
    \caption{$l, q$ relation with respect to Schwarz angle.}\label{Schwartz-l-q-relation-figure}
\end{figure}

Note that both values $l$ and $q$ can be pictured on a one dimensional sphere $S^1$ of radius $R$ centered at the origin $O$. Let $P$ be the intersection of the ray $\overrightarrow{xO}$ and $S^1$. Let $\theta$ counted counterclockwise from the segment $OP$ be the initial argument. It is clear that for every $\theta\in[0,\pi]$ there exists the unique point $y=y(\theta)\in S^1$ such that $\angle POy(\theta)=\theta$. Thus
\begin{equation}\label{l(theta)=q(theta-star)-formu-Prelimi}
    l(\theta)=|xy^*(\theta)|\quad\text{and}\quad q(\theta^*)=q(\theta)|_{\theta=\theta^*}=|xy(\theta^*)|\,.
\end{equation}
Therefore, if $\theta^*$ is defined as $\theta^*=\angle y^*(\theta)OP$, then $y(\theta)|_{\theta=\theta^*}=y(\theta^*)$ must be symmetric to $y^*(\theta)$ with respect to the line $Ox$. This, according to~\eqref{l(theta)=q(theta-star)-formu-Prelimi}, leads to the first identity in~\eqref{l(theta)=q(theta-star)-formula}. The same argument can be used to obtain the second identity in~\eqref{l(theta)=q(theta-star)-formula}, which completes the proof of Proposition~\ref{l(theta)=q(theta-star)-proposition}.
\end{proof}

From now on a reader can be referred back to Figure~\ref{Schwartz-Angle-Picture}. Observe that \eqref{SpheProDod-4} combined with \eqref{l(theta)=q(theta-star)-formula} yields
\begin{equation}\label{3-z}
    d\theta=-\frac{q(\theta)}{l(\theta)}d\theta^\star=
    -\frac{l(\theta^\star)}{q(\theta^\star)} d\theta^\star\,.
\end{equation}
Let $P$ and $P^\star$ be orthogonal projections of $y$ and
$y^\star$ to line $xO$ respectively, see the right part of Figure~\ref{Schwartz-Angle-Picture}. Clearly, $\triangle
xy^\star P^\star$ is similar to $\triangle xyP$ and thus
\begin{equation}\label{4-z}
    l(\theta)\,
    R \sin\theta=q(\theta)\,
    R\sin\theta^\star=
    l(\theta^\star)
    \, R\sin\theta^\star\,.
\end{equation}
Recalling that $\omega=l/q$ and applying \eqref{l(theta)=q(theta-star)-formula}, \eqref{3-z}, \eqref{4-z}, we have
\begin{equation}\label{5-z}
\begin{split}
  & \int\limits_{S^k} \omega^\alpha d\,S_y=
    \int\limits_0^\pi
    \left(\frac{l(\theta)}{q(\theta)}\right)^\alpha
    (R\sin\theta))^{k-1} \sigma_{k-1} Rd\theta
    \\& =
    %-\int\limits_\pi^0
    %\left(\frac{l(\theta)}{q(\theta)}\right)^\alpha
    %\left(\frac{l(\theta^\star)}{l(\theta)}R\sin\theta^\star\right)^{k-1}
    %|\sigma_{k-1}|R\,\frac{l(\theta^\star)}{q(\theta^\star)}d\theta^\star
    %\\& =
    \int\limits_0^\pi
    \left(\frac{l(\theta^\star)}{q(\theta^\star)}\right)^{k-\alpha}(R\sin\theta^\star)^{k-1}
    \sigma_{k-1}Rd\theta^\star
    = \int\limits_{S^k}
    \omega^{k-\alpha} d\,S_y\,,
    %\int\limits_0^\pi
    %\left(\frac{l(\theta)}{q(\theta)}\right)^{k-\alpha}(R\sin\theta)^{k-1}
    %|\sigma_{k-1}|Rd\theta=
\end{split}
\end{equation}
where $\sigma_{k-1}$ is the area of $(k-1)$-dimensional unit
sphere. The formula
$d\,S_y=(R\sin\theta)^{k-1}\sigma_{k-1}R\,d\theta$
holds, since our integrand $\omega(x,y)$ depends only on angle
$\theta$ when $x$ is fixed. This completes the proof of Theorem~\ref{Schwartz-Angle-Theorem} for the case when $|x|<R$, i.e., when $x$ is inside the sphere $S^k$. To see that formula~\eqref{Schwartz-Angle-Formula} holds for $|x|>R$ we need to apply the argument used in the proof of Case~2 of Statement~(A) of Theorem~\ref{Basic-theorem}, see p.~\pageref{x-is-outside-proof-Main-Thm}. This completes the proof of Theorem~\ref{Schwartz-Angle-Theorem}.
\end{proof}

\subsection{Laplacian and Spherical radialization.}

In this subsection we shall see the alternative proof of Lemma~\ref{Laplacian-Commutes-lemma}, p.~\pageref{Laplacian-Commutes-lemma}. Partly, this proof relies on the technique developed in the Randol's paper of \cite{Chavel}, pp.~ 271-273. Let $x$ and $y$ be two arbitrary points in $H^n_\rho$; $\tau=d(x,y)$ denotes the hyperbolic distance between $x$ and $y$ and $\beth^{n-1}(x,\tau)$ be the geodesic sphere of radius $\tau$ centered at $x$. We define
\begin{equation}\label{Radialization-Definition-Appendix}
    f^\sharp_x(y)=\frac{1}{\text{vol}(\beth^{n-1}(x,\tau))}\int\limits_{\beth^{n-1}(x,\tau)} f(y_1) d\,\beth_{y_1}\,,
\end{equation}
where the integration is considered with respect to the measure on $\beth^{n-1}(x,\tau)$ induced by the hyperbolic measure of $H^n_\rho$ and $\text{vol}(\beth^{n-1}(x,\tau))$ is the hyperbolic volume of $\beth^{n-1}(x,\tau)$. Note that we can modify the inverse of Cayley transform given in \eqref{Uniq-13}, p.~\pageref{Uniq-13}, in such a way to get an isometry of $H^n_\rho$ to $B^n_\rho$ which sends $x\in H^n_\rho$ to the origin $O\in B^n_\rho$. It is clear in this case that $\beth^{n-1}(x,\tau)\subset H^n_\rho$ will be mapped to the Euclidean sphere centered at the origin $S^{n-1}(R)\subset\mathbb{R}^n$ as long as $R=\rho\tanh(\tau/(2\rho))$. If we take into account that the hyperbolic metric in $B^n_\rho$ at some point depends only on the distance to the origin, we can observe that the radialization defined in \eqref{Laplacian-Commutes}, p.~\pageref{Laplacian-Commutes}, gives the same result as the radialization defined in \eqref{Radialization-Definition-Appendix} above and then, Lemma~\ref{Laplacian-Commutes-lemma}, p.~\pageref{Laplacian-Commutes-lemma} is equivalent to Lemma~\ref{Laplacian-Commutes-lemma-Appendix} below.

\begin{lemma}\label{Laplacian-Commutes-lemma-Appendix} Let $f^\sharp_x(y)$ is defined in \eqref{Radialization-Definition-Appendix}. Therefore,
\begin{equation}\label{Laplacian-Commutes-Appendix}
    \triangle_y f^\sharp_x(y)=(\triangle_y f(y))^\sharp_x(y)\,.
\end{equation}
\end{lemma}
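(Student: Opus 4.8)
The plan is to reduce everything to geodesic polar coordinates centered at $x$ and then differentiate under the integral sign explicitly. First I would invoke the modified Cayley transform discussed in the paragraph preceding Lemma~\ref{Laplacian-Commutes-lemma-Appendix} to assume, without loss of generality, that $x$ is the origin $O$ of the ball model $B^n_\rho$; this identifies the geodesic spheres $\beth^{n-1}(x,\tau)$ with the Euclidean spheres $S^{n-1}(R)$, $R=\rho\tanh(\tau/(2\rho))$, and lets me use the geodesic polar form of the Laplacian recorded in \eqref{Uniq-6}, namely $\triangle=\partial_r^2+\frac{n-1}{\rho}\coth(r/\rho)\partial_r+\triangle_{S(0;r)}$. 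The geodesic sphere of radius $r$ about $O$ is a round sphere of radius $\rho\sinh(r/\rho)$, so under the natural identification of all these geodesic spheres with the unit sphere $\mathbb{S}^{n-1}$ of directions at $O$ via radial geodesics one has $\triangle_{S(0;r)}=\rho^{-2}\sinh^{-2}(r/\rho)\,\triangle_{\mathbb{S}^{n-1}}$.

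Second, I would rewrite the radialization in direction-averaged form. Since the induced volume element on $\beth^{n-1}(O,\tau)$ equals $(\rho\sinh(\tau/\rho))^{n-1}$ times the unit-sphere element $d\sigma$, the Jacobian cancels the normalizing factor $\mathrm{vol}(\beth^{n-1}(O,\tau))^{-1}$ in \eqref{Radialization-Definition-Appendix}, giving
\[
f^\sharp_O(y)=\frac{1}{\sigma_{n-1}}\int_{\mathbb{S}^{n-1}}f\bigl(\exp_O(r\xi)\bigr)\,d\sigma(\xi),\qquad r=d(O,y),
\]
which is manifestly a function of $r$ alone.

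Third, I would compute both sides. On the left, because $f^\sharp_O$ is radial the angular part of $\triangle$ annihilates it, so $\triangle_y f^\sharp_O(y)=\partial_r^2 f^\sharp_O+\frac{n-1}{\rho}\coth(r/\rho)\,\partial_r f^\sharp_O$. On the right, apply $\triangle$ to $f$ first, restrict to the sphere of radius $r$, and average over $\mathbb{S}^{n-1}$: differentiation under the integral sign turns the $\partial_r^2 f$ and $\partial_r f$ terms into $\partial_r^2 f^\sharp_O$ and $\partial_r f^\sharp_O$, while the remaining term $\rho^{-2}\sinh^{-2}(r/\rho)\int_{\mathbb{S}^{n-1}}\triangle_{\mathbb{S}^{n-1}}f\,d\sigma$ vanishes because the integral of a Laplacian over a closed manifold is zero (the divergence theorem). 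The two expressions coincide, which is \eqref{Laplacian-Commutes-Appendix}, and this is equivalent to Lemma~\ref{Laplacian-Commutes-lemma} as explained in the text.

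The main obstacle will be the bookkeeping of the coordinate change and the legitimacy of the interchanges: I need $f\in C^2$ near the geodesic spheres in question so that $\partial_r f$, $\partial_r^2 f$ and $\triangle_{\mathbb{S}^{n-1}}f$ are continuous and differentiation under the integral sign applies, and I need to verify carefully that the identification of all geodesic spheres about $O$ with $\mathbb{S}^{n-1}$ via radial geodesics is smooth and carries the intrinsic Laplacian of $S(0;r)$ to $\rho^{-2}\sinh^{-2}(r/\rho)\triangle_{\mathbb{S}^{n-1}}$ using the \emph{same}, $r$-independent, angular coordinate $\xi$ that appears in the average. This consistency of the angular variable across radii is precisely what makes "average, then $\partial_r$" commute with "$\partial_r$, then average"; once it is in place the remainder is routine one-variable calculus with $\sinh$ and $\coth$.
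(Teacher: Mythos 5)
Your proof is correct, and it follows a genuinely different route from the one in the paper. The paper argues by duality: it pairs $\triangle_y f^\sharp_x-(\triangle_y f)^\sharp_x$ against an arbitrary smooth, compactly supported radial kernel $K(d(x,y))$, and derives that the pairing vanishes from three facts — self-adjointness of the hyperbolic Laplacian (formula \eqref{Formula-Base-1-Appendix}), the observation that a radial kernel cannot distinguish $f$ from $f^\sharp_x$ (formula \eqref{Formula-Base-2-Appendix}), and the fact that $\triangle_y K(x,y)$ is again radial in $d(x,y)$ (formula \eqref{Formula-Base-3-Appendix}) — concluding that the difference vanishes identically because $K$ is arbitrary among radial test kernels. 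You instead compute both sides directly in geodesic polar coordinates about $x$: rewrite $f^\sharp$ as a fiberwise average over the unit sphere of directions, invoke the polar form $\triangle=\partial_r^2+\tfrac{n-1}{\rho}\coth(r/\rho)\partial_r+\rho^{-2}\sinh^{-2}(r/\rho)\triangle_{\mathbb{S}^{n-1}}$, and match the two sides term by term, using that the angular Laplacian annihilates the radial function $f^\sharp$, that $\partial_r$ and $\partial_r^2$ commute with the angular average by differentiation under the integral sign, and that $\int_{\mathbb{S}^{n-1}}\triangle_{\mathbb{S}^{n-1}}(\,\cdot\,)\,d\sigma=0$. Both arguments are valid. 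Yours is more concrete and elementary — essentially the same mechanism the paper itself later uses to obtain Euler--Poisson--Darboux in Corollary~\ref{Radi-fu-Wave-equa-lemma} — and makes the cancellation visible in coordinates. The paper's duality argument is more structural: it never writes down the polar decomposition of $\triangle$, avoids the coordinate degeneracy at $r=0$ altogether, and transfers with no modification to any rotationally symmetric metric (as the paper's own remark exploits), whereas your calculation requires the explicit warped-product form and the identification $\triangle_{S(0;r)}=\rho^{-2}\sinh^{-2}(r/\rho)\triangle_{\mathbb{S}^{n-1}}$, which you rightly flag as the step needing care.
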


\begin{proof}[Proof of Lemma \ref{Laplacian-Commutes-lemma-Appendix}]
    Let $K(p) (-\infty<p<\infty)$ be an even $\mathcal{C}^{\infty}$ function having compact support. By setting $K(x,y)=K(d(x,y))$, where $(x,y)\in H^{n}_\rho\times H^{n}_\rho$, we obtain a smooth function of $x$ and $y$ which depends only on the hyperbolic distance between $x$ and $y$. Let $f(y)\in\mathcal{C}^2(H^{n}_\rho)$. Then, the direct computations yield the following identities:
\begin{equation}\label{Formula-Base-1-Appendix}
    \int\limits_{H^n_\rho}K(x,y)\triangle_y f(y)dy=\int\limits_{H^n_\rho}\triangle_yK(x,y)f(y)dy\,;
\end{equation}
\begin{equation}\label{Formula-Base-2-Appendix}
    \int\limits_{H^n_\rho}K(x,y)f^\sharp_x(y) dy=\int\limits_{H^n_\rho}K(x,y)f(y)dy\,;
\end{equation}
\begin{equation}\label{Formula-Base-3-Appendix}
    \triangle_yK(x,y)=\widetilde{K}(x,y)\,,
\end{equation}
where $\widetilde{K}(x,y)$ is also a function depending only on the hyperbolic distance between $x$ and $y$.
Combining \eqref{Formula-Base-1-Appendix}, \eqref{Formula-Base-2-Appendix} and \eqref{Formula-Base-3-Appendix} we can write the following sequence of equalities.
$$ \int\limits_{H^n_\rho}K(x,y)\triangle_y[f^\sharp_x(y)]dy=\int\limits_{H^n_\rho}[\triangle_y K(x,y)] f(y)dy= $$
$$ =\int\limits_{H^n_\rho}K(x,y)\triangle_y f(y)dy=\int\limits_{H^n_\rho}K(x,y)[\triangle_yf(y)]^\sharp_x(y)dy\,, $$
which yields

\begin{equation}\label{Sequence-result-Laplacian-Appendix}
    \int\limits_{H^n_\rho}K(x,y)\{\triangle_y[f^\sharp_x(y)]-[\triangle_yf(y)]^\sharp_x(y) \}dy=0\,.
\end{equation}
Fix any $x\in H^n_\rho$ and observe that the integrand in \eqref{Sequence-result-Laplacian-Appendix} depends only on the hyperbolic distance between $x$ and $y$. Since $K(x,y)$ is an arbitrary non-negative function of $d(x,y)$, we may conclude that $$\triangle_y[f^\sharp_x(y)]-[\triangle_yf(y)]^\sharp_x(y)\equiv0\quad\text{for every}\quad y\in H^n_\rho\,,$$
which completes the proof of Lemma~\ref{Laplacian-Commutes-lemma-Appendix}.
\end{proof}

\begin{remark}
It is clear that the argument used in the proof of Lemma~\ref{Laplacian-Commutes-lemma-Appendix} yields formula~\eqref{Laplacian-Commutes-Appendix} for the Laplacian of any rotationally symmetric metric in $\mathbb{R}^n$. In particular, if $\triangle$ is chosen as the Euclidean Laplacian, formula \eqref{Laplacian-Commutes-Appendix} leads to the Euler-Poisson-Darboux equation. The standard way to obtain such an equation is presented in \cite{FritzWaves}, pp.~88-89 and relies on the Stokes' formula.  The proof of the following corollary derives the Euler-Poisson-Darboux equation as the consequence of formula \eqref{Laplacian-Commutes-Appendix}.
\end{remark}

\begin{corollary}[Euler-Poisson-Darboux formula]\label{Radi-fu-Wave-equa-lemma}
Let $f(x)$ be any function in $\mathbb{R}^n$, $y=x+z$ and
\begin{equation}\label{Eucli-Radialization}
    F(x,r)=\frac{1}{|S(r)|}\int\limits_{|z|=r} f(x+z)dz\,.
\end{equation}
Then
\begin{equation}\label{Radi-function-Wave-equation-formula}
\triangle_x F(x,r)=F_{rr}+\frac{n-1}{r} F_r %\quad\text{and}\quad \left.\frac{\partial F(x,r)}{\partial r}\right|_{r=0}=0\,,
\end{equation}
which is known as the Euler-Poisson-Darboux formula.
\end{corollary}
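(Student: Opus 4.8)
The plan is to derive the Euler--Poisson--Darboux formula \eqref{Radi-function-Wave-equation-formula} directly from Lemma~\ref{Laplacian-Commutes-lemma-Appendix}, or rather from its Euclidean analogue established in the preceding remark, by unwinding what the Euclidean Laplacian does to a radialized function. First I would set up notation carefully: with $y=x+z$ fixed, the quantity $F(x,r)$ in \eqref{Eucli-Radialization} is precisely the radialization $f^\sharp_x(y)$ of $f$ about the point $x$, taken over the Euclidean sphere $|z|=r=|y-x|$; here the averaging sphere is centered at $x$ and has radius $r$, so $F(x,r)=f^\sharp_x(x+re)$ for any unit vector $e$. The remark following Lemma~\ref{Laplacian-Commutes-lemma-Appendix} asserts that the same proof (averaging commutes with any individual isometry, hence with the averaging operator) gives
\begin{equation}\label{EPD-plan-1}
    \triangle_y\bigl(f^\sharp_x(y)\bigr)=\bigl(\triangle_y f(y)\bigr)^\sharp_x(y)
\end{equation}
for the Euclidean Laplacian, where on the left $\triangle_y$ acts on the $y$-variable with $x$ held fixed, and the right-hand side is the spherical average of $\triangle f$ over $|y-x|=r$.

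Next I would translate \eqref{EPD-plan-1} into the two variables $(x,r)$. Since $f^\sharp_x(y)$ depends on $y$ only through $|y-x|=r$, the Euclidean Laplacian in the $y$-variable of a radial function $g(r)$ is the familiar radial operator $g''(r)+\frac{n-1}{r}g'(r)$; applying this to $g(r)=F(x,r)$ (with $x$ a fixed parameter) gives that the left side of \eqref{EPD-plan-1} equals $F_{rr}(x,r)+\frac{n-1}{r}F_r(x,r)$. For the right side, $\bigl(\triangle_y f(y)\bigr)^\sharp_x(y)$ is by definition $\frac{1}{|S(r)|}\int_{|z|=r}(\triangle f)(x+z)\,dz$, and since $\triangle$ is translation-invariant, $(\triangle f)(x+z)=\triangle_x\bigl(f(x+z)\bigr)$; pulling $\triangle_x$ out through the (finite, smooth) integral over the fixed sphere $|z|=r$ shows the right side equals $\triangle_x F(x,r)$. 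Combining these two identifications with \eqref{EPD-plan-1} yields exactly \eqref{Radi-function-Wave-equation-formula}.

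The one genuinely substantive point — the main obstacle — is justifying \eqref{EPD-plan-1} in the Euclidean setting, i.e. that radialization about $x$ commutes with the Laplacian. This is exactly the content of the geometric argument in the proof of Lemma~\ref{Laplacian-Commutes-lemma}: the spherical average over $S^{n-1}(r)$ centered at $x$ agrees, up to a constant, with the average over the stabilizer $\mathrm{SO}(n)$ of $x$ acting on $\mathbb{R}^n$ fixing $x$, pushed forward to the sphere; the Euclidean Laplacian commutes with each such rotation (rotations about $x$ are isometries of $\mathbb{R}^n$), hence with the average over the compact group, hence with radialization. One should note that this requires $f\in\mathcal{C}^2$ so that $\triangle f$ makes sense and differentiation under the integral sign is legitimate — I would state this hypothesis explicitly (it is implicit in the corollary). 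The remaining steps — recognizing the radial Laplacian as $\partial_{rr}+\frac{n-1}{r}\partial_r$, and commuting $\triangle_x$ with the integral over the fixed sphere $|z|=r$ using translation invariance — are routine and require only that the sphere of integration not depend on $x$, which holds because we write $y=x+z$ and integrate in $z$. So the proof is short: invoke the remark for \eqref{EPD-plan-1}, then compute both sides of it in the $(x,r)$ coordinates.
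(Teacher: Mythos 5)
Your proof is correct and follows essentially the same route as the paper: both rely on the Euclidean analogue of the commuting lemma $\triangle_y(f^\sharp_x)=(\triangle f)^\sharp_x$ (justified via the preceding remark), identify the left side as the radial operator $\partial_{rr}+\frac{n-1}{r}\partial_r$ applied to $F$, and identify the right side as $\triangle_x F$ by translation invariance plus differentiation under the integral sign. The paper simply compresses this into the chain $\triangle_x F=(\triangle f)^\sharp=\triangle_y F=\triangle_r F$, whereas you expand the two sides of the lemma separately; the difference is purely presentational.
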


\begin{proof} Combining the Leibnitz rule of differentiation under integral sign together with formula \eqref{Laplacian-Commutes-Appendix}, we may write the following sequence of equalities.
\begin{equation}
\begin{split}
    \triangle_x F(x,r)=\frac{1}{|S(r)|}\int\limits_{|x-y|=r} \triangle_y f(y)dy=\triangle_y F(x,r)=\triangle_r F(x,r)\,.
\end{split}
\end{equation}
where the second equality is obtained by \eqref{Laplacian-Commutes-Appendix}. This completes the proof of Corollary~\ref{Radi-fu-Wave-equa-lemma}.
\end{proof}

%\begin{figure}[!h]
%    \quad\quad\quad\quad\quad\quad\quad\quad\epsfig{figure=a-a-5-2.eps, height=6.5cm, width=8.5cm}
%    \caption{The bisector curve $v(t)$.}\label{Curve-bisector}
%\end{figure}

%which is the end of the story.

%To see that the second formula in \eqref{Radi-function-Wave-equation-formula} holds, it is enough to observe that for every $\theta\in S(x,1)$ and for $f\in\mathcal{C}^1(\mathbb{R}^n)$, we have
%\begin{equation}
%\begin{split}
%    & f(x+z)-f(x)=f'(x,\theta)\cdot r+o(r)\quad\text{and}\quad f'(x,\theta)=-f'(x,-\theta)\,,
%    \\& \text{where} \quad z=r\theta \quad\text{and}\quad f'(x,\theta)=\left.\frac{df(x+\theta r)}{dr}\right|_{r=0}\,.
%\end{split}
%Then the direct computation yields the second formula in \eqref{Radi-function-Wave-equation-formula}, which completes the proof of Corollary~\ref{Radi-fu-Wave-equa-lemma}.

%\begin{center}
%\epsfig{file=aaaafigure_4_bb.eps, %
%        height=5 cm}
%\end{center}

%\begin{figure}[h]
%   Requires \usepackage{graphicx}
%  \quad\quad\quad\quad\includegraphics*[bb= 0 0 200 200]{aaaafigure_4_bb.eps}\\
%  \caption{The bisector curve $v(t)$.}\label{Curve-bisector}
%\end{figure}

\backmatter

\bibliographystyle{alpha}
%\bibliography{asergei}

\begin{thebibliography}{99}
\addcontentsline{toc}{chapter}{\numberline{}Bibliography }

\bibitem{Schwarz} H.A. Schwarz, \textbf{Gesammelte Mathematische Abhandlungen}, Chelsea Publishing Company Bronx, New York, 1972.
\bibitem{Ahlfors} Lars V. Ahlfors, \textbf{Complex Analysis}, McGraw-Hill, Inc., 1966.
\bibitem{Chavel} Isaac Chavel, \textbf{Eigenvalues in Riemannian Geometry}, Academic Press, 1984.
\bibitem{McKean} H.P. McKean, \emph{An upper bound for the spectrum of $\vartriangle$ on a manifold of negative curvature}, \textbf{Journal of Differential Geometry}, vol. 4, pp. 359-366, 1970.
\bibitem{JunLING} Jun LING, \emph{The First Dirichlet Eigenvalue of a Compact Manifold and the Yang Conjecture}, arXiv:math/0407138v4 [math.DG] 8 Dec 2004.
\bibitem{Kellog} Oliver Dimon Kellog, \textbf{Foundations of Potential Theory}, New York Dover Publications, Inc., 1954.
\bibitem{Fritz} John, Fritz, \textbf{Partial Differential Equations}, Springer-Verlag New York, Inc., 1982.
\bibitem{Gage} M. Gage, \emph{Upper Bounds For The First Eigenvalues Of The Laplace-Beltrami Operator}, \textbf{Indiana University Mathematical Journal}, vol. 29, pp. 897-912, 1980.
\bibitem{Cheng} S.Y. Cheng, \emph{Eigenvalue Comparison Theorems And Its Geometric Applications}, \textbf{Math. Z.}, vol. 143, pp.289-297, 1975.
\bibitem{Sokolnikoff} Ivan S. Sokolnikoff, \textbf{Advanced Calculus}, McGraw-Hill Book Company, Inc., New York and London, 1939.
\bibitem{Reinhold} Reinhold Remmert, \textbf{Classical Topics in Complex Function Theory}, Springer-Verlag New York, Inc., 1998.
\bibitem{Olver} W.J.Olver, \textbf{Asymptotics and Special Functions}, Academic Press, Inc., New York and London, 1974.
\bibitem{Sheldon} Sheldon Axler, Paul Bourdon, Wade Ramey, \textbf{Harmonic Function Theory}, Springer-Verlag New York, Inc., 1992.
\bibitem{Freedman} Young, Freedman, \textbf{University Physics} (ninth edition), Addison-Wesley Publishing Company, Inc., 1966.
\bibitem{Simmons} George F. Simmons, \textbf{Differential Equations with Applications and Historical Notes}, McGraw-Hill, Inc., 1972.
\bibitem{Hartman} Philip Hartman, \textbf{Ordinary Differential Equations}, John Wiley and Sons, Inc., 1964.
\bibitem{Donelly} [Don81] Harold Donelly, \textbf{The differential form spectrum of hyperbolic space}, \emph{Manuscripta Math}., 33(3-4): 365-385, 1980/81.
\bibitem{Evans} Lawrence C. Evans, \textbf{Partial Differential Equations}, American Matehmatical Society, Rhode Island, 2002.
\bibitem{FritzWaves} Eritz John, \textbf{Plane Waves and Spherical Means}, Interscience Publisher, Inc., New York, 1955.

\end{thebibliography}

(Sergei Artamoshin) The Graduate Center, The City University of New York, Department of Mathematics, 365 Fifth Avenue, New York, NY, 10016.

\emph{E-mail address:} kolmorpos@yahoo.com

\end{spacing}

\end{document}